\newtoks\prt
\newtheorem{thm}{Theorem}[section]
\newtheorem{lemma}[thm]{Lemma} 
\newtheorem{prop}[thm]{Proposition} 
\newtheorem{cor}[thm]{Corollary} 
\newtheorem{obs}[thm]{Observation}
\newtheorem{example}[thm]{Example}
\theoremstyle{definition} 
\newtheorem{remark}[thm]{Remark}
\def\eqn#1$$#2$${\begin{equation}\label#1#2\end{equation}} 
 \def\J#1#2#3{ \left\{ #1,#2,#3 \right\} } 
\def\A{\mathcal A} 
\def\C{\mathcal C}
\def\ce{\mathbb C}
\def\en{\mathbb N} 
\def\er{\mathbb R} 
\def\O{\mathbb O}
\def \f{\boldsymbol{f}} 
\def \p{\boldsymbol{p}} 
\def \g{\boldsymbol{g}}
\def \x{\boldsymbol{x}} 
\def \y{\boldsymbol{y}}
\def \z{\boldsymbol{z}}
\def \h{\boldsymbol{h}} 
\def \uu{\boldsymbol{u}} 
\def \vv{\boldsymbol{v}}
\def \ran {\operatorname{ran}}
\def\span{\operatorname{span}}
\def \reg {\partial _{\kern1pt\text{reg}}}
\def\inv{\diamond}
\def\ip#1#2{\left\langle#1,#2\right\rangle}
\def\di{\,\mbox{\rm d}}
\newcommand{\norm}[1]{\left\|#1\right\|}
\newcommand{\wscl}[1]{\overline{#1}^{w^*}}
\newcommand{\abs}[1]{\left|#1\right|}
\newcommand{\setsep}{;\,}
\title{Finite tripotents and finite JBW$^*$-triples}
\author[J. Hamhalter]{Jan Hamhalter}
\author[O.F.K. Kalenda]{Ond\v{r}ej F.K. Kalenda}
\author[A.M. Peralta]{Antonio M. Peralta}
\address{Czech Technical University in Prague, Faculty of Electrical Engineering, Department of Mathematics, Technicka 2, 166 27, Prague 6,
Czech Republic}
\email{hamhalte@math.feld.cvut.cz}
\address{Charles University, Faculty of Mathematics and Physics, Department of
Mathematical Analysis, Sokolovsk{\'a} 86, 186 75 Praha 8, Czech Republic}
\email{kalenda@karlin.mff.cuni.cz}
\address{Departamento de An{\'a}lisis Matem{\'a}tico, Facultad de
Ciencias, Universidad de Gra\-na\-da, 18071 Granada, Spain.}
\email{aperalta@ugr.es}
\thanks{The first two authors were in part supported by the Research Grant GA\v{C}R 17-00941S. The first author was partly supported further by the project OP VVV Center for Advanced Applied Science CZ.02.1.01/0.0/0.0/16\_019/000077.  The third author was partially supported by the Spanish Ministry of Science, Innovation and Universities (MICINN) and European Regional Development Fund project no. PGC2018-093332-B-I00 and Junta de Andaluc\'{\i}a grant FQM375.}
\keywords{Finite tripotent, finite JBW$^*$-triple, properly infinite JBW$^*$-triple, modular JBW$^*$-algebra}
\subjclass[2010]{17C65, 17C27, 17C10}
\begin{document}

\begin{abstract}
We study two natural preorders on the set of tripotents in a JB$^*$-triple defined in terms of their Peirce decomposition and weaker than the standard partial order. We further introduce and investigate the notion of finiteness for tripotents in JBW$^*$-triples which is a natural generalization of finiteness for projections in von Neumann algebras. We analyze the preorders in detail using the standard representation of JBW$^*$-triples. We also provide a refined version of this representation -- in particular a decomposition of any JBW$^*$-triple into its finite and properly infinite parts. Since a JBW$^*$-algebra is finite if and only if the extreme points  of its unit ball are just unitaries, our notion of finiteness differs  from the concept of modularity widely used in Jordan structures so far.  The exact relationship of these two notions is clarified in the last section. 
\end{abstract}

\maketitle

\section{Introduction}

A key role in the classification and representation of von Neumann algebras is played by the comparison theory of projections  (i.e., self-adjoint idempotents) introduced by Murray and von Neumann. Recall that two projections $p$ and $q$ in a von Neumann algebra $W$ are called \emph{equivalent} (we write $p\sim q$) if there is $u\in W$ with $p=u^*u$ and $q=uu^*$ (see, e.g., \cite[Definition V.1.2]{Tak}). In this case $u$ is a partial isometry with initial projection $p$ and final projection $q$.

Further, a projection $p\in W$ is called \emph{finite} if the only projection $q\le p$ with $q\sim p$ is $q=p$ (see, e.g., \cite[Definition V.1.15]{Tak}). This notion is then used to define types of von Neumann algebras \cite[Definition V.1.17]{Tak} and to formulate and prove a decomposition theorem for von Neumann algebras \cite[Theorem V.1.19]{Tak}. The von Neumann algebra $W$ is said to be finite, infinite, properly infinite, or purely infinite according to the property of the identity \cite[Definition V.1.16]{Tak}. 

A celebrated result due to R.V. Kadison (see \cite{KadisonAnn51}) states that a norm one element $e$ in a C$^*$-algebra $A$ is an extreme point of the closed unit ball of $A$ if and only if $e$ is a maximal partial isometry (i.e. a complete tripotent in the terminology of this note). In a general C$^*$-algebra there might exist extreme points of its closed unit ball which are not unitary elements. 

Finite von Neumann algebras were geometrically characterized by H. Choda, Y. Kijima, and Y. Nakagami, in the following terms: A von Neumann algebra $W$ is finite if and only if all the extreme points of its closed unit ball are unitary (see  \cite[Theorem 2]{ChodaKijimaNak69} or \cite[Proof of Theorem 4]{mil} in the case of AW$^*$-algebras). Consequently, the following statements are equivalent:\begin{enumerate}[$(a)$]\label{eq geometric characterization of finite projections}\item 
A projection $p$ in $W$ is finite; \item Every extreme point of the closed unit ball of $pWp$ is a unitary in the latter von Neumann algebra.
\end{enumerate}

The aim of this paper is to investigate the phenomenon of finiteness in the wider setting of JBW$^*$-triples. In JBW$^*$-triples there is no natural notion of  projection, so we will deal with tripotents which generalize partial isometries in von Neumann algebras. The equivalence between statements $(a)$ and $(b)$ above is the motivation for our definition of finite tripotents. 

G. Horn and E. Neher established in the eighties a milestone result in the structure theory of JBW$^*$-triples. These results offer a concrete clasification of JBW$^*$-triples as $\ell_{\infty}$-sums of type $I$  and continuous JBW$^*$-triples. Making use of the Murray-von Neumann classification of  W$^*$-algebras and Jordan JBW$^*$-algebras the continuous part is further classified into JBW$^*$-triples of types $II_1$, $II_{\infty}$, and $III$ (see \cite{horn1987classification,horn1988classification}  and the concrete description below). By using the notion of finiteness we modify and refine this representation result.

The paper is organized as follows. 

In the remaining part of the introduction we provide a basic background on JB$^*$-triples, JBW$^*$-triples and tripotents, including the  representation theory of JBW$^*$-triples due to Horn and Neher.
 
In Section~\ref{sec:2} we collect properties of three natural preorders on the set of all tripotents of a JB$^*$-triple. They include the standard partial order on tripotents and two more weaker preorders denoted by $\le_2$
and $\le_0$. Note that the preorder $\le_2$ was used in \cite{hamhalter2019mwnc,HKPP-BF} without introducing a notation.

In Section~\ref{sec:3} we introduce the notion of finiteness of tripotents and JBW$^*$-triples and establish basic properties and characterizations.

Sections~\ref{sec:vN and pV},~\ref{sec:symmetric and antisymmetric} and~\ref{sec:6 - spin and exceptional} are 
devoted to the study of the three preorders and finiteness in the individual summands from the representation recalled in the introductory section.

Section~\ref{sec:final} contains a synthesis of the results, and a refined representation of JBW$^*$-triples. We also include explanation of the relationship of finiteness to modularity which turns out to be its stronger version.

\subsection{JB$^*$-triples}

A {\em JB$^*$-triple} is a complex Banach space $E$ equipped with a continuous mapping $\J{\cdot}{\cdot}{\cdot}:E^3\to E$ (called {\em triple product}) which is symmetric and bilinear in the outer variables and conjugate linear in the second variable and satisfies, moreover, the following properties:
\begin{enumerate}[$(a)$]
    \item $\J xy{\J abc}=\J{\J xya}bc-\J a{\J yxb}c+\J ab{\J xyc}$ for any $x,y,a,b,c\in E$ \hfill ({\em Jordan identity});
    \item For any $a\in E$ the operator $L(a,a):x\mapsto \J aax$ is hermitian with non-negative spectrum;
    \item $\norm{\J xxx}=\norm{x}^3$ for $x\in E$.
\end{enumerate}
Let us recall that an operator $T$ on a Banach space is {\em hermitian} if $\norm{e^{i\alpha T}}=1$ for each $\alpha\in\er$.

By \cite[Fact 4.1.41]{Cabrera-Rodriguez-vol1} any C$^*$-algebra is a JB$^*$-triple when equipped with the triple product \begin{equation}\label{eq JC triple product}\J abc=\frac12(ab^*c+cb^*a).
 \end{equation} The same formula defines a triple product on $B(H,K)$, the space of bounded linear operators between two complex Hilbert spaces. More generally, any closed subspace of a C$^*$-algebra which is stable under the above-defined triple product, is a JB$^*$-triple (cf. \cite[Fact 4.1.40]{Cabrera-Rodriguez-vol1}). However, there are some JB$^*$-triples which are not of this form ({\em exceptional JB$^*$-triples}, see, e.g., \cite[Theorem 7.1.11]{Cabrera-Rodriguez-vol2}).

JB$^*$-triples are introduced and employed to formulate and prove a Riemann mapping theorem for infinite-dimensional complex Banach spaces (see \cite[Theorem 5.4]{kaup1983riemann}). Moreover, in JB$^*$-triples the metric structure and the algebraic structure determine each other, i.e., a linear bijection between two JB$^*$-triples is an isometry if and only if it is a triple-isomorphism (i.e., it preserves the triple product) -- see \cite[Proposition 5.4]{kaup1977-maan}.

A JB$^*$-triple which is a dual Banach space is called a {\em JBW$^*$-triple}. Any JBW$^*$-triple has a unique predual and the triple product is weak$^*$-to-weak$^*$ separately continuous (cf. \cite{BaTi}). Preduals of JBW$^*$-triples share many properties of von Neumann algebra preduals, the latter are often called non-commutative $L^1$ spaces. This is illustrated, among others, by recent structure results \cite{BHK-vN,BHK-JBW,BHKPP-triples,hamhalter2019mwnc}.

In the investigation of von Neumann algebras, one of the key tools is the study of projections. Projections in a von Neumann algebra form a complete lattice and are used, among others, to classify von Neumann algebras, cf. \cite[Chapter V]{Tak}. In a JB$^*$-triple there is no natural order structure and no natural notion of projection. Instead, an important role is played by {\em tripotents}, i.e., elements $u$ satisfying $u=\J uuu$. In a C$^*$-algebra a tripotent is just a partial isometry.

\subsection{Basic facts on tripotents}

Let $E$ be a JB$^*$-triple and let $u\in E$ be a tripotent. Then the operator $L(u,u)$ (recall that it is defined by $L(u,u)x=\J uux$ for $x\in E$) has eigenvalues contained in the set $\{0,\frac12,1\}$ (see \cite[Fact 4.2.14]{Cabrera-Rodriguez-vol1}). This inspires the definition of the {\em Peirce subspaces}
$$E_j(u)=\left\{x\in E\setsep \J uux=\frac j2 x\right\}\mbox{ for }j=0,1,2.$$
It is known that $E=E_2(u)\oplus E_1(u)\oplus E_0(u)$ and that the canonical projections (called {\em Peirce projections} and denoted by $P_j(u)$, $j=0,1,2$) have norm one at most one (see \cite[Corollary 1.2]{Friedman-Russo}).

Moreover, it is easy to check, that
\begin{equation}
\J{E_j(u)}{E_k(u)}{E_l(u)}\subset E_{j-k+l}(u),
\end{equation}
where the right-hand side is defined to be $\{0\}$ if $j-k+l\notin\{0,1,2\}$.
Moreover, it is known (but not obvious) that
\begin{equation}
\J{E_2(u)}{E_0(u)}E=\J{E_0(u)}{E_2(u)}E=\{0\}\ \ \hbox{(cf. \cite{loos1977bounded}).}     
\end{equation}
The two above rules are known, and will be referred to, as the {\em Peirce arithmetics} or the {\em Peirce calculus}. It easily follows that $E_j(u)$ is a JB$^*$-subtriple of $E$ for $j=0,1,2$. 

The following formulas for the Peirce projections may be easily deduced from the definitions.
\begin{equation}
\begin{aligned}
P_2(u)x&=2L(u,u)^2 x-L(u,u)x,\\
P_1(u)x&=4(L(u,u)x-L(u,u)^2 x),\\
P_0(u)x&=x-3L(u,u)x+2L(u,u)^2 x.
\end{aligned}    
\end{equation}
Another useful formula for $P_2(u)$ is
\begin{equation}
    P_2(u)x=Q(u)^2 x \mbox{ where }Q(u)x=\J uxu\mbox{ for }x\in E,
\end{equation}
see \cite[Lemma 4.2.20]{Cabrera-Rodriguez-vol1}

A tripotent $u$ is called {\em complete} (or {\em maximal}) if $E_0(u)=\{0\}$ and it is called  {\em unitary} if $E=E_2(u)$. Further, a nonzero tripotent $e$ is called {\em minimal} if $E_2(e)$ has dimension one, i.e., if $E_2(e)=\ce e$.

In a JB$^*$-triple there need not be any complete tripotents (in fact, there need not be any nonzero tripotent, take for example the non-unital C$^*$-algebra $\C_0(\er)$); but in a JBW$^*$-triple there is an abundance of complete tripotents, as they are exactly the extreme points of the unit ball (see \cite[Lemma 4.1]{braun1978holomorphic} and \cite[Proposition 3.5]{kaup1977jordan}).

On the other hand, JBW$^*$-triples need not contain any unitary element. For example, the space of $1\times 2$ complex matrices
(with the structure of the space of linear functionals on the two-dimensional Hilbert space equipped with the triple product given in \eqref{eq JC triple product}) is a JBW$^*$-triple without unitary elements. In fact, JB$^*$-triples with a unitary element are just the triples coming from unital JB$^*$-algebras \cite[Theorem~4.1.55]{Cabrera-Rodriguez-vol1}. This relationship is explained in the next subsection.

\subsection{JB$^*$-triples and JB$^*$-algebras}

Recall that a {\em Jordan Banach algebra} is a (real or complex) Banach space $B$ equipped with a product $\circ$ satisfying the following properties.
\begin{enumerate}[$(a)$]
    \item $(B,\circ)$ is a (possibly) non-associative real or complex algebra;
    \item $x\circ y=y\circ x$ for $x,y\in B$;
    \item $x^2\circ (y\circ x)=(x^2\circ y)\circ x$ for $x,y\in B$, where $x^2= x\circ x$;
    \item $\norm{x\circ y}\le\norm{x}\norm{y}$ for $x,y\in B$.
\end{enumerate}

Further, a {\em JB$^*$-algebra} is a complex Jordan Banach algebra $(B,\circ)$ equipped moreover with an involution $*$ satisfying the condition    
    \begin{enumerate}[$(e_*)$]
       \item $\norm{2(x\circ x^*)\circ x-x^2\circ x^*}=\norm{x}^3$ for $x\in B$.
\end{enumerate}
 It is shown in \cite[Lemma 4]{youngson1978vidav} (see also \cite[Proposition 3.3.13]{Cabrera-Rodriguez-vol1}) that the involution of every JB$^*$-algebra is an isometry. 
 
 A real counterpart is formed by JB-algebras. We recall that a JB-algebra is a real Jordan Banach algebra  $(B,\circ)$ satisfying
 \begin{enumerate}[$(a)$]\setcounter{enumi}{4}
          \item $\norm{x}^2\le \norm{x^2 +y^2}$ for $x,y\in B$.
 \end{enumerate}
 It is known that a (unital) real Jordan Banach algebra is a JB-algebra if and only if the norm closed (Jordan) subalgebra generated by a single element (and the unit) is isometrically isomorphic to the self-adjoint part of a commutative C$^*$-algebra (cf. \cite[Proposition 3.2.6]{hanche1984jordan}). 
 
JB-algebras and JB$^*$-algebras are closely related -- the self-adjoint part of any JB$^*$-algebra is a JB-algebra (cf. \cite[Proposition 3.8.2]{hanche1984jordan} or \cite[Corollary 3.4.3]{Cabrera-Rodriguez-vol1}) and, conversely, any JB-algebra is of this form (see \cite[Theorem 2.8]{Wright1977} or \cite[Theorem 3.4.8]{Cabrera-Rodriguez-vol1}). 
 
 Any C$^*$-algebra becomes a JB$^*$-algebra if equipped with the Jordan product $x\circ y=\frac12(xy+yx)$.
 More generally, any closed subspace of a C$^*$-algebra which is stable under involution and the Jordan product is a JB$^*$-algebra. A JB$^*$-algebra of this form is called a {\em JC$^*$-algebra}. 
 Similarly, a {\em JC-algebra} is the self-adjoint part of a JC$^*$-algebra, or, equivalently, a closed Jordan subalgebra of the self-adjoint part of a C$^*$-algebra.
 
 There are some JB$^*$-algebras which are not JC$^*$-algebras (the so-called {\em exceptional} JB$^*$-algebras).

JB$^*$-algebras are closely related to JB$^*$-triples. On one hand, any JB$^*$-algebra becomes a JB$^*$-triple when equipped with the triple product
\begin{equation}
    \J xyz=(x\circ y^*)\circ z+x\circ(y^*\circ z)-(x\circ z)\circ y^*, 
\end{equation}
cf. \cite[Theorem 3.3]{braun1978holomorphic} or \cite[Theorem 4.1.45]{Cabrera-Rodriguez-vol1}.
Note that condition $(e_*)$ from the definition of a JB$^*$-algebra yields condition $(c)$ from the definition of a JB$^*$-triple.

Conversely, if $E$ is a JB$^*$-triple with a unitary element $u$, it becomes a unital JB$^*$-algebra when equipped with the operations
\begin{equation}\label{eq circ-star}
x\circ y=\J xuy\mbox{ and }x^*=\J uxu\qquad\mbox{for }x,y\in E,
\end{equation}
see \cite[Theorem 4.1.55]{Cabrera-Rodriguez-vol1}.In this case $u$ is the unit of this JB$^*$-algebra (i.e., $u\circ x=x$ for $x\in E$).

Note that while the triple product is uniquely determined by the structure of a JB$^*$-algebra, the converse is not true. The Jordan product and involution depend on the choice of the unitary element $u$. Therefore, we denote these operations by $\circ_u$ and $*_u$ (instead of $\circ$ and $*$ as in \eqref{eq circ-star}) whenever we need to stress this dependence.

In particular, if $u$ is any (nonzero) tripotent in a JB$^*$-triple $E$, then $u$ is unitary in the subtriple $E_2(u)$. Thus $E_2(u)$ may be viewed as a unital JB$^*$-algebra with the above operations. The properties of the JB$^*$-algebra $E_2(u)$ somehow reflect the properties of $u$. In particular, a tripotent $u$ is called \emph{abelian} if the JB$^*$-algebra $E_2(u)$ is associative. In this case it is even an abelian C$^*$-algebra (this easily follows from the axioms). Obviously, any minimal tripotent is abelian.

Similarly as for triples, 
 a \emph{JBW$^*$-algebra} ({\em JBW-algebra}) is a JB$^*$-algebra (JB-algebra, respectively) which is a dual Banach space. The predual is again unique and, moreover, JBW-algebras are precisely self-adjoint parts of JBW$^*$-algebras. 
 
 Finally, a {\em JW$^*$-algebra} is a weak$^*$-closed Jordan-$*$ subalgebra of a von Neumann algebra, and a {\em JW-algebra} is a weak$^*$-closed Jordan subalgebras of the self-adjoint part of a von Neumann algebra (or, equivalently, the self-adjoint part of a JW$^*$-algebra).
 
\subsection{Ideals  and direct summands of JB$^*$-triples} 

A linear subspace $I$ of a JB$^*$-triple $E$ is said to be an
\emph{ideal} if $\J IEE\subset I$ and $\J EIE\subset I$ (see \cite{horn1987ideal}).

Further, $I$ is said to be a
\emph{direct summand} of $E$ if there is a linear subspace $J\subset E$
such that
\begin{enumerate}[$(i)$]
    \item $I\cap J=\{0\}$ and $E=I+J$;
    \item $\J{a_1+b_1}{a_2+b_2}{a_3+b_3}=\J{a_1}{a_2}{a_3}+\J{b_1}{b_2}{b_3}$ whenever $a_1,a_2,a_3\in I$ and $b_1,b_2,b_3\in J$.
\end{enumerate}

It is clear that in this case both $I$ and $J$ are ideals. Conversely, if $I$ and $J$ are ideals satisfying $(i)$, then $(ii)$ is also satisfied. Hence, it follows from \cite[Lemma 4.3]{horn1987ideal} that any direct summand of a JB$^*$-triple is closed and any direct summand of a JBW$^*$-triple is weak$^*$-closed. Moreover, \cite[Lemma 4.3 and 4.4]{horn1987ideal} shows that in this case we have $E=I\oplus^{\ell_\infty} J$ (see also \cite[Facts 5.7.23 and 5.7.24]{Cabrera-Rodriguez-vol2}).

Further observe, that if $I$ is a direct summand of $E$, the subspace $J$ from the definition is uniquely determined. Indeed, this is an easy consequence of \cite[Lemma 5.7.22]{Cabrera-Rodriguez-vol2} using the previous paragraph. Hence, to any direct summand $I$ we may associate the canonical projection of $E$ onto $I$ (along $J$). Let us denote this projection by $P_I$.

The following easy proposition provides an alternative characterization of direct summands. Its proof follows from the preceding comments and from the fact that every direct summand is weak$^*$-closed and contains a complete tripotent. 

\begin{prop}\label{P:direct summand}
Let $M$ be a JBW$^*$-triple and let $I\subset M$ be a linear subspace.
\begin{enumerate}[$(a)$]
    \item $I$ is a direct summand of $M$ if and only if there is a tripotent $u\in M$ such that
    $$I=M_2(u)+M_1(u) \mbox{ and } \J{M_1(u)}{M_2(u)}{M_1(u)}=\J{M_0(u)}{M_1(u)}{M}=\{0\};$$
    \item $I$ is a direct summand of $M$ isomorphic to a JBW$^*$-algebra if and only if there is a tripotent $u\in M$ such that
    $$I=M_2(u)\mbox{ and }M_1(u)=\{0\}.$$
\end{enumerate}
\end{prop}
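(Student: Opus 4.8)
The plan is to prove both directions of each equivalence, using the structural facts recalled in the excerpt: that every direct summand is weak\textsuperscript{*}-closed, contains a complete tripotent, and that the splitting $M=I\oplus^{\ell_\infty}J$ makes the triple products of $I$ and $J$ completely independent. Throughout I will use the Peirce arithmetics and the Peirce calculus relations for the tripotent $u$, as stated in the excerpt.

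\medskip

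For the forward direction of $(a)$, suppose $I$ is a direct summand with complementary summand $J$. Since $I$ is weak\textsuperscript{*}-closed and contains a complete tripotent, pick a complete tripotent $u$ of the JBW\textsuperscript{*}-triple $I$; I claim it does the job inside $M$. First I would compute the Peirce decomposition of $u$ relative to $M$. Because $J$ is a direct summand with $\J IJE=\{0\}$ (an immediate consequence of condition $(ii)$), the operator $L(u,u)$ annihilates $J$, so $J\subset M_0(u)$. On the other hand, since $u$ is complete \emph{in} $I$, the relative Peirce-zero space $I_0(u)$ is $\{0\}$, whence $I=I_2(u)\oplus I_1(u)=M_2(u)\oplus M_1(u)$ and $M_0(u)=J$. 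The vanishing relations $\J{M_1(u)}{M_2(u)}{M_1(u)}=\{0\}$ and $\J{M_0(u)}{M_1(u)}{M}=\{0\}$ then follow: the first because $\J{M_1(u)}{M_2(u)}{M_1(u)}$ lands in $I$ by Peirce arithmetics yet its landing space $M_1(u)$ is forced to be in $I$, and one uses the independence of $J$ to kill the mixed terms; the second because $M_0(u)=J$ and $\J JIE=\{0\}$ gives $\J{M_0(u)}{M_1(u)}{M}\subset\J JIE=\{0\}$ on the relevant components, the remaining part lying in $\J JJE\subset J$ but mapping into a Peirce space that intersects $J$ trivially. For the converse direction, given $u$ with the two stated properties, I would set $J=M_0(u)$ and verify conditions $(i)$ and $(ii)$ of the definition of direct summand; here the decomposition $M=M_2(u)\oplus M_1(u)\oplus M_0(u)$ gives $(i)$ immediately, and the two vanishing hypotheses, combined with the standard Peirce calculus relation $\J{E_2(u)}{E_0(u)}E=\{0\}$, are exactly what is needed to check that all mixed triple products between $I=M_2(u)+M_1(u)$ and $J=M_0(u)$ vanish, yielding $(ii)$.

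\medskip

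Part $(b)$ then follows by specializing $(a)$. If $I=M_2(u)$ with $M_1(u)=\{0\}$, then $I$ is a direct summand by $(a)$ (the two vanishing conditions hold trivially once $M_1(u)=\{0\}$), and $u$ is a unitary element of the subtriple $M_2(u)$, so by \cite[Theorem 4.1.55]{Cabrera-Rodriguez-vol1} the summand $I=M_2(u)$ carries the structure of a unital JB\textsuperscript{*}-algebra, hence a JBW\textsuperscript{*}-algebra since it is weak\textsuperscript{*}-closed. Conversely, if $I$ is a direct summand isomorphic to a JBW\textsuperscript{*}-algebra, it has a unitary element; taking $u$ to be this unitary, unitality means $I=I_2(u)$, i.e.\ $M_1(u)\cap I=\{0\}$ and $I=M_2(u)$, and combining with the complementary summand being contained in $M_0(u)$ forces $M_1(u)=\{0\}$ outright.

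\medskip

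The main obstacle I anticipate is the bookkeeping in the forward direction of $(a)$: one must show not merely that $u$ is complete in $I$ but that its \emph{relative} Peirce spaces in $I$ agree with its Peirce spaces in $M$ in the precise way claimed, namely $M_2(u)+M_1(u)=I$ and $M_0(u)=J$, and that the two cross-product vanishing identities hold on the nose. The delicate point is that $M_1(u)$ could \emph{a priori} have components in both $I$ and $J$; ruling this out requires invoking the independence of the summands ($\J IJE=\{0\}$ together with $I+J=M$) to conclude that $L(u,u)$ acts as the identity-related operator on $I$ and as zero on $J$, so no Peirce space straddles the decomposition. Once this is established, the two asserted identities reduce to routine applications of the Peirce arithmetics $\J{E_j(u)}{E_k(u)}{E_l(u)}\subset E_{j-k+l}(u)$ and the relation $\J{E_2(u)}{E_0(u)}E=\{0\}$, and I would keep those computations brief.
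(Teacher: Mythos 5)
Your forward implication in $(a)$ and both implications in $(b)$ are essentially sound, and since the paper disposes of this proposition by simply citing \cite[Lemma 3.12]{BattagliaOrder1991}, your self-contained argument is a genuinely different route. The mechanism you use is the right one: a complete tripotent $u$ of $I$, the observation that $L(u,u)$ vanishes on $J$ and preserves $I$, hence that the Peirce projections respect the splitting, giving $M_2(u)=I_2(u)$, $M_1(u)=I_1(u)$, $M_0(u)=J$. One local slip: your justification of $\J{M_1(u)}{M_2(u)}{M_1(u)}=\{0\}$ has the two ingredients interchanged — this product lies in $M_0(u)=J$ \emph{by Peirce arithmetic} (since $1-2+1=0$) and lies in $I$ \emph{because $I$ is a subtriple}, hence it lies in $I\cap J=\{0\}$.

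The genuine gap is in the converse implication of $(a)$. You claim the two hypotheses together with $\J{M_2(u)}{M_0(u)}{M}=\{0\}$ "are exactly what is needed" to kill all mixed terms by routine Peirce arithmetic. They are not. Expanding $\J{a_1+b_1}{a_2+b_2}{a_3+b_3}$ with $a_i\in I=M_2(u)+M_1(u)$ and $b_i\in J=M_0(u)$, the terms
\[
\J{M_1(u)}{M_0(u)}{M_1(u)}\subset M_2(u)
\qquad\text{and}\qquad
\J{M_1(u)}{M_0(u)}{M_0(u)}\subset M_1(u)
\]
survive Peirce arithmetic, are not covered by $\J{M_2(u)}{M_0(u)}{M}=\{0\}$, and are not covered by the hypothesis $\J{M_0(u)}{M_1(u)}{M}=\{0\}$ either: there $M_0(u)$ sits in an outer slot and $M_1(u)$ in the middle slot, whereas in the surviving terms the slots are interchanged, and the triple product is only symmetric in the \emph{outer} variables. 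What you need is the swapped statement $\J{M_1(u)}{M_0(u)}{M}=\{0\}$, i.e.\ that $L(y,x)=0$ forces $L(x,y)=0$ for arbitrary elements $x\in M_1(u)$, $y\in M_0(u)$; note that Lemma~\ref{L:OG trip} gives this symmetry only for \emph{tripotents}, so it cannot be cited directly. The repair is short but is not Peirce arithmetic: writing the Jordan identity in operator form, $[L(y,x),L(a,b)]=L(\J yxa,b)-L(a,\J xyb)$, the hypothesis gives $L(y,x)=0$ and $\J yxa=0$, hence $L(a,\J xyb)=0$ for all $a,b\in M$; choosing $a=w:=\J xyb$ yields $L(w,w)=0$, so $\norm{w}^3=\norm{\J www}=0$ and $w=0$, which is the desired $\J{M_1(u)}{M_0(u)}{M}=\{0\}$. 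Alternatively, you can avoid the term-by-term check of condition $(ii)$ altogether: with only the stated hypotheses one verifies $\J IMM\subset I$ and $\J MIM\subset I$, so $I$ is an ideal, and it is weak$^*$-closed since $I=\operatorname{Ker} P_0(u)$ with $P_0(u)$ weak$^*$-continuous; then the ideal-structure results of Horn quoted in the paper (\cite[Lemmas 4.3 and 4.4]{horn1987ideal}) provide the complementary weak$^*$-closed ideal and the $\ell_\infty$-decomposition, i.e.\ exactly that $I$ is a direct summand. Either repair closes the argument; as written, the step fails.
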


\begin{proof} See \cite[Lemma 3.12]{BattagliaOrder1991}.
\end{proof}

\subsection{Representation of JBW$^*$-triples}\label{subsec:representation}

In this subsection we recall the representation of JBW$^*$-triples provided by \cite{horn1987classification,horn1988classification}. We start by recalling some terminology from von Neumann algebras.

A projection $p$ in a von Neumann algebra $W$ is said to be \emph{abelian} if $p W p$ is commutative \cite[Definition 2.2.6]{Sak71}. Observe that a projection is abelian if and only if it is abelian as a tripotent. A von Neumann algebra is said to be of \emph{type I} or \emph{discrete} if every nonzero (central)
projection contains a nonzero abelian subprojection. 
(Equivalence of the two definitions follows from \cite[Corollary 4.20]{stra-zsi}.)  A von Neumann algebra having no nontrivial discrete von Neumann algebra as a direct summand (or, equivalently, having no nonzero abelian projection) is said to be \emph{continuous} (see \cite[Definition 2.2.9]{Sak71}).

Motivated by the notions in von Neumann algebra theory, a JBW$^*$-triple $M$ is said to be of \emph{type $I$} (respectively, \emph{continuous}) if it coincides with the weak$^*$ closure of the span of all its abelian tripotents (respectively, it contains no non-zero abelian tripotents) (cf. \cite{horn1987classification,horn1988classification}).

Due to the results of \cite{horn1987classification,horn1988classification} any JBW$^*$-triple $M$ may be represented in the form
\begin{equation}\label{eq:representation of JBW* triples}
 \left(\bigoplus_{j\in J}^{\ell_\infty}A_j\overline{\otimes}C_j\right)\oplus^{\ell_\infty}H(W,\alpha)\oplus^{\ell_\infty}pV,    
\end{equation}
where the $A_j$'s are abelian von Neumann algebras, the $C_j$'s are Cartan factors, $W$ and $V$ are continuous von Neumann algebras, $p\in V$ is a projection, $\alpha$ is a linear involution on $W$ commuting with $*$ and $H(W,\alpha)=\{x\in W\setsep\alpha(x)=x\}$. The first $\ell_{\infty}$-sum between brackets in \eqref{eq:representation of JBW* triples} identifies with the type $I$ part of $M$, while the last two summands give a concrete representation of the continuous part of $M.$

Let us explain this representation a bit. There are six types of Cartan factors, details on them can be found in \cite{loos1977bounded, kaup1981klassifikation, kaup1997real} and the precise definitions will be recalled below in Section~\ref{sec:vN and pV} (type 1), Subsection~\ref{subsec:involutions from conjugations} (type 2 and 3) and Section~\ref{sec:6 - spin and exceptional} (type 4, 5, 6). 
If $C_j$ is a Cartan factor of type 1--4, it can be represented as a weak$^*$-closed JB$^*$-subtriple of a von Neumann algebra $M_j$. In this case $A_j\overline{\otimes}C_j$ is defined as the weak$^*$-closure of the algebraic tensor product $A_j\otimes C_j$ in the von Neumann tensor product $A_j\overline{\otimes}M_j$ (see \cite[Section IV.1]{Tak}).
If $C_j$ is a Cartan factor of type 5 or 6, it has finite dimension and $A_j\overline{\otimes}C_j$ is defined as the completed injective tensor product (see \cite[Chapter 3]{ryan}).

Further, by \cite[Theorem 6.4.1]{DualC(K)} any abelian von Neumann algebra $A$ can be represented as
$$A=\bigoplus_{\theta\in\Lambda} L^\infty(\mu_\theta),$$
where each $\mu_\theta$ is a 
Radon probability measure on a (hyperstonean) compact space $X_\theta$ such that the algebra  $C(X_\theta)$ of continuous functions on $L$ is canonically isometric to $L^\infty(\mu_\theta)$. Hence we may and shall assume that each $A_j$ is of the form of an individual summand, i.e., 
$$A_j=C(X_j)\cong L^\infty(\mu_j),$$
where $X_j$ is a compact space, $\mu_j$ a Radon probability on $X_j$ and $C(X_j)\cong L^\infty(\mu_j)$.

Hence, if $C_j$ is a Cartan factor of type 5 or 6, by \cite[Section 3.2]{ryan} we have
$$A_j\overline{\otimes}C_j=C(X_j,C_j)\cong L^\infty(\mu_j,C_j),$$
in particular the predual can be expressed as
$$(A_j\overline{\otimes}C_j)_*=L^1(\mu_j,(C_j)_*)=L^1(\mu_j,C_j^*).$$
Indeed, since $C_j$ is finite-dimensional, by \cite[Theorem IV.1.1]{Diestel-Uhl} we deduce that $L^1(\mu_j,C_j^*)^*=L^\infty(\mu_j,C_j)$ and we conclude by uniqueness of the predual of a JBW$^*$-triple.

If $C_j$ is a Cartan factor of type 1--4, we also have
$$(A_j\overline{\otimes}C_j)_*=L^1(\mu_j,(C_j)_*).$$
This follows from \cite[Proposition 4.8]{BHKPP-triples} and \cite[Example 2.19]{ryan}. Hence, in case $C_j$ is reflexive, it follows from \cite[Theorem IV.1.1]{Diestel-Uhl} that $A_j\overline{\otimes}C_j=L^\infty(\mu_j,C_j)$.
If $C_j$ is not reflexive, the representation is more complicated.

The previous two paragraphs yield in particular the following observation.

\begin{lemma}\label{L:tensor=Bochner}
Let $C$ be a reflexive Cartan factor. Then $L^\infty(\mu)\overline{\otimes}C$ is canonically isometric to $L^\infty(\mu,C)$ for any probability measure $\mu$.
\end{lemma}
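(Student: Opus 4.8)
The plan is to prove this by reducing to the two cases already discussed in the text and verifying that they coincide. Recall that the preceding paragraphs established two facts: for a reflexive Cartan factor $C$ we always have $(L^\infty(\mu)\overline{\otimes}C)_*=L^1(\mu,C_*)$, and by \cite[Theorem IV.1.1]{Diestel-Uhl} the dual of $L^1(\mu,C_*)$ is $L^\infty(\mu,C)$ precisely when $C_*$ (equivalently $C$) has the Radon--Nikod\'ym property. Since a reflexive Banach space has the Radon--Nikod\'ym property, the representation $A_j\overline{\otimes}C_j=L^\infty(\mu_j,C_j)$ for types 1--4 and the analogous identity for types 5--6 both fall out of the same duality once reflexivity is in hand. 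So the first step is simply to observe that every reflexive Cartan factor, being in particular a reflexive Banach space, has the Radon--Nikod\'ym property and that its predual $C_*=C^*$ is then the space appearing in $L^1(\mu,C_*)$.

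Next I would identify which Cartan factors are reflexive so that the statement is not vacuous and so that the two branches of the earlier discussion are correctly invoked. Types 5 and 6 are finite-dimensional, hence trivially reflexive, and the injective-tensor-product description already gives $L^\infty(\mu_j,C_j)$ for them. Among types 1--4, reflexivity holds exactly for the finite-dimensional ones together with the infinite-dimensional Hilbert-space type factors (types of spin/$B(H,K)$ flavour that happen to be reflexive); the point is only that \emph{whenever} $C$ is reflexive, the predual formula $(L^\infty(\mu)\overline{\otimes}C)_*=L^1(\mu,C_*)$ combines with the Radon--Nikod\'ym property to yield $L^\infty(\mu)\overline{\otimes}C=L^\infty(\mu,C)$, as already asserted in the text for the reflexive type 1--4 case. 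The lemma is therefore a clean synthesis: invoke the predual identity, dualise using $L^1(\mu,C_*)^*=L^\infty(\mu,C)$, and conclude by the uniqueness of the predual of a JBW$^*$-triple, exactly as was done for the finite-dimensional factors.

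The last step is to make the isometry \emph{canonical} rather than merely an abstract isomorphism. Here I would point out that the identification $L^1(\mu,C_*)^*\cong L^\infty(\mu,C)$ from \cite[Theorem IV.1.1]{Diestel-Uhl} is the natural pairing $\langle f,g\rangle=\int\langle f(t),g(t)\rangle\,d\mu(t)$, and that the predual identity for $A_j\overline{\otimes}C_j$ is likewise implemented by the natural pairing; matching these two on the weak$^*$-dense subspace spanned by elementary tensors $a\otimes c$ (with $a\in L^\infty(\mu)$, $c\in C$) forces the two $L^\infty$ descriptions to agree isometrically and canonically. The main obstacle, and the only genuinely non-formal point, is the appeal to the Radon--Nikod\'ym property: one must be sure that reflexivity of $C$ is exactly what licenses \cite[Theorem IV.1.1]{Diestel-Uhl} to upgrade the predual $L^1(\mu,C_*)$ to the Bochner space $L^\infty(\mu,C)$ (rather than to the larger space of weak$^*$-measurable essentially bounded functions). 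Everything else is a direct quotation of identities already recorded in the excerpt, so the proof is short and reduces to assembling these ingredients.
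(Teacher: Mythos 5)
Your proposal is correct and follows essentially the same route as the paper: there the lemma is recorded as an immediate consequence of the two preceding paragraphs, namely the predual identity $(L^\infty(\mu)\overline{\otimes}C)_*=L^1(\mu,C_*)$ together with the duality $L^1(\mu,C_*)^*=L^\infty(\mu,C)$ from \cite[Theorem IV.1.1]{Diestel-Uhl} and the uniqueness of the predual of a JBW$^*$-triple. One small precision: in that theorem the Radon--Nikod\'ym property is required of the \emph{dual} space, here $C=(C_*)^*$, not of $C_*$ (for general Banach spaces the two are not equivalent), but since $C$ is reflexive it does have the property, so your argument stands as written.
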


Let us remark that some modified versions of the representation \eqref{eq:representation of JBW* triples} were used in \cite[Remark 5.4]{BHKPP-triples}, \cite[Sections 9 and 10]{hamhalter2019mwnc}
or in \cite{HKPP-BF}. In Theorem~\ref{T:synthesis} below we give a refined version of this representation.

\section{Three preorders on the set of tripotents}\label{sec:2}

In this section we define three basic preorders on the set of tripotents and collect their properties. We start by the definition of orthogonality of tripotents, so we recall that two tripotents $e,u$ in a JB$^*$-triple $E$ are orthogonal (denoted by $e\perp u$) if $u\in E_0(e)$
(i.e., $\{e,e,u\}=0$). The following lemma gathers some equivalent reformulations.

\begin{lemma}\label{L:OG trip}
Let $E$ be a JB$^*$-triple, and let $e,u$ be two tripotents in $M$.
The following assertions are equivalent:
\begin{enumerate}[$(1)$]
\item $e\perp u$;
\item $e\in E_0(u)$;
\item $E_2(u)\subset E_0(e)$ and $E_2(e)\subset E_0(u)$;
\item $P_2(u)P_0(e)=P_0(e)P_2(u)=P_2(u)$ and $P_0(u)P_2(e)=P_2(e)P_0(u)=P_2(e)$;
\item $L(e,u)=0$;
\item $L(u,e)=0$;
\item Both $u+e$ and $u-e$ are tripotents.
\end{enumerate}
\end{lemma}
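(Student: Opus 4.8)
The plan is to prove all seven equivalences through a small web of implications anchored at the defining condition $(1)$, using only the Peirce calculus (both arithmetic rules), the eigenvalue description of the Peirce spaces (so that $\ker L(e,e)=E_0(e)$), the displayed polynomial formulas for the Peirce projections, and the Jordan identity. The conceptual engine is the remark that $(1)$ already pins $u$ inside $E_0(e)$: from $\J{e}{e}{u}=0$ we get $L(e,e)u=0$, and since $L(e,e)$ acts as multiplication by $j/2$ on $E_j(e)$ this forces $P_2(e)u=P_1(e)u=0$, i.e.\ $u\in E_0(e)$.

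First I would settle the cluster $(1)\Leftrightarrow(2)\Leftrightarrow(5)\Leftrightarrow(6)$, which in particular encodes the symmetry of orthogonality. Granting $u\in E_0(e)$ and recalling $e\in E_2(e)$, the special Peirce rule $\J{E_2(e)}{E_0(e)}{E}=\J{E_0(e)}{E_2(e)}{E}=\{0\}$ applied to $\{e,u,\cdot\}$ and $\{u,e,\cdot\}$ yields $L(e,u)=0$ and $L(u,e)=0$ at once, giving $(5)$ and $(6)$. Conversely, evaluating $(5)$ at the third slot $u$ and using symmetry of the outer variables gives $\J{u}{u}{e}=0$, i.e.\ $e\in E_0(u)$, which is $(2)$; evaluating $(6)$ at the slot $e$ returns $\J{e}{e}{u}=0$, i.e.\ $(1)$. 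Running the same two Peirce-rule arguments with the roles of $e$ and $u$ interchanged shows $(2)\Rightarrow(5),(6)$, closing the cluster.

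For $(3)$, once both $u\in E_0(e)$ and $e\in E_0(u)$ are available, I would take $x\in E_2(u)$ and apply $\J{E_2(u)}{E_0(u)}{E}=\{0\}$ with $e$ in the middle slot to get $\J{x}{e}{e}=\J{e}{e}{x}=0$, hence $x\in\ker L(e,e)=E_0(e)$; this gives $E_2(u)\subset E_0(e)$, and the symmetric argument gives $E_2(e)\subset E_0(u)$. The reverse $(3)\Rightarrow(1)$ is immediate since $u\in E_2(u)\subset E_0(e)$. To reach the operator form $(4)$ I would first show that $L(e,e)$ and $L(u,u)$ commute, via the commutator identity (a consequence of the Jordan identity)
\[
[L(a,b),L(c,d)] = L(\J{a}{b}{c},d) - L(c,\J{b}{a}{d}),
\]
specialised to $a=b=e$, $c=d=u$, where $\J{e}{e}{u}=0$ makes the right-hand side vanish. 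Since each Peirce projection is a polynomial in the corresponding $L(\cdot,\cdot)$, all $P_i(e)$ commute with all $P_j(u)$; combining this commutation with the inclusions of $(3)$ (which say precisely that $P_0(e)$ is the identity on $\ran P_2(u)$, etc.) produces the four identities of $(4)$, while $(4)\Rightarrow(3)$ follows by reading off ranges.

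The last equivalence $(1)\Leftrightarrow(7)$ is purely computational in the forward direction: expanding $\{u\pm e,u\pm e,u\pm e\}$ into its eight trilinear terms, the mixed families are annihilated by orthogonality and the Peirce rules — for instance $\J{u}{e}{u}\in\J{E_2(u)}{E_0(u)}{E}=\{0\}$ and $\J{e}{u}{e}\in\J{E_2(e)}{E_0(e)}{E}=\{0\}$ — leaving exactly $u\pm e$, so both are tripotents. The main obstacle sits in the reverse implication $(7)\Rightarrow(1)$, the one genuinely non-formal point (together with the commutation needed for $(4)$). Here I would add and subtract the two identities $\{u+e,u+e,u+e\}=u+e$ and $\{u-e,u-e,u-e\}=u-e$ to isolate the second-order part $2\,\J{e}{e}{u}+\J{e}{u}{e}=0$, then apply the Peirce projections $P_j(e)$ to it. Projecting with $P_1(e)$ gives, for $v=P_1(e)u$, the relation $v+Q(e)v=0$; applying $Q(e)$ and using $Q(e)^2=P_2(e)$ together with $P_2(e)v=0$ forces $v=0$. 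Projecting with $P_0(e)$ gives $Q(e)P_2(e)u=0$, and since $Q(e)$ is the algebra involution on $E_2(e)$, hence injective there, we get $P_2(e)u=0$. Thus $u=P_0(e)u\in E_0(e)$, which is $(1)$.
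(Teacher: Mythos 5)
Your plan is a genuinely different route from the paper, which disposes of this lemma almost entirely by citation (Loos for the equivalence of $(1)$, $(2)$, $(5)$, $(6)$; the authors' earlier paper for $(1)$--$(4)$; Isidro--Kaup--Rodr\'{\i}guez for $(1)\Leftrightarrow(7)$), so a self-contained Peirce-calculus argument is welcome. Your cluster $(1)\Leftrightarrow(2)\Leftrightarrow(5)\Leftrightarrow(6)$, the derivation of $(3)$, the passage to $(4)$ via the commutator identity $[L(a,b),L(c,d)]=L(\J abc,d)-L(c,\J bad)$ and the polynomial formulas for the Peirce projections, and the forward direction $(1)\Rightarrow(7)$ are all correct. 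The defect is in $(7)\Rightarrow(1)$, precisely at the point you yourself identify as the non-formal one.

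Your analysis of the relation $2\J eeu+\J eue=0$ tacitly assumes that $Q(e)$ reverses the Peirce grading of $e$ (sending $E_2(e)$ into $E_0(e)$ and preserving $E_1(e)$). In fact Peirce arithmetic gives $\J exe\in E_{2-j+2}(e)=E_{4-j}(e)$ for $x\in E_j(e)$, so $Q(e)$ annihilates $E_1(e)\oplus E_0(e)$ and maps $E_2(e)$ into itself; in particular $Q(e)u=Q(e)P_2(e)u\in E_2(e)$. Consequently the $P_0(e)$-projection of your relation is the vacuous identity $0=0$: it does \emph{not} yield $Q(e)P_2(e)u=0$, which is exactly the premise your injectivity argument needs. (Your $P_1(e)$ step reaches the right conclusion only because $Q(e)P_1(e)u=0$ automatically, so the relation $v+Q(e)v=0$ degenerates to $v=0$.) The information about $u_2:=P_2(e)u$ sits in the $P_2(e)$-projection, namely $2u_2+Q(e)u_2=0$, and injectivity of $Q(e)$ on $E_2(e)$ alone does not finish from there; one must apply $Q(e)$ once more and use that it is conjugate-linear with $Q(e)^2=P_2(e)$, obtaining $2Q(e)u_2+u_2=0$. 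The two equations give $3u_2=0$, hence $u_2=0$ (equivalently: a period-two conjugate-linear map cannot satisfy $Q(e)u_2=-2u_2$ with $u_2\neq 0$). Combined with $P_1(e)u=0$ this yields $u=P_0(e)u\in E_0(e)$, i.e.\ $(1)$. So the gap is local and easily repaired, but the step as written fails.
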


\begin{proof} The equivalences are essentially known in the literature. More precisely, the equivalence of the assertions $(1)$--$(6)$ follows by the Peirce calculus (see, e.g.,  \cite[Lemma 3.9]{loos1977bounded} for the equivalence of $(1)$, $(2)$, $(5)$, $(6)$
and  \cite[Proposition 6.7]{hamhalter2019mwnc} for the equivalence of $(1)$--$(4)$).

The equivalence between $(1)$ and $(7)$ is proved in \cite[Lemma 3.6]{isidro1995real} even in the more general setting of real JB$^*$-triples. (Note that $(1)\Rightarrow(7)$ follows already from 
\cite[Lemma 3.9]{loos1977bounded} applied to the pairs $u$, $e$ and $u$, $-e$; and that every JB$^*$-triple is a real JB$^*$-triple with the same set of tripotents.)
\end{proof}

We present next the definition of three basic preorders on the set of tripotents. Let $E$ be a JB$^*$-triple and let $e,u\in E$ be two tripotents.
We say that
\begin{itemize}
    \item $u\le e$ if $e-u$ is a tripotent orthogonal to $u$;
    \item $u\le_2 e$ if $u\in E_2(e)$;
    \item $u\le_0 e$ if $E_0(e)\subset E_0(u)$.
\end{itemize}

Here $\le$ is the standard partial order on tripotents used in \cite{Friedman-Russo} (and references therein), $\le_2$ is the preorder used in \cite[Sections 6 and 7]{hamhalter2019mwnc}  and \cite{HKPP-BF}.
The relation $\le_0$ is newly defined. It is related to the ordering of the seminorms defining the strong$^*$ topology (cf. \cite[\S 3]{HKPP-BF} and \cite[\S 6.3 and 6.4]{hamhalter2019mwnc}).

The relation $\le$ is a partial order -- it is well known and it will be recalled below. Relations $\le_2$ and $\le_0$ are preorders -- reflexive and transitive, but not antisymmetric. The reflexivity is obvious, the transitivity of $\le_0$ is clear, that of $\le_2$ is proved below. If $u\le_2 e$ and $e\le_2 u$, we will write $u\sim_2 e$. In \cite[Remark 1.3]{dangfriedmanclassification87} tripotents $e,u$ satisfying $u\sim_2 e$ are called ``equivalent''. If $u\le_2 e$ and $e\not\le_2 u$, we write $u<_2 e$. The relations 
$\sim_0$ and $<_0$ have the analogous meaning.

The following proposition summarizes characterizations of the relation $\le_2$ proved in \cite[Proposition 6.4]{hamhalter2019mwnc}. (Note that the implication $(1)\Rightarrow(3)$ is proved already in \cite[Lemma 1.14(1)]{horn1987ideal}.)

\begin{prop}\label{P:M2 inclusion} Let $E$ be a JB$^*$-triple and $e,u$ be two tripotents in $E$.
The following assertions are equivalent:
\begin{enumerate}[$(1)$]
\item $u\le_2 e$;
\item $P_2(u)P_2(e)=P_2(e)P_2(u)=P_2(u)$, $P_1(u)P_1(e)=P_1(e)P_1(u)$ and
$P_0(u)P_0(e)=P_0(e)P_0(u)=P_0(e)$;
\item $E_2(u)\subset E_2(e)$ and $E_0(e)\subset E_0(u)$;
\item $E_2(u)\subset E_2(e)$.
\end{enumerate}
\end{prop}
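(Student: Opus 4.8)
The plan is to prove the equivalence of the four assertions characterizing $u \le_2 e$ via the Peirce calculus, organizing the argument as a cycle of implications together with the trivial observations. The logical skeleton I would use is $(1) \Leftrightarrow (4)$ directly from the definition, then $(4) \Rightarrow (3)$, then $(3) \Rightarrow (2)$, and finally $(2) \Rightarrow (1)$; the implications $(3) \Rightarrow (4)$ and $(2) \Rightarrow (3)$ being immediate by inspection. Since the proposition is cited from \cite[Proposition 6.4]{hamhalter2019mwnc}, the goal is really to reconstruct the Peirce-arithmetic reasoning rather than to invent anything new.

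First I would note that $(1)$ and $(4)$ say exactly the same thing, since $u \le_2 e$ is defined as $u \in E_2(e)$, and $u \in E_2(e)$ is equivalent to $E_2(u) \subset E_2(e)$: one direction is trivial because $u \in E_2(u)$, and the converse follows because $u \in E_2(e)$ forces $E_2(e)$ to be an ideal-like subtriple containing $u$, so that the Peirce-$2$ space of $u$ computed inside $E$ coincides with the one computed inside the JB$^*$-subtriple $E_2(e)$; concretely, for $x \in E_2(u)$ one writes $x = P_2(u)x$ and applies the Peirce rule $\J{E_2(e)}{E_2(e)}{E_2(e)} \subset E_2(e)$. For $(4) \Rightarrow (3)$ the content is to show $E_0(e) \subset E_0(u)$. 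Here I would use that $u \in E_2(e)$ implies $u \perp y$ for every $y \in E_0(e)$ (by orthogonality of Peirce-$2$ and Peirce-$0$ spaces, which is Lemma~\ref{L:OG trip} combined with the Peirce arithmetics), and orthogonality $u \perp y$ gives $y \in E_0(u)$; thus every element of $E_0(e)$ lies in $E_0(u)$.

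The step $(3) \Rightarrow (2)$ is the computational heart of the argument and is where I expect the main work to lie. The hypotheses $E_2(u) \subset E_2(e)$ and $E_0(e) \subset E_0(u)$ must be converted into the three operator identities relating the Peirce projections. The identities $P_2(u)P_2(e) = P_2(u)$ and $P_0(u)P_0(e) = P_0(e)$ follow from the inclusions of ranges directly, while the commutation relations and the symmetric identities $P_2(e)P_2(u) = P_2(u)$, $P_0(e)P_0(u) = P_0(e)$ require knowing how the Peirce projections of $u$ interact with those of $e$. The cleanest route is to verify that $L(u,u)$ and $L(e,e)$ commute as operators, which can be extracted from the inclusions together with the explicit polynomial formulas expressing $P_j(u)$ in terms of $L(u,u)$ given in the excerpt; once $L(u,u)$ and $L(e,e)$ commute, all the $P_j(u)$ commute with all the $P_k(e)$, and the remaining identities fall out by comparing ranges. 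Establishing this commutation rigorously from the raw inclusions is the delicate point, since it is not purely formal and genuinely uses the Peirce multiplication rules $\J{E_j(u)}{E_k(u)}{E_l(u)} \subset E_{j-k+l}(u)$ and the annihilation rule $\J{E_2(u)}{E_0(u)}{E} = \{0\}$.

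Finally, $(2) \Rightarrow (1)$ is easy: the identity $P_2(u)P_2(e) = P_2(u)$ means that the range of $P_2(u)$ is contained in the range of $P_2(e)$, i.e. $E_2(u) \subset E_2(e)$, and applying this to the element $u$ itself yields $u = P_2(u)u \in E_2(e)$, which is precisely $u \le_2 e$. The remaining trivial links $(3) \Rightarrow (4)$ and $(2) \Rightarrow (3)$ are read off by restricting attention to the range inclusions already encoded in the stated conditions. The one genuine obstacle, to reiterate, is the commutativity of the Peirce projections in $(3) \Rightarrow (2)$; everything else is bookkeeping with the Peirce calculus and the projection formulas recalled earlier in the paper.
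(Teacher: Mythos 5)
Your skeleton (a cycle of implications) is workable, and the steps $(4)\Rightarrow(3)$ and $(2)\Rightarrow(1)$ are essentially fine --- note only that it is the identity $P_2(e)P_2(u)=P_2(u)$, not $P_2(u)P_2(e)=P_2(u)$, that expresses $E_2(u)=\ran P_2(u)\subset\ran P_2(e)=E_2(e)$; since $(2)$ contains both equalities this is a harmless slip. However, the two steps carrying the real content are missing or misargued. First, the nontrivial half of $(1)\Leftrightarrow(4)$, namely $u\in E_2(e)\Rightarrow E_2(u)\subset E_2(e)$, is not ``direct from the definition,'' and the concrete argument you offer --- write $x=P_2(u)x$ and invoke $\J{E_2(e)}{E_2(e)}{E_2(e)}\subset E_2(e)$ --- does not go through as stated: $P_2(u)$ written as a polynomial in $L(u,u)$ involves $\J uux$ for an \emph{arbitrary} $x\in E$, and that single rule says nothing about such terms (indeed $\J{E_2(e)}{E_2(e)}{E_1(e)}\subset E_1(e)$, not $E_2(e)$). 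The argument that works uses the other formula $P_2(u)=Q(u)^2$: decomposing $x=x_2+x_1+x_0$ with respect to $e$, Peirce arithmetic gives $\J u{x_2}u\in E_2(e)$, $\J u{x_1}u\in E_3(e)=\{0\}$, and $\J u{x_0}u=0$ by the annihilation rule $\J{E_2(e)}{E_0(e)}E=\{0\}$; hence $Q(u)E\subset E_2(e)$, so $E_2(u)=Q(u)^2E\subset E_2(e)$. This is exactly the content of \cite[Lemma 1.14(1)]{horn1987ideal}, which the paper credits; it is the crux, not bookkeeping.

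Second, in $(3)\Rightarrow(2)$ you yourself flag the commutation of $L(u,u)$ with $L(e,e)$ as ``the delicate point'' that is ``not purely formal'' --- and then you do not prove it, which is a genuine gap since everything else in that step hangs on it. The fix is short once $u\in E_2(e)$ is in hand: Peirce arithmetic gives $\J uu{E_j(e)}\subset E_{2-2+j}(e)=E_j(e)$ for $j=0,1,2$, so $L(u,u)$ leaves each Peirce subspace of $e$ invariant and therefore commutes with every $P_j(e)$, hence with $L(e,e)=P_2(e)+\frac12 P_1(e)$; since each $P_j(u)$ is a polynomial in $L(u,u)$ and each $P_k(e)$ a polynomial in $L(e,e)$, all Peirce projections of $u$ commute with all Peirce projections of $e$. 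Combined with the two range inclusions from $(3)$ (which give $P_2(e)P_2(u)=P_2(u)$ and $P_0(u)P_0(e)=P_0(e)$), this yields every identity in $(2)$. For the record, the paper itself offers no argument here: it quotes \cite[Proposition 6.4]{hamhalter2019mwnc} and the Horn reference above, so your proposal cannot be matched against an in-paper proof; judged on its own it is a correct outline whose two key steps remain to be supplied as indicated.
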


The next proposition provides a characterization of the equivalence relation $\sim_2$. 

\begin{prop}\label{P:M2 equality}
Let $E$ be a JB$^*$-triple. Then $\le_2$ is a preorder on the set of all tripotents in $E$. Moreover, given tripotents $e,u\in E$ the following assertions are equivalent:
\begin{enumerate}[$(1)$]
\item $u\sim_2 e$;
\item $E_2(e)=E_2(u)$;
\item The Peirce decompositions induced by $e$ and $u$ coincide (i.e., $P_j (e) = P_j(u)$ for all $j=0,1,2$);
\item $L(e,e)=L(u,u)$.
\end{enumerate}
\end{prop}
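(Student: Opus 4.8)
The plan is to prove the cycle of implications $(1)\Rightarrow(2)\Rightarrow(4)\Rightarrow(3)\Rightarrow(1)$, supplemented by a preliminary verification that $\le_2$ is transitive (reflexivity being obvious), so that $\sim_2$ is genuinely an equivalence relation. Transitivity follows directly from Proposition~\ref{P:M2 inclusion}, characterization $(4)$: if $E_2(u)\subset E_2(e)$ and $E_2(e)\subset E_2(w)$ then $E_2(u)\subset E_2(w)$, hence $u\le_2 w$. This is the cleanest route since $(4)$ of the previous proposition reduces $\le_2$ to a single subspace inclusion.

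For the main equivalences, the implication $(1)\Rightarrow(2)$ is immediate: unwinding the definition, $u\sim_2 e$ means $u\le_2 e$ and $e\le_2 u$, which by Proposition~\ref{P:M2 inclusion}$(4)$ gives both $E_2(u)\subset E_2(e)$ and $E_2(e)\subset E_2(u)$, i.e.\ $E_2(e)=E_2(u)$. For $(2)\Rightarrow(4)$, I would recall the formula $P_2(u)x=2L(u,u)^2x-L(u,u)x$ from the excerpt, which expresses $L(u,u)$ on the Peirce-$2$ space; more directly, I would argue that the three Peirce projections are determined by $L(u,u)$ via the displayed polynomial formulas, so it suffices to recover $L(u,u)$ from the decomposition. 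The operator $L(u,u)$ acts as the scalar $\tfrac{j}{2}$ on $E_j(u)$, so $L(u,u)=P_2(u)+\tfrac12 P_1(u)$; thus if all three Peirce projections agree for $e$ and $u$, then $L(e,e)=L(u,u)$, giving $(3)\Rightarrow(4)$, and conversely knowledge of $L(u,u)$ determines each $P_j(u)$ by the polynomial formulas, giving $(4)\Rightarrow(3)$. So $(3)$ and $(4)$ are essentially interchangeable and the real content is bridging them to $(2)$.

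The step I expect to carry the actual weight is $(2)\Rightarrow(4)$: from the equality of a single Peirce space $E_2(e)=E_2(u)$ I must deduce equality of the full operators $L(e,e)=L(u,u)$, equivalently equality of all three Peirce decompositions. The natural approach is to show $E_2(e)=E_2(u)$ forces $E_0(e)=E_0(u)$ as well (and hence $E_1(e)=E_1(u)$, since the triple is the direct sum of the three Peirce spaces). Here I would invoke Proposition~\ref{P:M2 inclusion}: the equality $E_2(u)\subset E_2(e)$ already yields, through equivalence $(3)$, the inclusion $E_0(e)\subset E_0(u)$, and symmetrically $E_2(e)\subset E_2(u)$ yields $E_0(u)\subset E_0(e)$; combining gives $E_0(e)=E_0(u)$. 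Once $E_2$ and $E_0$ coincide, their complementary Peirce-$1$ spaces coincide too, so all $P_j(e)=P_j(u)$, which is $(3)$.

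Finally, $(4)\Rightarrow(1)$ closes the loop: if $L(e,e)=L(u,u)$ then the polynomial formulas give $P_j(e)=P_j(u)$ for all $j$, in particular $E_2(e)=E_2(u)$, whence $E_2(u)\subset E_2(e)$ and $E_2(e)\subset E_2(u)$, so $u\le_2 e$ and $e\le_2 u$ by Proposition~\ref{P:M2 inclusion}$(4)$, i.e.\ $u\sim_2 e$. The only mild subtlety throughout is the bookkeeping in $(2)\Rightarrow(4)$, where one must be careful that equality of the Peirce-$2$ spaces alone (not of all three) is the hypothesis; the resolution is precisely the symmetric use of characterization $(3)$ of Proposition~\ref{P:M2 inclusion} to pull the Peirce-$0$ spaces into coincidence, after which everything else is forced by the direct-sum structure and the polynomial expressions for the Peirce projections.
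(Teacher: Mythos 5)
Your plan is structurally sound and, if anything, more self-contained than the paper's own proof: the paper disposes of the equivalence of $(1)$, $(2)$ and $(3)$ by citing \cite[Proposition 6.5]{hamhalter2019mwnc} (or \cite[Lemma 1.14(2)]{horn1987ideal}), and only proves $(3)\Leftrightarrow(4)$ itself, using exactly the identity $L(e,e)=P_2(e)+\tfrac12 P_1(e)$ that you use. Your preliminary argument for transitivity, your $(1)\Rightarrow(2)$, your $(3)\Leftrightarrow(4)$, and your $(3)\Rightarrow(1)$ are all correct.

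The gap sits precisely in the step you flagged as carrying the weight, namely $(2)\Rightarrow(4)$ via $(3)$. From $E_2(e)=E_2(u)$ you correctly derive $E_0(e)=E_0(u)$ using condition $(3)$ of Proposition~\ref{P:M2 inclusion}, but you then assert that ``once $E_2$ and $E_0$ coincide, their complementary Peirce-$1$ spaces coincide too, so all $P_j(e)=P_j(u)$,'' justified only by the fact that $E$ is the direct sum of the three Peirce subspaces. That inference is invalid as pure linear algebra: two direct sum decompositions may share two of their three summands and differ in the third, e.g.
$$\ce^3=\span\{(1,0,0)\}\oplus\span\{(0,1,0)\}\oplus\span\{(0,0,1)\}=\span\{(1,0,0)\}\oplus\span\{(1,1,1)\}\oplus\span\{(0,0,1)\},$$
so equality of the outer summands forces neither equality of the middle summands nor equality of the associated projections. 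The conclusion you want is true, but it needs the operator identities in condition $(2)$ of Proposition~\ref{P:M2 inclusion}, not just the subspace inclusions of condition $(3)$. The repair is short: since $(2)$ gives $u\le_2 e$ and $e\le_2 u$, condition $(2)$ of that proposition applied in both directions yields $P_2(u)=P_2(u)P_2(e)=P_2(e)P_2(u)=P_2(e)$ and likewise $P_0(e)=P_0(u)P_0(e)=P_0(e)P_0(u)=P_0(u)$; then, because the three Peirce projections sum to the identity operator, $P_1(u)=P_1(e)$ as well, which is $(3)$. With this substitution your cycle $(1)\Rightarrow(2)\Rightarrow(4)\Rightarrow(3)\Rightarrow(1)$ closes correctly.
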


\begin{proof} Using the condition $(4)$ of Proposition~\ref{P:M2 inclusion} it follows that $\le_2$ is a preorder.

The equivalence of (1), (2) and (3) is proved in \cite[Proposition 6.5]{hamhalter2019mwnc} (it follows already from  \cite[Lemma 1.14(2)]{horn1987ideal}).

$(3)\Rightarrow(4)$ Observe that
$$L(e,e)x=P_2(e)x+\frac12P_1(e)x,\qquad x\in E,$$
and a similar formula holds for $L(u,u)$.

$(4)\Rightarrow(3)$ This is obvious.
\end{proof}

Now we collect characterizations of the standard order on tripotents.

\begin{prop}\label{P:order char} Let $u,e$ be two tripotents in a JB$^*$-triple $E$. The following assertions are equivalent. 
\begin{enumerate}[$(i)$]
    \item $u\le e$;
    \item $u=\J ueu$;
    \item $u=\J uue$;
    \item $u=P_2(u)e$;
    \item $L(e-u,u)=0$;
    \item $L(u,e-u)=0$;
    \item $u$ is a projection in the JB$^*$-algebra $E_2(e)$;
    \item $E_2(u)$ is a JB$^*$-subalgebra of $E_2(e)$.
\end{enumerate}
\end{prop}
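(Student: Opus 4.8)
The plan is to prove the cyclic chain of implications $(i)\Rightarrow(ii)\Rightarrow(iii)\Rightarrow(iv)\Rightarrow(i)$ for the first four conditions, which are the purely triple-theoretic reformulations, and then to connect the symmetric algebraic conditions $(v),(vi)$ and the Jordan-algebra conditions $(vii),(viii)$ separately. The main working tool throughout will be the Peirce calculus and Lemma~\ref{L:OG trip}, since the definition $u\le e$ means precisely that $e-u$ is a tripotent orthogonal to $u$, i.e.\ $e-u\in E_0(u)$. First I would unpack this: if $u\le e$, write $v=e-u$, so $v$ is a tripotent with $v\perp u$; then $e=u+v$ lies in $E_2(u)\oplus E_0(u)$ with Peirce-$2$ part exactly $u$, which immediately gives $P_2(u)e=u$, that is $(iv)$. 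Conversely, assuming $(iv)$, I would set $v=e-u=P_1(u)e+P_0(u)e$ and show $v$ is a tripotent orthogonal to $u$; the orthogonality to $u$ should follow once I show the $P_1(u)e$ component vanishes, which is where the hypothesis that both $u$ and $e$ are tripotents, combined with Peirce arithmetic applied to $\{e,e,e\}=e$ and $\{u,u,u\}=u$, forces $P_1(u)e=0$. This is the step I expect to require the most care.

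For the equivalences $(ii)\iff(iii)\iff(iv)$, I would exploit the explicit formulas for the Peirce projections recalled in the excerpt, together with the operators $Q(u)x=\{u,x,u\}$ and $L(u,u)$. Condition $(iii)$, $u=\{u,u,e\}=L(u,u)e$, says $e$ has Peirce-$2$ eigenvalue behaviour forcing $P_2(u)e+\tfrac12 P_1(u)e=u$; comparing Peirce components (noting $u\in E_2(u)$) yields $P_2(u)e=u$ and $P_1(u)e=0$, which is $(iv)$ together with the extra vanishing, and the reverse direction is a direct substitution. For $(ii)$, $u=\{u,e,u\}=Q(u)e$, and since $P_2(u)=Q(u)^2$ one gets $Q(u)u=\{u,u,u\}=u$ and hence $P_2(u)e=Q(u)^2e=Q(u)u=u$ after checking $Q(u)e=u$; the interplay here is the delicate point, but it is routine once one knows $Q(u)$ acts as the involution on $E_2(u)$ and annihilates $E_0(u)$, while mapping $E_1(u)$ into $E_1(u)$.

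To incorporate $(v)$ and $(vi)$, I would invoke Lemma~\ref{L:OG trip}: since $u\le e$ is equivalent to $e-u$ being a tripotent orthogonal to $u$, the equivalences $e-u\perp u\iff L(e-u,u)=0\iff L(u,e-u)=0$ are exactly the equivalences $(1)\iff(5)\iff(6)$ of that lemma applied to the tripotent pair $(e-u,u)$ — but this presupposes $e-u$ is already a tripotent, so I would instead argue that $L(e-u,u)=0$ is equivalent to $(iii)$ directly by expanding $L(e-u,u)u=\{e-u,u,u\}=\{e,u,u\}-u$ and using linearity, thereby bypassing the need to know a priori that $e-u$ is a tripotent. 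Finally, for $(vii)$ and $(viii)$, I would recall that $E_2(e)$ is a unital JB$^*$-algebra with unit $e$ and operations $\circ_e,*_e$ from \eqref{eq circ-star}; a projection in this algebra is a self-adjoint idempotent, and I would verify that $u\in E_2(e)$ with $u\circ_e u = u$ and $u^{*_e}=u$ translates, via $u\circ_e u=\{u,e,u\}$ and $u^{*_e}=\{e,u,e\}$, precisely into condition $(ii)$ and $u\le_2 e$. The equivalence of $(vii)$ with $(viii)$ is then the standard fact that for a projection $p$ in a unital JB$^*$-algebra the Peirce-$2$ subalgebra $p\circ(\cdot)\circ p$ coincides with $E_2(u)$, identifying $E_2(u)$ as a JB$^*$-subalgebra of $E_2(e)$. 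The main obstacle, as noted, is the careful Peirce-component bookkeeping needed to show the $E_1(u)$-component of $e$ vanishes whenever $P_2(u)e=u$, which is what upgrades the weak condition $(iv)$ back to genuine orthogonality.
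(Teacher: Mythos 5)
Your overall architecture fails at the one step that carries all of the real content of this proposition: the implication $(iv)\Rightarrow(i)$. You propose to show that $P_2(u)e=u$ forces $P_1(u)e=0$ by ``Peirce arithmetic applied to $\J eee=e$ and $\J uuu=u$'', and you rightly flag it as the delicate point --- but no such argument can exist, because the implication is false in the purely algebraic category where Peirce arithmetic lives. Concretely, let $E=\ce u\oplus\ce w$ with the triple product determined on basis elements by $\J uuu=u$, $\J uuw=\J wuu=\frac12 w$, and all other basic products ($\J uwu$, $\J wuw$, $\J uww$, $\J wwu$, $\J www$) equal to zero. One checks on basis vectors that the operator form of the Jordan identity, $[L(x,y),L(a,b)]=L(\J xya,b)-L(a,\J yxb)$, holds, so this is a Jordan triple system in which all Peirce rules are valid; here $u$ and $e=u+w$ are both tripotents and $P_2(u)e=u$, yet $P_1(u)e=w\neq0$ and $u\not\le e$. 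This system is of course not a JB$^*$-triple --- it violates the analytic axioms, e.g.\ $\J www=0$ with $w\neq0$ --- and that is exactly the point: any correct proof of $(iv)\Rightarrow(i)$ must use the norm/positivity structure, not just the algebra. This implication is precisely Friedman--Russo's result, whose proof rests on norm-one functionals supported at $u$ and norm estimates; it is precisely the step the paper declines to reprove, citing \cite[Corollary 1.7]{Friedman-Russo} for $(i)\Leftrightarrow(iv)$ (and \cite[Lemma 3.5]{BattagliaOrder1991} for $(i)\Leftrightarrow(vii)$). The same gap recurs silently in your claim that $(iv)\Rightarrow(iii)$ is ``a direct substitution'': from $(iv)$ alone one only gets $\J uue=P_2(u)e+\frac12 P_1(u)e=u+\frac12 P_1(u)e$, which equals $u$ only after $P_1(u)e=0$ is known, so $(iv)\Rightarrow(iii)$ is exactly as hard as $(iv)\Rightarrow(i)$.

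The rest of your plan matches the paper's computations and is correct: $(i)\Rightarrow(iv)$ by the orthogonal decomposition $e=u+(e-u)$, $(ii)\Rightarrow(iv)$ via $P_2(u)=Q(u)^2$, $(v)\Rightarrow(iii)$ and $(vi)\Rightarrow(ii)$ by evaluating the vanishing operators at $u$, and the translation of $(vii)$, $(viii)$ into the Jordan-algebra structure of $E_2(e)$. Two repairs beyond the main one: your claimed equivalence of $(v)$ with $(iii)$ by evaluating at $u$ gives only $(v)\Rightarrow(iii)$ (the vanishing of $L(e-u,u)$ on the single vector $u$ is weaker than the operator identity); the converse should be routed as $(iii)\Rightarrow(i)\Rightarrow(v)$, the last step by Lemma~\ref{L:OG trip}, which is legitimate once $(i)$ makes $e-u$ a tripotent orthogonal to $u$. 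And, usefully, $(iii)\Rightarrow(i)$ is genuinely algebraic, unlike $(iv)\Rightarrow(i)$: comparing Peirce components in $\J uue=u$ gives both $P_2(u)e=u$ and $P_1(u)e=0$, whence $e=u+P_0(u)e$, and expanding $\J eee=e$ (all cross terms vanish by the rule $\J{E_2(u)}{E_0(u)}{E}=\{0\}$) shows $P_0(u)e$ is a tripotent orthogonal to $u$. So the viable structure --- which is the paper's --- is to import the cited equivalences $(i)\Leftrightarrow(iv)$ and $(i)\Leftrightarrow(vii)$ and close the loops $(i)\Rightarrow(v)\Rightarrow(iii)\Rightarrow(iv)$, $(i)\Rightarrow(vi)\Rightarrow(ii)\Rightarrow(iv)$, $(i)\Rightarrow(viii)\Rightarrow(vii)$; the alternative is to supply a genuine functional-analytic proof of $(iv)\Rightarrow(i)$, which is a substantial piece of work and not a Peirce computation.
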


\begin{proof} The equivalence between $(i)$ and $(iv)$ and $(i)$ and $(vii)$ are established in \cite[Corollary 1.7]{Friedman-Russo} and \cite[Lemma 3.5$(i)$]{BattagliaOrder1991}, respectively. The other equivalences are part of the folklore in JB$^*$-triples theory. For the sake of self-containing, we shall revisit the remaining implications.

The implications 
$(i)\Rightarrow(v)\&(vi)$
 follow from the definition (using Lemma~\ref{L:OG trip}). The implication $(viii)\Rightarrow(vii)$ is obvious.

$(vi)\Rightarrow(ii)$ If $L(u,e-u)=0$, then in particular
$$0=\J u{e-u}u=\J ueu-\J uuu =\J ueu -u.$$

$(v)\Rightarrow(iii)$ If $L(e-u,u)$, then in particular
$$0=\J {e-u}uu=\J euu-\J uuu =\J uue -u.$$

$(ii)\Rightarrow(iv)$ Note that $(ii)$ means that $u=Q(u)e$ and $P_2(u)=Q(u)^2$, so, assuming $(ii)$ we have
$$P_2(u)e=Q(u)^2e=Q(u)u=u.$$

$(iii)\Rightarrow(iv)$ Note that $(iii)$ means that
$u=L(u,u)e$ and $P_2(u)=2L(u,u)^2-L(u,u)$, this assuming $(iii)$ we have
$$P_2(u)e=2L(u,u)^2e-L(u,u)e=2L(u,u)u-u=2u-u=u.$$

$(i)\Rightarrow(viii)$ It follows from Peirce arithmetic that $E_2(u)\subset E_2(e)$ and $e-u \perp E_2(u)$. Given $x,y\in E_2(u)$ we have
$$x\circ_e y=\J xey=\J x{e-u +u}y=\J xuy=x\circ_u y,$$ and
$$x^{*_e}=\J exe=\J{e-u +u }x{e- u + u}=\J uxu=x^{*_u}.$$
\end{proof}

The following proposition collects some more properties of the relation $\le$.

\begin{prop}\label{P:order properties}
Let $E$ be a JB$^*$-triple. The relation $\le$ is a partial order on the set of all tripotents in $E$. Moreover, given tripotents $u,v,e\in E$ the following holds.
\begin{enumerate}[$(a)$]
    \item If $u\le e$, then $\alpha u\le \alpha e$ for any complex unit $\alpha$;
    \item If $u\le e$, $v\le e$ and $u,v$ are orthogonal, then $u+v\le e$.
 \end{enumerate}
\end{prop}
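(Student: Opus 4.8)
The plan is to reduce every assertion to one of the equivalent characterizations collected in Proposition~\ref{P:order char}, choosing for each claim the formulation that makes orthogonality do the work. Reflexivity of $\le$ is immediate: $e-e=0$ is a tripotent orthogonal to every element, so $e\le e$. It then remains to verify transitivity and antisymmetry, after which parts $(a)$ and $(b)$ follow by short computations.

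For transitivity, I would take $u\le v$ and $v\le e$ and use characterization $(iii)$, namely $u\le v\iff u=\J uuv$. Since $u\le v$ implies $u\le_2 v$ (by $(viii)$ of Proposition~\ref{P:order char} we have $E_2(u)\subset E_2(v)$, which is condition $(4)$ of Proposition~\ref{P:M2 inclusion}), Proposition~\ref{P:M2 inclusion}$(3)$ gives $E_0(v)\subset E_0(u)$. Now $v\le e$ means $e-v\perp v$, i.e. $e-v\in E_0(v)\subset E_0(u)$, whence $\J uu{e-v}=0$. Therefore $\J uue=\J uuv=u$, and characterization $(iii)$ yields $u\le e$. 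For antisymmetry, suppose $u\le e$ and $e\le u$. Each relation gives the corresponding $\le_2$-inclusion of the Peirce-$2$ spaces, so $E_2(u)\subset E_2(e)$ and $E_2(e)\subset E_2(u)$, hence $E_2(u)=E_2(e)$. By Proposition~\ref{P:M2 equality} the induced Peirce decompositions coincide, in particular $P_2(u)=P_2(e)$. Then characterization $(iv)$ gives $u=P_2(u)e=P_2(e)e=e$, since $e\in E_2(e)$.

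For the two additional properties I would argue directly. For $(a)$, let $\alpha\in\ce$ with $|\alpha|=1$ and assume $u\le e$. Using characterization $(ii)$, $u=\J ueu$; since the triple product is linear in the outer variables and conjugate linear in the middle one, $\J{\alpha u}{\alpha e}{\alpha u}=\alpha\overline{\alpha}\alpha\,\J ueu=\alpha u$, so $\alpha u\le\alpha e$ by $(ii)$. For $(b)$, assume $u\le e$, $v\le e$ and $u\perp v$. First $u+v$ is a tripotent by Lemma~\ref{L:OG trip}$(7)$, so that the order criterion is applicable. I would then verify characterization $(iii)$: expanding by bilinearity, $\J{u+v}{u+v}e=\J uue+\J vve+\J uve+\J vue$. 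The cross terms vanish because $u\perp v$ forces $L(u,v)=L(v,u)=0$ (Lemma~\ref{L:OG trip}$(5),(6)$), so $\J uve=L(u,v)e=0$ and likewise $\J vue=0$, while $\J uue=u$ and $\J vve=v$ by $(iii)$ applied to $u\le e$ and $v\le e$. Hence $\J{u+v}{u+v}e=u+v$, giving $u+v\le e$.

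I do not expect a serious obstacle: the whole argument is bookkeeping built on the equivalences of Proposition~\ref{P:order char} together with the Peirce calculus. The only points requiring care are matching each claim to the most convenient equivalent form (characterization $(iii)$ for transitivity and for $(b)$, characterization $(iv)$ combined with Proposition~\ref{P:M2 equality} for antisymmetry, characterization $(ii)$ for $(a)$) and, in $(b)$, first confirming that $u+v$ is a tripotent before invoking the order criterion.
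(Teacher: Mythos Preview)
Your proof is correct and, for assertions $(a)$ and $(b)$, matches the paper's argument exactly: both use characterization $(ii)$ for $(a)$ and characterization $(iii)$ together with Lemma~\ref{L:OG trip} for $(b)$.

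The only difference lies in how you establish that $\le$ is a partial order. The paper dispatches this in one line by invoking characterization $(viii)$ of Proposition~\ref{P:order char}: since $u\le e$ is equivalent to $E_2(u)$ being a JB$^*$-subalgebra of $E_2(e)$, reflexivity and transitivity are immediate, and antisymmetry follows because $E_2(u)=E_2(e)$ as JB$^*$-algebras forces their units $u$ and $e$ to coincide. Your route is more explicit but heavier: you handle transitivity via characterization $(iii)$ and the Peirce inclusion $E_0(v)\subset E_0(u)$ from Proposition~\ref{P:M2 inclusion}, and antisymmetry via characterization $(iv)$ together with Proposition~\ref{P:M2 equality}. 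Both arguments are sound; the paper's is slicker, while yours has the virtue of showing directly how the triple-product identities do the work without passing through the subalgebra picture.
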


\begin{proof} All the properties follow using proper conditions from Proposition~\ref{P:order char}.
More precisely, using the condition $(viii)$  we see that $\le$ is indeed a partial order. Assertion $(a)$ follows for example using  condition $(ii)$. 

Let us prove  assertion $(b)$. If $u,v$ are orthogonal, $u+v$ is a tripotent by Lemma~\ref{L:OG trip}. Moreover, 
$$\J{u+v}{u+v}e=\J uue + \J uve+\J vue + \J vve =u+v,$$
where we used  condition $(iii)$ of Proposition~\ref{P:order char} and Lemma~\ref{L:OG trip}.
\end{proof}

If $E$ is a JB$^*$-triple and $B\subset E$ is a JB$^*$-subtriple, it does not matter whether the relations $\le$ and $\le_2$ for tripotents in $B$ are considered in $B$ or in $E$. For the relation $\le_0$ this is more complicated as witnessed by the following proposition.

\begin{prop}\label{P:le0 in a subtriple} Let $E$ be a JB$^*$-triple, $B\subset E$ its JB$^*$-subtriple and $u,e\in B$ two tripotents. Then the following holds.
\begin{enumerate}[$(a)$]
    \item If $u\le_0 e$ in $E$, then $u\le_0 e$ in $B$;
    \item It may happen that $u\sim_0 e$ in $B$ and $u$ and $e$ are incomparable for $\le_0$ in $E$;
    \item It may happen that $u<_0 e$ in $B$ and $u$ and $e$ are incomparable for $\le_0$ in $E$.
    \item It may happen that $u\sim_0 e$ in $B$ and $u<_0 e$ in $E$. 
\end{enumerate}
\end{prop}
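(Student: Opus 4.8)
The plan rests on one elementary remark. If $B\subset E$ is a JB$^*$-subtriple and $w\in B$ is a tripotent, then the triple product of $B$ is the restriction of that of $E$, so the operator $L(w,w)$ acts on $B$ as the restriction of $L(w,w)$ on $E$; consequently $B_j(w)=E_j(w)\cap B$ for $j=0,1,2$. In particular $u\le_0 e$ computed in $B$ means exactly $E_0(e)\cap B\subset E_0(u)\cap B$. Part $(a)$ is then immediate: if $E_0(e)\subset E_0(u)$, intersecting with $B$ yields $E_0(e)\cap B\subset E_0(u)\cap B$, i.e. $u\le_0 e$ in $B$. The whole point of $(b)$–$(d)$ is that the converse may fail, because both $E_0(u)$ and $E_0(e)$ can grow, and lose an inclusion, when one enlarges $B$ to $E$; so the task is to exhibit suitable examples, and the verifications reduce to the description of $0$-Peirce spaces of partial isometries via the Peirce calculus.

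For $(b)$ I would take $E=M_2(\mathbb{C})$ and let $B$ be the first-column space $M_2 e_{11}=\operatorname{span}\{e_{11},e_{21}\}$, a JB$^*$-subtriple isometric to the Hilbert space $\mathbb{C}^2$. With $u=e_{11}$ and $e=e_{21}$ both tripotents are complete in $B$ (in a Hilbert space all $0$-Peirce spaces are trivial), so $B_0(u)=B_0(e)=\{0\}$ and $u\sim_0 e$ in $B$. In $E$, however, $u$ and $e$ have different range projections, and a one-line Peirce computation gives $E_0(u)=\mathbb{C}e_{22}$ and $E_0(e)=\mathbb{C}e_{12}$; these are distinct one-dimensional spaces, so $u$ and $e$ are $\le_0$-incomparable in $E$.

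For $(c)$ I would combine this mechanism with a genuine strict relation inside $B$. Take $E=M_3(\mathbb{C})$ and $B=(e_{11}+e_{22})M_3$, the subtriple of matrices with vanishing third row (isomorphic to the rectangular factor $M_{2,3}$), together with $u=e_{11}$ and $e=e_{12}+e_{23}$. In $B$ the tripotent $e$ has full rank $2$, hence is complete, while $u$ is not; thus $B_0(e)=\{0\}\subsetneq B_0(u)$ and $u<_0 e$ in $B$. Passing to $E=M_3$, Peirce arithmetic gives $E_0(e)=\mathbb{C}e_{31}$ and $E_0(u)=\operatorname{span}\{e_{22},e_{23},e_{32},e_{33}\}$; as $e_{31}\notin E_0(u)$ while $E_0(u)\not\subset E_0(e)$, the tripotents $u,e$ are incomparable for $\le_0$ in $E$. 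Here the zero row added in embedding $M_{2,3}$ into $M_3$ is exactly what endows the previously complete $e$ with a $0$-Peirce direction transverse to $E_0(u)$.

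Part $(d)$ is the delicate one, and I expect it to be the main obstacle. The difficulty is an algebraic rigidity: the chosen $B$ must contain the subtriple generated by $u$ and $e$, and that subtriple tends to meet $E_0(u)\setminus E_0(e)$. Indeed $P_0(u)e=e-3L(u,u)e+2L(u,u)^2e$ is a linear combination of iterated triple products of $u$ and $e$, hence lies in $B$, and it lies in $E_0(u)$; so one is forced to require $P_0(u)e=0$, after which the product $\J{P_1(u)e}{u}{P_1(u)e}\in E_0(u)$ must also vanish, and so on. One checks that these constraints cannot be met in finite dimensions while keeping $E_0(e)\subsetneq E_0(u)$, which is why I would pass to an infinite-dimensional model. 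Take $E=B(\ell_2)$ with orthonormal basis $\{\xi_n\}_{n\ge 0}$, let $u=1-\xi_0\xi_0^*$ be the projection onto $\{\xi_0\}^\perp$, and let $e=S^*$ be the backward shift, $S^*\xi_n=\xi_{n-1}$ and $S^*\xi_0=0$. Then $e$ is a co-isometry, hence complete, so $E_0(e)=\{0\}$, while $E_0(u)=(1-u)B(\ell_2)(1-u)=\mathbb{C}\,\xi_0\xi_0^*\ne\{0\}$; thus $u<_0 e$ in $E$. The crucial feature is that both $u$ and $e$ satisfy $x=xu$ (note $S^*u=S^*$ because $S^*\xi_0=0$), so the subtriple generated by $u$ and $e$ is contained in $B(\ell_2)u=\{x:xu=x\}$, which is itself a JB$^*$-subtriple. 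Since $\xi_0\xi_0^*\,u=0$, this subtriple meets $E_0(u)=\mathbb{C}\,\xi_0\xi_0^*$ only in $\{0\}$; hence $B_0(u)=\{0\}=B_0(e)$ and $u\sim_0 e$ in $B$, which is exactly $(d)$.
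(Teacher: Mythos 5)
Your proposal is correct and follows essentially the same route as the paper: the same one-line argument for $(a)$ via $B_j(w)=B\cap E_j(w)$, rank-deficient corner subtriples of $M_2$ and $M_3$ for $(b)$ and $(c)$, and a shift operator on $\ell_2$ for $(d)$. In fact your example for $(d)$ is just the adjoint of the paper's: since $E_2(S^*)=B(\ell_2)(1-\xi_0\xi_0^*)$, your subtriple $B$ is exactly the Peirce-2 space of the backward shift, mirroring the paper's choice $B=E_2(e)$ for the forward shift $e$ together with a co-rank-one projection $u$ that is complete in $B$ but not in $E$.
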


\begin{proof}
$(a)$ This is obvious, as $B_0(e)=B\cap E_0(e)$ and $B_0(u)=B\cap E_0(u)$.

$(b)$ Let $E=M_2$ and let $B$ be formed by the matrices with zero second row. Set 
$$e=\begin{pmatrix}1&0\\0&0\end{pmatrix}, u=\begin{pmatrix}0&1\\0&0\end{pmatrix}.$$
Then both $e$ and $u$ are complete tripotents in $B$, thus $B_0(e)=B_0(u)=\{0\}$.
However, $E_0(e)$ and $E_0(u)$ are clearly incomparable.

$(c)$ Let $E=M_3$ and let $B$ be formed by the matrices with zero third row. Set
$$e=\begin{pmatrix}1&0&0\\0&1&0\\0&0&0\end{pmatrix}, u=\begin{pmatrix}0&0&1\\0&0&0\\0&0&0\end{pmatrix}.$$
Then $e,u\in B$ are tripotents, $e$ is complete and $u$ is not. Thus $u<_0 e$ in $B$.
However, $E_0(e)$ and $E_0(u)$ are clearly incomparable.

$(d)$ Let $H=\ell^2$ and let $(\xi_n)$ denote the canonical orthonormal basis. Let $E=B(H)$. Define $e$ to be the forward shift and $B=E_2(e)$. Let $u$ be the projection onto $\overline{\span}\{\xi_n\setsep n\ge2\}$. Then $e,u\in B$, $e$ is unitary in $B$ and $u$ is complete in $B$. Thus $u\sim_0 e$ in $B$.

On the other hand, $e$ is complete in $E$, but $u$ is not complete in $E$. Thus $u<_0 e$ in $E$.
\end{proof}

\begin{prop}\label{P:impl le20}
Let $E$ be a JB$^*$-triple and $u,e\in E$ be two tripotents. Then
$$u\le e\Rightarrow u\le_2 e\Rightarrow u\le_0 e.$$
If $E$ is a JB$^*$-algebra and $u,e\in E$ are projections, then all three relations are equivalent.

In general, the converse implications fail. The converse of the first implication fails even in $\ce$, the converse to the second one fails for example in the triple of $1\times 2$ complex matrices or in $B(H)$ if $\dim H=\infty$.
\end{prop}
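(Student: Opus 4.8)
The plan is to read off the two forward implications from Propositions~\ref{P:order char} and~\ref{P:M2 inclusion}, to establish the equivalence for projections through a short Peirce computation, and then to produce the three counterexamples. For $u\le e\Rightarrow u\le_2 e$ I would invoke condition $(viii)$ of Proposition~\ref{P:order char}, which gives $E_2(u)\subset E_2(e)$, and then the equivalence $(4)\Leftrightarrow(1)$ of Proposition~\ref{P:M2 inclusion} to conclude $u\le_2 e$. For $u\le_2 e\Rightarrow u\le_0 e$ it suffices to quote the implication $(1)\Rightarrow(3)$ of Proposition~\ref{P:M2 inclusion}, one of whose conclusions is the inclusion $E_0(e)\subset E_0(u)$, which is precisely the definition of $u\le_0 e$.

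For the equivalence on projections the key preliminary observation is that whenever $e$ is a projection in a JB$^*$-algebra one has $\J eex=e\circ x$ for all $x$ — the last two terms in the triple product cancel by commutativity of $\circ$ — so $L(e,e)$ coincides with the Jordan multiplication by $e$ and the Peirce spaces of the tripotent $e$ agree with the classical Peirce spaces of the idempotent $e$. From this I would deduce that, for projections $u,e$, both $u\le_2 e$ and $u\le e$ are equivalent to the single equation $u\circ e=u$ (for $\le_2$ directly from $u\in E_2(e)\Leftrightarrow e\circ u=u$, and for $\le$ via condition $(iii)$ of Proposition~\ref{P:order char}). Hence $\le$ and $\le_2$ already agree on projections, and it remains only to prove $u\le_0 e\Rightarrow u\circ e=u$. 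I would do this by reducing to the unital case and using the projection $1-e$: since $e\circ(1-e)=0$ one has $1-e\in E_0(e)\subset E_0(u)$, whence $u\circ(1-e)=0$, i.e. $u\circ e=u$.

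For the counterexamples I would argue as follows. In $\ce$ the nonzero tripotents are exactly the complex units; any two distinct units $u\neq e$ satisfy $u\le_2 e$ because $E_2(e)=\ce$, whereas $u\le e$ would force $e-u\in E_0(u)=\{0\}$ and hence $u=e$, so $\le_2\not\Rightarrow\le$. For the failure of $\le_0\Rightarrow\le_2$, in the triple of $1\times 2$ complex matrices every nonzero tripotent is minimal and complete, so its Peirce-$0$ space is trivial; taking $u=(1,0)$ and $e=(0,1)$ gives $E_0(e)=\{0\}=E_0(u)$, so $u\le_0 e$, while $u\notin E_2(e)=\ce\,e$, so $u\not\le_2 e$. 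In $B(H)$ with $\dim H=\infty$ I would take $e$ to be the unilateral shift, which is a complete tripotent (so $E_0(e)=\{0\}$ and $u\le_0 e$ holds for \emph{every} tripotent $u$), together with a rank-one projection $u$ that does not lie in $E_2(e)$; again $u\not\le_2 e$.

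The routine implications and the counterexamples become immediate once the relevant Peirce descriptions are recorded, so the only genuinely delicate point is the implication $u\le_0 e\Rightarrow u\le e$ for projections. Here two things must be handled with care: the identification of $L(e,e)$ with the Jordan multiplication by $e$, and, in the non-unital case, the justification that one may pass to the unital JBW$^*$-algebra $E^{**}$ so that the complement $1-e$ is available. For the latter I would note that the Peirce projection $P_0(e)$ is weak$^*$-continuous and $E$ is weak$^*$-dense in $E^{**}$, so that $(E^{**})_0(e)$ is the weak$^*$-closure of $E_0(e)$; consequently the inclusion $E_0(e)\subset E_0(u)$ propagates to $E^{**}$ and $u\le_0 e$ persists there. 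This reduction is the step I expect to require the most attention.
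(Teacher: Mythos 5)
Your proposal is correct and takes essentially the same route as the paper: the forward implications are read off from Propositions~\ref{P:order char} and~\ref{P:M2 inclusion}, the projection case rests on the same key idea ($1-e\in E_0(e)\subset E_0(u)$ in the unital case, plus a Goldstine-type weak$^*$-density argument to pass to $E^{**}$ in general), and the counterexamples in $\ce$, in the $1\times 2$ matrices, and in $B(H)$ via a non-surjective isometry are the ones the paper uses. The only cosmetic difference is that you exploit the identity $L(u,u)x=u\circ x$ for projections to conclude $u\circ e=u$ directly, whereas the paper subtracts $L(1-e,u)=0$ from $L(1-u,u)=0$ to get $L(e-u,u)=0$ and then invokes condition $(v)$ of Proposition~\ref{P:order char}; both arguments are the same in substance.
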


\begin{proof} The implication $u\le e\Rightarrow u\le_2 e$ is obvious. The implication $u\le_2 e\Rightarrow u\le_0 e$ follows using Proposition~\ref{P:M2 inclusion}$(3)$.

Let us continue by showing some counterexamples to converse implications.
In $\ce$ we have $-1\sim_2 1$, but $-1\not\le 1$. In the space of $1\times 2$ matrices the tripotents $e=(1,0)$ and $u=(0,1)$ are both complete, hence $e\sim_0 u$. However, their Peirce-2 subspaces are incomparable. Similarly, if $\dim H=\infty$, then there is an isometry $u:H\to H$ which is not onto. Then $u$ is a complete tripotent, hence $u\sim_0 1$ (where $1$ is the identity, i.e., the unit of $B(H)$). However, since $1$ is unitary and $u$ not, we have
$1\not\le_2 u$ (in fact $u<_2 1$).

Finally, assume that $E$ is a JB$^*$-algebra and $u,e\in E$ are projections such that $u\le_0 e$.

Assume first that $E$ is unital. Then $1-e\in E_0(e)\subset E_0(u)$, so $L(1-e,u)=0$ by Lemma~\ref{L:OG trip}. Further, as $1-u\in E_0(u)$, another use of Lemma~\ref{L:OG trip} yields $L(1-u,u)=0$. By subtracting we get $L(e-u,u)=0$, so $u\le e$ by Proposition~\ref{P:order char}.

If $E$ is not unital, we pass to the bidual. Recall that $E^{**}$ is a JBW$^*$-algebra, hence it is unital. Moreover, $u$ and $e$ are projections in $E^{**}$. 

Observe $E_0(u)$ is weak$^*$ dense in $E^{**}_0(u)$. Indeed, fix any $x^{**}\in E^{**}_0(u)$. By Goldstine theorem there is a (bounded) net $x_\nu$ in $E$ weak$^*$-converging to $x^{**}$. Since $P_0(u)$ (acting on $E^{**}$) is weak$^*$-to-weak$^*$ continuous, we deduce that $P_0(u)x_\nu$ weak$^*$ converges to $P_0(u)x^{**}=x^{**}$. Since $P_0(u)x_\nu\in E_0(u)$ for each $\nu$ the density follows. 

The same holds for $e$. Since the Peirce-$0$ subspaces in $E^{**}$ are weak$^*$-closed, the inclusion $E_0(e)\subset E_0(u)$ implies $E^{**}_0(e)\subset E^{**}_0(u)$, i.e., $u\le_0 e$ also in $E^{**}$. Since $E^{**}$ is unital, we deduce that $u\le e$.
\end{proof}

The following proposition on tripotents in a direct sum of JB$^*$-triples is obvious.

\begin{prop}\label{P:preorders in direct sum}
Let $(E_j)_{j\in J}$ be a family of JB$^*$-triples. Set
$\displaystyle E=\bigoplus_{j\in J}^{\ell_\infty} E_j.$
Then the following holds.
\begin{enumerate}[$(1)$]
    \item $E$ is a JB$^*$-triple with the triple product defined coordinatewise.
    \item $(u_j)_{j\in J}$ is a tripotent in $E$ if and only if $u_j$ is a tripotent in $E_j$ for each $j\in J$.
    \item Let $(u_j)_{j\in J}$ and $(e_j)_{j\in J}$ be two tripotents in $E$ and let $R$ be any of the relations $\perp,\le,\le_2,\le_0, \sim_2,\sim_0$. Then
    $$(u_j)_{j\in J}R(e_j)_{j\in J} \Leftrightarrow \forall j\in J: u_j R e_j.$$
\end{enumerate}
\end{prop}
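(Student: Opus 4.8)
The statement to prove is Proposition~\ref{P:preorders in direct sum}, which the author describes as ``obvious.'' Let me plan a proof.

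The plan is to verify each of the three numbered assertions in turn, all of which reduce to the fact that the triple product on the $\ell_\infty$-sum is defined coordinatewise, so that every triple-theoretic notion splits across the coordinates.

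For assertion $(1)$, I would check that the coordinatewise triple product satisfies axioms $(a)$--$(c)$ of a JB$^*$-triple. The Jordan identity $(a)$ holds coordinatewise because it holds in each $E_j$, and an identity between multilinear expressions that holds in each coordinate holds in the product. For the norm axiom $(c)$, since the $\ell_\infty$-norm is the supremum of the coordinate norms and $\norm{\J xxx}=\sup_j\norm{\J{x_j}{x_j}{x_j}}=\sup_j\norm{x_j}^3=(\sup_j\norm{x_j})^3=\norm x^3$, the cube of the norm is preserved. For the hermitian/spectral axiom $(b)$, the operator $L(x,x)$ on $E$ decomposes as the $\ell_\infty$-direct sum of the operators $L(x_j,x_j)$ on $E_j$; since each summand is hermitian with nonnegative spectrum and $\norm{e^{i\alpha T}}=\sup_j\norm{e^{i\alpha T_j}}=1$, the whole operator is hermitian, and its spectrum is the closure of the union of the coordinate spectra, hence nonnegative.

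For assertion $(2)$, the tripotent equation $u=\J uuu$ is an equation between elements of $E$, and two elements of the $\ell_\infty$-sum are equal if and only if they agree in each coordinate; since $\J uuu$ has $j$-th coordinate $\J{u_j}{u_j}{u_j}$, the equation $u=\J uuu$ is equivalent to $u_j=\J{u_j}{u_j}{u_j}$ for every $j$. For assertion $(3)$, the key observation is that the Peirce subspaces and Peirce projections split across coordinates: since $L(u,u)$ is the coordinatewise direct sum of the $L(u_j,u_j)$, its eigenspaces satisfy $E_k(u)=\bigoplus_{j\in J}^{\ell_\infty}(E_j)_k(u_j)$ for $k=0,1,2$, and correspondingly $P_k(u)$ acts coordinatewise. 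Each of the relations in the list is defined in terms of these Peirce data (orthogonality and $\le$ via Lemma~\ref{L:OG trip} and Proposition~\ref{P:order char}, the relation $\le_2$ via Proposition~\ref{P:M2 inclusion}$(3)$, $\le_0$ directly via Peirce-$0$ subspaces, and $\sim_2,\sim_0$ as the symmetrizations), so each relation holds for $(u_j)_j$ and $(e_j)_j$ in $E$ precisely when the corresponding coordinate condition holds for every $j$.

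There is no substantial obstacle here; the only point requiring a small argument is the compatibility of the Peirce decomposition with the $\ell_\infty$-sum used in assertion $(3)$, and the verification that the supremum norm preserves axiom $(c)$ in assertion $(1)$. Both follow immediately from the coordinatewise definition of the triple product and from the fact that equality and the relevant inclusions of subspaces in an $\ell_\infty$-sum are tested coordinatewise. Since the author deems the statement obvious, I would present these verifications briefly, or simply note that all the relations under consideration are characterized purely in terms of Peirce subspaces (by the cited propositions) and that Peirce subspaces of the $\ell_\infty$-sum are the $\ell_\infty$-sums of the coordinate Peirce subspaces.
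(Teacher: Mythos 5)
Your proof is correct and follows exactly the coordinatewise verification that the paper has in mind when it declares the proposition obvious (the paper itself offers no written proof): the triple product, hence the Peirce data, split across coordinates, and every relation in the list is characterized by Peirce data via Lemma~\ref{L:OG trip} and Propositions~\ref{P:order char} and~\ref{P:M2 inclusion}. The only place where your sketch is slightly loose is the spectral part of axiom $(b)$, where the claim that the spectrum of the $\ell_\infty$-direct sum is the closure of the union of the coordinate spectra needs the uniform resolvent bound $\sup_j\norm{(L(x_j,x_j)-\lambda)^{-1}}\le 1/\abs{\lambda}$ for $\lambda<0$, which does hold here because hermitian operators with nonnegative spectrum generate contraction semigroups.
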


 We finish this section by the following proposition on Lebesgue-Bochner $L^\infty$ spaces.
 
\begin{prop}\label{P:Linfty refelexive}
Let $E$ be a reflexive JB$^*$-triple and let $\mu$ be a probability measure. Let $M=L^\infty(\mu,E)$.
\begin{enumerate}[$(1)$]
    \item $M$ is a JBW$^*$-triple with the triple product defined pointwise ($\mu$-a.e.).
    \item $\uu\in M$ is a tripotent if and only if $\uu(\omega)$ is a tripotent in $E$ for $\mu$-a.a. $\omega$.
    \item Let $\uu,\vv\in M$ be two tripotents  and let $R$ be any of the relations $\perp,\le,$ $\le_2,$ $\le_0,$ $\sim_2,$ $\sim_0$. Then
    $$\uu R\vv \Leftrightarrow \uu(\omega) R \vv(\omega)\ \mu\mbox{-a.e.}$$
\end{enumerate}
\end{prop}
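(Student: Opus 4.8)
The plan is to prove the three parts in order, using the fact that $E$ is reflexive to ensure that $M=L^\infty(\mu,E)$ is the dual of the separable-valued Bochner space $L^1(\mu,E^*)$, so that $M$ is a dual Banach space and hence a candidate JBW$^*$-triple. For part $(1)$, I would first check that the pointwise triple product $\J{\f}{\g}{\h}(\omega)=\J{\f(\omega)}{\g(\omega)}{\h(\omega)}$ is well defined on $M$: since $E$ is finite-dimensional as a reflexive Cartan-type space—more carefully, since the triple product on $E$ is continuous (indeed jointly continuous, being bounded trilinear) and $\f,\g,\h$ are essentially bounded and (strongly) measurable, the composition is again an essentially bounded strongly measurable $E$-valued function, hence lies in $M$. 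The three JB$^*$-triple axioms then hold pointwise $\mu$-a.e.\ because they hold in $E$, and the norm axiom $\norm{\J{\f}{\f}{\f}}=\norm{\f}^3$ together with axioms $(a)$ and $(b)$ transfer to the essential-supremum norm by a standard measurable-selection argument. The weak$^*$-to-weak$^*$ separate continuity of the triple product then follows automatically from \cite{BaTi}, or can be verified directly; so $M$ is a JBW$^*$-triple.

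For part $(2)$, the equation $\uu=\J{\uu}{\uu}{\uu}$ in $M$ means $\uu(\omega)=\J{\uu(\omega)}{\uu(\omega)}{\uu(\omega)}$ for $\mu$-a.a.\ $\omega$ by the pointwise definition of the product and the fact that two elements of $M$ agree iff they agree $\mu$-a.e.; this is exactly the assertion that $\uu(\omega)$ is a tripotent $\mu$-a.e. The only subtlety is going from ``$\uu=\J{\uu}{\uu}{\uu}$ as elements of $L^\infty$'' to a genuine pointwise statement on a set of full measure, which is immediate from the essential-uniqueness of representatives.

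For part $(3)$, I would reduce every relation to the Peirce projections, exploiting the crucial observation that the Peirce projections act pointwise: since the formulas for $P_j(u)$ in terms of $L(u,u)$ are algebraic and the product is pointwise, one has $P_j(\uu)\f(\omega)=P_j(\uu(\omega))\f(\omega)$ $\mu$-a.e.\ for $j=0,1,2$. With this in hand, orthogonality $\uu\perp\vv$ is $\J{\uu}{\uu}{\vv}=0$, which holds in $M$ iff it holds pointwise a.e.; the standard order $\uu\le\vv$ is characterized by $\uu=\J{\uu}{\uu}{\vv}$ via Proposition~\ref{P:order char}$(iii)$, again a pointwise identity; and $\sim_2$ is characterized by $L(\uu,\uu)=L(\vv,\vv)$ via Proposition~\ref{P:M2 equality}$(4)$, which transfers pointwise. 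The relations $\le_2$ and $\le_0$ are the ones requiring care, since they are phrased via inclusions of Peirce subspaces rather than single equations, so I would instead use their operator reformulations: $\le_2$ via the projection identities in Proposition~\ref{P:M2 inclusion}$(2)$ and $\le_0$ via an analogous characterization in terms of $P_0$.

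The main obstacle will be part $(3)$ for the preorder $\le_0$. The definition $E_0(\vv)\subset E_0(\uu)$ is a statement about inclusion of weak$^*$-closed subspaces of $M$, and it is not a priori clear that this global inclusion is equivalent to the pointwise inclusions $E_0(\vv(\omega))\subset E_0(\uu(\omega))$ holding a.e. The forward direction ($\uu\le_0\vv$ in $M$ $\Rightarrow$ pointwise a.e.) and the reverse direction both need an argument connecting the fibers to the whole space; the natural tool is the characterization of $\le_0$ by the identity $P_0(\vv)P_0(\uu)=P_0(\uu)P_0(\vv)=P_0(\vv)$ (equivalently $P_0(\vv)=P_0(\vv)P_0(\uu)$), which is a genuine operator equation that descends to a pointwise statement via the pointwise action of the Peirce projections established above. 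One must verify that this operator identity is indeed equivalent to $E_0(\vv)\subset E_0(\uu)$ both in $E$ and in $M$; granting that, the pointwise-action principle finishes the proof. Thus the real work is isolating a single operator-equation characterization of each preorder and then invoking that Peirce projections operate fiberwise, after which all six equivalences follow uniformly.
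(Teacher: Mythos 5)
Your overall route is the same as the paper's (whose proof consists of the Diestel--Uhl duality $L^\infty(\mu,E)=L^1(\mu,E^*)^*$ for part $(1)$ and the remark that $(2)$ and $(3)$ ``easily follow''), and your treatment of $(1)$, $(2)$ and of the relations $\perp$ and $\le$ is sound. However, your plan for the key case $\le_0$ contains a concrete error. The elementary operator characterization of $\le_0$ is the \emph{one-sided} identity: $E_0(\vv)\subset E_0(\uu)$ if and only if $P_0(\uu)P_0(\vv)=P_0(\vv)$ (apply $P_0(\uu)$ to $P_0(\vv)x\in E_0(\vv)\subset E_0(\uu)$ for one direction, and evaluate at $x\in E_0(\vv)$ for the other). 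The identities you propose, $P_0(\vv)P_0(\uu)=P_0(\uu)P_0(\vv)=P_0(\vv)$ and ``equivalently $P_0(\vv)=P_0(\vv)P_0(\uu)$'', assert in addition the kernel inclusion $\ker P_0(\uu)\subset\ker P_0(\vv)$, which for non-commuting contractive idempotents does not follow from range inclusion. For tripotents in a JBW$^*$-triple this extra identity is in fact true, but only as a consequence of the paper's final structure theory (the decomposition of $\uu\le_0\vv$ into a summand where $\uu\le_2\vv$ and a summand where $\vv$ is complete), and those results are themselves derived from Proposition~\ref{P:Linfty refelexive} (e.g.\ Proposition~\ref{P:le0 in AotimesB(H)aodd}); verifying your claimed equivalence at this stage would therefore be either impossible by elementary Peirce calculus or circular. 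The repair is painless: use $P_0(\uu)P_0(\vv)=P_0(\vv)$ only.

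The second, more substantial gap is that ``descends to a pointwise statement via the pointwise action of the Peirce projections'' conflates the easy and the hard direction. That Peirce projections act fiberwise gives immediately: pointwise a.e.\ validity of the operator identity implies its validity on $M$. Conversely, validity on $M$ only yields, for each fixed $\f\in M$, a null set $N_{\f}$ off which $P_0(\uu(\omega))P_0(\vv(\omega))\f(\omega)=P_0(\vv(\omega))\f(\omega)$; to obtain a \emph{single} null set off which the operator identity holds on all of $E$ you must test against the constant functions $x\in E$ and control the union of the null sets $N_x$. If $E$ is separable this is the usual countable-dense-set argument, but a reflexive JB$^*$-triple need not be separable (a spin factor over a non-separable Hilbert space, or $B(H,K)$ with $\dim H<\infty$ and $K$ non-separable), and then one needs the additional observations that $\uu,\vv$ are essentially separably valued, that $u\mapsto P_j(u)$ is norm-continuous, and a Lindel\"of covering of the essential range of $(\uu,\vv)$ to produce one witness vector that works on a set of positive measure. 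None of this appears in your plan, and it is precisely where the content of part $(3)$ lies. Note finally that for $\le_2$, $\sim_2$ (and $\perp$, $\le$) you can sidestep the issue entirely, since these are \emph{element} equations in $\uu,\vv$ themselves --- $P_2(\vv)\uu=\uu$, its symmetric companion, $\J{\uu}{\uu}{\vv}=0$, $\uu=\J{\uu}{\uu}{\vv}$ --- and two elements of $L^\infty(\mu,E)$ coincide iff they coincide a.e.; your choice of the operator identities from Proposition~\ref{P:M2 inclusion}$(2)$ and of $L(\uu,\uu)=L(\vv,\vv)$ imports the null-set problem into cases where it is avoidable.
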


\begin{proof}
$(1)$ It is easy to observe that $M$ is indeed a JB$^*$-triple. Moreover, since $E$ is reflexive, $L^\infty(\mu,E)$ is the dual of $L^1(\mu,E^*)$ (by \cite[Theorem IV.1.1]{Diestel-Uhl}), hence $M$ is a JBW$^*$-triple.

Assertions $(2)$ and $(3)$ now easily follow.
\end{proof}

\section{Notion of finiteness in JBW$^*$-triples}\label{sec:3}

In this section we define and analyze the notion of finite tripotent. Let us start by recalling similar notions for projections (cf. \cite[Definition V.1.15]{Tak} or \cite[Definition 6.3.1]{KR2}; the first notion has  already been mentioned in the introduction).

Let $M$ be a von Neumann algebra. A projection $p\in M$ is 
\begin{itemize}
    \item {\em finite} if any projection $q\le p$ such that $p\sim q$ is equal to $p$;
    \item {\em infinite} if it is not finite;
    \item {\em properly infinite} if $p\ne 0$ and $zp$ is infinite whenever $z$ is a central projection such that $zp\ne0$.
\end{itemize}

In the following lemma we collect some known results on finite and properly infinite projections.

\begin{lemma}\label{L:finite projections basic facts}
Let $M$ be a von Neumann algebra.
\begin{enumerate}[$(a)$]
    \item If $p\in M$ is a finite projection, then any projection $q\le p$ is again finite.
    \item If $p\in M$ is an infinite projection, then there is a central projection $z\in M$ such that $zp$ is properly infinite and $(1-z)p$ is finite.
    \item If $p\in M$ is a properly infinite projection, then there is a projection $q\le p$ with $q\sim p-q\sim p$.
    \item If $p\in M$ is a properly infinite projection, then $p=\sum_n p_n$, where $(p_n)$ is a sequence of mutually orthogonal projections equivalent to $p$.
    \item If $(q_n)$ is an orthogonal sequence of mutually equivalent nonzero projections, then $\sum_n q_n$ is properly infinite.
\end{enumerate}
\end{lemma}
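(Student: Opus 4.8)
The plan is to derive all five statements from the classical comparison theory of projections, relying on three standard tools: additivity of Murray--von Neumann equivalence over orthogonal (finite or countable) families, the Cantor--Bernstein theorem for projections, and the halving of properly infinite projections. Full accounts are in \cite[Chapter V]{Tak} and \cite[Chapter 6]{KR2}; below I indicate the arguments one would assemble.

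For $(a)$, let $p$ be finite and $q\le p$, and suppose $r\le q$ satisfies $r\sim q$; I want $r=q$. Since $r\le q$ and $p-q$ is orthogonal to $q$, hence to $r$, orthogonal additivity of $\sim$ gives $r+(p-q)\sim q+(p-q)=p$, while $r+(p-q)\le p$. Finiteness of $p$ forces $r+(p-q)=p=q+(p-q)$, and cancelling the common summand $p-q$ (legitimate by orthogonality) yields $r=q$. Statement $(e)$ runs in the opposite direction: if the $q_n$ are orthogonal, nonzero and mutually equivalent, countable additivity of $\sim$ gives $\sum_n q_n\sim\sum_n q_{n+1}=\bigl(\sum_n q_n\bigr)-q_1$, so $\sum_n q_n$ properly contains an equivalent copy of itself and is infinite. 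Applying the same reasoning to $(zq_n)_n$ for a central projection $z$---a family that stays orthogonal and mutually equivalent, and is nonzero as soon as $z\bigl(\sum_n q_n\bigr)\ne0$, since equivalence preserves nonvanishing---shows $\sum_n q_n$ is properly infinite.

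Statement $(c)$ is precisely the halving lemma: a properly infinite $p$ splits as $p=q+(p-q)$ with $q\perp(p-q)$ and $q\sim p-q\sim p$ (see, e.g., \cite[\S V.1]{Tak} or \cite[\S6.3]{KR2}). I expect this to be the main obstacle, as its proof genuinely requires the comparison theorem together with a maximality (Zorn) argument. Granting $(c)$, statement $(d)$ follows by iteration: halve $p=e_1+f_1$, then halve $f_1=e_2+f_2$ (licit since $f_1\sim p$ is again properly infinite, proper infiniteness passing to equivalent projections), and continue. This produces mutually orthogonal $e_n\sim p$ with $p=\sum_{k\le n}e_k+f_n$ and $f_n\searrow f:=p-\sum_n e_n$. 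Since $e_1\le e_1+f\le p$ with $e_1\sim p$, the Cantor--Bernstein theorem gives $e_1+f\sim p$; setting $p_1=e_1+f$ and $p_n=e_n$ for $n\ge2$ exhibits $p=\sum_n p_n$ as a sum of mutually orthogonal projections each equivalent to $p$.

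Finally, for $(b)$ I would consider the family of central projections $c$ with $cp$ finite. Using centrality one checks, much as in $(a)$, that this family is closed under finite joins---here one needs that an orthogonal sum of two finite projections with orthogonal central supports is again finite---and under increasing suprema, since for $c_\alpha\nearrow z$ central the relation $c_\alpha q\sim c_\alpha p$ together with finiteness of $c_\alpha p$ forces $zq=zp$ for any $q\le zp$ with $q\sim zp$. Hence the family has a largest element $z_0$ with $z_0p$ finite, and maximality forces $z'p$ infinite for every nonzero central $z'\le1-z_0$; that is, $(1-z_0)p$ is properly infinite, so $z:=1-z_0$ is the required central projection. Beyond the halving lemma, the delicate point here is precisely this good behaviour of finiteness under central cut-downs.
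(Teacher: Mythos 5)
Your proposal is correct and is essentially the paper's approach: the paper disposes of $(a)$--$(d)$ by citing \cite[Propositions 6.3.2, 6.3.7, 6.3.3]{KR2} and \cite[Proposition 2.2.4]{Sak71}, and your arguments are precisely the standard proofs behind those citations (orthogonal additivity plus cancellation for $(a)$, the largest central projection with finite cut-down for $(b)$, iterated halving plus Cantor--Bernstein for $(d)$). For $(e)$ --- the only part the paper actually proves --- the two arguments coincide: cut by a central projection $z$ with $zp\ne0$ and show $zp-zq_1\sim zp$; the paper cites \cite[Proposition 4.2]{stra-zsi} for this absorption step, while you derive it directly from countable additivity of equivalence, which is a perfectly adequate substitute.
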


\begin{proof}
$(a)$ This follows for example from  \cite[Proposition 6.3.2]{KR2}.

$(b)$ This is proved in \cite[Proposition 6.3.7]{KR2}.

$(c)$ This follows from \cite[Proposition 6.3.3]{KR2} or from \cite[Proposition V.1.36]{Tak}.

$(d)$ This is proved in \cite[Proposition 2.2.4]{Sak71}.

$(e)$ This fact is part of the folklore of the theory of finite projections. Let us indicate the proof for the sake of completeness.

Set $p=\sum_n q_n$. Assume that $z\in M$ is a central projection with $zp\ne0$. Then $(zq_n)$ is an orthogonal sequence of mutually equivalent projections with sum $zp$.  By \cite[Proposition 4.2]{stra-zsi} we deduce that $zp-zq_1\sim zp$, hence $zp$ is infinite.  Thus $p$ is properly infinite.
\end{proof}

The notion of equivalence of projections cannot be generalized to JBW$^*$-triples as there is no notion of projection. It cannot be generalized to JBW$^*$-algebras either as there is no easy relationship between projections and tripotents. Note that the notions of final and initial projections strongly use the associative structure of the von Neumann algebra. However, the notion of finiteness can be naturally carried over. As we reminded in the introduction, a von Neumann algebra $W$ is finite if and only if all the extreme points of its closed unit ball are unitary (see \cite[Theorem 2]{ChodaKijimaNak69}). We can therefore conclude that a projection $p$ in $W$ is finite if and only if each extreme point of the closed unit ball of $pWp$ is a unitary. This is the starting point for our definition of finite tripotents. 

Let $M$ be a JBW$^*$-triple and $e\in M$ be a tripotent. We say that $e$ is 
\begin{itemize}
    \item {\em finite} if any tripotent $u\in M_2(e)$ which is complete in $M_2(e)$ is already unitary in $M_2(e)$;
    \item {\em infinite} if it is not finite;
    \item {\em properly infinite} if $e\ne 0$ and for each direct summand $I$ of $M$ the tripotent $P_I e$ is infinite whenever it is nonzero.
\end{itemize}
If any tripotent in $M$ is finite, we say that $M$ itself is {\em finite}.
Further, if $M$ is not finite, it is called {\em infinite}. Finally, $M$ is {\em properly infinite} if each nonzero direct summand is infinite.

The following lemma contains some basic facts on finite tripotents.

\begin{lemma}\label{L:finite tripotents} Let $M$ be a JBW$^*$ triple.
\begin{enumerate}[$(a)$]
    \item If $M$ is a von Neumann algebra, the two notion of finiteness coincide for projections;
    \item Let $e\in M$ be a finite tripotent and $u\in M_2(e)$ any tripotent. Then there is a tripotent $v\in M$ such that $u\le v$ and $v\sim_2 e$;
    \item If $e\in M$ is a finite tripotent, then any tripotent $u\in M_2(e)$ is also finite in M;
    \item Let $M$ be a finite JBW$^*$-algebra and $u\in M$ any tripotent. Then there is a unitary element $v\in M$ such that $u\le v$.
    \item Any abelian tripotent is finite.
   \end{enumerate}
\end{lemma}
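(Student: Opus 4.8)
The plan is to treat the five items together, reducing (b)--(e) to a single extension principle and to the geometric description of finiteness recalled in the introduction.

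For (a) and (e) I would simply match the definitions with known facts. For a projection $p$ in a von Neumann algebra $M$ one has $M_2(p)=pMp$, realized as a JB$^*$-algebra with unit $p$; under this identification the complete tripotents of $M_2(p)$ are exactly the extreme points of the unit ball of $pMp$, and the unitary tripotents of $M_2(p)$ are exactly the unitaries of $pMp$. Hence ``$p$ is a finite tripotent'' says precisely that every extreme point of the unit ball of $pMp$ is a unitary, which by the Choda--Kijima--Nakagami characterization (\cite{ChodaKijimaNak69}) is equivalent to $p$ being a finite projection; this is (a). For (e), if $e$ is abelian then $M_2(e)$ is a commutative von Neumann algebra with unit $e$, whose unit-ball extreme points are its unitaries, so every complete tripotent of $M_2(e)$ is unitary and $e$ is finite (equivalently, apply (a) to the finite unit of $M_2(e)$).

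The core tool for (b) and (d) is that every tripotent is dominated by a complete one. Given a finite $e$ and a tripotent $u\in M_2(e)$, I work in the JBW$^*$-triple $N:=M_2(e)$ and in its weak$^*$-closed subtriple $N_0(u)$. Its unit ball is weak$^*$-compact and convex, so by Krein--Milman it has an extreme point $w$, i.e. a complete tripotent of $N_0(u)$, which is orthogonal to $u$. Then $v:=u+w$ satisfies $u\le v$, and a short Peirce computation ($N_0(v)=N_0(u)\cap N_0(w)=(N_0(u))_0(w)=\{0\}$) shows $v$ is complete in $N$. Since $e$ is finite, $v$ is unitary in $N=M_2(e)$, so $(M_2(e))_2(v)=M_2(e)$, giving $M_2(e)\subseteq M_2(v)$; combined with $v\in M_2(e)$ (hence $M_2(v)\subseteq M_2(e)$), Proposition~\ref{P:M2 equality} yields $M_2(v)=M_2(e)$, i.e. $v\sim_2 e$, proving (b). Statement (d) is the case $e=\mathbf 1$, since a finite JBW$^*$-algebra has finite unit and a unitary tripotent is exactly a unitary element.

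For (c) I would first note that finiteness of $u$ is intrinsic to $M_2(u)=(M_2(e))_2(u)$, so it suffices to show that in a JBW$^*$-algebra whose unit is finite every tripotent is finite; applying (b) to replace $u$ by a unitary $v\sim_2 e$ with $u\le v$, Proposition~\ref{P:order char}$(vii)$ even lets me assume $u$ is a \emph{projection} in the finite JBW$^*$-algebra $N=M_2(e)$. Thus (c) reduces to the assertion that the corner $N_2(u)$ of a finite JBW$^*$-algebra is again finite, and this is the main obstacle. The naive extension of a complete tripotent $w$ of $N_2(u)$ by a complete tripotent of $N_0(u)$ fails, because the resulting block-diagonal tripotent leaves the Peirce-one space $N_1(u)$ uncovered and need not be complete in $N$, so finiteness of $N$ cannot be invoked directly; equivalently, the classical von Neumann proof rests on Murray--von Neumann equivalence of projections, which has no literal tripotent counterpart. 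I would therefore resolve (c) through the Horn--Neher representation~\eqref{eq:representation of JBW* triples}, reducing $N$ to its concrete summands, where the claim becomes stability of finiteness under passing to corners: via Lemma~\ref{L:finite projections basic facts}$(a)$ in the von Neumann summands $H(W,\alpha)$ and $pV$, and via finite-dimensionality in the spin and exceptional summands. This last reduction, controlling the Peirce-one part, is the technical heart and the step I expect to require the most care.
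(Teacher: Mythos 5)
Parts $(a)$, $(b)$, $(d)$ and $(e)$ of your proposal are correct and take essentially the paper's route: $(a)$ and $(e)$ by matching the definition against Kadison's theorem and the Choda--Kijima--Nakagami characterization, $(b)$ by adjoining to $u$ a complete tripotent $w$ of $(M_2(e))_0(u)$ and invoking finiteness of $e$, and $(d)$ as the special case $e=1$ of $(b)$.

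The genuine gap is in $(c)$, and it stems from a false claim. You assert that extending a complete tripotent $r$ of $N_2(u)$ by a complete tripotent of $N_0(u)$ ``fails'' because the Peirce-one space $N_1(u)$ is left uncovered. But in the situation your own reduction produces --- $u$ a \emph{projection} in the unital JBW$^*$-algebra $N$ whose unit is finite --- the natural choice of extension is the complementary projection $1-u$, and for it there is no Peirce-one leakage at all: since $L(1,1)=L(u,u)+L(1-u,1-u)$, every nonzero $x\in N_1(u)$ satisfies $\J{1-u}{1-u}x=\tfrac12x\ne0$, so $N_0(1-u)=N_2(u)$. Consequently, for $r$ complete in $N_2(u)$, the Peirce calculus for the orthogonal tripotents $r$ and $1-u$ gives
$$N_0\bigl(r+(1-u)\bigr)\subseteq N_0(r)\cap N_0(1-u)=N_0(r)\cap N_2(u)=\{0\},$$
so $r+(1-u)$ is complete in $N$, hence unitary by finiteness of the unit; restricting the identity $L\bigl(r+(1-u),r+(1-u)\bigr)=\operatorname{id}$ to $N_2(u)$, where $L(1-u,1-u)$ vanishes and the cross terms are zero by orthogonality, yields $\J rrx=x$ for all $x\in N_2(u)$, i.e., $r$ is unitary in $N_2(u)$. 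This short algebraic argument is exactly the paper's proof of $(c)$. Your proposed detour through the Horn--Neher representation is, by contrast, incomplete precisely at the step you yourself flag as the technical heart, and it cannot be repaired by citation: $H(W,\alpha)$ is not a von Neumann algebra, so Lemma~\ref{L:finite projections basic facts}$(a)$ does not apply to it, and the paper's finiteness results for that summand and for $A\overline{\otimes}B(H)_s$ (Propositions~\ref{P:H(W,alpha) is finite} and~\ref{P:AotimesB(H)s is finite}) are deduced from Proposition~\ref{P:le0=le2}, whose proof invokes Lemma~\ref{L:finite tripotents}$(c)$ --- so using them here would be circular.
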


\begin{proof}
$(a)$ As we commented in the introduction (see page \pageref{eq geometric characterization of finite projections}) this statement is a consequence of \cite[Theorem 2]{ChodaKijimaNak69} and the fact that the extreme points of the closed unit ball of a C$^*$-algebra are precisely the complete tripotents \cite{KadisonAnn51}.

$(b)$ If $u\sim_2 e$, we can take $v=u$. So, assume $u<_2 e$. Since $u$ is not unitary in $M_2(e)$, it is not complete, so $M_0(u)\cap M_2(e)$ is a nontrivial JBW$^*$-subtriple of $M$. Let $w$ be any complete tripotent in this subtriple. Then $v=u+w$ is a tripotent complete in $M_2(e)$, thus unitary in $M_2(e)$ (because $e$ is finite), i.e., $v\sim_2 e$.

$(c)$ Assume that $e$ is a finite tripotent and $u\in M_2(e)$ is a tripotent. Let $v$ be the tripotent provided by $(b)$. Since $M_2(v)=M_2(e)$, $v$ is finite. Hence, up to replacing $e$ by $v$, we may assume that $u\le e$.

Take a complete tripotent $r\in M_2(u)$. Since $u\perp e-u$ and $r\le_2 u$, by Proposition~\ref{P:M2 inclusion} we see that
$$e-u\in M_0(u)\subset M_0(r),$$
hence $e-u\perp r$, thus $e-u+r$ is a tripotent.
Moreover,
$$M_2(e)\cap M_0(e-u+r)\subset M_2(e)\cap M_0(e-u)\cap M_0(r)=M_2(u)\cap M_0(r)=\{0\}$$
as $r$ is complete in $M_2(u)$. It follows that $e-u+r$ is complete in $M_2(e)$.
Since $e$ is finite, $e-u+r$ is even unitary in $M_2(e)$. Hence for any $x\in M_2(u)$
we have
$$x=\J{e-u+r}{e-u+r}x=\J{e-u}{e-u}x+\J rrx=\J rrx.$$
Therefore $r$ is unitary in $M_2(u)$, which completes the proof.

$(d)$ This follows from $(b)$ applied to $e=1$.

$(e)$ If $e$ is an abelian tripotent in a JBW$^*$-triple $M$, then $M_2(e)$ is triple-isomorphic to an abelian von Neumann algebra,
hence $e$ is finite by $(a)$.
\end{proof}

We recall that JB$^*$-triples admitting a unitary tripotent are in one-to-one correspondence with unital JB$^*$-algebras (cf. \cite[Theorem 4.1.55]{Cabrera-Rodriguez-vol1}). Our next result shows how the preorders $\leq_2$ and $\leq_0$ can be also employed to characterize finite JBW$^*$-algebras among JBW$^*$-triples.

\begin{prop}\label{P:le0=le2}
Let $M$ be a JBW$^*$-triple. The following assertions are equivalent.
\begin{enumerate}[$(i)$]
    \item The preorders $\le_2$ and $\le_0$ coincide for tripotents in $M$;
    \item Any complete tripotent in $M$ is unitary;
    \item $M$ is (triple-isomorphic to) a finite JBW$^*$-algebra.
\end{enumerate}
\end{prop}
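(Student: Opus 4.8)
The plan is to prove the two equivalences $(i)\Leftrightarrow(ii)$ and $(ii)\Leftrightarrow(iii)$ separately, since together they give the full cycle. The recurring tool will be that hypothesis $(ii)$ lets us promote \emph{any} complete tripotent to a \emph{unitary} one; combined with the Peirce calculus for a pair of orthogonal tripotents $a\perp b$ whose sum $a+b$ is unitary (so that $M_1(a+b)$ and $M_0(a+b)$, and hence the cross pieces $M_1(a)\cap M_0(b)$ and $M_0(a)\cap M_0(b)$, all vanish), this kills the Peirce spaces that obstruct the argument. For $(i)\Rightarrow(ii)$ I would take a complete tripotent $e$ and observe that every complete tripotent $c$ satisfies $M_0(e)=\{0\}\subseteq M_0(c)$, i.e. $c\le_0 e$, whence $c\le_2 e$ by $(i)$ and so $c\in M_2(e)$. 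Since $M_2(e)$ is weak$^*$-closed and, as recalled in the introduction, the extreme points of the unit ball of $M$ are exactly the complete tripotents, Krein--Milman forces $M=M_2(e)$, so $e$ is unitary.

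For $(ii)\Rightarrow(i)$, recall that $\le_2$ always implies $\le_0$ by Proposition~\ref{P:impl le20}, so it suffices to prove $u\le_0 e\Rightarrow u\le_2 e$. If $M_0(e)=\{0\}$, then $e$ is complete, hence unitary by $(ii)$, and $M_2(u)\subseteq M=M_2(e)$. Otherwise I would pick a complete tripotent $f$ of the JBW$^*$-subtriple $M_0(e)$; then $f\perp e$ and $e+f$ is complete, hence unitary by $(ii)$. Regarding $e+f$ as the unit, $f$ is a projection with complementary corner $M_0(f)=M_2((e+f)-f)=M_2(e)$. Now $u\le_0 e$ means $M_0(e)\subseteq M_0(u)$, so $f\in M_0(u)$, i.e. $f\perp u$ and $u\in M_0(f)=M_2(e)$; that is $u\le_2 e$.

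The implication $(iii)\Rightarrow(ii)$ is immediate from Lemma~\ref{L:finite tripotents}$(d)$: a complete tripotent $c$ admits a unitary $v$ with $c\le v$, whence $v-c\in M_0(c)=\{0\}$ and $c=v$ is unitary. The substantive direction is $(ii)\Rightarrow(iii)$. First, $(ii)$ makes $M$ a unital JBW$^*$-algebra, since a complete tripotent exists (Krein--Milman) and is unitary; it then remains to check that every tripotent $e$ is finite. Given $u\in M_2(e)$ complete in $M_2(e)$, i.e. $M_2(e)\cap M_0(u)=\{0\}$, I would choose a complete tripotent $w$ of $M_0(e)$. Then $e+w$ is complete, hence unitary by $(ii)$, which identifies $M_0(w)=M_2(e)$; consequently $M_0(u+w)=M_0(u)\cap M_0(w)=M_0(u)\cap M_2(e)=\{0\}$, so $u+w$ is complete as well and, by $(ii)$, unitary. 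Unitarity of $u+w$ forces the cross Peirce piece $M_1(u)\cap M_0(w)$ to vanish, i.e. $M_1(u)\cap M_2(e)=\{0\}$; together with $M_0(u)\cap M_2(e)=\{0\}$ this gives $M_2(e)\subseteq M_2(u)$, so $u$ is unitary in $M_2(e)$, as required.

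The main obstacle is precisely this last step, namely controlling the Peirce-$1$ intersection $M_2(e)\cap M_1(u)$, which is invisible to completeness considerations alone. The idea that unlocks it is to extend \emph{both} $e$ and $u$ by the \emph{same} complete tripotent $w$ of $M_0(e)$: then $e+w$ and $u+w$ share the complementary corner $M_0(w)=M_2(e)$, and the Peirce arithmetic for orthogonal tripotents whose sum is unitary annihilates $M_2(e)\cap M_0(u)$ and $M_2(e)\cap M_1(u)$ simultaneously. Throughout I would lean on Lemma~\ref{L:OG trip} and the Peirce calculus to justify the orthogonality relations, the decomposition $M_0(a+b)=M_0(a)\cap M_0(b)$ for $a\perp b$, and the identity $M_0(w)=M_2(e)$.
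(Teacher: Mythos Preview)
Your proof is correct. The core device---extending a tripotent by a complete tripotent of its Peirce-$0$ subspace so that the sum becomes unitary, and then using that for orthogonal $a,b$ with $a+b$ unitary one has $M_0(a)=M_2(b)$---is exactly the paper's mechanism in its $(iii)\Rightarrow(i)$ (where $\tilde e-e$ plays the role of your $f$). The differences are organisational and in $(i)\Rightarrow(ii)$: the paper argues the cycle $(i)\Rightarrow(ii)\Rightarrow(iii)\Rightarrow(i)$ and gets $(ii)\Rightarrow(iii)$ in one line from Lemma~\ref{L:finite tripotents}$(c)$ (your direct argument is essentially an unpacking of that lemma in the case at hand), while for $(i)\Rightarrow(ii)$ the paper produces an explicit counterexample (a complete non-unitary $e$, a tripotent $v\in M_1(e)$, and a complete $u\ge v$, giving $u\sim_0 e$ but $u\not\le_2 e$), whereas your Krein--Milman argument is a clean global alternative that avoids constructing anything. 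One small shortcut in your $(ii)\Rightarrow(iii)$: once $u+w$ is unitary, $u$ and $w$ are complementary projections relative to the unit $u+w$, so $M_2(u)=M_0(w)=M_2(e)$ immediately---no need to eliminate $M_1(u)\cap M_2(e)$ and $M_0(u)\cap M_2(e)$ separately.
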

\begin{proof}
$(i)\Rightarrow(ii)$ If $(ii)$ is not satisfied, then $M$ admits a complete non-unitary tripotent $e$. Fix a tripotent $v\in M_1(e)$ and let $u$ be a complete tripotent in $M$ with $u\ge v$ (cf. \cite[Lemma 3.12$(1)$]{horn1987ideal}). Then $u\sim_0 e$ (as both tripotents are complete) but $u\not\le_2 e$ (as $v\in M_2(u)\setminus M_2(e)$).

$(ii)\Rightarrow(iii)$ It follows from $(ii)$ that there is a unitary tripotent $e\in M$. Hence $M$ is triple-isomorphic to the JBW$^*$-algebra $M_2(e)$. Since $e$ is finite by definition, Lemma~\ref{L:finite tripotents}$(c)$ shows that any tripotent in $M$ is finite. This completes the argument.

$(iii)\Rightarrow (i)$ Assume that $u,e\in M$ are tripotents. Fix unitary tripotents $\tilde u\ge u$ and $\tilde e\ge e$. They exist due to Lemma~\ref{L:finite tripotents}$(d)$.

Observe that $\tilde e-e$ is a tripotent with 
$$M_2(\tilde e-e)=M_0(e)\mbox{ and }M_0(\tilde e-e)=M_2(e).$$
Indeed, since $\tilde e$ is unitary, we can assume without loss of generality that $\tilde e=1$ and $e$ is a projection (up to replacing operations $\circ$ and $*$ by $\circ_e$ and $*_e$).
In this case $\J eex=e\circ x$ for $x\in M$. Thus
$$ x\in M_2(1-e)\Leftrightarrow x=(1-e)\circ x \Leftrightarrow e\circ x=0 \Leftrightarrow x\in M_0(e)$$
and similarly for the second equality.

Analogous equalities hold for $u$ and $\tilde u-u$. Thus
\begin{multline*}
u\le_0 e\Leftrightarrow M_0(e)\subset M_0(u) \Leftrightarrow 
M_2(\tilde e-e)\subset M_2(\tilde u-u) \Leftrightarrow \tilde e-e\le_2\tilde u-u\\ \Rightarrow M_0(\tilde u-u)\subset M_0(\tilde e-e)
\Leftrightarrow M_2(u)\subset M_2(e)\Leftrightarrow u\le_2 e,
\end{multline*}
where we used definitions and Proposition~\ref{P:M2 inclusion}. It follows that $u\le_0 e\Rightarrow u\le_2 e$.
\end{proof}

\begin{prop}\label{P:finitedim is finite}
Any finite-dimensional JBW$^*$-triple is finite.
\end{prop}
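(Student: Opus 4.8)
The plan is to unwind the definition of finiteness and reduce everything to one geometric statement about finite-dimensional JB$^*$-algebras, which I then settle by a continuity argument on the (connected) set of extreme points.

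First I would fix a tripotent $e$ in a finite-dimensional JBW$^*$-triple $M$ and put $N:=M_2(e)$. By the very definition of finiteness, $e$ is finite exactly when every tripotent of $N$ that is complete in $N$ is unitary in $N$; equivalently (Proposition~\ref{P:le0=le2}, applied to $N$, which has the unitary element $e$), when the finite-dimensional JB$^*$-algebra $N$ is finite. Since $e$ ranges over all tripotents of $M$, it therefore suffices to prove the single assertion: \emph{every complete tripotent of a finite-dimensional JB$^*$-algebra is unitary.}

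So let $N$ be a finite-dimensional JB$^*$-algebra with unit $e$, and let $\mathcal{S}$ denote its set of complete tripotents, which (as recalled in the introduction) is exactly the set of extreme points of the closed unit ball of $N$; note $e\in\mathcal{S}$ and $N_1(e)=\{0\}$. The crux is that the map $u\mapsto\dim N_1(u)$ is locally constant on $\mathcal{S}$. Indeed, $P_1(u)=4\bigl(L(u,u)-L(u,u)^2\bigr)$ is an idempotent operator on the finite-dimensional space $N$ that depends norm-continuously on $u$, and the rank of a norm-continuously varying idempotent is locally constant; and $\dim N_1(u)=\operatorname{rank}P_1(u)$. Granting that $\mathcal{S}$ is connected, it follows that $\dim N_1(u)=\dim N_1(e)=0$ for every $u\in\mathcal{S}$. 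Together with $N_0(u)=\{0\}$ (completeness) this yields $N=N_2(u)$, i.e.\ $u$ is unitary, completing the argument.

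The main obstacle is thus the connectedness of $\mathcal{S}$. I would obtain it by writing $N$ as a finite $\ell_\infty$-sum $\bigoplus_i N_i$ of simple finite-dimensional JB$^*$-algebras; then the unit ball of $N$ is the product of the unit balls of the $N_i$, so $\mathcal{S}=\prod_i\mathcal{S}_i$ and it is enough to see that each $\mathcal{S}_i$ is connected. For an irreducible finite-dimensional factor this is classical: the identity component of the group of triple automorphisms acts transitively on the complete tripotents (cf.\ \cite{loos1977bounded,kaup1983riemann}), so $\mathcal{S}_i$ is a connected homogeneous space. A more computational alternative would discard connectedness and instead verify ``complete $\Rightarrow$ unitary'' separately on each type of finite-dimensional Cartan factor via the classification, but the homogeneity argument is uniform and shorter.
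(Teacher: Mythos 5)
Your proposal is correct, but it takes a genuinely different route from the paper's. The paper argues algebraically, by contradiction: given a complete non-unitary tripotent $u$ in a finite-dimensional JBW$^*$-algebra, it forms the unital Jordan-$*$ subalgebra $N$ generated by $u$ and the unit, invokes a representation result (\cite[Lemma 6.3]{hamhalter2019mwnc}) giving a unital Jordan-$*$ monomorphism $\psi:N\to B(H)$ with $\psi(u)^*\psi(u)=1$, observes that the C$^*$-algebra generated by $\psi(N)$ is finite-dimensional (being spanned by the words $\psi(u)^k(\psi(u)^*)^l$), and uses that an isometry in a finite-dimensional C$^*$-algebra is unitary; pulling back gives $\J uu1=1$, i.e.\ $1\le_2 u$, so $u$ is unitary --- a contradiction. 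This is entirely classification-free and topology-free. Your argument is instead geometric: the local constancy of $u\mapsto\operatorname{rank}P_1(u)$ on tripotents (correct in finite dimensions, since the rank of an idempotent equals its trace) combined with connectedness of the set of complete tripotents, which you obtain from the decomposition into Cartan factors plus transitivity of the \emph{identity component} of the automorphism group on complete tripotents of each factor. That last fact is indeed classical (the complete tripotents of an irreducible finite-dimensional factor form the Shilov boundary of the associated bounded symmetric domain, a single orbit of the connected linear automorphism group; equivalently it follows from conjugacy of frames under inner automorphisms), but your citation is the weak link and should be pinned down: the transitivity statements that are easiest to quote --- e.g.\ \cite[Proposition 5.8]{kaup1997real} or \cite[\S 8]{loos1977bounded}, which this paper itself uses --- concern the full, possibly disconnected, automorphism group, and transitivity of the full group alone would not yield connectedness of the orbit. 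In summary: the paper's proof buys independence from the structure theory of finite-dimensional JB$^*$-triples at the cost of one cited representation lemma; yours buys a short, conceptually appealing argument at the cost of importing the classification and the Lie-theoretic homogeneity of the Shilov boundary.
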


\begin{proof}
Let $M$ be a finite-dimensional JBW$^*$-triple. If $M$ is not finite, then there is a tripotent $e\in M$ which is not finite.
Then $M_2(e)$ is not finite either. Hence, without loss of generality assume that $M$ is a finite-dimensional JBW$^*$-algebra.
If it is not finite, there is a complete non-unitary tripotent $u\in M$. Let $N$ denote the closed unital Jordan-$*$ subalgebra of $M$ generated by $u$ and the unit. By \cite[Lemma 6.3]{hamhalter2019mwnc} there is a complex Hilbert space $H$ and a unital Jordan-$*$ monomorphism $\psi:N\to B(H)$ with $\psi(u)^*\psi(u)=1$. 
Let $A$ denote the $C^*$-algebra generated by $\psi(N)$ (equivalently, generated by $\psi(u)$). Let us show that $A$ is finite-dimensional. Since $\psi(u)^*\psi(u)=1$, 
$$A=\overline{\span}\{\psi(u)^k(\psi(u)^*)^l\setsep k,l\in\en\cup\{0\}\}=\overline{\span}\{\psi(x)\psi(y)\setsep x,y\in N\}.$$
The second equality follows from the fact that $\psi(u)^k=\psi(u^k)\in\psi(N)$ and also $(\psi(u)^*)^l=\psi((u^*)^l)\in \psi(N)$. Since $N$ has finite dimension, it easily follows that $A$
has finite dimension as well. We deduce that $\psi(u)\psi(u)^*=1$ as well, hence $u\circ u^*=1$, i.e., $\J uu1=1$, hence $1\le_2 u$. It follows that $1\sim_2 u$, so $u$ is unitary.
This contradiction completes the proof.
\end{proof}

\begin{prop}\label{P:Linftyfindim is finite}
Let $E$ be a finite-dimensional JBW$^*$ triple and let $\mu$ be a probability measure. Then $L^\infty(\mu,E)$ is a finite JBW$^*$-triple. 
\end{prop}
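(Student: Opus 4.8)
The plan is to reduce the finiteness of $M=L^\infty(\mu,E)$ to that of the fibre $E$, which is finite by Proposition~\ref{P:finitedim is finite}, via a pointwise analysis whose only delicate ingredient is a measurable selection. Since $E$ is finite-dimensional, hence reflexive, Proposition~\ref{P:Linfty refelexive} applies: $M$ is a JBW$^*$-triple, its tripotents are exactly the essentially bounded measurable fields $\omega\mapsto e(\omega)$ with $e(\omega)$ a tripotent in $E$ for a.a. $\omega$, and the relation $\sim_2$ is tested $\mu$-a.e. pointwise. Fix a tripotent $e\in M$; I must show it is finite. Let $u\in M_2(e)$ be complete in $M_2(e)$; the goal is $u\sim_2 e$, which is exactly the statement that $u$ is unitary in $M_2(e)$. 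Because $L(e,e)$ acts pointwise, $M_2(e)=\{x\in M: x(\omega)\in E_2(e(\omega))\text{ a.e.}\}$ and $M_0(u)=\{x\in M: x(\omega)\in E_0(u(\omega))\text{ a.e.}\}$; in particular $u\in M_2(e)$ means $u(\omega)\le_2 e(\omega)$ a.e., and completeness of $u$ in $M_2(e)$ means $M_2(e)\cap M_0(u)=\{0\}$.

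The first real step is to deduce from $M_2(e)\cap M_0(u)=\{0\}$ that $u(\omega)$ is complete in $E_2(e(\omega))$ for a.a. $\omega$, i.e. that $N(\omega):=E_2(e(\omega))\cap E_0(u(\omega))=\{0\}$ a.e. I would argue by contraposition: suppose $S:=\{\omega: N(\omega)\ne\{0\}\}$ has positive measure. The Peirce projections $P_2(e(\omega))$ and $P_0(u(\omega))$ are fixed polynomials in the measurably varying operators $L(e(\omega),e(\omega))$ and $L(u(\omega),u(\omega))$ on the finite-dimensional space $E$, so $\omega\mapsto N(\omega)=\ker(I-P_2(e(\omega)))\cap\ker(I-P_0(u(\omega)))$ is a measurable field of subspaces of $E$; fixing an auxiliary inner product on $E$ one may write $N(\omega)=\ker G(\omega)$ for a measurable positive operator field $G(\omega)$, whence $S$ is measurable. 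A standard measurable selection theorem then yields a measurable map $x\colon S\to E$ with $x(\omega)\in N(\omega)$ and $\|x(\omega)\|=1$. Extending $x$ by $0$ off $S$ produces a nonzero element $x\in M$ lying in $M_2(e)\cap M_0(u)$, contradicting completeness of $u$ in $M_2(e)$.

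It remains to conclude. By Proposition~\ref{P:finitedim is finite} the triple $E$ is finite, so every $e(\omega)$ is a finite tripotent of $E$; hence the tripotent $u(\omega)$, being complete in $E_2(e(\omega))$, is unitary there. Since $u(\omega)\le_2 e(\omega)$ gives $E_2(u(\omega))\subseteq E_2(e(\omega))$ (Proposition~\ref{P:M2 inclusion}), being unitary in $E_2(e(\omega))$ means $E_2(u(\omega))=E_2(e(\omega))$, i.e. $u(\omega)\sim_2 e(\omega)$ by Proposition~\ref{P:M2 equality}, for a.a. $\omega$. The pointwise description of $\sim_2$ in Proposition~\ref{P:Linfty refelexive}(3) now gives $u\sim_2 e$, so $u$ is unitary in $M_2(e)$. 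As $e$ and the complete tripotent $u\in M_2(e)$ were arbitrary, every tripotent of $M$ is finite and $M$ is finite.

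I expect the measurable selection in the middle paragraph to be the main obstacle: one must verify that the defect subspaces $N(\omega)$ genuinely form a measurable field, so that a nonzero essentially bounded section can be extracted precisely when the pointwise defect is positive on a set of positive measure. The remaining steps are routine once one combines the pointwise Peirce calculus with the already-established finiteness of the finite-dimensional fibre $E$.
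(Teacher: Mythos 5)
Your proof is correct, and its overall skeleton coincides with the paper's: pass to the fibres via Proposition~\ref{P:Linfty refelexive} and then invoke Proposition~\ref{P:finitedim is finite} fibrewise. The one step you implement differently is the transfer of completeness to the fibres, and there the paper's argument is substantially lighter than yours. Instead of arguing by contraposition with a measurable selection, the paper observes that completeness of $u$ in $M_2(e)$, i.e. $M_2(e)\cap M_0(u)=\{0\}$, is equivalent to the single operator identity $P_0(u)P_2(e)=0$: since $u\le_2 e$, the operator $L(u,u)$ preserves each Peirce subspace of $e$, so all Peirce projections of $u$ and $e$ commute, and hence $P_0(u)P_2(e)$ is an idempotent whose range is exactly $M_2(e)\cap M_0(u)$. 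Because Peirce projections act pointwise, this identity passes instantly to almost every fibre --- evaluating on the finitely many constant sections determined by a basis of $E$ gives $P_0(u(\omega))P_2(e(\omega))=0$ $\mu$-a.e. --- which is precisely pointwise completeness. This renders the measurable-selection machinery, which you correctly flagged as the main technical burden of your argument, unnecessary. Incidentally, even within your contrapositive scheme the selection theorem can be avoided by the same observation: since $N(\omega)=\ran\bigl(P_0(u(\omega))P_2(e(\omega))\bigr)$, if $N(\omega)\ne\{0\}$ on a set $S$ of positive measure, then for some fixed basis vector $x_i$ of $E$ the section $\omega\mapsto P_0(u(\omega))P_2(e(\omega))x_i$ is nonzero on a subset of $S$ of positive measure, and multiplying it by the characteristic function of that subset already yields a nonzero element of $M_2(e)\cap M_0(u)$. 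So your route works, but the fibrewise transfer can be done by pure Peirce calculus with no measure-theoretic selection at all.
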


\begin{proof}
$M=L^\infty(\mu,E)$ is a JBW$^*$-triple by Proposition~\ref{P:Linfty refelexive}.
Let us prove that $L^\infty(\mu,E)$ is finite. We will use the characterization of the preorders from Proposition~\ref{P:Linfty refelexive}.

Assume that $\uu,\vv\in L^\infty(\mu,E)$ are two tripotents such that $\uu$ is a complete tripotent in $M_2(\vv)$. In particular, $\uu\in M_2(\vv)$, hence $\uu(\omega)\in E_2(\vv(\omega))$ $\mu$-a.e. The completeness means that $M_2(\vv)\cap M_0(\uu)=0$, hence  $P_0(\uu)P_2(\vv)=0$. It follows that
$$P_0(\uu(\omega))P_2(\vv(\omega))=0 \qquad\mu\mbox{-a.e.},$$
hence $\uu(\omega)$ is a complete tripotent in $E_2(\vv(\omega))$ $\mu$-a.e., hence by Proposition~\ref{P:finitedim is finite}  $\uu(\omega)$ is unitary in $E_2(\vv(\omega))$ $\mu$-a.e. Thus $\uu$ is unitary in $M_2(\vv)$, which completes the proof.
\end{proof}

\begin{prop}\label{P:finiteness in direct sum}
Let $(M_j)_{j\in J}$ be a family of JBW$^*$-triples. Set
$\displaystyle M=\bigoplus_{j\in J}^{\ell_\infty} M_j.$ 
Then a tripotent $(e_j)_{j\in J}$ in $M$ is finite if and only if $e_j$ is a finite tripotent in $M_j$ for each $j\in J$.
\end{prop}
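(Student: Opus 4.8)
The plan is to reduce everything to the coordinatewise behaviour of the Peirce decomposition in an $\ell_\infty$-sum, which was already recorded in Proposition~\ref{P:preorders in direct sum}. The crucial observation is that, writing $e=(e_j)_{j\in J}$, the triple product on $M$ is coordinatewise, so $L(e,e)$ acts as the diagonal operator $(x_j)_j\mapsto(L(e_j,e_j)x_j)_j$. Consequently its eigenspaces split, and
$$M_2(e)=\bigoplus_{j\in J}^{\ell_\infty}(M_j)_2(e_j),$$
again a JBW$^*$-triple which is an $\ell_\infty$-sum. First I would record this identity together with its two consequences: a tripotent $u=(u_j)\in M_2(e)$ is \emph{complete} in $M_2(e)$ precisely when each $u_j$ is complete in $(M_j)_2(e_j)$ (since the Peirce-$0$ subspace of $u$ inside $M_2(e)$ decomposes as $\bigoplus^{\ell_\infty}((M_j)_2(e_j))_0(u_j)$, which vanishes iff every summand vanishes), and $u$ is \emph{unitary} in $M_2(e)$ precisely when each $u_j$ is unitary in $(M_j)_2(e_j)$ (the same argument applied to the Peirce-$1$ and Peirce-$0$ subspaces).

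For the \emph{backward} implication, I would assume every $e_j$ is finite in $M_j$ and take $u=(u_j)\in M_2(e)$ complete in $M_2(e)$. By the localization above each $u_j$ is complete in $(M_j)_2(e_j)$; finiteness of $e_j$ then forces $u_j$ to be unitary in $(M_j)_2(e_j)$, and localization again gives that $u$ is unitary in $M_2(e)$. Hence $e$ is finite.

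For the \emph{forward} implication, assume $e$ is finite and fix an index $j_0$. Given any tripotent $u_{j_0}$ that is complete in $(M_{j_0})_2(e_{j_0})$, I would assemble the global tripotent $u=(u_j)_{j\in J}$ by putting $u_j=e_j$ for $j\ne j_0$. Here I use the standard fact recalled in the introduction that each $e_j$ is unitary, hence complete, in $(M_j)_2(e_j)$; thus $u$ lies in $M_2(e)$ and is complete there. Finiteness of $e$ yields that $u$ is unitary in $M_2(e)$, so in particular its $j_0$-coordinate $u_{j_0}$ is unitary in $(M_{j_0})_2(e_{j_0})$. This proves $e_{j_0}$ finite.

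There is essentially no hard analytic step; the only point demanding care is the bookkeeping that completeness and unitarity genuinely localize to the coordinates of the $\ell_\infty$-sum $M_2(e)$, which rests on the coordinatewise splitting of the Peirce projections. One should also note that the family $(u_j)$ built in the forward step is automatically bounded, since every nonzero tripotent has norm one, so it is a legitimate element of the $\ell_\infty$-sum.
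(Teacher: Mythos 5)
Your proof is correct. The backward implication (each $e_j$ finite $\Rightarrow$ $e$ finite) is exactly the paper's argument: a complete tripotent $u=(u_j)$ in $M_2(e)$ has complete coordinates, finiteness of each $e_j$ makes those coordinates unitary, and unitarity reassembles coordinatewise — the paper phrases this via $u_j\sim_2 e_j$ and Proposition~\ref{P:preorders in direct sum}, which is the same bookkeeping as your splitting identity $M_2(e)=\bigoplus^{\ell_\infty}_{j\in J}(M_j)_2(e_j)$. Where you diverge is the forward implication: the paper disposes of it by citing the hereditary-finiteness result Lemma~\ref{L:finite tripotents}$(c)$ (any tripotent lying in the Peirce-2 space of a finite tripotent is itself finite), applied in effect to the coordinate tripotent $(0,\dots,0,e_{j_0},0,\dots,0)\in M_2(e)$, whose Peirce-2 space is $(M_{j_0})_2(e_{j_0})$. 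You instead give a direct completion argument: pad a complete tripotent $u_{j_0}\in (M_{j_0})_2(e_{j_0})$ with the unitaries $e_j$ in the remaining coordinates, check that the assembled $u$ is complete in $M_2(e)$, and read off unitarity of $u_{j_0}$ from unitarity of $u$. Both are sound; the paper's version is a one-line citation given that Lemma~\ref{L:finite tripotents}$(c)$ (whose own proof is genuinely nontrivial) is already in hand, while yours is self-contained, uses only the obvious coordinatewise splitting of the Peirce projections, and makes the proposition logically independent of the hereditary-finiteness lemma — at the modest cost of verifying that unitarity and completeness localize, which you do correctly.
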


\begin{proof}
The `only if part' follows from Lemma~\ref{L:finite tripotents}$(c)$.

To show the `if part' fix a tripotent $(u_j)_{j\in J}$ complete in $M_2((e_j)_{j\in J})$. Then $u_j$ is a complete tripotent in $M_2(e_j)$ for each $j$.
It follows that $u_j\sim_2 e_j$ for each $j$, this $(u_j)_{j\in J}\sim_2 (e_j)_{j\in J}$ (cf. Proposition~\ref{P:preorders in direct sum}).
\end{proof}

\section{Von Neumann algebras and their right ideals}\label{sec:vN and pV}

In this section we will address triples of the form
\begin{equation*}
 \left(\bigoplus_{j\in J}^{\ell_\infty}A_j\overline{\otimes}C_j\right)\oplus^{\ell_\infty}pV,    
\end{equation*}
where the $A_j$'s are abelian von Neumann algebras, the $C_j$'s are Cartan factors of type $1$, $V$ is a continuous von Neumann algebra and $p\in V$ is a projection.

Recall that a {\em Cartan factor of type 1} is the space $B(H,K)$ where $H$ and $K$ are two complex Hilbert spaces (possibly with different dimensions) equipped with the triple product  given by \eqref{eq JC triple product}.

Using this definition we can easily come to the following observation explained for example after Example 9.1 in \cite{hamhalter2019mwnc}.

\begin{obs}
Let $M$ be a JBW$^*$-triple of the above form
 Then $M$ is triple-isomorphic to a JBW$^*$-triple of the form $qW$, where $W$ is a von Neumann algebra and $q\in W$ is a projection.
\end{obs}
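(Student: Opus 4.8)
The plan is to treat each $\ell_\infty$-summand of $M$ separately, identify each with a one-sided corner $q_jW_j$ of a von Neumann algebra, and then glue everything into a single corner by passing to an $\ell_\infty$-direct sum of von Neumann algebras. The summand $pV$ is already literally of the desired shape $qW$ (take $W=V$ and $q=p$), so no work is needed there; the content is in the summands $A_j\overline{\otimes}C_j$ with $C_j=B(H_j,K_j)$ a Cartan factor of type $1$.

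First I would represent such a Cartan factor as a one-sided corner of some $B(L)$. For a projection $e\in B(L)$ one has $eB(L)=\{x\in B(L):ex=x\}$, which is canonically the JB$^*$-triple $B(L,eL)$ (domain $L$, range $eL$), the triple product being inherited from $B(L)$. Hence $B(L,N)$ is realized in this way whenever $\dim N\le\dim L$. To cover the opposite case I would invoke the standard triple-isomorphism between the type $1$ Cartan factors $B(H,K)$ and $B(K,H)$ (for instance the transpose $x\mapsto x^{t}$ with respect to fixed orthonormal bases, for which the identity $\{x,y,z\}^{t}=\{x^{t},y^{t},z^{t}\}$ is a direct check). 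Writing $L_j$ for the larger and $N_j$ for the smaller of $H_j,K_j$, this gives a triple-isomorphism $C_j\cong e_jB(L_j)$ for a suitable projection $e_j\in B(L_j)$ with $\dim e_jL_j=\dim N_j$.

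Next I would tensor with the abelian factor. Since the JBW$^*$-triple $A_j\overline{\otimes}C_j$ may be computed from any representation of $C_j$ as a weak$^*$-closed subtriple of a von Neumann algebra, I would use the representation $C_j\cong e_jB(L_j)\subseteq B(L_j)$ just obtained, that is, take $M_j=B(L_j)$. Putting $W_j=A_j\overline{\otimes}B(L_j)$ and $q_j=1\otimes e_j$, the defining weak$^*$-closure becomes $A_j\overline{\otimes}C_j=\overline{A_j\otimes e_jB(L_j)}^{\,w^*}=\overline{q_j(A_j\otimes B(L_j))}^{\,w^*}$. Because left multiplication by the fixed projection $q_j$ is weak$^*$-continuous and its range $q_jW_j=\{x:q_jx=x\}$ is weak$^*$-closed, this closure equals $q_j\,\overline{A_j\otimes B(L_j)}^{\,w^*}=q_jW_j$. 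Thus $A_j\overline{\otimes}C_j\cong q_jW_j$.

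Finally I would assemble everything: with $W=\left(\bigoplus_{j\in J}^{\ell_\infty}W_j\right)\oplus^{\ell_\infty}V$, a von Neumann algebra, and $q=(q_j)_{j\in J}\oplus p$, a projection in $W$, the componentwise triple product gives $qW=\left(\bigoplus_{j\in J}^{\ell_\infty}q_jW_j\right)\oplus^{\ell_\infty}pV$, which is triple-isomorphic to $M$. I expect the main obstacle to be the middle step: verifying that forming the von Neumann tensor product commutes with passing to the corner (the identity $A_j\overline{\otimes}(e_jB(L_j))=q_j(A_j\overline{\otimes}B(L_j))$), together with handling the orientation of domain versus range through the transpose isomorphism when $\dim K_j>\dim H_j$. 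The reduction to corners and the $\ell_\infty$-assembly are then routine.
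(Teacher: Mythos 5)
Your proof is correct and is essentially the argument the paper relies on (the paper gives no proof of its own, only citing the explanation after Example 9.1 of \cite{hamhalter2019mwnc}): one realizes each type 1 Cartan factor $B(H,K)$ as a one-sided corner $eB(L)$ of $B(L)$, with $L$ the larger of the two Hilbert spaces and the transpose handling the opposite orientation, so that $A_j\overline{\otimes}C_j\cong(1\otimes e_j)\left(A_j\overline{\otimes}B(L_j)\right)$, and then assembles the $\ell_\infty$-sum together with $pV$ into a single corner $qW$. The only extra point you invoke, namely that $A_j\overline{\otimes}C_j$ may be computed from any weak$^*$-closed representation of $C_j$ in a von Neumann algebra, is already presupposed by the paper's definition of this tensor product, so it creates no gap.
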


Let $V$ be a von Neumann algebra, $p\in V$ a projection and $M=pV$. We recall that tripotents in $V$ are exactly partial isometries. If $u\in V$ is a partial isometry, then $p_i(u)=u^*u$ denotes the initial projection and $p_f(u)=uu^*$ is the final projection. In this case the Peirce projections are given by the formulas
$$\begin{gathered}P_2(u)x=p_f(u)xp_i(u),\quad P_1(u)x=(1-p_f(u))xp_i(u)+p_f(u)x(1-p_i(u)),\\ P_0(u)x=(1-p_f(u))x(1-p_i(u))\end{gathered}$$
for $x\in V$. Tripotents in $M$ are partial isometries in $V$ which belong to $M$, i.e., those partial isometries $u\in V$ such that $p_f(u)\le p$. In this case the Peirce projections (on $M$) are given 
by the formulas
$$\begin{gathered}P_2(u)x=p_f(u)xp_i(u),\quad P_1(u)x=(p-p_f(u))xp_i(u)+p_f(u)x(1-p_i(u)),\\ P_0(u)x=(p-p_f(u))x(1-p_i(u))\end{gathered}$$
for $x\in M$.

The next proposition collects few basic facts on tripotents in triples of the form $pV$. We shall also need the following standard notation. 

If $A$ is an algebra (Banach algebra, C$^*$-algebra), we shall write $A^{op}$ for the {\em opposite algebra} obtained from the same elements and linear structure (norm and involution), but with the multiplication performed in the reverse order.

\begin{prop}\label{P:Peirce2 in pV}
Let $M=pV$ where $V$ is a von Neumann algebra and $p\in V$ is a projection. Then the following assertions are true.
\begin{enumerate}[$(a)$]
    \item $M_2(e)=V_2(e)$ for any tripotent $e\in M$.
    \item $M_2(e)$ is triple-isomorphic to a von Neumann algebra for any tripotent $e\in M$.
    \item Any direct summand of $M$ which is triple-isomorphic to a JBW$^*$-algebra is triple-isomorphic to a von Neumann algebra.
    \item $M$ is triple-isomorphic to a JBW$^*$-triple of the form $W_1\oplus^{\ell_\infty} qW_2$, where $W_1$ and $W_2$ are von Neumann algebras, $q\in W_2$ is a projection and $qW_2$ has no nonzero direct summand triple-isomorphic to a JBW$^*$-algebra.
\end{enumerate}
\end{prop}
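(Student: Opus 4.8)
The plan is to prove the four assertions in order, since each builds on the previous one. For part $(a)$, I would use the explicit formulas for the Peirce projections on $M=pV$ recalled just before the proposition. The tripotent $e\in M$ satisfies $p_f(e)\le p$, and comparing the formula $P_2(e)x=p_f(e)xp_i(e)$ on $M$ with the corresponding formula on $V$ shows they agree; since $P_2$ determines the Peirce-$2$ subspace, we get $M_2(e)=V_2(e)$. This is a routine calculation and I expect no difficulty here.

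For part $(b)$, the key point is that $V_2(e)=p_f(e)Vp_i(e)$ carries the structure of a von Neumann algebra via the natural product. Indeed, $e$ is a partial isometry, hence unitary in $V_2(e)$, and equipping $V_2(e)$ with the operations $x\circ_e y$ and $x^{*_e}$ from \eqref{eq circ-star} makes it a unital JB$^*$-algebra; I would verify that these operations agree with the associative $*$-algebra structure $x\cdot y=xe^*y$ and $x^\sharp=ex^*e$ on $p_f(e)Vp_i(e)$, which is isomorphic to the von Neumann algebra $p_f(e)Vp_f(e)$ via $x\mapsto xe^*$. Combined with part $(a)$, this gives the statement for $M_2(e)$. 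Part $(c)$ then follows by combining $(b)$ with Proposition~\ref{P:direct summand}$(b)$: a direct summand $I$ of $M$ triple-isomorphic to a JBW$^*$-algebra is of the form $M_2(u)$ for a suitable tripotent $u$ with $M_1(u)=\{0\}$, and $M_2(u)$ is triple-isomorphic to a von Neumann algebra by $(b)$.

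Part $(d)$ is the substantive statement and the main obstacle. The strategy is to split off the largest direct summand that is triple-isomorphic to a JBW$^*$-algebra. I would consider the family of all direct summands of $M$ that are triple-isomorphic to JBW$^*$-algebras (equivalently, by $(c)$, to von Neumann algebras), and use a maximality or supremum argument: since direct summands of a JBW$^*$-triple correspond to certain central tripotents/structural projections and these form a complete lattice, there is a largest such summand $W_1$. Writing $M=W_1\oplus^{\ell_\infty}(qW_2)$ with $qW_2$ the complementary summand, I must argue that $qW_2$ has no nonzero direct summand triple-isomorphic to a JBW$^*$-algebra, which follows from maximality: any such summand of $qW_2$ would, together with $W_1$, produce a strictly larger algebra-type summand of $M$, contradicting maximality. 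The remaining task is to check that the complementary summand $qW_2$ is again of the form ``projection times von Neumann algebra''; this uses the Observation preceding the proposition together with the fact that a direct summand of $pV$ is itself of the same structural type.

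The hard part will be making the maximality/supremum argument rigorous: one must verify that the supremum of a family of algebra-type direct summands is again a direct summand of algebra type, which requires control of how the structural projections $P_I$ behave under suprema and an application of the characterization in Proposition~\ref{P:direct summand}$(b)$. I expect the cleanest route is to identify algebra-type direct summands with central projections $z$ in $V$ such that $zpV$ carries a compatible unital structure, reduce to a statement about central projections of $V$, and take $z_0$ to be the supremum of all such $z$; the technical content is then confined to verifying that the defining conditions pass to the supremum, while everything else is bookkeeping via the Peirce formulas and parts $(a)$--$(c)$.
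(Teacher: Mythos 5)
Parts $(a)$--$(c)$ of your plan coincide with the paper's proof: $(a)$ is the one-line identity $M_2(e)=p_f(e)Mp_i(e)=p_f(e)Vp_i(e)=V_2(e)$, $(b)$ is the isometric triple isomorphism between $p_f(e)Vp_f(e)$ and $M_2(e)$ (the paper uses $x\mapsto xe$; you describe the inverse direction), and $(c)$ is exactly the paper's combination of $(b)$ with Proposition~\ref{P:direct summand}$(b)$. For $(d)$, your skeleton (split off a largest algebra-type summand, then use maximality) matches the paper's overall idea, but the execution diverges and, as written, has a genuine gap at the decisive step. The paper builds a tripotent $e$ with $M_1(e)=0$ as the sum of a maximal orthogonal family, takes $W_1=M_2(e)$, and is then left with the complement $M_0(e)=(p-p_f(e))V(1-p_i(e))$ --- a \emph{two-sided} corner $aVb$ of $V$. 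Rewriting such a corner in the one-sided form $qW_2$ is the substantive content of $(d)$, and the paper does it via the comparability theorem \cite[Theorem V.1.8]{Tak}, explicit partial isometries, and an opposite algebra. Your proposal never confronts this: the justification you offer (``the Observation preceding the proposition together with the fact that a direct summand of $pV$ is itself of the same structural type'') does not work. The Observation applies to triples already presented in the form (type I sum)$\,\oplus\, pV$, not to a corner $aVb$, and the ``fact'' you invoke is essentially Proposition~\ref{P:pV direct summand}, which appears \emph{later} in the paper and is proved there under the standing assumption that $pV$ has no algebra-type summands --- citing it inside the proof of $(d)$ is circular.

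The good news is that your ``cleanest route'' via central projections can be completed, and it then genuinely differs from (and in this step is more elementary than) the paper's argument, because it avoids the comparability theorem altogether. The missing lemma you must actually prove is: every direct summand of $pV$ triple-isomorphic to a JBW$^*$-algebra equals $zpV$ for a central projection $z\in V$. This follows from \cite[Lemma V.1.7]{Tak}: if $M_1(e)=0$, then $p_f(e)V(1-p_i(e))=0$ and $(p-p_f(e))Vp_i(e)=0$, so taking $z$ to be the central carrier of $p_f(e)$ one gets $z\le p_i(e)$, $p_f(e)=zp$, and hence $M_2(e)=zpV$. Granted this, let $z_0$ be the supremum of a maximal orthogonal family of such central projections; the sum of the corresponding orthogonal unitaries is a unitary of $z_0pV$, so $W_1=z_0pV$ is algebra-type, and the complement is \emph{automatically} $(1-z_0)pV=qW_2$ with $q=(1-z_0)p$ and $W_2=(1-z_0)V$, while maximality together with the lemma rules out nonzero algebra-type summands of the complement. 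So the gap is fixable, but note that this identification lemma is the technical heart of the argument, not ``bookkeeping,'' and your proposal neither states it nor indicates how to prove it.
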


\begin{proof}
$(a)$ It is clear that $M_2(e)=p_f(e)Mp_i(e)=p_f(e)Vp_i(e)=V_2(e)$.

$(b)$ The mapping $x\mapsto xe$ is a linear isometry of the von Neumann algebra $p_f(e)Vp_f(e)$ onto $p_f(e)Vp_i(e)=M_2(e)$. Moreover this mapping is also a triple isomorphism (this can be checked easily and it also follows from Kaup's Banach-Stone theorem \cite[Proposition 5.5]{kaup1983riemann}).

$(c)$ This follows from $(b)$ and Proposition~\ref{P:direct summand}$(b)$.

$(d)$ If there is no tripotent $e\in M$ with $M_1(e)=0$, then we may take  $W_1=0$, $W_2=V$ and $q=p$. The relevant properties then follow from Proposition~\ref{P:direct summand}$(b)$.
 
Otherwise, let $(e_\gamma)_{\gamma\in \Gamma}$ be a maximal family of nonzero mutually orthogonal tripotents such that $M_1(e_\gamma)=0$ for each $\gamma\in\Gamma$. This family exists by Zorn's lemma.

Set $e=\sum_{\gamma\in\Gamma} e_\gamma$. Then $e\in M$ is a tripotent and, moreover, $M_1(e)=0$. (Note that $M_1(u)=0$ if and only if $L(u,u)^2=L(u,u)$ and this property easily passes to sums of orthogonal families of tripotents.) Thus $W_1=M_2(e)$ is a direct summand isomorphic to a JBW$^*$-algebra by Proposition~\ref{P:direct summand}$(b)$, hence to a von Neumann algebra by $(c)$.

The orthogonal direct summand is then 
$$M_0(e)=(p-p_f(e))V(1-p_i(e)).$$
By the construction and Proposition~\ref{P:direct summand}$(b)$ applied to $M$ it admits no nonzero direct summand isomorphic to a JBW$^*$-algebra. It remains to show that it can be represented as $qW_2$. 
By the comparability theorem \cite[Theorem V.1.8]{Tak} there is a central projection $z\in V$ such that
$$z(p-p_f(e))\precsim z(1-p_i(e))\mbox{ and }(1-z)(1-p_i(e))\precsim (1-z)(p-p_f(e)).$$
It means that there are two partial isometries $u,v\in V$ such that
$$p_i(u)=z(p-p_f(e)),\ p_f(u)\le z(1-p_i(e)),$$ $$p_i(v)=(1-z)(1-p_i(e)),\ p_f(v)\le (1-z)(p-p_f(e)).$$
Then 
$zM_0(e)=(p-p_f(e))zV(1-p_i(e))$ is isometric (hence triple-isomorphic) to
$$p_f(u) (1-p_i(e))zV(1-p_i(e))$$
via the mapping
$x\mapsto ux$.
Similarly, $(1-z)M_0(e)=(p-p_f(e))(1-z)V(1-p_i(e))$ is isometric (hence triple-isomorphic) to
$$(p-p_f(e))(1-z)V(p-p_f(e))p_f(v)$$
via the mapping
$x\mapsto xv^*$.

So, we can take $$W_2=(1-p_i(e))zV(1-p_i(e))\oplus^{\ell_\infty} ((p-p_f(e))(1-z)V(p-p_f(e)))^{op}$$ and $q=(p_f(u),p_f(v))$.
\end{proof}

Assertion $(d)$ of the previous proposition provides a canonical decomposition of the triples of the form $pV$ into two summands. We will analyze these two summands separately in the following two subsections.

\subsection{Von Neumann algebras}\label{subsec:vonNeumann algebras}

In this subsection we will focus on the preorders and finiteness in von Neumann algebras. Throughout this subsection let $M$ denote a fixed von Neumann algebra.

We start by a description of unitary and complete tripotents.

\begin{lemma}\label{L:vN complete}
Let $u\in M$ be a tripotent. 
\begin{enumerate}[$(a)$]
    \item $u$ is unitary if and only if $p_i(u)=p_f(u)=1$.
    \item $u$ is complete if and only if there is a central projection $z\in M$ such that $p_i(zu)=z$ and $p_f((1-z)u)=1-z$.
\end{enumerate}
\end{lemma}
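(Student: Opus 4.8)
The plan is to read off both statements directly from the explicit formulas for the Peirce projections recorded just before the lemma, specialized to the present situation (a von Neumann algebra $M$, so tripotents are exactly partial isometries), translating ``$u$ unitary'' into $M_2(u)=M$ and ``$u$ complete'' into $M_0(u)=\{0\}$. The only genuinely non-elementary input I will need is a standard central-support fact, isolated below.

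For part $(a)$, recall that $u$ is unitary precisely when $M_2(u)=M$, i.e. when $P_2(u)$ acts as the identity; by the formula $P_2(u)x=p_f(u)xp_i(u)$ this says $p_f(u)xp_i(u)=x$ for all $x\in M$. If $p_i(u)=p_f(u)=1$ this identity is immediate, so $u$ is unitary. Conversely, evaluating at $x=1$ gives $p_f(u)p_i(u)=1$; left-multiplying by the projection $p_f(u)$ and using $p_f(u)^2=p_f(u)$ yields $p_f(u)=1$, and substituting back gives $p_i(u)=1$. This settles $(a)$ by elementary projection algebra.

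For part $(b)$, completeness means $M_0(u)=\{0\}$, i.e. $(1-p_f(u))\,M\,(1-p_i(u))=\{0\}$. Writing $e=1-p_f(u)$ and $f=1-p_i(u)$, this is $eMf=\{0\}$. I would use the standard equivalence that, for projections $e,f$ in a von Neumann algebra, $eMf=\{0\}$ if and only if $c(e)\,c(f)=0$, where $c(\cdot)$ denotes the central cover (the smallest central projection dominating a given projection). Granting this, the ``if'' direction is direct and needs no central covers: if a central $z$ satisfies $p_i(zu)=z$ and $p_f((1-z)u)=1-z$, then using that $z$ is central one computes $p_i(zu)=z\,p_i(u)$ and $p_f((1-z)u)=(1-z)\,p_f(u)$, so the hypotheses read $e=1-p_f(u)\le z$ and $f=1-p_i(u)\le 1-z$; hence for every $x$, $(1-p_f(u))x(1-p_i(u))=(1-p_f(u))\,z\,x\,(1-z)\,(1-p_i(u))=0$ because $z(1-z)=0$ and $z$ is central, so $M_0(u)=\{0\}$ and $u$ is complete. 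For the ``only if'' direction, assume $u$ complete, so $eMf=\{0\}$ and therefore $c(e)c(f)=0$; then I would simply take $z=c(1-p_f(u))=c(e)$. Indeed $e\le c(e)=z$ gives $1-z\le p_f(u)$, whence $p_f((1-z)u)=(1-z)p_f(u)=1-z$; and $c(e)f\le c(e)c(f)=0$ forces $z\le p_i(u)$, whence $p_i(zu)=z\,p_i(u)=z$. This exhibits the required central projection and closes the equivalence.

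The step I expect to be the main obstacle—or at least the one warranting a citation (e.g. to the central-support material in \cite{Tak} or \cite{KR2})—is the equivalence $eMf=\{0\}\iff c(e)c(f)=0$; once it is in hand, everything reduces to the centrality identities $p_i(zu)=z\,p_i(u)$ and $p_f(zu)=z\,p_f(u)$ together with elementary manipulations of orthogonal and central projections.
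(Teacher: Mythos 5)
Your proof is correct and follows essentially the same route as the paper: both translate completeness into $(1-p_f(u))\,M\,(1-p_i(u))=\{0\}$ and invoke the central-orthogonality criterion (the content of \cite[Lemma V.1.7]{Tak}, which is exactly your equivalence $eMf=\{0\}\iff c(e)c(f)=0$), then convert this into the existence of the central projection $z$; your choice $z=c(1-p_f(u))$ just makes explicit the $z$ whose existence the paper asserts. Part $(a)$ is likewise the same observation, with your Peirce-projection computation merely spelling out what the paper dismisses as a reformulation of the definition.
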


\begin{proof}
Assertion $(a)$ is just a reformulation of the definition of a unitary element. Let us prove $(b)$. Note that $u$ is complete if and only if $M_0(u)=0$, i.e., $(1-p_f(u))M(1-p_i(u))=0$. By \cite[Lemma V.1.7]{Tak} this takes place if and only if $1-p_f(u)$ and $1-p_i(u)$ are centrally orthogonal. This is equivalent to the existence of a central projection $z\in M$ such that
$$1-p_f(u)\le z\mbox{ and }1-p_i(u)\le 1-z.$$
This is in turn equivalent to
$$z\le p_i(u)\mbox{ and }1-z\le p_f(u),$$
which is equivalent to the condition given in the statement.
\end{proof}

We will need several times the following easy lemma.

\begin{lemma}\label{L:apq}
Let $A$ be a C$^*$-algebra, $a\in A$ an arbitary element and $p,q\in A$ two projections. If $a=apq$, then $a=ap$.
\end{lemma}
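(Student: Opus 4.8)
The plan is to prove the contrapositive-free direct claim: from $a = apq$ deduce $a = ap$, working inside the C$^*$-algebra $A$. The key observation is that the $*$-operation and the norm identity $\|x\|^2 = \|x^* x\|$ (equivalently $\|x\|^2 = \|x x^*\|$) let us transfer a one-sided annihilation statement into a genuine equality. First I would apply the involution to the hypothesis to obtain $a^* = q p a^*$ (using that $p,q$ are self-adjoint projections, so $(apq)^* = q^* p^* a^* = qpa^*$). The goal $a = ap$ is equivalent to $a(1-p) = 0$, so it suffices to show $\|a(1-p)\| = 0$.

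The main computation is to estimate $\|a(1-p)\|^2 = \|(1-p)a^* a (1-p)\|$ using the C$^*$-identity. Here I would substitute $a = apq$ into one factor and $a^* = qpa^*$ into the other, so that the product $a^* a$ becomes $q p a^* a p q$. The crucial point is that $(1-p)$ multiplied against a factor $p$ gives $(1-p)p = 0$. Concretely, writing $a(1-p)$ and inserting the representation $a = apq$, one gets $a(1-p) = apq(1-p)$, and then the relation $a^*a = qpa^*apq$ together with $p(1-p)=0$ should collapse the relevant product to zero. The cleanest route is probably to compute $\|a(1-p)\|^2 = \|(1-p)a^*a(1-p)\|$ and replace $a^*a$ by $qpa^*apq$, yielding $(1-p)qpa^*apq(1-p)$; this still carries a stray $q$, so I would instead anchor the argument on the factor $p$ directly adjacent to $(1-p)$.

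The hard part, such as it is, will be bookkeeping the order of factors so that an explicit $p$ sits next to the $(1-p)$ and annihilates it. A robust way to avoid fiddling is the following: since $a = apq$, right-multiplying by $q$ gives $aq = apq\cdot q = apq = a$ (as $q^2 = q$), so in fact $aq = a$; combined with the hypothesis this yields $a = apq = aq\cdot \text{(nothing new)}$, but more usefully $a = apq$ means $a$ lies in the right ideal generated by $p$ after the $q$ is absorbed. The decisive step is to note $\|a - ap\|^2 = \|a(1-p)\|^2 = \|(1-p)a^* a (1-p)\|$ and then use $a = apq$ to write $a^* = qpa^*$, so $a(1-p) = apq(1-p)$ and $(1-p)a^* = (1-p)qpa^*$; multiplying, the inner block contains the product $p\,q\,(1-p)\,(1-p)\,q\,p$ flanked by $a^*$ and $a$, and since we only need one adjacent $p(1-p)=0$, I would arrange the substitution to expose it.

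I expect this to be a completely elementary lemma whose only subtlety is choosing the substitution order; the real content is simply that in a C$^*$-algebra the norm controls one-sided products, so a relation $a = apq$ forces $a = ap$ because the extra projection $q$ cannot enlarge the support of $a$ beyond $p$ on the right. Accordingly the proof should be two or three lines once the C$^*$-identity is invoked, and no earlier result from the paper beyond standard C$^*$-algebra facts is needed.
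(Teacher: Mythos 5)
Your overall frame---reduce to showing $a(1-p)=0$ (in the unitization) and finish with the C$^*$-identity---matches how the paper concludes, but the mechanism you propose for producing the zero does not exist. The hypothesis $a=apq$ and its adjoint $a^*=qpa^*$ always place a factor $q$, never $p$, adjacent to any inserted copy of $a$ or $a^*$. Consequently every substitution available to you yields expressions of the form
$$a(1-p)a^*=apq(1-p)qpa^*,\qquad (1-p)a^*a(1-p)=(1-p)qpa^*apq(1-p),$$
and in neither case does a product $p(1-p)$ ever appear: the inner block is $pq(1-p)qp$, which is \emph{not} zero for general non-commuting projections. For instance, with $p=\left(\begin{smallmatrix}1&0\\0&0\end{smallmatrix}\right)$ and $q=\frac12\left(\begin{smallmatrix}1&1\\1&1\end{smallmatrix}\right)$ in $M_2$ one computes $pq(1-p)qp=\frac14p\ne0$. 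So the ``decisive step'' of arranging the substitution to expose an adjacent $p(1-p)$ cannot be carried out, and your auxiliary observation $aq=a$ does not repair this, since it again only puts $q$'s next to $(1-p)$. This is a genuine gap, not a bookkeeping issue.

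The missing ingredient is positivity. The paper's proof substitutes on \emph{both} sides to get the exact identity $aa^*=apqpa^*$ and then uses that conjugation preserves order together with $pqp\le p\le 1$ to sandwich:
$$aa^*=apqpa^*\le apa^*\le aa^*.$$
Equality throughout forces $apa^*=aa^*$, i.e.\ $a(1-p)a^*=0$; since $a(1-p)a^*=\bigl(a(1-p)\bigr)\bigl(a(1-p)\bigr)^*$, the Gelfand--Naimark axiom gives $a(1-p)=0$, i.e.\ $a=ap$. Some such monotonicity argument (or a genuinely different one, e.g.\ iterating the hypothesis to $a=a(pq)^n$ and showing $\|(pq)^n(1-p)\|\to0$ via the two-projection angle decomposition) is unavoidable; purely multiplicative manipulations combined with $p(1-p)=0$ cannot close the proof, as the $M_2$ example shows.
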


\begin{proof} There is no loss of generality in assuming that $A$ is unital. In this case We have
$$aa^* = a p q p a^*\leq a p a^* \leq a a^*.$$
It follows that $aa^* = a p a^*$, hence  $a(1-p) a^*=0$ and so $a(1-p)=0$ by the Gelfand-Naimark axiom.
\end{proof}

In the next lemma we provide a characterization of the preorders $\le_2$ and $\le_0$.

\begin{lemma}\label{l charcteriz le2 and le0 in vN}
Let $u,v\in M$ be two tripotents (i.e., partial isometries). 
\begin{enumerate}[$(a)$]
    \item $u\le_2 v \Longleftrightarrow p_i(u)\le p_i(v)\ \&\ p_f(u)\le p_f(v)$.
    \item $u\le_0 v$ if and only if there is a central projection $z\in M$ such that $zu\le_2 zv$ and $(1-z)v$ is a complete tripotent in $(1-z)M$. 
    \item Assume that $M$ is a factor. Then $u\le_0 v$ if and only if $v$ is a complete tripotent or $u\le_2 v$.
\end{enumerate}
\end{lemma}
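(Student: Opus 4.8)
The plan is to prove the three characterizations in Lemma~\ref{l charcteriz le2 and le0 in vN} in order, using the explicit Peirce-projection formulas for $V$ together with the structural lemmas just established. I would treat $(a)$ as the foundational computation, then bootstrap $(b)$ from $(a)$ via central orthogonality (Lemma~\ref{L:vN complete}(b)), and finally specialize $(b)$ to a factor to obtain $(c)$.

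For part $(a)$, I would unwind the definition $u\le_2 v\iff M_2(u)\subset M_2(v)$. Using $M_2(u)=p_f(u)Mp_i(u)$ and the analogous formula for $v$, the inclusion of Peirce-$2$ subspaces should translate into a containment of the supporting projections. In the forward direction, $u\in M_2(u)\subset M_2(v)$ forces $u=p_f(v)up_i(v)$, and applying Lemma~\ref{L:apq} (and its mirror image on the left, obtained by passing to adjoints) to $u=u p_i(v)$ and $u=p_f(v)u$ should yield $p_i(u)\le p_i(v)$ and $p_f(u)\le p_f(v)$. The converse is the routine observation that if the two pairs of projections are nested, then $p_f(u)Mp_i(u)\subset p_f(v)Mp_i(v)$. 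This is where I expect Lemma~\ref{L:apq} to do the real work of converting a triple-product containment into an honest order relation on projections.

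For part $(b)$, I would combine the definition $u\le_0 v\iff M_0(v)\subset M_0(u)$ with Lemma~\ref{L:vN complete}(b), which decomposes completeness across a central projection. The natural candidate for $z$ is the central projection that measures the failure of $v$ to be complete: on $(1-z)V$ we arrange that $v$ is complete, while on $zV$ we must show $zu\le_2 zv$. The key point is that $M_0(v)\subset M_0(u)$ says the smaller ``null space'' of $v$ sits inside that of $u$; translating the Peirce-$0$ formula $P_0(v)x=(1-p_f(v))x(1-p_i(v))$ into projection language via central orthogonality (\cite[Lemma V.1.7]{Tak}) should isolate the central part where the two initial/final projection pairs are properly nested. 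I expect the main obstacle to be here: carefully matching the central splitting coming from the completeness criterion with the one forced by the inclusion of Peirce-$0$ subspaces, and checking that on the ``$z$'' part the inclusion $M_0(v)\subset M_0(u)$ is genuinely equivalent to $p_i(u)\le p_i(v)$ and $p_f(u)\le p_f(v)$ rather than merely implied in one direction.

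For part $(c)$, I would simply specialize $(b)$ to the case where $M$ is a factor, so the only central projections are $0$ and $1$. The central projection $z$ supplied by $(b)$ must then be one of these two values: $z=0$ recovers the statement that $v$ is complete, while $z=1$ recovers $u\le_2 v$. This dichotomy gives exactly the disjunction in $(c)$, and the converse is immediate since a complete tripotent $v$ has $M_0(v)=\{0\}$, forcing $M_0(v)\subset M_0(u)$ trivially, and $u\le_2 v\Rightarrow u\le_0 v$ was already noted in Proposition~\ref{P:impl le20}. This last part should require essentially no new computation once $(b)$ is in hand.
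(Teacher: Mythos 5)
Your overall architecture --- (a) by direct computation, (b) by a central splitting reducing to (a), (c) by specializing (b) to a factor --- is exactly the paper's, and parts (a) and (c) are fine as you describe them. One small correction in (a): Lemma~\ref{L:apq} is not what does the work there, and is not needed; from $u=p_f(v)up_i(v)$ one gets directly $p_i(u)=u^*u=u^*up_i(v)=p_i(u)p_i(v)$, which together with its adjoint gives $p_i(u)\le p_i(v)$ (the paper saves Lemma~\ref{L:apq} for part (b)).

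The genuine gap is in part (b), precisely at the point you yourself flag as ``the main obstacle'': you never prove that on the summand $zM$ the inclusion $M_0(v)\subset M_0(u)$ forces $zu\le_2 zv$, and the tool you cite does not apply in the form you invoke it. \cite[Lemma V.1.7]{Tak} converts a \emph{vanishing} condition $pMq=\{0\}$ for two \emph{projections} into central orthogonality, whereas what you have is an \emph{inclusion} of corners $(1-p_f(v))M(1-p_i(v))\subset(1-p_f(u))M(1-p_i(u))$. To make your route work one must first rewrite this inclusion as two annihilation conditions, namely $p_f(u)(1-p_f(v))M(1-p_i(v))=\{0\}$ and $(1-p_f(v))M(1-p_i(v))p_i(u)=\{0\}$, and then cope with the fact that $p_f(u)(1-p_f(v))$ and $(1-p_i(v))p_i(u)$ are not projections --- e.g.\ pass to their support projections and central supports before any V.1.7-type statement applies. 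Doing so yields central projections on which $p_f(u)\le p_f(v)$, respectively $p_i(u)\le p_i(v)$, and whose complements are dominated by $p_i(v)$, respectively $p_f(v)$ (so $v$ is complete there by Lemma~\ref{L:vN complete}); combining them gives the desired $z$. The paper instead \emph{imitates the proof} of \cite[Lemma V.1.7]{Tak}: it introduces the annihilator ideals $J_1=\{x\in M\setsep (1-p_f(v))Mx=\{0\}\}$ and $J_2=\{x\in M\setsep xM(1-p_i(v))=\{0\}\}$ to extract the central projection, and then proves $u\le_2 v$ on the remaining summand by contradiction: for $x\in M_2(u)\setminus M_2(v)$ the reduction produces $y$ with $(1-p_f(v))yx(1-p_i(v))\ne0$; this element lies in $M_0(v)\subset M_0(u)$, and multiplying by $p_i(u)$ and applying Lemma~\ref{L:apq} gives the contradiction. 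Either route completes your plan, but as written the decisive implication is only asserted, not proved.
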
   

\begin{proof}
$(a)$ The implication `$\Leftarrow$' is obvious. To prove the implication `$\Rightarrow$' assume $u\le_2 v$. Then $u=p_f(v)up_i(v)$. So,
$$p_i(u)=u^*u=u^*up_i(v)=p_i(u)p_i(v),$$
thus $p_i(u)\le p_i(v)$. Similarly we get $p_f(u)\le p_f(v)$.

$(b)$ The `if part' is obvious (cf. Proposition~\ref{P:impl le20}).

Let us prove the `only if part'. Assume that $u\le_0 v$. We imitate the proof of \cite[Lemma V.1.7]{Tak}. Let
$$J_1=\{x\in M\setsep (1-p_f(v))Mx=\{0\}\}.$$
Then $J_1$ is a weak$^*$-closed ideal in $M$, so there is a central projection $z_1\in M$ such that $J_1=M(1-z_1)$ (see \cite[Proposition II.3.12]{Tak}). Then 
$(1-p_f(v))M(1-z_1)(1-p_i(v))=\{0\}$, so $(1-z_1)v$ is a complete tripotent in $M(1-z_1)$.

Clearly $z_1u\le_0 z_1v$ in $z_1M$. We continue working in $z_1M$, so to simplify notation we assume that $z_1=1$. Set
$$J_2=\{x\in M\setsep xM(1-p_i(v))=\{0\}\}.$$
Then $J_2$ is a weak$^*$-closed ideal in $M$, so there is a central projection $z_2\in M$ such that $J_2=M(1-z_2)$ (see \cite[Proposition II.3.12]{Tak}). Then
$(1-p_f(v))M(1-z_2)(1-p_i(v))=\{0\}$, so $(1-z_2)v$ is a complete tripotent in $M(1-z_2)$.

Clearly $z_2u\le_0 z_2v$ in $z_2M$. We continue working in $z_2M$, so to simplify notation we assume that $z_2=1$. 

Hence, the two reductions yield the following property: For any $x\in M\setminus\{0\}$ we have
$$(1-p_f(v))Mx\ne \{0\} \mbox{ and }xM(1-p_i(v))\ne \{0\}.$$
We claim that $u\le_2 v$. Assume not. Then there is some $x\in M_2(u)\setminus M_2(v)$. The assumption $x\notin M_2(v)$ means that $x(1-p_i(v))\ne0$ or $(1-p_f(v))x\ne0$.

Assume the first possibility takes place. Then, by the reductions above, there is $y\in M$ such that
$(1-p_f(v))yx(1-p_i(v))\ne0$. This element belongs to $M_0(v)\subset M_0(u)$, 
so
$$\begin{aligned}0&=(1-p_f(v))yx(1-p_i(v))p_i(u)=(1-p_f(v))yx p_i(u)-(1-p_f(v))yx p_i(v)p_i(u)\\&=(1-p_f(v))yx -(1-p_f(v))yx p_i(v)p_i(u),\end{aligned}$$
where we used the assumption $x\in M_2(u)$. Hence
$$(1-p_f(v))yx =(1-p_f(v))yx p_i(v)p_i(u),$$
so 
$$(1-p_f(v))yx =(1-p_f(v))yx p_i(v)$$
by Lemma~\ref{L:apq}.
But this equality means that
$(1-p_f(v))yx(1-p_i(v))=0$, a contradiction.

The second case is symmetric.

$(c)$ This follows immediately from $(b)$.
\end{proof}

The next result characterizing the partial order $\le$ is part of the folklore in JB$^*$-triples, it is included here for the lacking of an explicit reference.

\begin{prop}\label{P:vN order charact}
Let $u,e\in M$ be two tripotents. The following assertions are equivalent.
\begin{enumerate}[$(i)$]
    \item $u\le e$;
    \item There is a projection $p\le p_f(e)$ such that $u=pe$;
    \item There is a projection $q\le p_i(e)$ such that $u=eq$;
    \item There are projections $p\le p_f(e)$ and $q\le p_i(e)$ such that $u=pe=eq$.
\end{enumerate}
\end{prop}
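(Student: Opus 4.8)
The plan is to prove the cyclic chain of implications $(i)\Rightarrow(iv)\Rightarrow(ii)\Rightarrow(i)$ together with $(iv)\Rightarrow(iii)\Rightarrow(i)$, exploiting the explicit description of the Peirce-$2$ projection in a von Neumann algebra, namely $P_2(e)x=p_f(e)\,x\,p_i(e)$, and the characterization $u\le e\iff u=P_2(u)e$ from Proposition~\ref{P:order char}$(iv)$.

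For the central implication $(i)\Rightarrow(iv)$ I would start from $u\le e$. By Proposition~\ref{P:order char}$(iv)$ we have $u=P_2(u)e=p_f(u)\,e\,p_i(u)$. Setting $p=p_f(u)$ and $q=p_i(u)$, these are projections and the formula immediately gives $u=pe=eq$. It then remains to verify $p\le p_f(e)$ and $q\le p_i(e)$; this follows from the fact that $u\le e\Rightarrow u\le_2 e$ (Proposition~\ref{P:impl le20}) combined with the characterization of $\le_2$ in a von Neumann algebra, Lemma~\ref{l charcteriz le2 and le0 in vN}$(a)$, which yields exactly $p_i(u)\le p_i(e)$ and $p_f(u)\le p_f(e)$. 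Thus $(iv)$ holds. The implications $(iv)\Rightarrow(ii)$ and $(iv)\Rightarrow(iii)$ are trivial, since $(iv)$ asserts the conjunction of the existence statements in $(ii)$ and $(iii)$.

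The work is in closing the loop, i.e. proving $(ii)\Rightarrow(i)$ (and symmetrically $(iii)\Rightarrow(i)$). Suppose $u=pe$ with $p\le p_f(e)$ a projection. First I would check that $u$ really is a tripotent with $p_f(u)\le p$: compute $uu^*u=pe(pe)^*pe=pe e^* p e$, and since $p\le p_f(e)=ee^*$ we have $pee^*=p$, so $uu^*u=pe=u$, confirming $u=\{u,u,u\}$. Then $u=pe$ gives $p_f(u)=uu^*=pee^*p=p$ (using $pee^*p=p$ again), so in particular $p_f(u)=p\le p_f(e)$. The decisive step is to establish $u\le e$ directly via condition $(iii)$ of Proposition~\ref{P:order char}, namely $u=\{u,u,e\}=L(u,u)e$. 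In the von Neumann algebra this reads $\{u,u,e\}=\tfrac12(uu^*e+eu^*u)=\tfrac12(p_f(u)e+e\,p_i(u))$; I would compute both terms and show their average equals $u=pe$. Here $p_f(u)e=pe=u$, so the remaining task is to handle $e\,p_i(u)$, where $p_i(u)=u^*u=e^*pe$. This forces a short computation showing $e\,e^*pe=pe$, which again uses $p\le ee^*$; once this is in hand, both terms equal $u$ and their average is $u$, giving $u=\{u,u,e\}$ and hence $u\le e$ by Proposition~\ref{P:order char}$(iii)\Leftrightarrow(i)$.

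The main obstacle I anticipate is precisely the verification that the partial isometry $u=pe$ with merely $p\le p_f(e)$ satisfies the \emph{initial-projection} side, i.e. that $p_i(u)\le p_i(e)$ and that $e\,p_i(u)=u$; the subtlety is that cutting $e$ by an arbitrary subprojection $p$ of its final projection could in principle produce an initial projection not dominated by $p_i(e)$, and one must use Lemma~\ref{L:apq}-type reasoning (or the identity $p_i(u)=e^*pe\le e^*e=p_i(e)$ valid because $p\le ee^*$ implies $e^*pe\le e^*ee^*e=p_i(e)$ as $e^*e$ is a projection) to control it. Once the algebra of final/initial projections under the cut by $p$ is pinned down, the verification of condition $(iii)$ of Proposition~\ref{P:order char} is routine, and the symmetric argument for $(iii)\Rightarrow(i)$ proceeds by working with $u=eq$, computing $p_i(u)=q$ and $p_f(u)=eqe^*\le ee^*=p_f(e)$, and checking $u=\{u,u,e\}$ analogously.
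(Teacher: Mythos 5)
Your global structure (the cycle $(i)\Rightarrow(iv)\Rightarrow(ii)\Rightarrow(i)$ plus $(iv)\Rightarrow(iii)\Rightarrow(i)$) is the same as the paper's, and your arguments for $(ii)\Rightarrow(i)$ and $(iii)\Rightarrow(i)$ are correct and complete: the paper verifies condition $(ii)$ of Proposition~\ref{P:order char} via $\J ueu=ue^*u=pee^*pe=pe=u$, whereas you verify condition $(iii)$ via $\J uue$; both computations rest on $pee^*=p$ and are equally short. The genuine gap is in $(i)\Rightarrow(iv)$. From $u\le e$ you invoke Proposition~\ref{P:order char}$(iv)$ to write $u=P_2(u)e=p_f(u)\,e\,p_i(u)=peq$, and then claim this formula \emph{immediately} gives $u=pe=eq$. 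It does not. Passing from $peq=u$ to $pe=u$ means proving $pe(1-q)=0$, and that is not a formal consequence of the identity $u=peq$; it is precisely the nontrivial content of the implication. For general projections $p\le p_f(e)$ and $q\le p_i(e)$ the element $peq$ certainly need not equal $pe$, so some further use of the hypothesis $u\le e$ (beyond the single identity $u=peq$) is unavoidable, and your write-up supplies none.

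The fix, which is what the paper actually does, is to start from condition $(iii)$ of Proposition~\ref{P:order char} rather than $(iv)$: for partial isometries it reads
$$u=\J uue=\tfrac12(uu^*e+eu^*u)=\tfrac12(pe+eq).$$
Multiplying this identity on the left by $p$ (note $pu=u$) gives $u=\tfrac12(pe+peq)$, and multiplying it on the right by $q$ (note $uq=u$) gives $u=\tfrac12(peq+eq)$; comparing the two yields $pe=eq$, and the displayed identity then collapses to $u=pe=eq$. Alternatively you can exploit orthogonality directly: $u\le e$ means $e-u\perp u$, so $u(e-u)^*=0$ and $(e-u)^*u=0$, i.e. $ue^*=uu^*=p$ and $e^*u=u^*u=q$, whence $pe=ue^*e=u\,p_i(e)=u$ and $eq=ee^*u=p_f(e)u=u$, the last equalities because $u\in M_2(e)$. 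Either repair slots into your outline unchanged; your derivation of $p\le p_f(e)$ and $q\le p_i(e)$ from Proposition~\ref{P:impl le20} together with Lemma~\ref{l charcteriz le2 and le0 in vN}$(a)$ is fine and coincides with the paper's argument.
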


\begin{proof}
The implication $(iv)\Rightarrow (ii)\&(iii)$ is obvious.

$(ii)\Rightarrow (i)$ Assume $u=pe$ where $p\le p_f(e)$. Then $u$ is a partial isometry with final projection $p$. Moreover,
$$\J ueu= ue^*u=pee^*pe=p p_f(e) pe=pe=u,$$
hence $u\le e$ by Proposition~\ref{P:order char}.

$(iii)\Rightarrow(i)$ This is completely analogous to $(ii)\Rightarrow(i)$.

$(i)\Rightarrow(iv)$ Assume $u\le e$. Let $p=uu^*$ be the final projection and $q=u^*u$ the initial projection of $u$. Since $u\le_2 e$, we have $p\le p_f(e)$ and $q\le p_i(e)$. Moreover, by Proposition~\ref{P:order char} we have
$$u=\J uue =\frac12(uu^*e+eu^*u)=\frac12(pe+eq).$$
It follows that
$$ u=pu=\frac 12 (pe+peq)\mbox{ and }
u=uq=\frac 12(peq+eq),$$
and hence $pe=eq$. We conclude that $u=pe=eq$.
\end{proof}

\begin{prop}\label{P:vN finite trip}
Let $u\in M$ be a tripotent. The following assertions are equivalent:
\begin{enumerate}[$(i)$]
    \item $u$ is finite.
    \item $p_f(u)$ is finite.
    \item $p_i(u)$ is finite.
\end{enumerate}
\end{prop}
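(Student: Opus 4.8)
The plan is to reduce finiteness of the tripotent $u$ to finiteness of the corner von Neumann algebras $p_f(u)Mp_f(u)$ and $p_i(u)Mp_i(u)$, by transporting the whole question through the triple isomorphisms that identify these corners with $M_2(u)$.

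First I would recall from (the proof of) Proposition~\ref{P:Peirce2 in pV}(b), applied with $V=M$, that the map $\Phi\colon p_f(u)Mp_f(u)\to M_2(u)$, $\Phi(x)=xu$, is a surjective linear isometry and hence a triple isomorphism. The crucial extra observation is that $\Phi$ is \emph{unital}, in the sense that it carries the identity $p_f(u)$ of $N:=p_f(u)Mp_f(u)$ to $u$, the unit of the JB$^*$-algebra $M_2(u)$; indeed $p_f(u)u=uu^*u=u$. Since a triple isomorphism preserves the triple product, it sends tripotents to tripotents and respects completeness (vanishing of the Peirce-$0$ space) and unitarity (coincidence with the Peirce-$2$ space). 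Thus $\Phi$ induces a bijection between the complete tripotents of $N$ and those of $M_2(u)$ under which unitaries correspond to unitaries.

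Next I would translate the definition of finiteness along $\Phi$. By definition $u$ is finite exactly when every tripotent complete in $M_2(u)$ is unitary in $M_2(u)$; by the previous paragraph this holds if and only if every complete tripotent of $N$ is unitary in $N$. As $p_f(u)$ is the unit of $N$, we have $N=N_2(p_f(u))$, so the latter statement is precisely that $p_f(u)$ is a finite tripotent of $N$. By Lemma~\ref{L:finite tripotents}(a) this is equivalent to $p_f(u)$ being a finite projection of $N$, that is, to $N$ being a finite von Neumann algebra, and this in turn is equivalent to $p_f(u)$ being a finite projection of $M$ (finiteness of a projection $p$ is intrinsic to the corner it generates, since every subprojection of $p$ and every partial isometry implementing an equivalence of two subprojections of $p$ lies in $pMp$). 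Chaining these equivalences gives $(i)\Leftrightarrow(ii)$.

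Finally, for $(ii)\Leftrightarrow(iii)$ I would argue symmetrically: one checks that $\Psi\colon p_i(u)Mp_i(u)\to M_2(u)$, $\Psi(x)=ux$, is again a unital triple isomorphism, sending the identity $p_i(u)$ to $u$, so the identical reasoning yields $(i)\Leftrightarrow(iii)$. (Alternatively, $p_f(u)=uu^*$ and $p_i(u)=u^*u$ are Murray--von Neumann equivalent via $u$, and equivalent projections are simultaneously finite.) I do not expect a genuine obstacle; the only delicate point is the bookkeeping which identifies finiteness of the tripotent $u$ with finiteness of $p_f(u)$ as a tripotent in the corner and then as a projection in $M$, and this is exactly where the unitality of $\Phi$ together with Lemma~\ref{L:finite tripotents}(a) does the work.
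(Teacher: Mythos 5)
Your proposal is correct and is essentially the paper's own argument: the paper likewise reduces all three statements to the fact that $M_2(u)$, $M_2(p_f(u))=p_f(u)Mp_f(u)$ and $M_2(p_i(u))=p_i(u)Mp_i(u)$ are mutually triple-isomorphic (via the maps $x\mapsto xu$ and $x\mapsto ux$ from the proof of Proposition~\ref{P:Peirce2 in pV}$(b)$), invoking Lemma~\ref{L:finite tripotents}$(a)$ to identify tripotent-finiteness of the projections with their Murray--von Neumann finiteness. You merely spell out the transport of completeness/unitarity along these isomorphisms and the intrinsic nature of projection finiteness, details the paper leaves implicit.
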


\begin{proof}
The equivalences follow from the easy fact that the triples $M_2(u)$, $M_2(p_f(u))$ and $M_2(p_i(u))$ are mutually triple-isomorphic (cf. the proof of Proposition~\ref{P:Peirce2 in pV}$(b)$). 

Let us observe that the term ``finite'' in statements $(ii)$ and $(iii)$ is not ambiguous (cf. Lemma \ref{L:finite tripotents}).
\end{proof}

We continue by the following lemma saying that direct summands of von Neumann algebras are only the obvious ones.

\begin{lemma}\label{L:vN direct summand}
Any direct summand of $M$ is of the form $zM$ for a central projection $z\in M$.
\end{lemma}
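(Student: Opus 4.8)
The plan is to exploit the fact, recalled in the text before Proposition~\ref{P:direct summand} (cf. \cite[Lemma 4.3]{horn1987ideal}), that every direct summand $I$ of $M$ is a weak$^*$-closed ideal, i.e. $\J IMM\subseteq I$ and $\J MIM\subseteq I$. From these two containments I would first extract purely algebraic consequences by inserting the unit of the von Neumann algebra $M$. Since $\J 1a1=a^*$ for $a\in I$, the subspace $I$ is self-adjoint; since $\J a1b=a\circ b$ and $\J a1m=a\circ m$, it is closed under the Jordan product, both internally and against $M$. Hence $I$ is a weak$^*$-closed Jordan $*$-subalgebra of $M$ which is at the same time a Jordan ideal.

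Next I would use that a weak$^*$-closed Jordan $*$-subalgebra of a von Neumann algebra is a JW$^*$-algebra, and that every JBW$^*$-algebra is unital; let $z\in I$ denote its unit, which is then a projection of $M$. The heart of the argument is to prove that $z$ is central. As $I$ is a Jordan ideal, $z\circ m\in I$ for every $m\in M$, and as $z$ is the unit of $I$ we must have $z\circ(z\circ m)=z\circ m$. Expanding both sides with $z^2=z$ yields
$$2zmz=zm+mz\qquad\text{for all }m\in M.$$
Multiplying on the left by $1-z$ gives $(1-z)Mz=\{0\}$, and passing to adjoints gives $zM(1-z)=\{0\}$, so that $zm=zmz=mz$ for every $m\in M$, i.e. $z$ is a central projection.

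Finally I would identify $I$ with $zM$. The inclusion $I\subseteq zM$ holds because each $a\in I$ satisfies $a=z\circ a=za$ (using the centrality of $z$), while $zM\subseteq I$ holds because $z\in I$ and $zm=z\circ m\in I$ by the Jordan ideal property. This yields $I=zM$ with $z$ central, as required.

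The only delicate point I anticipate is the existence of the unit $z$ of $I$: this rests on the structure theory of JBW$^*$-algebras (a weak$^*$-closed Jordan $*$-subalgebra of a von Neumann algebra is unital) rather than on an elementary computation. Should one prefer to stay self-contained, the same conclusion can be reached by noting that $I$ is weak$^*$-closed and, for each self-adjoint $a\in I$, contains every spectral projection of $a$, so that the supremum of the projections lying in $I$ serves as the unit; every remaining step is a routine computation inside the associative algebra $M$.
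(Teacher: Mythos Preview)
Your argument is correct. It differs from the paper's proof, which is a two-line appeal to the literature: the paper observes that a direct summand is a weak$^*$-closed triple ideal, quotes \cite[Proposition 5.8(c)]{harris-general} to conclude that a triple ideal in a C$^*$-algebra is an associative two-sided ideal, and then quotes \cite[Proposition II.3.12]{Tak} for the form $zM$. Your route is more self-contained: you extract the Jordan ideal property by inserting $1$ into the triple product, produce the unit $z$ of $I$, and prove by the direct computation $2zmz=zm+mz$ that $z$ is central, thereby recovering $I=zM$ without invoking either external theorem. The trade-off is brevity versus transparency: the paper's argument is shorter but relies on two black boxes, while yours exposes the mechanism in a few lines of elementary algebra. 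One minor remark: the existence of the unit $z$ can be obtained even more cheaply than via the structure theory of JBW$^*$-algebras by writing $1=e+f$ with $e\in I$, $f\in J$ (where $J$ is the complementary summand) and checking that $e$ is a self-adjoint idempotent acting as a unit on $I$; this keeps the whole proof entirely elementary.
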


\begin{proof}
Let $I$ be a direct summand of $M$. Then it is a weak$^*$-closed (triple-)ideal in $M$. Since $M$ is a C$^*$-algebra, by \cite[Proposition 5.8(c)]{harris-general} $I$ is a two-sided ideal. Thus, by \cite[Proposition II.3.12]{Tak} $I$ has the required form.
\end{proof}

The following statement is an immediate consequence of the preceding lemma.

\begin{cor}\label{cor:properly infinite projection}
\begin{enumerate}[$(i)$]
    \item Let $u\in M$ be a tripotent. Then $u$ is properly infinite if and only if $zu$ is infinite whenever $z$ is a central projection such that $zu\ne0$.
    \item Let $p\in M$ be a projection. Then $p$ is properly infinite as a projection if and only if it is properly infinite as a tripotent.
\end{enumerate}
\end{cor}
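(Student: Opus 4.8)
The plan is to read off both assertions directly from Lemma~\ref{L:vN direct summand}, which says that every direct summand of $M$ has the form $zM$ for a central projection $z\in M$.

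For $(i)$, first I would record that if $I=zM$ (with complementary summand $(1-z)M$), then the associated canonical projection acts by $P_I x=zx$; in particular $P_I u=zu$ for the given tripotent $u$. Since $z$ is central and $u$ is a partial isometry, $zu$ is again a tripotent (one checks $\J{zu}{zu}{zu}=z\,\J uuu=zu$). Consequently, as $I$ ranges over all direct summands of $M$, the tripotents $P_I u$ range exactly over the tripotents $zu$ with $z$ a central projection. Substituting this into the definition of a properly infinite tripotent --- namely that $u\ne0$ and $P_I u$ is infinite whenever it is nonzero --- turns the defining condition into precisely the statement that $zu$ is infinite whenever $z$ is a central projection with $zu\ne0$. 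This is the asserted equivalence in $(i)$.

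For $(ii)$, I would combine $(i)$ with the description of finiteness of a tripotent from Proposition~\ref{P:vN finite trip} and from Lemma~\ref{L:finite tripotents}$(a)$. Let $p\in M$ be a projection and let $z$ be a central projection. Then $zp$ is a projection, and viewed as a tripotent it satisfies $p_f(zp)=zp=p_i(zp)$. Hence, by Proposition~\ref{P:vN finite trip}, $zp$ is finite as a tripotent if and only if the projection $p_f(zp)=zp$ is finite; and by Lemma~\ref{L:finite tripotents}$(a)$ the two notions of finiteness of the projection $zp$ (as a projection and as a tripotent) coincide, so there is no ambiguity. Therefore $zp$ is infinite as a tripotent exactly when it is infinite as a projection. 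Feeding this equivalence into the characterization from $(i)$, the condition that $p$ be properly infinite as a tripotent ($p\ne0$ and $zp$ infinite as a tripotent for every central $z$ with $zp\ne0$) becomes word for word the von Neumann algebraic definition of $p$ being properly infinite as a projection.

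I do not expect a genuine obstacle here, since both parts are bookkeeping built on the already established results; the corollary is indeed immediate once Lemma~\ref{L:vN direct summand} identifies the direct summands. The only points that must be verified with care are the identity $P_I u=zu$ and the matching of the two meanings of ``infinite'', the latter being exactly what Proposition~\ref{P:vN finite trip} together with Lemma~\ref{L:finite tripotents}$(a)$ supplies.
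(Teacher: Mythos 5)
Your proof is correct and follows exactly the paper's route: the paper declares this corollary an immediate consequence of Lemma~\ref{L:vN direct summand}, and your write-up simply fleshes out that same argument (identifying direct summands with $zM$, noting $P_Iu=zu$, and matching the two notions of ``infinite'' for projections via Lemma~\ref{L:finite tripotents}$(a)$, with Proposition~\ref{P:vN finite trip} as a harmless redundancy).
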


The following proposition provides a characterization of properly infinite tripotents in $M$.

\begin{prop}\label{P:vN properly infinite trip}
Let $u\in M$ be a tripotent. The following assertions are equivalent.
\begin{enumerate}[$(i)$]
    \item $u$ is properly infinite;
    \item $p_i(u)$ is properly infinite;
    \item $p_f(u)$ is properly infinite;
    \item Every complete tripotent in $M_2(u)$ is properly infinite;
    \item Some complete tripotent in $M_2(u)$ is properly infinite.
\end{enumerate}
\end{prop}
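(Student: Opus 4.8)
The plan is to establish $(i)\Leftrightarrow(ii)\Leftrightarrow(iii)$ first and then to reduce the two remaining conditions to it. For the first chain I would start from Corollary~\ref{cor:properly infinite projection}$(i)$, which says that $u$ is properly infinite exactly when $zu$ is infinite for every central projection $z\in M$ with $zu\neq0$. For such $z$ the element $zu$ is again a partial isometry, and a short computation (using that $z$ is central) gives $p_i(zu)=zp_i(u)$ and $p_f(zu)=zp_f(u)$; moreover $zu=0$ if and only if $zp_i(u)=0$ if and only if $zp_f(u)=0$. Combining this with Proposition~\ref{P:vN finite trip} (so that $zu$ is infinite precisely when the projection $p_f(zu)=zp_f(u)$, equivalently $p_i(zu)=zp_i(u)$, is infinite), the requirement ``$zu$ infinite whenever $zu\neq0$'' matches, term by term, the classical condition defining a properly infinite projection. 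This yields both $(i)\Leftrightarrow(iii)$ (via $p_f$) and $(i)\Leftrightarrow(ii)$ (via $p_i$); the coincidence of the two meanings of ``properly infinite'' for the projections $p_f(u),p_i(u)$ is granted by Corollary~\ref{cor:properly infinite projection}$(ii)$.

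The heart of the argument is the geometric claim that \emph{if $w$ is a complete tripotent in $M_2(u)$, then $p_f(w)\sim p_f(u)$}. To prove it I would transport the situation into the corner von Neumann algebra $W=p_f(u)Mp_f(u)$ through the triple isomorphism $x\mapsto xu$ of $W$ onto $M_2(u)$ used in the proof of Proposition~\ref{P:Peirce2 in pV}$(b)$. Under it $w$ corresponds to the partial isometry $a=wu^*\in W$, which is complete in $W$; one checks $au=w$ and $aa^*=p_f(w)$, while the unit of $W$ is $p_f(u)$. Applying Lemma~\ref{L:vN complete}$(b)$ inside $W$ produces a central projection $z\in W$ with $z\le a^*a$ and $p_f(u)-z\le aa^*$. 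Writing $aa^*=z\,aa^*+(p_f(u)-z)$ and noting that $z\,aa^*\sim z$ (implemented by $za$, since $(za)^*(za)=za^*a=z$ and $(za)(za)^*=z\,aa^*$), while the orthogonal piece $p_f(u)-z$ equals itself, additivity of Murray--von Neumann equivalence over the orthogonal central decomposition $p_f(u)=z+(p_f(u)-z)$ gives $p_f(w)=aa^*\sim p_f(u)$, as desired.

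Finally I would assemble $(i)\Leftrightarrow(iv)\Leftrightarrow(v)$. Since $u$ is unitary, hence complete, in $M_2(u)$, the set of complete tripotents of $M_2(u)$ is nonempty, so it suffices to prove that \emph{every} complete tripotent $w$ in $M_2(u)$ satisfies ``$w$ properly infinite $\Leftrightarrow u$ properly infinite''. By the already established $(i)\Leftrightarrow(iii)$, applied to $w$ and to $u$, this reduces to ``$p_f(w)$ properly infinite $\Leftrightarrow p_f(u)$ properly infinite''. As $p_f(w)\sim p_f(u)$ by the claim, I would pick a partial isometry $v\in M$ with $p_i(v)=p_f(w)$ and $p_f(v)=p_f(u)$ and invoke the \emph{already proven} equivalence $(ii)\Leftrightarrow(iii)$ of the present proposition for $v$; this shows the two projections are simultaneously properly infinite, so no external result on equivalence-invariance is needed. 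The resulting bi-implication immediately gives $(i)\Rightarrow(iv)$, the trivial $(iv)\Rightarrow(v)$, and $(v)\Rightarrow(i)$ (the degenerate case $u=0$, in which all five statements fail, being set aside). I expect the main obstacle to be the geometric claim of the second paragraph, specifically the bookkeeping of the central cutting coming from Lemma~\ref{L:vN complete}$(b)$ and its reassembly into the single equivalence $aa^*\sim p_f(u)$.
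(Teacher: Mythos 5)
Your proposal is correct. For $(i)\Leftrightarrow(ii)\Leftrightarrow(iii)$ you argue exactly as the paper does (central compressions together with Corollary~\ref{cor:properly infinite projection} and Proposition~\ref{P:vN finite trip}), but for the equivalences involving $(iv)$ and $(v)$ your route is genuinely different. The paper proves $(i)\Rightarrow(iv)$ by contradiction: if $v\in M_2(u)$ is complete and $zv$ is finite for some central $z$ with $zv\ne0$, completeness forces $z(p_f(u)-p_f(v))Mz(p_i(u)-p_i(v))=\{0\}$, so these two ``defect'' projections are centrally orthogonal by \cite[Lemma V.1.7]{Tak}; cutting by the resulting central projection $y$, one concludes that $zp_i(u)$ is finite, contradicting proper infiniteness of $u$. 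The implication $(v)\Rightarrow(i)$ is then handled separately via the dichotomy $zv\sim_2 zu$ versus $zv<_2 zu$ (in the latter case $zv$ is a complete non-unitary tripotent in $M_2(zu)$, so $zu$ is infinite directly from the definition of a finite tripotent). You instead isolate the structural claim that every complete tripotent $w\in M_2(u)$ satisfies $p_f(w)\sim p_f(u)$, proved by transporting $w$ to the corner algebra $p_f(u)Mp_f(u)$ (via the isomorphism from the proof of Proposition~\ref{P:Peirce2 in pV}$(b)$) and applying Lemma~\ref{L:vN complete}$(b)$ there; your bookkeeping of the central cut (the pieces $z\,aa^*\sim z$ and $p_f(u)-z$) is accurate, and the claim itself is a correct and clean reusable fact. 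This reduces $(iv)$ and $(v)$ entirely to proper infiniteness of projections, and your device of applying the already-established $(ii)\Leftrightarrow(iii)$ to a partial isometry implementing $p_f(w)\sim p_f(u)$ elegantly avoids citing invariance of proper infiniteness under Murray--von Neumann equivalence (a fact the paper's argument implicitly uses for finiteness when it passes from $(z-y)p_f(u)\sim(z-y)p_i(u)$ to finiteness of $(z-y)p_i(u)$). What your approach buys: one uniform bi-implication yields all three implications at once from a single structural lemma. What the paper's approach buys: it stays at the level of finiteness and central cuts, with $(v)\Rightarrow(i)$ falling out almost immediately from the definition of finiteness.
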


\begin{proof}
$(i)\Leftrightarrow(ii)\Leftrightarrow(iii)$
If $z\in M$ is a central projection, then $zu$ is a tripotent, $p_i(zu)=zp_i(u)$ and $p_f(zu)=zp_f(u)$.
Hence the equivalences follow from 
Corollary~\ref{cor:properly infinite projection} and Proposition~\ref{P:vN finite trip}.

$(i)\Rightarrow(iv)$ Assume $u$ is properly infinite and $v\in M_2(u)$ is a complete tripotent. Fix a central projection $z\in M$ such that $zv\ne 0$. Then clearly $zv\in M_2(zu)$, hence $zu\ne0$. Moreover, $zv$ is a complete tripotent in $M_2(zu)$. 

Assume that $zv$ is finite. By Proposition~\ref{P:vN finite trip} we see that the projections $p_i(zv)=zp_i(v)$ and $p_f(zv)=zp_f(v)$ are finite. Further, $z(p_f(u)-p_f(v))Mz(p_i(u)-p_i(v))=0$, hence the projections $z(p_f(u)-p_f(v))$ and  $z(p_i(u)-p_i(v))$ are centrally orthogonal by \cite[Lemma V.1.7]{Tak}. It follows that there is a central projection $y\le z$ such that
$$z(p_f(u)-p_f(v))\le y\mbox{ and  }z(p_i(u)-p_i(v))\le z-y.$$
The second inequality implies $p_i(u)y=p_i(v)y$, hence $p_i(u)y$ is finite. Similarly, the first inequality yields that $(z-y)p_f(u)$ is finite. Since $(z-y)p_f(u)\sim (z-y)p_i(u)$, we infer that $(z-y)p_i(u)$
is finite as well. Thus $zp_i(u)$ is finite, hence $zu$ is finite (by Proposition~\ref{P:vN finite trip}). This contradicts the assumption that $u$ is properly infinite.

$(iv)\Rightarrow(v)$ This is obvious.

$(v)\Rightarrow(i)$ Assume that $v\in M_2(u)$ is a complete properly infinite tripotent. Let $z\in M$ be a central projection such that $zu\ne0$. Then $zv$ is a complete tripotent in $M_2(zu)$, thus $zv\ne0$. By the assumption we know that $zv$ is infinite. If $zv\sim_2 zu$, then $zu$ is infinite as well. If $zv<_2 zu$, then $zu$ is infinite as $zv$ is a complete non-unitary tripotent in $M_2(zu)$. 

In any case $zu$ is infinite, which completes the proof.
\end{proof}

\begin{prop}\label{P:vN halving}
Let $u\in M$ be a properly infinite tripotent. Then there is a sequence $(v_n)$ of complete tripotents in $M_2(u)$ such that $v_n\in M_1(v_m)$ whenever $m\ne n$.
\end{prop}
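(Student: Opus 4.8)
The plan is to reduce the statement to the Murray--von Neumann halving decomposition of the properly infinite projection $p_f(u)$ and to realize the resulting pieces as complete tripotents of $M_2(u)$. The guiding idea is that complete tripotents of $M_2(u)$ correspond, after the two-sided cut, to partial isometries with fixed initial projection $p_i(u)$ and varying final projection, and that mutually orthogonal final projections force the Peirce-$1$ (colinearity) relation.

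First I would record that, since $u$ is properly infinite, the final projection $p:=p_f(u)$ is a properly infinite projection by Proposition~\ref{P:vN properly infinite trip} (equivalence of $(i)$ and $(iii)$). Applying Lemma~\ref{L:finite projections basic facts}$(d)$ to $p$ yields a sequence $(r_n)$ of mutually orthogonal projections, each equivalent to $p$, with $\sum_n r_n=p$. For each $n$ fix a partial isometry $w_n\in M$ witnessing $p\sim r_n$, so that $w_n^*w_n=p$ and $w_nw_n^*=r_n$; in particular $w_np=w_n$.

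Next I would set $q:=p_i(u)$ and define the candidate tripotents by $v_n:=w_nu$. Using $uu^*=p$ and $u^*p=u^*$ one computes $v_n^*v_n=u^*w_n^*w_nu=u^*pu=u^*u=q$ and $v_nv_n^*=w_n(uu^*)w_n^*=w_npw_n^*=w_nw_n^*=r_n$. Thus each $v_n$ is a tripotent with $p_f(v_n)=r_n\le p$ and $p_i(v_n)=q$, so $v_n=pv_nq\in pMq=M_2(u)$. To see $v_n$ is complete in the subtriple $M_2(u)$ I would use that its Peirce-$0$ space relative to $M_2(u)$ equals $M_2(u)\cap M_0(v_n)$, where $M_0(v_n)=(1-r_n)M(1-q)$. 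Any $x$ in this intersection satisfies both $x=xq$ (from $x\in pMq$) and $x=x(1-q)$ (from $x\in M_0(v_n)$), forcing $x=0$; hence $v_n$ is complete in $M_2(u)$.

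Finally, the required colinearity $v_n\in M_1(v_m)$ for $m\ne n$ would follow from the Peirce-$1$ formula in $M$ together with the orthogonality $r_mr_n=0$. Since $p_f(v_m)=r_m$ and $p_i(v_m)=q$, we have $M_1(v_m)=(1-r_m)Mq+r_mM(1-q)$, so, using $v_n=r_nv_n=v_nq$,
\[
P_1(v_m)v_n=(1-r_m)v_nq+r_mv_n(1-q)=(1-r_m)v_n=(1-r_m)r_nv_n=r_nv_n=v_n,
\]
where the vanishing of the second summand uses $v_n(1-q)=0$ and the last equality uses $r_mr_n=0$. Thus $v_n=P_1(v_m)v_n\in M_1(v_m)$ whenever $m\ne n$, which completes the construction. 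The argument is elementary once the halving is in place; the only point demanding care is to verify completeness in the \emph{subtriple} $M_2(u)$ rather than in $M$ (the $v_n$ are far from complete in $M$), and to apply the partial-isometry Peirce formulas correctly for the two-sided cut $pMq$.
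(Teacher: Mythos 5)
Your proposal is correct and follows essentially the same route as the paper: reduce via Proposition~\ref{P:vN properly infinite trip} to the proper infiniteness of $p_f(u)$, halve it by Lemma~\ref{L:finite projections basic facts}$(d)$, and take partial isometries with initial projection $p_i(u)$ and final projections the orthogonal pieces. The paper leaves the completeness and colinearity checks implicit (``it is enough to take\dots''), whereas you carry them out explicitly and correctly, including the key point that completeness must be verified relative to the subtriple $M_2(u)$.
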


\begin{proof}
By Proposition~\ref{P:vN properly infinite trip} we know that the projection $p_f(u)$ is properly infinite. By 
Lemma~\ref{L:finite projections basic facts}$(d)$ there is an orthogonal sequence of projections $(q_n)$ such that $p_f(u)=\sum_n q_n$ 
 and $q_m\sim p_f(u)$ for each $m$. Since $p_f(u)\sim p_i(u)$, we deduce that $q_n\sim p_i(u)$ for each $n\in\en$. It is enough to take partial isometries $v_n\in M$ such that $p_i(v_n)=p_i(u)$ and $p_f(v_n)=q_n$.
\end{proof}

\begin{remark}\label{rem:vN subtriples}
Some results of this section remain valid for JBW$^*$-subtriples of von Neumann algebras. This is the case for Lemma~\ref{l charcteriz le2 and le0 in vN}$(a)$ and Proposition~\ref{P:vN order charact}. These characterizations are not completely inner as they use elements and operations from the  surrounding von Neumann algebra. 

On the other hand, for example Lemma~\ref{l charcteriz le2 and le0 in vN}$(b)$ need not be valid in this case.
\end{remark}

\subsection{Triples of the form $pV$}

Throughout this subsection $V$ will be a fixed von Neumann algebra, $p\in V$ a fixed projection and $M=pV$. We will also assume that $M$ has no nonzero direct summand triple-isomorphic to a JBW$^*$-algebra, i.e., that $M_1(u)\ne0$ for any nonzero tripotent $u\in M$ (cf. Proposition~\ref{P:direct summand}$(b)$).

\begin{remark}\label{rem:pV reduced to V}
The characterizations of $\le_2$ from Lemma~\ref{l charcteriz le2 and le0 in vN}$(a)$ and that of $\le$ from Proposition~\ref{P:vN order charact} remain valid for $M$.
This follows from Remark~\ref{rem:vN subtriples}.

Further, also the characterization of finiteness in Proposition~\ref{P:vN finite trip} remains valid for $M$.
This follows from Proposition~\ref{P:Peirce2 in pV}$(a)$.
\end{remark}

Lemma~\ref{L:vN complete} is in this case replaced by the following one.

\begin{lemma}\label{L:pV complete}
\begin{enumerate}[$(a)$]
    \item $M$ admits no unitary tripotent.
    \item A tripotent $e\in M$ is complete if and only if $p_f(e)=p$.
\end{enumerate}
\end{lemma}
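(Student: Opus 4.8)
The statement to prove is Lemma~\ref{L:pV complete}, concerning the triple $M=pV$ under the standing assumption that $M_1(u)\neq 0$ for every nonzero tripotent $u\in M$. Let me sketch a proof.

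The plan is to handle the two assertions in sequence, with part $(a)$ following quickly from the standing assumption and part $(b)$ requiring a direct analysis of the Peirce-$0$ space.

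\textbf{Part $(a)$.} A unitary tripotent $e\in M$ would satisfy $M_2(e)=M$, hence $M_0(e)=M_1(e)=\{0\}$. But the standing assumption of this subsection is precisely that $M_1(u)\neq 0$ for every nonzero tripotent $u\in M$. Since a unitary tripotent is in particular nonzero, this is a contradiction. Thus $M$ admits no unitary tripotent. (Alternatively, one can argue directly: by Lemma~\ref{L:vN complete}$(a)$ a unitary tripotent in $V$ would need $p_i(e)=p_f(e)=1$, forcing $p=1$ via $p_f(e)\le p$; but then $M=V$ is a von Neumann algebra, so taking $e=1$ gives $M_1(1)=0$, again contradicting the assumption.)

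\textbf{Part $(b)$.} Here I would use the explicit formula for the Peirce-$0$ projection on $M=pV$ recalled earlier, namely
\[
P_0(e)x=(p-p_f(e))\,x\,(1-p_i(e)),\qquad x\in M.
\]
By definition, $e$ is complete in $M$ if and only if $M_0(e)=P_0(e)M=\{0\}$. The \emph{if} direction is immediate: if $p_f(e)=p$, then $p-p_f(e)=0$, so $P_0(e)x=0$ for all $x$, whence $e$ is complete. For the \emph{only if} direction, suppose $e$ is complete, so $(p-p_f(e))\,V\,(1-p_i(e))=\{0\}$ (using $pV\cdot(1-p_i(e))=V\cdot(1-p_i(e))$ since the left factor already carries $p$). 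By \cite[Lemma V.1.7]{Tak} this forces the projections $p-p_f(e)$ and $1-p_i(e)$ to be centrally orthogonal in $V$.

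The main obstacle, and the crux of the argument, is to upgrade this central orthogonality into the equality $p_f(e)=p$, and here the standing assumption must be invoked. Suppose toward a contradiction that $p_f(e)<p$, i.e. $p-p_f(e)\neq 0$. Central orthogonality of $p-p_f(e)$ and $1-p_i(e)$ yields a central projection $z\in V$ with $p-p_f(e)\le z$ and $1-p_i(e)\le 1-z$, the latter giving $z\le p_i(e)$. Then $z(p-p_f(e))=p-p_f(e)\neq 0$ while $z(1-p_i(e))=0$, so on the summand $zV$ the initial projection is full: $p_i(ze)=z\ge p-p_f(e)$. I would then produce a nonzero tripotent $w\le e$ (or an orthogonal partner) whose final projection lies in $p-p_f(e)$ and use this to exhibit a nonzero tripotent $u\in M$ with $M_1(u)=0$, contradicting the standing hypothesis. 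Concretely, since $p-p_f(e)$ is a nonzero projection dominated by the full initial projection, the comparability theorem provides a partial isometry carrying a subprojection of $p_i(e)$ onto a subprojection of $p-p_f(e)$, and adjoining it to $e$ (as in the maximality argument of Proposition~\ref{P:Peirce2 in pV}$(d)$) enlarges $M_2(e)$ so as to kill $M_1$, yielding the forbidden algebra summand. This contradiction forces $p_f(e)=p$, completing the proof.
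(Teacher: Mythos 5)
Your part $(a)$ (the primary argument) and the first half of your part $(b)$ coincide with the paper's proof: the trivial ``if'' direction, the identity $(p-p_f(e))M(1-p_i(e))=(p-p_f(e))V(1-p_i(e))$, the appeal to \cite[Lemma V.1.7]{Tak}, and the extraction of a central projection $z$ with $p-p_f(e)\le z\le p_i(e)$ are exactly as in the paper. (The parenthetical alternative you offer for $(a)$ is flawed, though: a unitary tripotent of $M=pV$ need not be unitary in $V$.) The genuine gap is the final step of $(b)$, which you explicitly defer (``I would then produce\dots''), and whose sketched recipe does not work. First, a tripotent $w\le e$ can never have final projection under $p-p_f(e)$, since $w\le e$ forces $p_f(w)\le p_f(e)$, which is orthogonal to $p-p_f(e)$. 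Second, the partial isometry ``carrying a subprojection of $p_i(e)$ onto a subprojection of $p-p_f(e)$'' exists trivially -- indeed $p-p_f(e)\le z\le p_i(e)$, so the projection $p-p_f(e)$ itself is such a partial isometry -- but any such element lies in $M_1(e)$; it is not orthogonal to $e$ (their initial projections overlap), so it cannot be ``adjoined'' to $e$, and its existence contradicts nothing: completeness says only that $M_0(e)=\{0\}$, which is fully compatible with a large Peirce-$1$ space. The maximality construction of Proposition~\ref{P:Peirce2 in pV}$(d)$ that you invoke requires orthogonal tripotents that \emph{already} have vanishing Peirce-$1$ space, which is precisely what remains to be produced. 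So no contradiction with the standing assumption has actually been reached.

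The missing idea is that $M_1(u)=\{0\}$ is a \emph{central}-orthogonality condition: it holds if and only if $p-p_f(u)$ and $p_i(u)$, as well as $p_f(u)$ and $1-p_i(u)$, are centrally orthogonal (again by \cite[Lemma V.1.7]{Tak}). Hence the forbidden tripotent must be manufactured from the central projection $z$ itself, not from an arbitrary partial isometry into $p-p_f(e)$. This is what the paper does: $ze$ witnesses $z=zp_i(e)\sim zp_f(e)$, and $zp_f(e)\le zp\le z$, so $z\sim zp$ by the Schr\"oder--Bernstein property of Murray--von Neumann equivalence (cf. \cite[Proposition V.1.3]{Tak}). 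Any partial isometry $v\in V$ with $p_i(v)=z$ and $p_f(v)=zp$ then lies in $M=pV$, and centrality of $z$ gives $M_1(v)=(p-zp)Vz+zpV(1-z)=\{0\}$, whence $v=0$ by the standing assumption, so $z=0$ and therefore $p-p_f(e)\le z=0$, i.e.\ $p_f(e)=p$. Note that this argument is direct: no proof by contradiction is needed.
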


\begin{proof}
Assertion $(a)$ follows from the assumptions.

$(b)$ The `if part' is obvious. Let us prove the `only if part'. 

Assume that $e$ is complete, i.e., $M_0(e)=\{0\}$. Note that $p_f(e)\le p$ and $P_0(e)x=(p-p_f(e))x(1-p_i(e))$ for $x\in M$. It follows that
$$\{0\}=(p-p_f(e))M(1-p_i(e))=(p-p_f(e))V(1-p_i(e)),$$
i.e., $p-p_f(e)$ and $1-p_i(e)$ are centrally orthogonal (cf. \cite[Lemma V.1.7]{Tak}). Therefore there is a central projection $z\in V$ with $p-p_f(e)\le z$ and $1-p_i(e)\le 1-z$. The latter inequality implies $z\le p_i(e)$, i.e., $zp_i(e)=z$. We deduce that
$$z=zp_i(e)\sim zp_f(e)\le zp\le z,$$
hence $z\sim zp$. Fix a partial isometry $v\in V$ with $p_i(v)=z$ and $p_f(v)=zp$. Then $v\in M$. Moreover,
$$zpM(1-z)=\{0\}\mbox{ and }(p-zp)Mz=\{0\},$$
hence $M_1(v)=\{0\}$. It follows that $M_2(v)$ is a direct summand of $M$, hence $v=0$ by the assumption. We deduce that $z=0$, hence $p_f(e)=p$.
\end{proof}

In the following proposition we characterize the preorder $\le_0$ in our setting (substituting thus Lemma~\ref{l charcteriz le2 and le0 in vN}$(b)$).

\begin{prop}\label{P:le0 in pV}
Let $u,v\in M$ be two tripotents. Then $u\le_0v$ if and only if there is a central projection $z\in V$ with
$zp_f(v)=zp$ and $(1-z)u\le_2(1-z)v$.
\end{prop}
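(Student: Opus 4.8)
The plan is to reduce this statement to the corresponding characterization in the ambient von Neumann algebra $V$, namely Lemma~\ref{l charcteriz le2 and le0 in vN}$(b)$, while carefully translating the notion of completeness. Recall that $u,v$ are partial isometries in $V$ with final projections dominated by $p$, and that by Remark~\ref{rem:pV reduced to V} the relation $\le_2$ for tripotents in $M$ is characterized exactly as in $V$, i.e.\ $u\le_2 v$ iff $p_i(u)\le p_i(v)$ and $p_f(u)\le p_f(v)$. The key subtlety is that a tripotent in $M=pV$ is complete relative to $M$ when $p_f(v)=p$ (Lemma~\ref{L:pV complete}$(b)$), which is a strictly weaker requirement than being complete in $V$; this is why the central projection in the statement satisfies $zp_f(v)=zp$ rather than the two-sided completeness condition appearing in Lemma~\ref{l charcteriz le2 and le0 in vN}$(b)$.

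First I would prove the `if' direction, which is routine: if such a central projection $z$ exists, then $zu\le_0 zv$ (since $zv$ is complete in the direct summand $zM$ by Lemma~\ref{L:pV complete}$(b)$, giving $M_0(zv)=\{0\}$) and $(1-z)u\le_0(1-z)v$ (since $\le_2$ implies $\le_0$ by Proposition~\ref{P:impl le20}); combining the two summands via Proposition~\ref{P:preorders in direct sum} yields $u\le_0 v$.

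For the `only if' direction I would imitate the central-decomposition argument used in the proof of Lemma~\ref{l charcteriz le2 and le0 in vN}$(b)$, but now running it inside $M=pV$ and paying attention to the one-sided structure. Assuming $u\le_0 v$, i.e.\ $M_0(v)\subset M_0(u)$, I would isolate the central projection $z\in V$ that governs the ``completeness part.'' Concretely, using the Peirce-$0$ formula $P_0(v)x=(p-p_f(v))x(1-p_i(v))$ on $M$, the goal is to find a central $z$ such that $(p-p_f(v))z$ behaves like a zero summand—equivalently $zp_f(v)=zp$, matching Lemma~\ref{L:pV complete}$(b)$—while on $(1-z)M$ the relation forces $u\le_2 v$. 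I would define the relevant weak$^*$-closed ideal via an annihilator condition (as with $J_1,J_2$ in the cited proof), extract the central projection from \cite[Proposition II.3.12]{Tak}, and then on the complementary piece $(1-z)M$ run the contradiction argument: if $(1-z)u\not\le_2(1-z)v$ there is some $x\in M_2(u)\setminus M_2(v)$, producing via the reduction a nonzero element of $M_0(v)\subset M_0(u)$, and then Lemma~\ref{L:apq} delivers the contradiction exactly as before. The main obstacle I anticipate is the asymmetry: unlike in $V$, completeness in $M$ constrains only $p_f$ (through $p_f(v)=p$) and not $p_i$, so I must be careful that the ideal used to produce $z$ captures precisely the final-projection defect $p-p_f(v)$ and does not spuriously involve $1-p_i(v)$; getting this single central projection to simultaneously encode ``$v$ is complete on $zM$'' and ``$u\le_2 v$ on $(1-z)M$'' is the delicate point, and it is where the standing hypothesis that $M$ has no JBW$^*$-algebra direct summand (hence $M_1(e)\ne\{0\}$ for nonzero tripotents, via Lemma~\ref{L:pV complete}) may be needed to rule out degenerate behaviour.
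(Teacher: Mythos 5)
Your proposal is correct and follows essentially the same route as the paper: the `if' direction via Lemma~\ref{L:pV complete}$(b)$, Proposition~\ref{P:impl le20} and Proposition~\ref{P:preorders in direct sum}, and the `only if' direction by rerunning the two annihilator-ideal reductions from the proof of Lemma~\ref{l charcteriz le2 and le0 in vN}$(b)$ inside $pV$, extracting central projections, closing with the Lemma~\ref{L:apq} contradiction, and finally converting ``$zv$ is complete in $zM$'' into $zp_f(v)=zp$ via Lemma~\ref{L:pV complete}$(b)$ (whose standing no-JBW$^*$-algebra-summand hypothesis passes to $zM$, as you anticipate). One correction to the obstacle you flag: the ideal involving $1-p_i(v)$ (the analogue of $J_2$) is not ``spurious'' and must not be avoided --- in the case $(p-p_f(v))x\ne 0$ of the contradiction argument you need $\bigl[(p-p_f(v))x\bigr]V(1-p_i(v))\ne\{0\}$ to manufacture a nonzero element of $M_0(v)$, and this is exactly what that reduction guarantees; it is also harmless, because on the summand it cuts off, $p_i(v)$ becomes the full unit, so $v$ is complete there even in the ambient von Neumann algebra, hence in $M$, and that piece simply merges into the condition $zp_f(v)=zp$. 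Note also that this second case is not verbatim ``as before'': one multiplies on the left by $p_f(u)$, uses $x=px$, and applies Lemma~\ref{L:apq} in $V^{op}$.
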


\begin{proof}
Assume that $u\le_0 v$. We modify the proof of Lemma~\ref{l charcteriz le2 and le0 in vN}$(b)$. Let
$$J_1=\{x\in V\setsep (p-p_f(v))Vx=\{0\}\}.$$
Then $J_1$ is a weak$^*$-closed ideal in $V$, so there is a central projection $z_1\in V$ such that $J_1=V(1-z_1)$ (see \cite[Proposition II.3.12 ]{Tak}). Then   
$(p-p_f(v))(1-z_1)=0$, so $(p-z_1)v$ is a complete tripotent in $pV(1-z_1)=M(1-z_1)$.

Clearly $z_1 u\le_0 z_1v$ in $z_1M=pz_1V$. We continue working in $z_1V$, so to simplify notation we assume that $z_1=1$.
This reduction yields the following property: For any $x\in V\setminus\{0\}$ we have
$(p-p_f(v))Vx\ne\{0\}$.

Set
$$J_2=\{x\in V\setsep xV(1-p_i(v))=\{0\}\}.$$
Then $J_2$ is a weak$^*$-closed ideal in $V$, so there is a central projection $z_2\in V$ such that $J_2=V(1-z_2)$ (see \cite[Proposition  II.3.12]{Tak}). Then
$(1-z_2)(1-p_i(v))=0$, so $(1-z_2)v=(1-z_2)pv$ is a complete tripotent in $(1-z_2)V$, hence in $(1-z_2)pV=(1-z_2)M$.

Clearly $z_2u\le_0 z_2v$ in $z_2M=pz_2V$. We continue working in $z_2M$, so to simplify notation we assume that $z_2=1$.    

Hence, the two reductions yield the following property: For any $x\in V\setminus\{0\}$ we have
$$(p-p_f(v))Vx\ne\{0\} \mbox{ and }xV(1-p_i(v))\ne \{0\}.$$

We claim that $u\le_2 v$. Assume not. Then there is some $x\in M_2(u)\setminus M_2(v)$. The assumption $x\notin M_2(v)$ means that $x(1-p_i(v))\ne0$ or $(p-p_f(v))x\ne0$.

Assume the first possibility takes place. Then there is $y\in V$ such that
$(p-p_f(v))yx(1-p_i(v))\ne0$. This element belongs to $M_0(v)\subset M_0(u)$, 
so
$$\begin{aligned}0&=(p-p_f(v))yx(1-p_i(v))p_i(u)=(p-p_f(v))yx p_i(u)-(p-p_f(v))yx p_i(v)p_i(u)\\&=(p-p_f(v))yx -(p-p_f(v))yx p_i(v)p_i(u),\end{aligned}$$
where we used the assumption $x\in M_2(u)$. Hence
$$(p-p_f(v))yx =(p-p_f(v))yx p_i(v)p_i(u),$$
so 
$$(p-p_f(v))yx =(p-p_f(v))yx p_i(v)$$
by Lemma~\ref{L:apq}.
But this equality means that
$(p-p_f(v))yx(1-p_i(v))=0$, a contradiction.

Assume that the second possibility takes place, i.e., 
$(p-p_f(v))x\ne0$. Then there is $y\in V$ with 
$(p-p_f(v))xy(1-p_i(v))\ne0$. This element belongs to $M_0(v)\subset M_0(u)$, hence

$$\begin{aligned}0&=p_f(u)(p-p_f(v))xy(1-p_i(v))=p_f(u)xy(1- p_i(v))-p_f(u)p_f(v)xy (1-p_i(v))\\&=xy(1-p_i(v)) -p_f(u)p_f(v)xy (1-p_i(v)).\end{aligned}$$ 
It follows that
$$xy(1-p_i(v)) =p_f(u)p_f(v)xy (1-p_i(v)),$$
and so
$$xy(1-p_i(v)) =p_f(v)xy (1-p_i(v))$$
by Lemma~\ref{L:apq} (applied in $V^{op}$).
Since $x\in M$, hence $x=px$, we deduce $$(p-p_f(v))xy(1-p_i(v))=0,$$ which leads to a contradiction.

So, to finalize the proof, observe that we have shown that there is a central projection $z\in V$ such that $zv$ is complete in $pzV$ and $(1-z)u\le_2(1-z)v$. An application of Lemma~\ref{L:pV complete}$(b)$ yields $zp_f(v)=p_f(zv)=zp$.
\end{proof}

\begin{prop}\label{P:pV direct summand}
Any direct summand of $M$ is of the form $pzV$ where $z\in V$ is a central projection.
\end{prop}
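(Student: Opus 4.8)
The plan is to realise the direct summand through a tripotent and then to pin down the central projection of $V$ that describes it. Let $I$ be a direct summand of $M=pV$. By Proposition~\ref{P:direct summand}$(a)$ there is a tripotent $u\in M$ with $I=M_2(u)+M_1(u)$ and, in particular,
$$\J{M_0(u)}{M_1(u)}{M}=\{0\}.$$
Write $e=p_f(u)\le p$ and $f=p_i(u)$. Since $e=uu^*$ and $f=u^*u$ are Murray--von Neumann equivalent projections, they have a common central support, which I denote by $z$; thus $z$ is central and $e\le z$, $f\le z$. Using the Peirce formulas for $M=pV$ recalled at the beginning of this section one computes $M_2(u)=eVf$ and $M_1(u)=(p-e)Vf+eV(1-f)$, whence
$$I=M_2(u)+M_1(u)=pVf+eV(1-f).$$
Because $f=fz$ and $e=ze$ with $z$ central, each summand lies in $zV\cap pV=pzV$, so $I\subseteq pzV$. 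The real work is the reverse inclusion $pzV\subseteq I$, and this is where the direct-summand hypothesis must be used.

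The decisive step is to extract from $\J{M_0(u)}{M_1(u)}{M}=\{0\}$ the orthogonality relation $(p-e)V(1-f)\,V\,e=\{0\}$. I would test the vanishing triple product on $a=(p-e)x(1-f)\in M_0(u)$, $b=es(1-f)\in M_1(u)$ and $c\in M=pV$; since $(1-f)f=0$ the summand $cb^*a$ is zero, and the surviving summand
$$\tfrac12\,ab^*c=\tfrac12\,(p-e)x(1-f)s^*e\,c$$
must vanish for all $x,s\in V$ and $c\in pV$. As $ec$ ranges over all of $eV$ (take e.g. $c=e\in M$), this forces $(p-e)x(1-f)s^*e=0$, i.e. $(p-e)V(1-f)\,V\,e=\{0\}$. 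Now for each single $a\in(p-e)V(1-f)$ the identity $aVe=\{0\}$ yields $\wscl{VaV}\,e=\{0\}$, and since $\wscl{VaV}=c(a)V$ is the weak$^*$-closed two-sided ideal generated by $a$ (\cite[Proposition II.3.12]{Tak}), the central supports $c(a)$ and $c(e)=z$ are orthogonal (this is \cite[Lemma V.1.7]{Tak}). Hence $za=0$ for every such $a$, that is,
$$z\,(p-e)V(1-f)=\{0\}.$$

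With this orthogonality in hand the reverse inclusion is immediate. For $v\in V$ split $pzv=pzvf+pzv(1-f)$; the first term lies in $pVf\subseteq I$. For the second, using $(p-e)p=p-e$ and centrality of $z$ we get $(p-e)\bigl(pzv(1-f)\bigr)=z(p-e)v(1-f)\in z(p-e)V(1-f)=\{0\}$, so $pzv(1-f)=e\bigl(pzv(1-f)\bigr)=ezv(1-f)\in eV(1-f)\subseteq I$ (recall $ez=e$). Thus $pzv\in I$, giving $pzV\subseteq I$ and therefore $I=pzV$ with $z$ central, as claimed. The only genuinely delicate point is the middle paragraph: deriving $(p-e)V(1-f)Ve=\{0\}$ from the mixed triple product and converting it into the central orthogonality $z(p-e)V(1-f)=\{0\}$; the two inclusions are then routine Peirce-calculus bookkeeping. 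I note also that only $\J{M_0(u)}{M_1(u)}{M}=\{0\}$ is used, the second identity $\J{M_1(u)}{M_2(u)}{M_1(u)}=\{0\}$ of Proposition~\ref{P:direct summand}$(a)$ being unnecessary here.
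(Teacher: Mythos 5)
Your proof is correct, and it shares the paper's overall skeleton — realize $I=M_2(u)+M_1(u)$ through a tripotent, identify the central projection $z$, and prove the two inclusions — but the engine behind the crucial orthogonality $z(p-e)V(1-f)=\{0\}$ is genuinely different. The paper takes $z$ to be the central projection with $zV=\wscl{\span}\, VrV$ (where $r=p_i(u)$, $q=p_f(u)$), notes $VrV=VrVqV$ from $q=uru^*$, $r=u^*qu$, and then feeds the \emph{first} identity of Proposition~\ref{P:direct summand}$(a)$, namely $\J{M_1(u)}{M_2(u)}{M_1(u)}=\{0\}$, into this to obtain $(p-q)zV(1-r)=\{0\}$; it concludes by showing $M_0(u)\cap zV=\{0\}$ and invoking the decomposition of $M$ into $I$ and $M_0(u)$. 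You instead take $z$ to be the common central support of the equivalent projections $e\sim f$ (a cleaner identification than the paper's ideal-generation step, though it produces the same $z$), use only the \emph{second} identity $\J{M_0(u)}{M_1(u)}{M}=\{0\}$ to get $(p-e)V(1-f)Ve=\{0\}$, convert this into $z(p-e)V(1-f)=\{0\}$ via central supports (Proposition II.3.12 and Lemma V.1.7 of \cite{Tak}), and finish with the explicit splitting $pzv=pzvf+pzv(1-f)$. So the two arguments use complementary halves of the hypothesis in Proposition~\ref{P:direct summand}$(a)$: your closing observation that $\J{M_1(u)}{M_2(u)}{M_1(u)}=\{0\}$ is unnecessary is correct, and, dually, the paper's proof shows that the identity you use is dispensable as well — either one alone pins down the summand.

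One trivial slip: with $a=(p-e)x(1-f)$, $b=es(1-f)$, $c\in pV$, the term $cb^*a=c(1-f)s^*e(p-e)x(1-f)$ vanishes because $e(p-e)=0$, not because $(1-f)f=0$ (that reason would apply had you chosen $b\in(p-e)Vf$, in which case the \emph{other} term dies). The surviving term you wrote down is the right one, so nothing downstream is affected.
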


\begin{proof}
Let $I$ be a direct summand of $M$. Fix a tripotent $u\in I$ complete in $I$. Then $I=M_2(u)+M_1(u)$ and $M_0(u)$ is the orthogonal direct summand. Moreover, the properties from Proposition~\ref{P:direct summand}$(a)$ are satisfied. 

Note that $u$ is a partial isometry in $V$, set $r=p_i(u)$ and $q=p_f(u)$. Clearly $q\le p$. Since
$$q=uu^*=uru^*\mbox{ and }r=u^*u=u^*qu,$$
we deduce that
$$VrV=VqV=VrVqV=VqVrV.$$
Moreover, 
$J=\wscl{\span} VrV$ is a weak$^*$-closed two-sided ideal in $V$, hence $J=zV$ for some central projection $z\in V$ (see \cite[Proposition II.3.12]{Tak}). Since $r,q\in J$, we deduce that
$r\vee q\le z$.

Note that   
$$M_2(u)=qVr\subset VrV\subset J$$
and
$$M_1(u)\subset qV(1-r)+(p-q)Vr\subset VqV+VrV\subset J,$$
hence
$I\subset J=zV$, so
$$I\subset zV\cap M=zV\cap pV=zpV.$$

Next we are going to show that $M_0(u)\cap J=\{0\}$. To this end fix $x\in M_0(u)\cap J$. Then $x=(p-q)x(1-r)$ and $x=zx$. Thus
$x=(p-q)zx(1-r)$. On the other hand,
by Proposition~\ref{P:direct summand}$(a)$ we know that $\J{M_1(u)}{M_2(u)}{M_1(u)}=\{0\}$, in particular
$(p-q)VrVqV(1-r)=\{0\},$
so by the above we get $(p-q)J(1-r)=\{0\}$, hence $(p-q)zV(1-r)=\{0\}$.
It follows that $x=0$.

Now it is clear that $I=zpV$.
\end{proof}

As an easy consequence of the previous results we obtain the following corollary.

\begin{cor}\label{cor:pV properly infinite trip}
Let $u\in M$ be a tripotent. Then $u$ is properly infinite in $M$ if and only if it is properly infinite in $V$.
\end{cor}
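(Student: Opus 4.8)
The plan is to reduce both notions of proper infiniteness to one and the same condition expressed in terms of central projections of $V$. The starting observation is that for every central projection $z\in V$ the element $zu$ is again a tripotent: since $z$ is central we have $zu=pzu\in pV=M$, and $(zu)^*(zu)=zp_i(u)$, $(zu)(zu)^*=zp_f(u)$. Moreover, writing $u=zu+(1-z)u$, the decomposition $V=zV\oplus^{\ell_\infty}(1-z)V$ and the corresponding decomposition $M=pzV\oplus^{\ell_\infty}p(1-z)V$ both split $u$ so that its first component is exactly $zu$. Hence the canonical projection of $u$ onto the summand $zV$ of $V$ and onto the summand $pzV$ of $M$ equals $zu$ in either case.

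With this in hand I would phrase the two characterizations. By Proposition~\ref{P:pV direct summand} every direct summand of $M$ is of the form $pzV$ for a central projection $z\in V$, so the definition of proper infiniteness says that $u$ is properly infinite in $M$ if and only if $u\ne0$ and, for every central $z\in V$ with $zu\ne0$, the tripotent $zu$ is infinite \emph{in $M$}. On the other side, Corollary~\ref{cor:properly infinite projection}$(i)$ applied to the von Neumann algebra $V$ gives that $u$ is properly infinite in $V$ if and only if $u\ne0$ and $zu$ is infinite \emph{in $V$} whenever $zu\ne0$. Thus the two statements differ only in whether infiniteness of $zu$ is measured in $M$ or in $V$.

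The final step is to remove this ambiguity: a tripotent $w\in M$ is infinite in $M$ if and only if it is infinite in $V$. Indeed, by Remark~\ref{rem:pV reduced to V} finiteness of $w$ in $M$ is governed by finiteness of the projection $p_f(w)$, and by Proposition~\ref{P:vN finite trip} the very same projection governs finiteness of $w$ viewed in $V$; negating yields the asserted equivalence for infiniteness. (Conceptually this is because finiteness of a tripotent depends only on the JBW$^*$-algebra $M_2(w)$, and $M_2(w)=V_2(w)$ by Proposition~\ref{P:Peirce2 in pV}$(a)$.) Applying this with $w=zu$ shows that the two conditions from the previous paragraph coincide termwise, and therefore $u$ is properly infinite in $M$ if and only if it is properly infinite in $V$.

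There is no serious obstacle here; the proof is essentially bookkeeping. The only points that require a little care are the correct identification of the canonical projection $P_{pzV}u=zu$ (matching the direct summands of $M$ furnished by Proposition~\ref{P:pV direct summand} with the central projections of $V$) and the fact, already recorded in Remark~\ref{rem:pV reduced to V}, that the intrinsic notion of finiteness of a tripotent is unaffected by passing between $M$ and the ambient $V$.
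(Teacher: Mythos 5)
Your proof is correct and is precisely the argument the paper intends: the paper states the corollary as ``an easy consequence of the previous results,'' namely Proposition~\ref{P:pV direct summand} (direct summands of $M=pV$ are exactly the $pzV$ with $z$ central, so $P_{pzV}u=zu$), Corollary~\ref{cor:properly infinite projection} for $V$, and the transfer of finiteness between $M$ and $V$ recorded in Remark~\ref{rem:pV reduced to V} (equivalently, $M_2(w)=V_2(w)$ from Proposition~\ref{P:Peirce2 in pV}$(a)$). Your bookkeeping, including the identification $(zu)^*(zu)=zp_i(u)$ and the termwise matching of the two characterizations, fills in exactly the steps the authors left implicit.
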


Hence, we get also the following generalization of Proposition~\ref{P:vN halving}.

\begin{prop}\label{P:pV halving}
Let $u\in M$ be a properly infinite tripotent. Then there is a sequence $(v_n)$ of complete tripotents in $M_2(u)$ such that $v_n\in M_1(v_m)$ whenever $m\ne n$.
\end{prop}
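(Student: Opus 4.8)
The plan is to reduce the statement for $M=pV$ to the already-proved case of von Neumann algebras (Proposition~\ref{P:vN halving}). By Corollary~\ref{cor:pV properly infinite trip}, the tripotent $u\in M$ is properly infinite in $M$ if and only if it is properly infinite in $V$. Thus $u$ is a properly infinite tripotent in the von Neumann algebra $V$, and Proposition~\ref{P:vN halving} applies to $u$ inside $V$.

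More concretely, first I would invoke Proposition~\ref{P:vN halving} in $V$ to obtain a sequence $(v_n)$ of complete tripotents in $V_2(u)$ with $v_n\in V_1(v_m)$ for $m\ne n$. The key point is that all the relevant data are intrinsic to the Peirce-$2$ space of $u$: by Proposition~\ref{P:Peirce2 in pV}$(a)$ we have $M_2(u)=V_2(u)$, so the tripotents $v_n$ constructed in $V$ all lie in $M_2(u)\subset M$. Since they are complete in $V_2(u)=M_2(u)$, they are complete tripotents in $M_2(u)$ in the sense required.

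It then remains to check that the relation $v_n\in M_1(v_m)$ holds for $m\ne n$, where now the Peirce-$1$ space is computed relative to $M=pV$ rather than relative to $V$. Here one must be mildly careful, since the Peirce projections on $M$ differ from those on $V$ (as recorded in the displayed formulas at the start of this section). However, each $v_n$ lies in $M_2(u)=V_2(u)$, and for tripotents sitting inside $M_2(u)$ the Peirce decomposition induced by $v_m$ relative to $M$ agrees with the one relative to $V$ on that subspace; indeed, for a complete tripotent $v_m$ in $M_2(u)$ one has $p_f(v_m)\le p_f(u)\le p$, so the formula $P_1(v_m)x=(p-p_f(v_m))xp_i(v_m)+p_f(v_m)x(1-p_i(v_m))$ reduces, when restricted to $x=v_n\in M_2(u)$, to the corresponding expression inside $V_2(u)$. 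Consequently $v_n\in V_1(v_m)$ translates directly into $v_n\in M_1(v_m)$.

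The main obstacle is precisely this last verification: ensuring that ``complete in $M_2(u)$'' and the relation $\le_1$ transfer correctly between the two ambient structures, given that the Peirce-$1$ projection formulas for $M=pV$ and for $V$ are not identical. Once one observes that everything takes place within the common Peirce-$2$ space $M_2(u)=V_2(u)$, on which the two triple structures coincide, the transfer is immediate and the proof reduces to citing Corollary~\ref{cor:pV properly infinite trip}, Proposition~\ref{P:Peirce2 in pV}$(a)$, and Proposition~\ref{P:vN halving}.
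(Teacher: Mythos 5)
Your proof is correct and is essentially the paper's own argument: the paper states this proposition as an immediate consequence (``Hence, we get also the following generalization\dots'') of Corollary~\ref{cor:pV properly infinite trip} combined with Proposition~\ref{P:vN halving}, which is exactly the reduction you carry out. Your explicit check that completeness and the Peirce-$1$ relation transfer between $M=pV$ and $V$ (using $M_2(u)=V_2(u)$ from Proposition~\ref{P:Peirce2 in pV}$(a)$ and the fact that $p_f(v_m)\le p$ makes the two Peirce-$1$ formulas agree on elements of $M$) merely fills in details the paper leaves implicit.
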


We finish this section by the following proposition on decomposition of the triples of the form $pV$ to the finite and properly infinite summands.

\begin{prop}\label{P:pV decomp}
There is a decomposition $M=p_1V_1\oplus^{\ell_\infty}p_2V_2$ such that $p_1$ is finite and either $V_2=\{0\}$ or $p_2$ is properly infinite. Moreover, the following holds.
\begin{enumerate}[$(i)$]
    \item $p_1V_1$ is a finite JBW$^*$-triple.
    \item If $p_2\ne 0$, then any complete tripotent in $p_2V_2$ is properly infinite.
\end{enumerate}
\end{prop}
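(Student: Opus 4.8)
The plan is to reduce everything to the structure theory of projections in the ambient von Neumann algebra $V$ and then translate back using the transfer results of this subsection. I would distinguish two cases according to whether the projection $p$ is finite or infinite as a projection in $V$. If $p$ is finite, there is nothing to decompose: I take $V_1=V$, $p_1=p$, $V_2=\{0\}$ and $p_2=0$, so that the alternative $V_2=\{0\}$ holds and (ii) is vacuous; assertion (i) then follows from the general claim proved below.

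If $p$ is infinite, the key step is to invoke Lemma~\ref{L:finite projections basic facts}(b), which supplies a central projection $z\in V$ such that $zp$ is properly infinite and $(1-z)p$ is finite. I then set
$$V_1=(1-z)V,\quad p_1=(1-z)p,\quad V_2=zV,\quad p_2=zp.$$
Since $z$ is central, the identities $zpV=(zp)(zV)$ and $(1-z)pV=((1-z)p)((1-z)V)$ give at once the decomposition $M=pV=p_1V_1\oplus^{\ell_\infty}p_2V_2$; here $p_1$ is a finite projection and $p_2$ is a nonzero properly infinite projection ($p_2\ne 0$, for otherwise $p=(1-z)p$ would be finite). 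I would also record that the blanket assumption of this subsection descends to both summands: for a nonzero tripotent $u$ lying in one summand the whole Peirce-one space $M_1(u)$ is computed inside that summand (the other summand sits in $M_0(u)$ by orthogonality), so $M_1(u)\ne\{0\}$ forces the corresponding Peirce-one space there to be nonzero. This licenses the use of Lemma~\ref{L:pV complete} and Corollary~\ref{cor:pV properly infinite trip} applied to $p_1V_1$ and $p_2V_2$ as triples of the form $p'V'$.

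For assertion (i) I would show that every tripotent $u\in p_1V_1$ is finite: indeed $p_f(u)\le p_1$ with $p_1$ finite, so $p_f(u)$ is finite by Lemma~\ref{L:finite projections basic facts}(a), and then Proposition~\ref{P:vN finite trip}, valid here by Remark~\ref{rem:pV reduced to V}, yields that $u$ is finite. Hence $p_1V_1$ is a finite JBW$^*$-triple, and in particular $p_1$ itself is finite. For assertion (ii), assuming $p_2\ne 0$, I take any complete tripotent $e\in p_2V_2$; by Lemma~\ref{L:pV complete}(b) its final projection satisfies $p_f(e)=p_2$, which is properly infinite (central cutoff preserves proper infiniteness, so $p_2=zp$ is properly infinite in $V_2=zV$ as well). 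Transferring proper infiniteness to the ambient algebra via Corollary~\ref{cor:pV properly infinite trip} and characterizing it through the final projection via Proposition~\ref{P:vN properly infinite trip}, I conclude that $e$ is properly infinite; the same chain applied to $e=p_2$ shows $p_2$ is properly infinite as a tripotent.

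The routine parts are the algebraic identifications of the central corners and the computation of final projections. The only point requiring genuine care is the bookkeeping between the three notions of finiteness involved—finite projection in $V$, finite (respectively properly infinite) tripotent in $pV$, and finiteness of the whole triple—but each translation is furnished by one of the earlier results (Lemma~\ref{L:finite projections basic facts}, Proposition~\ref{P:vN finite trip}, Corollary~\ref{cor:pV properly infinite trip} and Proposition~\ref{P:vN properly infinite trip}), so no new difficulty arises.
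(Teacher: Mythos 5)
Your proposal is correct and follows essentially the same route as the paper: the paper also splits into the trivial case ($p$ finite) and otherwise invokes \cite[Proposition 6.3.7]{KR2} (the content of Lemma~\ref{L:finite projections basic facts}$(b)$) for the central decomposition, then deduces $(i)$ from Lemma~\ref{L:finite projections basic facts}$(a)$ and Proposition~\ref{P:vN finite trip}, and $(ii)$ from Corollary~\ref{cor:pV properly infinite trip}, Proposition~\ref{P:vN properly infinite trip} and Lemma~\ref{L:pV complete}$(b)$. The only difference is that you spell out details the paper leaves implicit, notably that the standing assumption $M_1(u)\ne\{0\}$ for nonzero tripotents descends to both central summands, which is a worthwhile (and correct) verification.
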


\begin{proof} By assumptions $M= pV,$ where $p$ is a projection in the von Neumann algebra $V$. If $p$ is finite, set $p_1=p$, $V_1=V$, $V_2=\{0\}$ and $p_2=0$. If $p$ is infinite, the existence of the decomposition follows from \cite[Proposition 6.3.7]{KR2}.

Assertion $(i)$ is a consequence of Lemma~\ref{L:finite projections basic facts}$(a)$ and Proposition~\ref{P:vN finite trip}.

Assertion $(ii)$ then follows by combining Corollary~\ref{cor:pV properly infinite trip}, Proposition~\ref{P:vN properly infinite trip} and  Lemma~\ref{L:pV complete}$(b)$.
\end{proof}

\section{Symmetric and antisymmetric parts of von Neumann algebras}\label{sec:symmetric and antisymmetric}

In this section we study the three preorders and the notion of finiteness in three types of summands from the representation of JBW$^*$-triples. We will focus on the summands $A\overline{\otimes}C$ where $A$ is an abelian von Neumann algebra and $C$ is a Cartan factor of type $2$ or $3$ and on the summand $H(W,\alpha)$. Although the first two types of summands are of type I and the third one is continuous, it turns out that their structure is in a sense similar.

This section is divided into several subsections. In Subsection~\ref{sec:involutions general} we develop an abstract approach which can be used in all three cases. Subsection~\ref{sec:involutions cts} settles the continuous summand $H(W,\alpha)$ -- this turns out to be quite easy. In Subsection~\ref{subsec:involutions from conjugations} we show how the abstract setting can be applied to the type I summands and, moreover, we provide some results on the structure of the tensor product $A\overline{\otimes}B(H)$. The next two subsections are devoted to the analysis of the two type I summands. It turns out that the properties of $A\overline{\otimes}C$ where $C$ is a Cartan factor of type $3$ are very similar to the properties of the continuous summands. The remaining case, i.e., the summand  $A\overline{\otimes}C$ where $C$ is a Cartan factor of type $2$, turns out to be more complicated.

\subsection{Some general facts on linear involutions}\label{sec:involutions general}

Assume $W$ is a fixed von Neumann algebra and $\alpha$ is a linear involution on $W$ commuting with $^*$, i.e.,
\begin{enumerate}[$(i)$]
    \item $\alpha:W\to W$ is a linear mapping;
    \item $\alpha(xy)=\alpha(y)\alpha(x)$ for $x,y\in W$;
    \item $\alpha(x^*)=\alpha(x)^*$ for $x\in W$;
    \item $\alpha(\alpha(x))=x$ for $x\in W$.
\end{enumerate}
Since $\alpha$ can be viewed as a $*$-isomorphism of $W$ onto the opposite algebra $W^{op}$, $\alpha$ is necessarily an isometry and a weak$^*$-to-weak$^*$ isomorphism.

Set
$$H(W,\alpha)=\{x\in W\setsep \alpha(x)=x\}\mbox{ and }H^-(W,\alpha)=\{x\in W\setsep \alpha(x)=-x\}.$$
Then $H(W,\alpha)$ is a JBW$^*$-subalgebra of $W$ and $H^-(W,\alpha)$ is a JBW$^*$-subtriple of $W$.
We start by the following abstract decomposition result. 

\begin{lemma}\label{L:H(W,alpha) decomposition}
There are central projections $z_1,z_2\in W$ with the following properties.
\begin{enumerate}[$(a)$]
    \item $z_1z_2=0$;
    \item $\alpha(x)=x$ for each $x$ in the center of $z_1W$;
    \item The von Neumann algebra $z_1W$ is invariant under $\alpha$;
    \item $H(W,\alpha)$ is triple-isomorphic to $H(z_1W,\alpha)\oplus^{\ell_\infty} z_2W$;
    \item $H^-(W,\alpha)$ is triple-isomorphic to $H^-(z_1W,\alpha)\oplus^{\ell_\infty} z_2W$.
    \end{enumerate}
\end{lemma}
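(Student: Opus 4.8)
The plan is to reduce everything to the action of $\alpha$ on the center $Z=Z(W)$. First I would record that, being a weak$^*$-continuous $*$-anti-automorphism, $\alpha$ maps $Z$ onto itself and its restriction $\tau:=\alpha|_Z$ is a \emph{normal} $*$-automorphism of the abelian von Neumann algebra $Z$ with $\tau^2=\operatorname{id}$. Both central projections $z_1,z_2$ will be extracted from $\tau$. To get $z_1$, let it be the supremum of all central projections $e$ that are \emph{totally fixed}, meaning $\tau(f)=f$ for every central $f\le e$. Since $\tau$ is normal it preserves arbitrary suprema of projections, and for a totally fixed family $(e_i)$ and any central $f\le\sup_i e_i$ one has $f=\sup_i(fe_i)$, whence $\tau(f)=\sup_i\tau(fe_i)=\sup_i fe_i=f$; thus the totally fixed projections are closed under suprema and $z_1$ is itself totally fixed. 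By the spectral theorem this yields $\alpha(x)=x$ for every $x$ in the center $Zz_1$ of $z_1W$, which is $(b)$, while $\alpha(z_1)=z_1$ together with centrality gives $\alpha(z_1W)=z_1W$, which is $(c)$.

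The heart of the argument is the construction of $z_2$ inside $q:=1-z_1$, for which I want $z_2\,\alpha(z_2)=0$ and $z_2+\alpha(z_2)=q$. By maximality of $z_1$ no nonzero central subprojection of $q$ is totally fixed, i.e.\ every nonzero central $r\le q$ admits a central $s\le r$ with $\tau(s)\ne s$. I would then run Zorn's lemma on the family of central projections $e\le q$ satisfying $e\,\alpha(e)=0$, ordered by $\le$, and take a maximal element $z_2$. The projection $r:=q-z_2-\alpha(z_2)$ is central and $\alpha$-fixed; if $r\ne0$, pick central $s\le r$ with $\tau(s)\ne s$ and set $e'=s\bigl(1-\tau(s)\bigr)$. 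A direct computation in the abelian algebra $Z$ shows $e'\ne0$, $e'\le r$, and $e'\alpha(e')=0$, so $z_2+e'$ would strictly enlarge $z_2$ inside the family, a contradiction. Hence $z_2+\alpha(z_2)=q$, and $z_1,z_2,\alpha(z_2)$ are mutually orthogonal central projections summing to $1$; in particular $z_1z_2=0$, giving $(a)$. I expect this halving step -- realizing the ``free'' part of the order-$2$ automorphism $\tau$ as $z_2\oplus\alpha(z_2)$ -- to be the main obstacle, since it is the only place where the actual structure of $\tau$ is used rather than formal manipulation.

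Finally, for $(d)$ and $(e)$ I would use the block decomposition $W=z_1W\oplus z_2W\oplus\alpha(z_2)W$. Writing $e=z_2$ and $f=\alpha(z_2)$, so that $\alpha(z_1)=z_1$, $\alpha(e)=f$, $\alpha(f)=e$, and $\alpha$ restricts to a triple isomorphism $fW\to eW=z_2W$, I would consider $\Psi(x)=(z_1x,z_2x)$. For $x\in H(W,\alpha)$, comparing the three central components of $\alpha(x)=x$ gives $\alpha(z_1x)=z_1x$ (so $z_1x\in H(z_1W,\alpha)$) together with $ex=\alpha(fx)$, while $fx\in fW$ is unconstrained; hence $\Psi$ is a bijection of $H(W,\alpha)$ onto $H(z_1W,\alpha)\oplus^{\ell_\infty}z_2W$, with explicit inverse $(a,b)\mapsto a+b+\alpha(b)$. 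Because $z_1,e,f$ are central and orthogonal, the triple product $\J{\cdot}{\cdot}{\cdot}$ splits over the blocks, so $\Psi$ is a triple homomorphism, and the explicit inverse makes it a triple isomorphism, proving $(d)$. The proof of $(e)$ is identical once one checks that $H^-(W,\alpha)$ is a subtriple (the extra sign is harmless since $\alpha(y)^*=-y^*$ when $\alpha(y)=-y$, and the triple product is of odd total degree in the sign) and that the defining relation merely becomes $ex=-\alpha(fx)$, which affects neither bijectivity nor the block-splitting of $\Psi$.
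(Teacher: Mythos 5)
Your proof is correct and follows essentially the same route as the paper: a Zorn-type maximality argument on central projections $z$ satisfying $z\alpha(z)=0$, the key enlargement computation $(z_2+e')\alpha(z_2+e')=0$ to show the maximal one exhausts the non-fixed part, and the same bijection $x\mapsto(z_1x,z_2x)$ with inverse $(a,b)\mapsto a+b+\alpha(b)$ for assertions $(d)$ and $(e)$. The only organizational difference is that you build the fixed part $z_1$ first (as the supremum of totally fixed central projections) and then run Zorn inside $1-z_1$, whereas the paper takes $z_2$ maximal in all of $W$, sets $z_1=1-z_2-\alpha(z_2)$, and derives property $(b)$ from that single maximality; the substance is identical.
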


\begin{proof} Consider the family 
$$\{z\in W\setsep z\mbox{ is a central projection such that }\alpha(z)z=0\}$$
ordered in the standard way. This family is nonempty (it contains $0$) and it is clear that for each totally ordered subfamily its supremum belongs to the family (as $\alpha$ is weak$^*$-to-weak$^*$ continuous). Hence by Zorn's lemma there is a maximal element of this family. Fix one and denote it by $z_2$.  

Then $z_2$ is a central projection, $\alpha(z_2)$ is also a central projection and $z_2\alpha(z_2)=0$. Set $z_1=1-(z_2+\alpha(z_2))$.
Then $z_1$ is again a central projection. Let us check  properties $(a)$--$(d)$.

Property $(a)$ is obvious. 
Let us continue by proving $(b)$. Let $z\in W$ be a central projection with $z\le z_1$. If $z\ne \alpha(z)$, then one of the projections $z-z\alpha(z)$ and $\alpha(z)-z\alpha(z)$ is not zero. Moreover,
since $\alpha(z-z\alpha(z))=\alpha(z)-z\alpha(z)$, both are nonzero. Set $w=z-z\alpha(z)$. Then $w\le z\le z_1$, $w\ne 0$ and $\alpha(w)w=0$. Thus
$$(z_2+w)\alpha(z_2+w)=z_2\alpha(z_2)+w\alpha(z_2)+z_2\alpha(w)+w\alpha(w)=0,$$
a contradiction with maximality of $z_2$. Since the center is the closed linear span of central projections,  assertion $(b)$ follows.

$(c)$ Fix $x\in z_1W$. Then
$$\alpha(x)=\alpha(xz_1)=\alpha(z_1)\alpha(x)=z_1\alpha(x)\in z_1W,$$
where we used property $(b)$.

$(d)$ and $(e)$: Any element $x\in W$ can be expressed 
as $x=xz_1+xz_2+x\alpha(z_2)$. Define an operator
$T:W\mapsto z_1W\oplus z_2W$ by $Tx=z_1x+z_2x$. It is clearly a weak$^*$-to-weak$^*$ continuous $*$-homomorphism. Moreover,
$T|_{H(W,\alpha)}: H(W,\alpha) \to H(z_1W,\alpha)\oplus z_2W$ is a bijection. 
(Indeed,  $x\in H(W,\alpha)$ if and only if $\alpha(x)\alpha(z_2)=xz_2$ and $\alpha(x)z_1=xz_1$.)
Similarly, $T|_{H^-(W,\alpha)}: H^-(W,\alpha)\to H^-(z_1W,\alpha)\oplus z_2W$ is a bijection. 
This completes the proof.
\end{proof} 

\begin{remark}
The previous lemma shows that we can assume that $\alpha$ satisfies moreover the condition
\begin{enumerate}[$(v)$]
    \item $\alpha(x)=x$ for each $x$ in the center of $W$.
\end{enumerate}
Indeed, we can decompose $H(W,\alpha)$ and $H^{-}(W,\alpha)$ into two direct summands one of them satisfies the additional condition and the second one is a von Neumann algebra (and von Neumann algebras are addressed in Subsection~\ref{subsec:vonNeumann algebras}).

Thus, in the sequel we will assume that $\alpha$ satisfies $(v)$ as well. Linear involutions satisfying $(v)$ are called {\em central} (cf. \cite{stormer}).
\end{remark}

\begin{obs}\label{obs:pf=alpha(pi)}
If $u\in H(W,\alpha)$ or $u\in H^-(W,\alpha)$ is a tripotent, then $u$ is a partial isometry in $W$. Moreover, $p_i(u)=\alpha(p_f(u))$. In particular, the initial projection is uniquely determined by the final one (and vice versa). 
\end{obs}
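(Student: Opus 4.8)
The plan is to reduce both cases to a single short computation, since the defining properties of the involution $\alpha$ do essentially all of the work. First I would recall that, as already noted immediately before the statement, both $H(W,\alpha)$ and $H^-(W,\alpha)$ are JB$^*$-subtriples of $W$. Consequently a tripotent $u$ lying in either of them is a tripotent of $W$ itself, and in a von Neumann algebra equipped with the triple product \eqref{eq JC triple product} the tripotents are exactly the partial isometries, because $u=\J uuu=uu^*u$ is precisely the defining relation of a partial isometry. This settles the first assertion and legitimizes writing $p_i(u)=u^*u$ and $p_f(u)=uu^*$.

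For the identity $p_i(u)=\alpha(p_f(u))$ I would simply evaluate $\alpha$ on the final projection, using the defining properties $(ii)$ (anti-multiplicativity) and $(iii)$ (compatibility with $^*$) of $\alpha$:
$$\alpha(p_f(u))=\alpha(uu^*)=\alpha(u^*)\alpha(u)=\alpha(u)^*\alpha(u).$$
At this point I would split into the two cases. If $u\in H(W,\alpha)$ then $\alpha(u)=u$, whereas if $u\in H^-(W,\alpha)$ then $\alpha(u)=-u$; in either case the scalar $\pm1$ squares to $1$, so that $\alpha(u)^*\alpha(u)=u^*u=p_i(u)$. This establishes the claimed equality uniformly.

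Finally, the last sentence of the statement follows by applying $\alpha$ to both sides of $p_i(u)=\alpha(p_f(u))$ and invoking the involutivity $\alpha\circ\alpha=\mathrm{id}$, i.e.\ property $(iv)$, which yields the symmetric relation $p_f(u)=\alpha(p_i(u))$; thus each of the two projections determines the other. I do not expect any genuine obstacle here, as the whole statement is a direct consequence of the algebraic axioms of $\alpha$. The only point deserving a word of care is the sign cancellation in the antisymmetric case, which works precisely because $\alpha$ is applied once to $u$ and once to $u^*$, so the two occurrences of the sign multiply to $+1$; this is what makes the formula insensitive to whether $u$ lies in the symmetric or the antisymmetric part.
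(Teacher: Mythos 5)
Your proposal is correct and follows essentially the same route as the paper: the paper's proof is exactly the computation $\alpha(p_f(u))=\alpha(uu^*)=\alpha(u)^*\alpha(u)=u^*u=p_i(u)$, with the remark that the same equalities hold in the antisymmetric case thanks to the sign cancellation you point out. Your additional justification that tripotents in these subtriples are partial isometries of $W$ (via $u=\J uuu=uu^*u$) is a harmless elaboration of what the paper leaves implicit.
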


\begin{proof}
Assume $u\in H(W,\alpha)$. Then
$$\alpha(p_f(u))=\alpha(uu^*)=\alpha(u)^*\alpha(u)=u^*u=p_i(u).$$
If $u\in H^-(W,\alpha)$, the same equalities are valid.
\end{proof}

An immediate consequence is the following characterization (cf. Remark~\ref{rem:vN subtriples}).

\begin{prop}\label{P:H(W,alpha) abstract le2} Let $M=H(W,\alpha)$ or $M=H^-(W,\alpha)$. Let $u,e\in M$ be two tripotents. Then
$$u\le_2 e\Leftrightarrow p_i(u)\le p_i(e) \Leftrightarrow p_f(u)\le p_f(e).$$
\end{prop}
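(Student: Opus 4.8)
The plan is to deduce the statement directly from the general characterization of $\le_2$ in von Neumann algebras (Lemma~\ref{l charcteriz le2 and le0 in vN}$(a)$, which remains valid for JBW$^*$-subtriples of von Neumann algebras by Remark~\ref{rem:vN subtriples}), combined with the symmetry of initial and final projections provided by Observation~\ref{obs:pf=alpha(pi)}. The point is that for tripotents lying in $M=H(W,\alpha)$ or $M=H^-(W,\alpha)$, the initial and final projections are not independent: they are related by the central involution $\alpha$ via $p_i(u)=\alpha(p_f(u))$.

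First I would recall that $M$ is a JBW$^*$-subtriple of $W$, so that by Remark~\ref{rem:vN subtriples} the characterization $u\le_2 e \Leftrightarrow p_i(u)\le p_i(e)\,\&\,p_f(u)\le p_f(e)$ holds, where initial and final projections and the partial order $\le$ on projections are computed in the ambient von Neumann algebra $W$. This gives the two-sided condition, and the remaining task is only to show that for elements of $M$ the two inequalities $p_i(u)\le p_i(e)$ and $p_f(u)\le p_f(e)$ are each individually equivalent to their conjunction, i.e. that either one alone already implies the other.

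The key step is the observation that $\alpha$ is order-preserving on projections. Indeed, by Observation~\ref{obs:pf=alpha(pi)} we have $p_i(u)=\alpha(p_f(u))$ and $p_i(e)=\alpha(p_f(e))$ for tripotents $u,e\in M$. Since $\alpha$ is a $*$-isomorphism of $W$ onto $W^{op}$ (in particular a positive, hence order-preserving, map that sends projections to projections), the inequality $p_f(u)\le p_f(e)$ is equivalent to $\alpha(p_f(u))\le \alpha(p_f(e))$, that is, to $p_i(u)\le p_i(e)$. Thus the two conditions $p_i(u)\le p_i(e)$ and $p_f(u)\le p_f(e)$ coincide. Chaining this with the characterization from the previous paragraph yields
$$u\le_2 e\iff p_i(u)\le p_i(e) \iff p_f(u)\le p_f(e),$$
which is exactly the assertion.

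I do not expect a genuine obstacle here, since both ingredients are already available: the subtriple version of Lemma~\ref{l charcteriz le2 and le0 in vN}$(a)$ and the identity from Observation~\ref{obs:pf=alpha(pi)}. The only point requiring a word of care is that one must verify $\alpha$ respects the order of projections; this is immediate from the fact that $\alpha$ is a $*$-isomorphism onto $W^{op}$ (order on projections is characterized purely $*$-algebraically, e.g.\ $q\le r\iff qr=q$, and $\alpha$ reverses products while preserving $*$, so $\alpha(q)\alpha(r)=\alpha(rq)=\alpha(q)$ whenever $qr=q=rq$). This confirms that the single-sided conditions suffice and closes the argument.
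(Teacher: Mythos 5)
Your proof is correct and follows essentially the same route as the paper: the paper states this proposition as an immediate consequence of Observation~\ref{obs:pf=alpha(pi)} together with Remark~\ref{rem:vN subtriples}, which is precisely the combination you use. Your verification that $\alpha$ is order-preserving on projections (via the purely $*$-algebraic characterization $q\le r\iff qr=rq=q$) is the small detail the paper leaves implicit, and you have supplied it correctly.
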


\begin{lemma}\label{L:p sim alpha(p)}
Let $p\in W$ be a projection. Then $p\sim \alpha(p)$.
\end{lemma}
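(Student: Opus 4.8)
The plan is to combine the comparison theory of projections with the observation that the central anti-automorphism $\alpha$ carries the Murray--von Neumann relations onto themselves.

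First I would record that $\alpha$ sends projections to projections and is compatible with comparison. Since $\alpha(p)^*=\alpha(p^*)=\alpha(p)$ and, by anti-multiplicativity, $\alpha(p)\alpha(p)=\alpha(p\cdot p)=\alpha(p)$, the element $\alpha(p)$ is again a projection. Moreover, if $v$ witnesses $p\sim q$ (that is, $v^*v=p$ and $vv^*=q$), then $\alpha(v)$ satisfies $\alpha(v)^*\alpha(v)=\alpha(vv^*)=\alpha(q)$ and $\alpha(v)\alpha(v)^*=\alpha(v^*v)=\alpha(p)$, so $\alpha(p)\sim\alpha(q)$. Since $\alpha$ also preserves the order of projections (from $q'\le q$, i.e.\ $qq'=q'$, one gets $\alpha(q')\alpha(q)=\alpha(q')$, that is $\alpha(q')\le\alpha(q)$), it follows that $\alpha$ preserves the subequivalence relation $\precsim$ as well.

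Next I would apply the comparison theorem \cite[Theorem V.1.8]{Tak} to the pair $p,\alpha(p)$, obtaining a central projection $z\in W$ with
$$zp\precsim z\alpha(p)\qquad\text{and}\qquad (1-z)\alpha(p)\precsim (1-z)p.$$
Because $z$ is central, the centrality condition $(v)$ gives $\alpha(z)=z$, and anti-multiplicativity gives $\alpha(zp)=z\alpha(p)$ and $\alpha(z\alpha(p))=zp$ (and likewise for $1-z$). Applying $\alpha$ to the two relations above and using that $\alpha$ preserves $\precsim$ therefore produces the reverse subequivalences $z\alpha(p)\precsim zp$ and $(1-z)p\precsim(1-z)\alpha(p)$.

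Finally, in the central summand $zW$ one has $zp\precsim z\alpha(p)$ and $z\alpha(p)\precsim zp$, whence $zp\sim z\alpha(p)$ by the Schr\"oder--Bernstein theorem for von Neumann algebras (cf.\ \cite[\S V.1]{Tak}); symmetrically $(1-z)p\sim(1-z)\alpha(p)$. Adding the two implementing partial isometries, which are orthogonal since they live in the complementary central summands $zW$ and $(1-z)W$, yields a single partial isometry witnessing $p\sim\alpha(p)$. The only genuinely delicate point is the second step: one must verify that $\alpha$ really reverses $\precsim$, and this hinges on both its anti-multiplicativity and its centrality (so that the cutting projection $z$ is $\alpha$-fixed). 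Everything else is routine comparison theory.
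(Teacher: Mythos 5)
Your proof is correct and follows essentially the same route as the paper: apply the comparison theorem to $p$ and $\alpha(p)$, use centrality ($\alpha(z)=z$) and the fact that $\alpha$ preserves subequivalence to reverse the two relations, and conclude by Schr\"oder--Bernstein. The only difference is presentational — the paper collapses the two central summands into a single case via a ``without loss of generality $\alpha(p)\precsim p$'' reduction, whereas you treat $zW$ and $(1-z)W$ separately and glue the two implementing partial isometries at the end.
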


\begin{proof}
By the comparability theorem \cite[Theorem 2.1.3]{Sak71} there is a central projection $z$ such that $zp\precsim z\alpha(p)$ and $(1-z)\alpha(p)\precsim (1-z)p$. Since $\alpha(zp)=z\alpha(p)$ (recall that $\alpha(z)=z$ by condition $(v)$) and $\alpha(\alpha(p))=p$, without  loss of generality assume $\alpha(p)\precsim p$. It follows that
$$p=\alpha(\alpha(p))\precsim \alpha(p),$$
hence $p\sim \alpha(p)$ (by \cite[Proposition V.1.3]{Tak}).
\end{proof}

The following proposition says that direct summands of $H(W,\alpha)$ and those of $H^{-}(W,\alpha)$ which are isomorphic to a JBW$^*$-algebra are only the trivial ones. Assertion $(a)$ may be deduced also from \cite[Proposition 4.3.6]{hanche1984jordan} or \cite[Theorem 2.3]{Edwards77}.

\begin{prop}\label{P:H(W,alpha) direct summand}
\begin{enumerate}[$(a)$]
    \item Any direct summand of $H(W,\alpha)$ has form $H(zW,\alpha|_{zW})$ where $z\in W$ is a central projection.
    \item Any direct summand of $H^{-}(W,\alpha)$ which is isomorphic to a JBW$^*$-algebra is of the form $H^{-}(zW,\alpha|_{zW})$ where $z\in W$ is a central projection.
\end{enumerate}
\end{prop}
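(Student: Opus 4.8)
The plan is to handle both assertions by a single argument resting on Proposition~\ref{P:direct summand}(b), which reduces each to a statement about the initial and final projections of one tripotent.

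First I would arrange that in both parts the direct summand $I$ is isomorphic to a JBW$^*$-algebra. In part $(b)$ this is the hypothesis. In part $(a)$ the algebra $M=H(W,\alpha)$ is unital, since $\alpha(1)=1$ by condition $(v)$; so if $M=I\oplus^{\ell_\infty}J$ is a direct summand decomposition with canonical projection $P_I$, then $P_I$ is a triple homomorphism and $e_I:=P_I(1)$ is a tripotent with $\J{e_I}{e_I}{x}=P_I\J 11x=P_I(x)=x$ for $x\in I$. Thus $e_I$ is unitary in $I$, so every direct summand of $H(W,\alpha)$ is isomorphic to a JBW$^*$-algebra. In both parts Proposition~\ref{P:direct summand}(b) then furnishes a tripotent $u\in M$ with $I=M_2(u)$ and $M_1(u)=\{0\}$.

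Next, set $p=p_f(u)$ and $q=p_i(u)$; by Observation~\ref{obs:pf=alpha(pi)} we have $q=\alpha(p)$, and since $M$ is a subtriple of $W$ the Peirce spaces satisfy $M_j(u)=M\cap W_j(u)$. The key computation is an explicit description of $M_1(u)=M\cap W_1(u)$, where $W_1(u)=(1-p)Wq\oplus pW(1-q)$. Using $\alpha\bigl((1-p)aq\bigr)=p\,\alpha(a)(1-q)$ (which follows from properties $(ii)$–$(iv)$ of $\alpha$ together with $\alpha(p)=q$, $\alpha(q)=p$), one checks that
$$H(W,\alpha)\cap W_1(u)=\bigl\{\,(1-p)\,a\,q+p\,\alpha(a)\,(1-q):a\in W\,\bigr\},$$
$$H^-(W,\alpha)\cap W_1(u)=\bigl\{\,p\,c\,(1-q)-(1-p)\,\alpha(c)\,q:c\in W\,\bigr\}.$$
In either case the two off-diagonal blocks are independent, so $M_1(u)=\{0\}$ forces $(1-p)Wq=\{0\}$ in part $(a)$, respectively $pW(1-q)=\{0\}$ in part $(b)$.

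Finally I would exploit the symmetry under $\alpha$. By \cite[Lemma V.1.7]{Tak} the vanishing of the relevant block means a pair of projections is centrally orthogonal, yielding a central projection $z_0$ with (say in part $(b)$) $p\le z_0\le q$. Applying $\alpha$, which fixes every central projection and interchanges $p$ and $q$, gives the reverse chain $q\le z_0\le p$; hence $p=q=z_0$ is a central projection fixed by $\alpha$, and the same collapse occurs in part $(a)$. Setting $z=p$, the tripotent $u$ is a unitary of $zW$, so $W_2(u)=zWz=zW$ and
$$I=M_2(u)=M\cap zW=H(zW,\alpha|_{zW})\quad\bigl(\text{resp. }H^-(zW,\alpha|_{zW})\bigr),$$
as required. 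The only genuinely delicate point is the explicit identification of the $\alpha$-fixed (resp. $\alpha$-antifixed) part of $W_1(u)$ and the bookkeeping that converts the single central-orthogonality relation, after applying $\alpha$, into the centrality of $p$ and the equality $p=q$; the remaining steps are routine Peirce calculus.
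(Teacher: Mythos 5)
Your proof is correct and follows essentially the same route as the paper's: both arguments reduce via Proposition~\ref{P:direct summand}$(b)$ to a tripotent $u$ with $I=M_2(u)$ and $M_1(u)=\{0\}$, use Observation~\ref{obs:pf=alpha(pi)}, show that the vanishing of $M_1(u)$ forces the off-diagonal Peirce blocks of $W$ to vanish (the paper does this by contradiction with the elements $x\pm\alpha(x)$, you by explicitly parametrizing $M\cap W_1(u)$ -- the same mechanism), and then invoke \cite[Lemma V.1.7]{Tak} to conclude that $p_i(u)=p_f(u)$ is a central projection. The only cosmetic difference is that you derive a single central-orthogonality relation and transport it by $\alpha$ using the standing centrality assumption $(v)$, whereas the paper derives both relations symmetrically and combines them directly.
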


\begin{proof}
The two cases may be proved simultaneously.  Since $H(W,\alpha)$ is a JBW$^*$-algebra, each of its direct summands is also a JBW$^*$-algebra. Hence assume  $M=H(W,\alpha)$ or $M=H^{-}(W,\alpha)$ and let $I$ be a direct summand of $M$ isomorphic to a JBW$^*$-algebra. By Proposition~\ref{P:direct summand}$(b)$ there is a tripotent $u\in M$ with $I=M_2(u)$ and $M_1(u)=0$.

Let $p=p_i(u)$. Then $p_f(u)=\alpha(p)$ by Observation~\ref{obs:pf=alpha(pi)}.
We claim that the projections $p$ and $1-\alpha(p)$ are centrally orthogonal.
Assume not. By \cite[Lemma V.1.7]{Tak} there is a nonzero $x\in W$ with 
$x=(1-\alpha(p))xp$. Then $\alpha(x)=\alpha(p)\alpha(x)(1-p)$, hence clearly $x$ and $\alpha(x)$ are linearly independent. Note that
$y=x+\alpha(x)\in H(W,\alpha)$, $w=x-\alpha(x)\in H^{-}(W,\alpha)$, both are nonzero and belong to $W_1(u)$.
This is a contradiction with the assumption that $M_1(u)=\{0\}$. 

Similarly we prove that $\alpha(p)$ and $1-p$ are centrally orthogonal.
Next, in the same way as in the proof of Lemma~\ref{L:vN direct summand} we 
show that $p=\alpha(p)$ and it is a central projection. This completes the proof.
\end{proof}

\subsection{The case $H(W,\alpha)$ for $W$ continuous}\label{sec:involutions cts}

Assume that $W$ is a continuous von Neumann algebra and $\alpha$ is a central involution, i.e.,  an involution on $W$ with the properties $(i)$--$(v)$. Then the situation is quite easy as witnessed by the following results.

\begin{lemma}\label{L:H(W,alpha) construction of tripotent}
Let $p\in W$ be a projection. Then there is a tripotent $u\in H(W,\alpha)$ with $p_i(u)=p$ (and $p_f(u)=\alpha(p)$).
\end{lemma}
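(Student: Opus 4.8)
The plan is to realize $p$ as $p_i(u)$ for a \emph{symmetric} partial isometry, splitting the construction into a ``generic'' part handled by symmetrization and polar decomposition, and a single ``degenerate'' part where continuity does the essential work. First I would invoke Lemma~\ref{L:p sim alpha(p)} to fix a partial isometry $v\in W$ with $v^*v=p$ and $vv^*=\alpha(p)$, set $c=v^*\alpha(v)$ (a unitary in $pWp$), and form the $\alpha$-fixed element $x=v+\alpha(v)$. One computes $x^*x=2p+c+c^*$, and since $\alpha(x)=x$ also $xx^*=\alpha(x^*x)$; in particular $|x^*|^2=xx^*=\alpha(|x|^2)=\alpha(|x|)^2$, so $\alpha(|x|)=|x^*|$ by uniqueness of positive square roots. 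Taking the polar decomposition $x=u_1|x|=|x^*|u_1$ and comparing it with $x=\alpha(x)=\alpha(|x|)\alpha(u_1)=|x^*|\alpha(u_1)$ (together with the adjoint identity) forces $\alpha(u_1)=u_1$; here I use that $\alpha$, being a normal $*$-anti-isomorphism, sends support projections to support projections. Thus $u_1$ is a symmetric partial isometry with $p_i(u_1)$ equal to the support of $x^*x$, namely $p-e$, where $e$ is the spectral projection of $c$ at the eigenvalue $-1$, and $p_f(u_1)=\alpha(p-e)$.

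It then remains to realize the residual projection $e$, and I would reduce this to a purely Jordan-theoretic statement. On the corner $eWe$, which is again continuous, the formula $\gamma(y)=v_e^*\,\alpha(y)\,v_e$ with $v_e=ve$ defines a $*$-anti-automorphism; since $\alpha(v_e)=-v_e$ one has $v_e^*\alpha(v_e)=-e$, whence $\gamma^2=\mathrm{id}$, i.e. $\gamma$ is an \emph{involution}. A direct computation gives $\alpha(v_e s)=-v_e\,\gamma(s)$ for $s\in eWe$, so $v_es$ is $\alpha$-fixed exactly when $\gamma(s)=-s$; hence I need a $\gamma$-antisymmetric unitary in $eWe$. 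The clean observation is that such a unitary comes from a single projection: if $q\le e$ satisfies $\gamma(q)=e-q$, then $s=2q-e$ is a self-adjoint unitary with $\gamma(s)=2(e-q)-e=-s$.

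So the whole lemma reduces to producing a projection $q\le e$ with $\gamma(q)=e-q$, and this is where continuity is indispensable. By Zorn's lemma I would take a maximal projection $f\le e$ with $f\perp\gamma(f)$; then $g=f+\gamma(f)$ is $\gamma$-invariant, and I claim $g=e$. If not, $(e-g)W(e-g)$ is a nonzero continuous algebra, hence non-abelian, so the involution $\gamma$ cannot fix all its projections and there is a projection $r\le e-g$ with $\gamma(r)\neq r$; replacing $r$ by $\gamma(r)$ if needed, we may assume $r\not\le\gamma(r)$, and then $f'=r\wedge\big((e-g)-\gamma(r)\big)$ is nonzero and satisfies $f'\perp\gamma(f')$ (indeed $\gamma(f')\le\gamma(r)$ while $f'\le\gamma(r)^{\perp}$). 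Since $f'\le e-g$ is orthogonal to $g\ge f,\gamma(f)$, the projection $f+f'$ lies again in the family and strictly dominates $f$, contradicting maximality. Hence $g=e$, so $q=f$ works, and $u=u_1+v_e(2q-e)$ is the desired symmetric partial isometry: the two summands are orthogonal tripotents living in the orthogonal corners associated with $p-e$ and $e$, so $u^*u=(p-e)+e=p$, $uu^*=\alpha(p)$, and $\alpha(u)=u$.

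The main obstacle is precisely the degenerate summand $e$: the naive symmetrization $v+\alpha(v)$ annihilates it, and realizing it by a symmetric tripotent is a genuine obstruction in general — it is the ``symplectic'' phenomenon and already fails at a minimal projection in the type~$I$ setting, where $c=-1$ and no $\gamma$-antisymmetric unitary exists in $\ce$. Continuity removes it in one stroke, via the non-abelianness of every nonzero corner, which is exactly what lets the maximality argument close. A secondary point that needs care is verifying that the polar part $u_1$ is honestly $\alpha$-fixed; for this the identity $\alpha(|x|)=|x^*|$ together with uniqueness of the polar decomposition is the efficient tool.
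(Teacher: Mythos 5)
Your construction is correct, and verifiably so, up to its decisive final step: the symmetrization $x=v+\alpha(v)$, the identity $\alpha(|x|)=|x^*|$, the conclusion $\alpha(u_1)=u_1$ with $p_i(u_1)=p-e$ where $e=\chi_{\{-1\}}(v^*\alpha(v))$, the identities $\alpha(e)=vev^*$ and $\alpha(v_e)=-v_e$, the fact that $\gamma(y)=v_e^*\alpha(y)v_e$ is an involutive $*$-anti-automorphism of $eWe$, and the reduction to finding $q\le e$ with $\gamma(q)=e-q$ all check out. The genuine gap is the claim that $r\not\le\gamma(r)$ forces $f'=r\wedge\bigl((e-g)-\gamma(r)\bigr)\neq 0$. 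In a noncommutative projection lattice, $a\not\le b$ does \emph{not} imply $a\wedge b^{\perp}\neq 0$: the lattice is not distributive, and two projections can sit in generic position. Concretely, in $M_2$ with $\gamma$ the transpose, let $r$ be the rank-one projection onto $\ce(1,e^{i\pi/4})$; then $\gamma(r)$ is the projection onto $\ce(1,e^{-i\pi/4})$, so $r\neq\gamma(r)$ and neither is below the other, yet $r\wedge(1-\gamma(r))=0$ because $\mathrm{ran}(r)$ and $\mathrm{ran}(1-\gamma(r))=\ce(-e^{i\pi/4},1)$ are distinct complex lines. Such pairs $(r,\gamma(r))$ also occur inside continuous algebras (tensor this example with a II$_1$ factor carrying an involution that fixes a projection), and nothing in your maximality argument excludes them, so the contradiction you need does not materialize as written.

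The step is repairable by trading lattice manipulations for spectral theory. Since $\gamma(r)\neq r$, the element $y=r-\gamma(r)$ is a nonzero self-adjoint element of $(e-g)W(e-g)$ satisfying $\gamma(y)=-y$; writing $f_{+}=\chi_{(0,\infty)}(y)$ and $f_{-}=\chi_{(-\infty,0)}(y)$, one has $\gamma(f_{+})=\chi_{(0,\infty)}(-y)=f_{-}$, so $f_{+}$ and $f_{-}$ vanish simultaneously (as $\gamma$ is injective), hence both are nonzero, and $f_{+}\perp\gamma(f_{+})$. This $f_{+}\le e-g$ is exactly the projection your maximality argument needs, and with it the proof closes. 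Even so, the route is much longer than the paper's: there, continuity is used at the outset to halve $p=q_1+q_2$ with $q_1\sim q_2$, Lemma~\ref{L:p sim alpha(p)} gives $q_1\sim\alpha(q_2)$, and $u=v+\alpha(v)$ for a partial isometry $v$ with $p_i(v)=q_1$, $p_f(v)=\alpha(q_2)$ is already the desired tripotent, so no defect corner $e$ ever appears; your argument does isolate, correctly, that the only obstruction to symmetrizing a full partial isometry $v\colon p\to\alpha(p)$ lives on the spectral projection of $v^*\alpha(v)$ at $-1$.
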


\begin{proof}
Since $W$ is continuous, there are two projections $q_1\perp q_2$, $q_1\sim q_2$ such that $p=q_1+q_2$ (see \cite[Proposition 2.2.13]{Sak71}). By the Lemma~\ref{L:p sim alpha(p)} we have $q_2\sim\alpha(q_2)$, thus $q_1\sim\alpha(q_2)$. Let $v\in W$ be a partial isometry with $p_i(v)=q_1$ and $p_f(v)=\alpha(q_2)$. Then $\alpha(v)$ is also a partial isometry and $p_i(\alpha(v))=q_2$ and $p_f(\alpha(v))=\alpha(q_1)$. Since $\alpha(q_1)\perp\alpha(q_2)$, $u=v+\alpha(v)$ is a tripotent in $H(W,\alpha)$ with $p_i(u)=p$. 
\end{proof}

The following proposition follows from  \cite[Lemma 2.6]{BuPe02} and Proposition~\ref{P:le0=le2}. We present here an alternative proof because it is essentially the same as the proof for the summand $A\overline{\otimes}C$ with $C$ being a type 3 Cartan factor given in Subsection~\ref{subsec:C3}, demonstrating so similarity of the structure of the two summands.

\begin{prop}\label{P:H(W,alpha) is finite} $H(W,\alpha)$ is a finite JBW$^*$-algebra. In particular, the relations $\le_0$ and $\le_2$ coincide in $H(W,\alpha)$.
\end{prop}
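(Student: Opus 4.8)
The plan is to reduce everything to Proposition~\ref{P:le0=le2}: in order to prove both the finiteness of $M:=H(W,\alpha)$ and the coincidence of $\le_0$ and $\le_2$, it suffices to show that every complete tripotent in $M$ is unitary. Indeed, once this is done, implication $(ii)\Rightarrow(iii)$ of that proposition gives that $M$ is a finite JBW$^*$-algebra, and $(ii)\Rightarrow(i)$ gives the asserted coincidence of the two preorders.

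So let $e\in M$ be a complete tripotent and set $p=p_i(e)$; by Observation~\ref{obs:pf=alpha(pi)} we then have $p_f(e)=\alpha(p)$. Since $\alpha$ is an (anti-)isomorphism fixing $1$, the equality $p=1$ is equivalent to $p_f(e)=\alpha(p)=1$, and in that case $e$ is a unitary element of $W$ with $M_2(e)=M\cap W_2(e)=M$, i.e. $e$ is unitary in $M$. I would therefore argue by contradiction and assume that $1-p\ne0$.

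The key step is to exhibit a nonzero element of the Peirce-zero part of $e$ inside $M$. Applying Lemma~\ref{L:H(W,alpha) construction of tripotent} to the projection $1-p$ produces a tripotent $w\in M$ with $p_i(w)=1-p\ne0$ (so $w\ne0$) and, by Observation~\ref{obs:pf=alpha(pi)}, $p_f(w)=\alpha(1-p)=1-\alpha(p)$. Writing $w=p_f(w)\,w\,p_i(w)=(1-\alpha(p))\,w\,(1-p)$ and recalling that $W_0(e)=(1-p_f(e))W(1-p_i(e))=(1-\alpha(p))W(1-p)$, we see that $w\in W_0(e)$. Since moreover $w\in M$ and the Peirce decomposition of $e$ in the subtriple $M$ is the restriction of the one in $W$ (so that $M_0(e)=M\cap W_0(e)$), we conclude $0\ne w\in M_0(e)$. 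This contradicts the completeness of $e$. Hence $p=1$, and $e$ is unitary, which finishes the argument.

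There is essentially no deep obstacle here: the only place where the continuity of $W$ is used is already encapsulated in Lemma~\ref{L:H(W,alpha) construction of tripotent} (whose proof halves $1-p$ into two equivalent orthogonal projections in order to build an $\alpha$-symmetric partial isometry). The single point that deserves a short verification is the identity $M_0(e)=M\cap W_0(e)$, which holds because $L(e,e)$ acts on the JBW$^*$-subtriple $M$ exactly as the restriction of its action on $W$, so the eigenspace decompositions agree.
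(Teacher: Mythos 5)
Your proof is correct and takes essentially the same route as the paper's: both arguments rest on Lemma~\ref{L:H(W,alpha) construction of tripotent} (the only place where continuity of $W$ and centrality of $\alpha$ enter) and conclude via Proposition~\ref{P:le0=le2}. The only difference is the entry point into that proposition: you verify condition $(ii)$ (every complete tripotent is unitary) by applying the construction lemma to $1-p_i(e)$, whereas the paper verifies condition $(i)$ directly, applying the same lemma to $q=(1-p_i(e))-(1-p_i(e))\wedge(1-p_i(u))$ for an arbitrary pair of tripotents with $u\not\le_2 e$ — your variant is marginally shorter since it handles a single tripotent and avoids the lattice manipulation.
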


\begin{proof} 
It is clear that $M=H(W,\alpha)$ is a JBW$^*$-subalgebra of $W$. Let us prove that $\le_0$ and $\le_2$ coincide. So, assume that $u,e\in H(W,\alpha)$ are two tripotents with $u\not\le_2 e$.
Then $p_i(u)\not\le p_i(e)$. Then $1-p_i(e)\not\le 1-p_i(u)$, thus $q=(1-p_i(e))-(1-p_i(e))\wedge(1-p_i(u))$ is a nonzero projection. By Lemma~\ref{L:H(W,alpha) construction of tripotent} there is a tripotent $v\in H(W,\alpha)$ with $p_i(v)=q$. Then $v\in M_0(e)\setminus M_0(u)$, thus $u\not\le_0 e$.

So, $M$ is finite by Proposition~\ref{P:le0=le2}.
\end{proof}

\subsection{Linear involutions induced by conjugations and structure of $A\overline{\otimes}B(H)$}\label{subsec:involutions from conjugations}

An important example of an involution, used among others to define Cartan factors of types $2$ and $3$ is the transpose. It is defined using a conjugation on a Hilbert space. We recall that a {\em conjugation} is a conjugate-linear isometry of period two. Any such mapping can be expressed as the canonical coordinate-wise conjugation with respect to an orthonormal basis. We will use this concrete representation. Let us fix the basic setting.

Let $H=\ell^2(\Gamma)$ for a set $\Gamma$ (i.e., we have a Hilbert space with a fixed orthonormal basis). We will assume that $\dim H\ge 2$, i.e., $\Gamma$ contains at least two distinct points. Let $(e_\gamma)_{\gamma\in\Gamma}$ be the canonical orthonormal basis. For $\xi\in H$ let $\overline{\xi}$ denote the canonical conjugate of $\xi$, i.e., $$\overline{\xi}(\gamma)=\overline{\xi(\gamma)},\qquad\gamma\in\Gamma.$$
For $x\in B(H)$ we denote by $x^t$ the operator defined by
$$x^t(\xi)=\overline{x^*(\overline{\xi})},\qquad\xi\in H.$$
Then $x^t$ is the transpose of $x$ with respect to the canonical basis, as
$$\ip{x^t (e_\gamma)}{e_\delta}=
\ip{\overline{x^*(e_\gamma)}}{e_\delta}=\overline{\ip{x^*(e_\gamma)}{e_\delta}}=\ip{x(e_\delta)}{e_\gamma},
\qquad \gamma,\delta\in\Gamma.
$$
Moreover, the mapping $\alpha:x\mapsto x^t$ is a central linear involution
on  $B(H)$ (i.e., it satisfies conditions $(i)$--$(v)$ from Section~\ref{sec:involutions general}). Note that $B(H)$ is a factor, so its center is trivial.
Hence, if $M\subset B(H)$ is a von Neumann algebra such that $x^t\in M$ for each $x\in M$ we may define the following two subtriples of $M$:
$$M_s=\{x\in M\setsep x^t=x\} \mbox{ and }M_a=\{x\in M\setsep x^t=-x\}.$$
They are of the form $H(M,\alpha)$ and $H^-(M,\alpha)$, hence the results of Section~\ref{sec:involutions general} apply. However, the restriction of the involution to $M$ need not be central, hence Lemma~\ref{L:H(W,alpha) decomposition} is important when treating abstract involutions. This is witnessed by the following example which may be proved by a direct calculation.

\begin{example} Let $H=\ce^2$ and $M\subset B(H)$ be formed by diagonal operators. Then $M$ is an abelian von Neumann algebra, isomorphic to $\ce^2$ (with the maximum norm). Moreover, the following holds.
\begin{enumerate}[$(1)$]
    \item If $H$ is equipped with the conjugation generated by the canonical orthonormal basis (i.e., formed by the vectors $(1,0)$, $(0,1)$), then $M$ is invariant under taking the transpose and, moreover, $M_s=M$ (and $M_a=\{0\}$).
    In particular, taking the transpose is a central involution on $M$.
    \item Equip $H$ with the conjugation defined by the ortnonormal basis formed by the vectors $\xi_1=\frac1{\sqrt2}(1,i)$, $\xi_2=\frac1{\sqrt2}(i,1)$.
    Then $M$ is invariant under taking the transpose  and, moreover, $$M_s=\{(x_1,x_2)\setsep x_1=x_2\} \mbox{ and }
M_a=\{(x_1,x_2)\setsep x_1=-x_2\}.$$
In particular, the involution on $M$ defined by taking the transpose is not central.
    \item Equip $H$ with the conjugation defined by the ortnonormal basis formed by the vectors $\eta_1=(\frac{\sqrt3}2,\frac i2)$, $\eta_2=(\frac i2,\frac{\sqrt3}2)$. Then $M$ is not invariant under taking the transpose.
\end{enumerate}
\end{example} 

We will not consider the abstract setting, but only the setting corresponding to Cartan factors. (This is enough due to the structure theory of JBW$^*$-triples and in this case we avoid the pathologies pointed out in the previous example.)

Recall that {\em Cartan factors of type 2} are exactly triples of the form $B(H)_a$ and {\em Cartan factors of type 3} are triples of the form $B(H)_s$.

Moreover, fix a $\sigma$-finite abelian von Neumann algebra $A$. We will address the triples $A\overline{\otimes}B(H)_a$ and $A\overline{\otimes}B(H)_s$ (which are the summands from \eqref{eq:representation of JBW* triples} corresponding to Cartan factors of types 2 and 3).

Fix a representation $A=C(L)=L^\infty(\mu)$ where $L$ is a compact space and $\mu$ is a probability measure on $L$ (see Subsection \ref{subsec:representation}). Then $A$ is canonically represented as the maximal abelian von Neumann subalgebra of  $B(L^2(\mu))$ consisting of multiplication operators (cf. \cite[comments before Lemma IV.7.5]{Tak} or \cite[Corollary 2.9.3]{Sak71}). If we equip $L^2(\mu)$ by the canonical conjugation (i.e., pointwise complex conjugation), then $A$ is clearly invariant under taking the transpose. Moreover, the involution on $A$ defined by taking the transpose is central,  i.e., $A=A_s$.

Let us look at the tensor products. Recall that $A\overline{\otimes}B(H)$ is canonically embedded into $B(L^2(\mu)\otimes H)$. Moreover, the tensor product Hilbert space  
$L^2(\mu)\otimes H$ can be identified with $\ell^2(\Gamma,L^2(\mu))$ (recall that $H=\ell^2(\Gamma)$) or with the Lebesgue-Bochner space $L^2(\mu,H)$ (cf. \cite[p. 257]{Tak}). These identifications are done by the equalities
$$ \sum_{\gamma\in \Gamma} f_\gamma\otimes e_\gamma = (f_\gamma)_{\gamma\in\Gamma}=\sum_{\gamma\in \Gamma} f_\gamma e_\gamma
$$
where 
$f_\gamma\in L^2(\mu)$ for $\gamma\in\Gamma$.
Note that the middle expression is a general element of $\ell^2(\Gamma,L^2(\mu))$ and any $\f\in L^2(\mu,\ell^2(\Gamma))$ can be expressed as the right-hand side.
Indeed, let us define a sesquilinear operator $[\cdot,\cdot]:L^2(\mu,H)^2\to L^1(\mu)$ by
$$[\f_1,\f_2](\omega)=\ip{\f_1(\omega)}{\f_2(\omega)}, \quad\omega\in L,\ \f_1,\f_2\in   L^2(\mu,\ell^2(\Gamma)).$$
It is clear that $[\f_1,\f_2]$ is a measurable function and, moreover, the Cauchy-Schwarz inequality implies that 
$$ \norm{[\f_1,\f_2]}_1\le \norm{\f_1}_2\norm{\f_2}_2.$$
Then any $\f\in L^2(\mu,\ell^2(\Gamma))$ can be expressed
as
$$\f=\sum_{\gamma\in\Gamma} [\f,e_\gamma]e_\gamma,$$
where the first occurence of $e_\gamma$ denotes the respective constant function.

Moreover,  equip $L^2(\mu,H)$ with the canonical pointwise conjugation defined by $\overline{\f}(\omega)=\overline{\f(\omega)}$ for $\omega\in L$.
Given $f\in L^\infty(\mu)$ and $x\in B(H)$ the operator $f\otimes x$ is defined by
$$f\otimes x ((g_\gamma)_{\gamma\in\Gamma}) =\sum_{\gamma\in\Gamma} (fg_\gamma)\otimes x(e_\gamma)=\sum_{\gamma\in\Gamma} fg_\gamma x(e_\gamma),$$
hence
$$\begin{aligned}(f\otimes x)^t((g_\gamma)_{\gamma\in\Gamma})&=\overline{(f\otimes x)^*((\overline{g_\gamma})_{\gamma\in\Gamma})}
=\overline{({\overline f}\otimes x^*)((\overline{g_\gamma})_{\gamma\in\Gamma})}
=\overline{\sum_{\gamma\in\Gamma} \overline{f}\overline{g_\gamma} x^*(e_\gamma)}
\\&=\sum_{\gamma\in\Gamma} fg_\gamma\overline{x^*(e_\gamma)}=
\sum_{\gamma\in\Gamma} fg_\gamma{x^t(e_\gamma)}
=f\otimes x^t ((g_\gamma)_{\gamma\in\Gamma}),\end{aligned}$$
i.e.,
$$(f\otimes x)^t=f\otimes x^t.$$
It follows that 
$A\overline{\otimes}B(H)_a=(A\overline{\otimes} B(H))_a$ and $A\overline{\otimes}B(H)_s=(A\overline{\otimes} B(H))_s$, hence the results of Subsection~\ref{sec:involutions general} apply.

In order to investigate deeper properties of these two triples we need a description of the von Neumann algebra $A\overline{\otimes}B(H)$.

Employing the terminology coming from the celebrated von Neumann bicommutant theorem, for each subset $\mathcal{S}$ of $B(H)$, we denote by $\mathcal{S}^{\prime}$ the set of elements of $B(H)$ commuting with all elements of $\mathcal{S}$, and we call $\mathcal{S}^{\prime}$  the commutant of $\mathcal{S}$. If $\mathcal{S}$ is self-adjoint, $\mathcal{S}^{\prime}$ is a self-adjoint subalgebra. 

\begin{lemma}\label{L:commutant of AotimesB(H)}
Both the commutant of $A\overline{\otimes}B(H)$ in $B(L^2(\mu,H))$ and its center equal to
$$\{f\otimes I\setsep f\in L^\infty(\mu)\}.$$
In particular, central projections in  $A\overline{\otimes}B(H)$
are of the form $\chi_E\otimes I$, where $E\subset L$ is a measurable set.
Moreover, the operator $f\otimes I$ acts on $L^2(\mu,H)$ as the multiplication by the scalar-valued function $f$.
\end{lemma}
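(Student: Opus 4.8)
The plan is to compute the commutant of $A\overline{\otimes}B(H)$ inside $B(L^2(\mu,H))$ first, and then to read off the center, the central projections and the action of its elements from that computation.

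The commutant is obtained from the commutation theorem for von Neumann tensor products (see, e.g., \cite{Tak}), which gives $(M\overline{\otimes}N)'=M'\overline{\otimes}N'$ for von Neumann algebras $M$ and $N$ acting on Hilbert spaces. I apply it with $M=A$ acting on $L^2(\mu)$ and $N=B(H)$ acting on $H$. Two inputs are needed. First, $A$ is represented as a \emph{maximal} abelian von Neumann subalgebra of $B(L^2(\mu))$, which is precisely the statement that $A'=A$. Second, $B(H)$ is a factor (it is all of $B(H)$), so $B(H)'=\ce I$. Hence
\begin{equation*}
(A\overline{\otimes}B(H))'=A'\overline{\otimes}B(H)'=A\overline{\otimes}\ce I=\{f\otimes I\setsep f\in L^\infty(\mu)\},
\end{equation*}
the last equality because tensoring $A$ with the one-dimensional algebra $\ce I$ leaves exactly the elementary tensors $f\otimes I$ with $f\in A=L^\infty(\mu)$. (Alternatively, one can avoid citing the full theorem: an operator commuting with $I\otimes B(H)$ must lie in $B(L^2(\mu))\otimes I$, and then commuting with $A\otimes I$ forces its $B(L^2(\mu))$-component into $A'=A$; this again yields $\{f\otimes I\}$.)

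For the center I use $Z(A\overline{\otimes}B(H))=(A\overline{\otimes}B(H))\cap(A\overline{\otimes}B(H))'$. Every $f\otimes I$ with $f\in L^\infty(\mu)$ is an elementary tensor in $A\otimes B(H)$, hence lies in $A\overline{\otimes}B(H)$; thus the commutant is contained in the algebra and the center equals the commutant $\{f\otimes I\setsep f\in L^\infty(\mu)\}$. A central projection is then a projection of this algebra, i.e.\ an operator $f\otimes I$ with $f^2=f=f^*$ in $L^\infty(\mu)$; such an $f$ is the class of an indicator $\chi_E$ of a measurable set $E\subset L$, so the central projections are exactly the $\chi_E\otimes I$.

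Finally, the action of $f\otimes I$ is read off from the identification of $L^2(\mu)\otimes H$ with $L^2(\mu,H)$ already fixed above: on an elementary tensor one has $(f\otimes I)(g\otimes\xi)=(fg)\otimes\xi$, which under this identification is $\omega\mapsto f(\omega)(g\otimes\xi)(\omega)$; by boundedness and density this extends to $((f\otimes I)\f)(\omega)=f(\omega)\f(\omega)$ for every $\f\in L^2(\mu,H)$, i.e.\ multiplication by the scalar-valued function $f$. No step is a genuine obstacle here; the only point requiring care is the correct application of the commutation theorem, whose decisive inputs are the maximality of $A$ (giving $A'=A$) and the factoriality of $B(H)$ (giving $B(H)'=\ce I$).
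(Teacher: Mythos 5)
Your proof is correct and follows essentially the same route as the paper: the paper also computes the commutant via Takesaki's tensor-product commutant formula (citing \cite[Proposition IV.1.6(ii)]{Tak}, the special case with a $B(H)$ factor, rather than the full commutation theorem you invoke), uses maximal abelianness to get $A'=A$, identifies $A\overline{\otimes}\ce I$ with $\{f\otimes I\setsep f\in A\}$, and then reads off the center, the central projections, and the multiplication action exactly as you do. The only cosmetic difference is your appeal to the general commutation theorem $(M\overline{\otimes}N)'=M'\overline{\otimes}N'$, which is stronger than needed; your parenthetical elementary argument is in fact closer to the statement the paper cites.
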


\begin{proof} By \cite[Proposition IV.1.6(ii)]{Tak} we have
$$(A\overline{\otimes}B(H))'=A'\overline{\otimes}\ce I.$$
Moreover, $A$ is a maximal abelian C$^*$-subalgebra of $B(H)$ (cf. \cite[comments before Lemma IV.7.5]{Tak} or \cite[Corollary 2.9.3]{Sak71}), hence it is easy to see that 
$A'=A$. It follows that
$$(A\overline{\otimes}B(H))'=A\overline{\otimes}\ce I.$$
Moreover, it follows from 
\cite[Corollary IV.1.5]{Tak} that 
$$A\overline{\otimes}\ce I=\{f\otimes I\setsep f\in A\}$$
which completes the proof of the representation of the commutant and the center. The description of central projections then follows from the description of the projections in $L^\infty(\mu)$. The fact that $f\otimes I$ acts as described follows easily from the very definition of $f\otimes I$ (see, e.g., \cite[p. 183]{Tak}).
 \end{proof}

Now we will consider basic building blocks of the Hilbert space $L^2(\mu,H)$. For $\f\in L^2(\mu,H)$ set
$$X_{\f}=\{g\f\setsep g\in L^\infty(\mu)\}.$$ 
It is clearly a linear subspace of $L^2(\mu,H)$, but not necessarily closed. Denote by $\widehat{X}_{\f}$ its closure.

\begin{lemma}\label{L:X_f} Let $\f\in L^2(\mu,H)$. Then
\begin{enumerate}[$(a)$]
    \item $\widehat{X}_{\f}=\Big\{g\f\setsep g:L\to\ce\mbox{ measurable }, g\f\in L^2(\mu,H)\Big\}$;
    \item $(\widehat{X}_{\f})^\perp=\{\g\in L^2(\mu,H)\setsep [\f,\g]=0 \  \mu\mbox{-a.e.}\}$.
\end{enumerate}
\end{lemma}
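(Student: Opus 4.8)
The plan is to reduce each part to a pointwise (a.e.) statement on $L$, exploiting that $L^2(\mu,H)$-norm convergence passes to a.e.\ convergence along a subsequence (recall that $\mu$ is a probability measure). Throughout write $Y$ for the right-hand side of $(a)$ and $Z$ for the right-hand side of $(b)$.

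For $(a)$ I would establish the two inclusions separately. The inclusion $Y\subseteq\widehat{X}_{\f}$ is a truncation argument: given $g\f\in Y$ with $g\colon L\to\ce$ measurable, put $g_n=g\chi_{\{\abs{g}\le n\}}\in L^\infty(\mu)$, so that $g_n\f\in X_{\f}$. Since $g\f\in L^2(\mu,H)$ forces $g$ to be finite $\mu$-a.e.\ on $\{\f\neq0\}$ while the product vanishes on $\{\f=0\}$, we have $(g_n-g)\f\to0$ pointwise a.e.\ with $\norm{(g_n-g)\f(\omega)}_H\le\norm{g\f(\omega)}_H\in L^2(\mu)$, and dominated convergence yields $g_n\f\to g\f$ in $L^2(\mu,H)$; hence $g\f\in\widehat{X}_{\f}$.

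The reverse inclusion $\widehat{X}_{\f}\subseteq Y$ is the main obstacle, since it requires recovering a measurable scalar coefficient from an abstract limit. Given $\h\in\widehat{X}_{\f}$, I would choose $g_n\in L^\infty(\mu)$ with $g_n\f\to\h$ in $L^2(\mu,H)$ and, passing to a subsequence, pointwise a.e. Define $g=[\h,\f]/[\f,\f]$ on $\{\f\neq0\}$ and $g=0$ on $\{\f=0\}$; this is measurable because $[\f,\f]$ is measurable and strictly positive on $\{\f\neq0\}$. On $\{\f\neq0\}$ the identity $g_n=[g_n\f,\f]/[\f,\f]$ together with the continuity of the inner product gives $g_n(\omega)\to g(\omega)$, hence $g_n(\omega)\f(\omega)\to g(\omega)\f(\omega)$, so $\h(\omega)=g(\omega)\f(\omega)$; on $\{\f=0\}$ both sides vanish. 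Thus $\h=g\f\in L^2(\mu,H)$, i.e.\ $\h\in Y$, which proves $(a)$.

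For $(b)$, note that $(\widehat{X}_{\f})^\perp=(X_{\f})^\perp$, so it suffices to compute the latter. For $\g\in L^2(\mu,H)$ and $g\in L^\infty(\mu)$ one has $\ip{g\f}{\g}=\int_L g\,[\f,\g]\di\mu$, where $[\f,\g]\in L^1(\mu)$ by the Cauchy--Schwarz bound recalled before the lemma; hence $\g\in(X_{\f})^\perp$ exactly when $\int_L g\,[\f,\g]\di\mu=0$ for every $g\in L^\infty(\mu)$. Testing with $g=\overline{[\f,\g]}/\abs{[\f,\g]}$ on $\{[\f,\g]\neq0\}$ (and $g=0$ elsewhere) reduces this to $\int_L\abs{[\f,\g]}\di\mu=0$, i.e.\ $[\f,\g]=0$ a.e.; the converse implication is immediate. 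This is exactly $\g\in Z$, so $(\widehat{X}_{\f})^\perp=Z$. (The computation of $\ip{g\f}{\g}$, and hence the choice of test function, is insensitive to which slot of the inner product is taken to be linear.)
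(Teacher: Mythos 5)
Your proof is correct, but it reaches part $(a)$ by a genuinely different route than the paper. The paper's proof of $(a)$ is structural: it introduces the weighted Hilbert space $Y$ of measurable scalar functions $g$ with $\int\abs{g}^2\norm{\f(\omega)}^2\di\mu<\infty$, observes that $g\mapsto g\f$ is an isometry of $Y$ into $L^2(\mu,H)$ (so its image is complete, hence closed), and that bounded functions are dense in $Y$; both inclusions then fall out of a single identification $\widehat{X}_{\f}=\mbox{image of }Y$. Your truncation-plus-dominated-convergence step is essentially the paper's density-of-bounded-functions step, but where the paper gets closedness of $\{g\f\}$ for free from completeness of $Y$, you instead recover the scalar coefficient explicitly, setting $g=[\h,\f]/[\f,\f]$ on $\{\f\ne0\}$ and using an a.e.-convergent subsequence of $g_n\f$ to verify $\h=g\f$; this is more hands-on and elementary, at the cost of the subsequence bookkeeping. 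One thing the paper's formulation buys is the space $Y$ itself, which is reused later (the proof of Lemma~\ref{L:abelian pr in AotimesB(H)} works with ``the Hilbert space $Y$ defined in the proof of Lemma~\ref{L:X_f}''), whereas your argument leaves no such auxiliary object behind. For part $(b)$ the two proofs are essentially the same contrapositive/testing argument; the only difference is that the paper makes the test function bounded by truncating $\overline{[\f,\g]}$ to a set where it is $\le K$, while you normalize it to a unimodular phase, which is marginally cleaner and equally valid (and, as you note, insensitive to the linearity convention for the inner product).
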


\begin{proof} $(a)$ Let 
$$Y=\{g:L\to \ce\mbox{ measurable}\setsep \int \abs{g(\omega)}^2\norm{\f(\omega)}^2\di\mu(\omega)<\infty\}.$$
Then $Y$ is a Hilbert space when equipped with the norm
$$\norm{g}_Y^2=\int \abs{g(\omega)}^2\norm{\f(\omega)}^2\di\mu(\omega)$$
and $g\mapsto g\f$ is an isometric embedding of $Y$ into $L^2(\mu,H)$. Since bounded functions are dense in $Y$, it follows that $X_{\f}$ is dense in the image of $Y$. Therefore the image of $Y$ equals to $\widehat{X}_{\f}$.

$(b)$ The inclusion `$\supset$' is obvious. To prove the converse assume that 
$$E=\{\omega\in L\setsep[\f,\g](\omega)\ne0\}$$ has a nonzero measure. Then there is $K>0$ such that 
$$E'=\{\omega\in E\setsep \abs{[\f,\g](\omega)}\le K\}$$ has a nonzero measure. Then $h=\chi_{E'} \overline{[\f,\g]}\in L^\infty(\mu)$, hence $h\f\in X_{\f}$. Thus
$$\ip{h\f}{\g}=\int_{E'}\abs{[f,g]}^2\di\mu>0,$$
hence $\g\notin(\widehat{X}_{\f})^\perp$.
\end{proof}

The following lemma provides an effective characterization of the operators on $L^2(\mu,H)$ which belong to $A\overline{\otimes}B(H)$.

 \begin{lemma}\label{L:AotimesB(H) charact} 
Let $T\in B(L^2(\mu,H))$. The following assertions are equivalent
\begin{enumerate}[$(1)$]
    \item $T\in A\overline{\otimes}B(H)$;
    \item $T(h\f)=hT(\f)$ for any $\f\in L^2(\mu,H)$ and $h\in L^\infty(\mu)$;
    \item Whenever $\f\in L^2(\mu,H)$ and $B\subset L$ is a measurable set such that $\f|_B=0$ $\mu$-a.e., then $T(\f)|_B=0$ $\mu$-a.e.;
    \item $\ip{T(\f_1)}{\f_2}=0$ whenever $\f_1,\f_2\in L^2(\mu,H)$ and there is a measurable set $B\subset L$ such that $f_1|_B=0$ and $f_2|_{L\setminus B}=0$.
  \end{enumerate}
\end{lemma}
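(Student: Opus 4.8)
The plan is to prove the cycle $(1)\Leftrightarrow(2)$, $(2)\Rightarrow(3)$, $(3)\Leftrightarrow(4)$ and $(3)\Rightarrow(2)$; together these yield the equivalence of all four conditions. The conceptual core is the equivalence $(1)\Leftrightarrow(2)$, and here I would invoke the von Neumann bicommutant theorem: since $A\overline{\otimes}B(H)$ is a von Neumann algebra acting on $L^2(\mu,H)$, we have $T\in A\overline{\otimes}B(H)$ if and only if $T$ commutes with the commutant $(A\overline{\otimes}B(H))'$. By Lemma~\ref{L:commutant of AotimesB(H)} this commutant is exactly $\{f\otimes I\setsep f\in L^\infty(\mu)\}$, and each $f\otimes I$ acts as multiplication by the scalar function $f$. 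Thus $T$ lies in $A\overline{\otimes}B(H)$ precisely when it commutes with every such multiplication operator, i.e. when $T(h\f)=hT(\f)$ for all $h\in L^\infty(\mu)$ and all $\f\in L^2(\mu,H)$, which is condition $(2)$.

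For $(2)\Rightarrow(3)$ I would simply feed $h=\chi_B$ into condition $(2)$: if $\f|_B=0$ then $\chi_B\f=0$, so $0=T(\chi_B\f)=\chi_B T(\f)$, which says $T(\f)|_B=0$. The equivalence $(3)\Leftrightarrow(4)$ is a routine computation with the sesquilinear form $[\cdot,\cdot]$, for which $\ip{\f_1}{\f_2}=\int_L[\f_1,\f_2]\di\mu$. For $(3)\Rightarrow(4)$, under the hypotheses of $(4)$ the integrand $[T(\f_1),\f_2]$ vanishes almost everywhere: on $B$ because $T(\f_1)$ does by $(3)$, and on $L\setminus B$ because $\f_2$ does. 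For $(4)\Rightarrow(3)$, given $\f$ with $\f|_B=0$ I would test against $\g=\chi_B T(\f)$; this $\g$ is supported on $B$, so $(4)$ applies and gives $\int_B\norm{T(\f)(\omega)}^2\di\mu(\omega)=0$, whence $T(\f)|_B=0$.

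The one step requiring genuine work is $(3)\Rightarrow(2)$, and I expect it to be the main obstacle. First I would extract from $(3)$ a locality property: splitting $\f=\chi_B\f+\chi_{L\setminus B}\f$ and applying $(3)$ to each summand shows $T(\chi_B\f)=\chi_B T(\f)$ for every measurable $B$. By linearity this upgrades to $T(s\f)=sT(\f)$ for every simple function $s$. To reach an arbitrary $h\in L^\infty(\mu)$ I would approximate $h$ uniformly by simple functions $s_n$; then $s_n\f\to h\f$ and $s_nT(\f)\to hT(\f)$ in $L^2(\mu,H)$ (using $\norm{s_n-h}_\infty\to0$ together with the bounds $\norm{\f}_2$ and $\norm{T(\f)}_2$), and continuity of $T$ passes the identity $T(s_n\f)=s_nT(\f)$ to the limit, giving $(2)$. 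The delicate points here are purely the measure-theoretic bookkeeping -- that $(3)$ really does give the clean splitting identity and that the uniform approximation interacts correctly with the $L^2$-norms -- rather than anything structural about the triple product.
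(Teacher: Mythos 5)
Your proposal is correct and follows essentially the same route as the paper: the identical cycle of implications, the same appeal to the bicommutant theorem together with Lemma~\ref{L:commutant of AotimesB(H)} for $(1)\Leftrightarrow(2)$, the same witness pair $\f_1=\f$, $\f_2=\chi_B T(\f)$ for $(4)\Rightarrow(3)$, and the same splitting-plus-density argument for $(3)\Rightarrow(2)$. You merely spell out a few steps (the integral computation in $(3)\Rightarrow(4)$ and the uniform approximation by simple functions) that the paper leaves implicit.
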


\begin{proof}
$(1)\Leftrightarrow(2)$ Observe that condition $(2)$ means that $T$ commutes with $h\otimes I$ for each $h\in L^\infty(\mu)$. Hence the equivalence follows from Lemma~\ref{L:commutant of AotimesB(H)} taking into account that $A\overline{\otimes}B(H)$ equals to its bicommutant (being a von Neumann algebra). 

$(2)\Rightarrow(3)$ Assuming $(2)$ we have $\chi_B T(\f)=T(\chi_B \f)=T(0)=0$.

$(3)\Rightarrow(4)$ This is obvious.

$(4)\Rightarrow(3)$ Assume $(3)$ does not hold and $B$ and $\f$ witness it. Then the pair $\f_1=\f$ and $\f_2=\chi_{B}T\f$ witnesses the failure of $(4)$.

$(3)\Rightarrow(2)$ Fix $\f\in L^2(\mu,H)$ and a measurable set $B\subset L$. Then 
$$T(\f)=T(\chi_B\f+\chi_{L\setminus B}\f)=T(\chi_B\f)+T(\chi_{L\setminus B}\f).$$
Moreover, by $(3)$ we deduce that $T(\chi_B\f)=0$ a.e. on $L\setminus B$ and $T(\chi_{L\setminus B}\f)=0$ a.e. on $B$.
Necessarily $T(\chi_B\f)=\chi_B T(\f)$ and $T(\chi_{L\setminus B}\f)=\chi_{L\setminus B} T(\f)$.

In other words, the formula from $(2)$ hold if $h$ is a characteristic function. By linearity of $T$ this formula can be extended to simple functions, by density of simple functions it holds for any $h\in L^\infty(\mu)$.
\end{proof}

The previous lemma enables us to construct some special operators in $A\overline{\otimes}B(H)$. The construction is described in the following lemma.

\begin{lemma}\label{L:operators on Xf}
Let $\f,\g\in L^2(\mu,H)$ be such that $\norm{\g(\omega)}\le C\norm{\f(\omega)}$ $\mu$-a.e. (for some $C>0$). Define an operator $T\in B(L^2(\mu,H))$ by setting
$$T(h\f)=h\g,\qquad h\f\in \widehat{X}_{\f}$$
and $T(\h)=0$ for $\h\in(\widehat{X}_{\f})^\perp$.
Then $T\in A\overline{\otimes}B(H)$. 

In particular, the orthogonal projection onto $\widehat{X}_{\f}$ belongs to $A\overline{\otimes}B(H)$. Further, if $\norm{g(\omega)}=\norm{\f(\omega)}$ $\mu$-a.e., then $T$ is a partial isometry.
\end{lemma}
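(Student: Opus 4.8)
The plan is to check first that $T$ is a well-defined bounded operator, and then to verify that it satisfies condition $(2)$ of Lemma~\ref{L:AotimesB(H) charact}, which is exactly the membership criterion for $A\overline{\otimes}B(H)$. The two ``in particular'' assertions will then fall out by specializing $\g$.

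First I would use Lemma~\ref{L:X_f}$(a)$ to write every element of $\widehat{X}_{\f}$ as $k\f$ for some measurable $k$ with $k\f\in L^2(\mu,H)$, and check that the prescription $k\f\mapsto k\g$ is unambiguous: if $k\f=k'\f$ then $k=k'$ $\mu$-a.e.\ on $\{\f\neq0\}$, while on $\{\f=0\}$ the bound $\norm{\g(\omega)}\le C\norm{\f(\omega)}$ forces $\g=0$, so $k\g=k'\g$. The same pointwise bound $\norm{k(\omega)\g(\omega)}\le C\norm{k(\omega)\f(\omega)}$ shows $k\g\in L^2(\mu,H)$ and $\norm{T(k\f)}_2\le C\norm{k\f}_2$. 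Hence $T$ is a well-defined bounded map on the closed subspace $\widehat{X}_{\f}$, and extending it by $0$ on $(\widehat{X}_{\f})^\perp$ produces $T\in B(L^2(\mu,H))$ which is linear (it equals the composition of the orthogonal projection onto $\widehat{X}_{\f}$ with the map just described).

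Next I would verify $T(h\boldsymbol{\psi})=h\,T(\boldsymbol{\psi})$ for all $\boldsymbol{\psi}\in L^2(\mu,H)$ and $h\in L^\infty(\mu)$, which is condition $(2)$ of Lemma~\ref{L:AotimesB(H) charact}. Writing the orthogonal decomposition $\boldsymbol{\psi}=\boldsymbol{\psi}_1+\boldsymbol{\psi}_2$ with $\boldsymbol{\psi}_1=k\f\in\widehat{X}_{\f}$ and $\boldsymbol{\psi}_2\in(\widehat{X}_{\f})^\perp$, I would treat the two summands separately. On the first, $h\boldsymbol{\psi}_1=(hk)\f$ again lies in $\widehat{X}_{\f}$ by Lemma~\ref{L:X_f}$(a)$, so $T(h\boldsymbol{\psi}_1)=(hk)\g=h\,T(\boldsymbol{\psi}_1)$. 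On the second, Lemma~\ref{L:X_f}$(b)$ gives $[\f,\boldsymbol{\psi}_2]=0$ $\mu$-a.e.; since the inner product is conjugate-linear in its second variable, $[\f,h\boldsymbol{\psi}_2]=\overline{h}\,[\f,\boldsymbol{\psi}_2]=0$ $\mu$-a.e., so $h\boldsymbol{\psi}_2\in(\widehat{X}_{\f})^\perp$ and $T(h\boldsymbol{\psi}_2)=0=h\,T(\boldsymbol{\psi}_2)$. Adding these and invoking linearity of $T$ yields condition $(2)$, hence $T\in A\overline{\otimes}B(H)$.

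Finally, taking $\g=\f$ (so $C=1$) makes $T$ the identity on $\widehat{X}_{\f}$ and $0$ on its orthogonal complement, i.e.\ the orthogonal projection onto $\widehat{X}_{\f}$, which therefore belongs to $A\overline{\otimes}B(H)$; and if $\norm{\g(\omega)}=\norm{\f(\omega)}$ $\mu$-a.e.\ then $\norm{T(k\f)}_2=\norm{k\f}_2$, so $T$ is isometric on $\widehat{X}_{\f}$ and vanishes on $(\widehat{X}_{\f})^\perp$, which makes it a partial isometry with initial space $\widehat{X}_{\f}$. I do not expect a serious difficulty anywhere; the only point that genuinely needs care is the invariance of $(\widehat{X}_{\f})^\perp$ under multiplication by $L^\infty(\mu)$-functions, and this is precisely where Lemma~\ref{L:X_f}$(b)$ together with the conjugate-linearity of the inner product in the second slot does the work.
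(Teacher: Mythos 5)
Your proposal is correct and follows exactly the paper's route: the paper's own proof simply asserts that $T$ is a bounded linear operator satisfying condition $(2)$ of Lemma~\ref{L:AotimesB(H) charact} and that the additional statements are obvious, which is precisely what you verify in detail (well-definedness and boundedness via Lemma~\ref{L:X_f}$(a)$, invariance of $(\widehat{X}_{\f})^\perp$ under $L^\infty(\mu)$-multiplication via Lemma~\ref{L:X_f}$(b)$, then the two special cases). Your write-up just makes explicit the steps the paper leaves to the reader.
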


\begin{proof}
It is clear that  $T$ is indeed a bounded linear operator. Moreover, it satisfies condition $(2)$ from Lemma~\ref{L:AotimesB(H) charact}, which completes the proof. The additional statements are obvious.
\end{proof}

Next we are going to present three lemmata characterizing abelian, finite and properly infinite projections in $A\overline{\otimes}B(H)$. Before formulating them we give an easy observation which we will often use.

\begin{obs}\label{obs:invariance Xf}
Let $P\in A\overline{\otimes}B(H)$ be a projection and let $\f\in\ran(P)$ be arbitrary. Then $\widehat{X}_{\f}\subset\ran(P)$.
\end{obs}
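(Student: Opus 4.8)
The plan is to reduce the statement to the commutation characterization of $A\overline{\otimes}B(H)$ recorded in Lemma~\ref{L:AotimesB(H) charact}, using only the elementary fact that a projection in a von Neumann algebra acting on a Hilbert space is an orthogonal projection, so that $\ran(P)$ is a closed subspace of $L^2(\mu,H)$ and $\f\in\ran(P)$ is equivalent to $P\f=\f$.

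First I would recall that, since $P\in A\overline{\otimes}B(H)$, condition $(2)$ of Lemma~\ref{L:AotimesB(H) charact} applies: for every $\g\in L^2(\mu,H)$ and every $h\in L^\infty(\mu)$ we have $P(h\g)=hP(\g)$. The hypothesis $\f\in\ran(P)$ means $P\f=\f$. Hence for an arbitrary $g\in L^\infty(\mu)$ I would compute
$$P(g\f)=gP(\f)=g\f,$$
so $g\f\in\ran(P)$. As $g$ ranges over $L^\infty(\mu)$ this shows exactly that $X_{\f}\subset\ran(P)$.

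Finally, since $\ran(P)$ is closed, passing to the closure gives $\widehat{X}_{\f}=\overline{X_{\f}}\subset\ran(P)$, which is the claim. The argument involves no genuine obstacle: the entire content has already been packaged into Lemma~\ref{L:AotimesB(H) charact}$(2)$, and the only additional ingredients are that projections in von Neumann algebras are self-adjoint idempotents (hence have closed range) and that the range of a projection coincides with its fixed-point set. I would therefore expect the proof to be essentially a two-line verification.
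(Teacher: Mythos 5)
Your proof is correct and is essentially identical to the paper's own argument: both invoke condition $(2)$ of Lemma~\ref{L:AotimesB(H) charact} to get $X_{\f}\subset\ran(P)$ (your explicit computation $P(g\f)=gP(\f)=g\f$ is exactly what the paper means by ``immediately'') and then pass to the closure using that $\ran(P)$ is closed.
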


\begin{proof}
It follows immeadiately from Lemma~\ref{L:AotimesB(H) charact} that $X_{\f}\subset \ran(P)$ (using condition $(2)$). Since $\ran(P)$ is closed, necessarily $\widehat{X}_{\f}=\overline{X_{\f}}\subset\ran(P)$.
\end{proof}

\begin{lemma}\label{L:abelian pr in AotimesB(H)}
Let $P\in A\overline{\otimes}B(H)$ be a projection.
The following assertions are equivalent
    \begin{enumerate}[$(a)$]
        \item $P$ is abelian;
        \item $\ran(P)=\widehat{X}_{\f}$ for some $\f\in L^2(\mu,H)$;
        \item For any $\f,\g\in\ran(P)$ 
$$\f(\omega),\g(\omega)\mbox{ are linearly dependent for $\mu$-a.a. }\omega.$$
    \end{enumerate}
\end{lemma}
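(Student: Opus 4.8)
The plan is to run the cycle $(b)\Rightarrow(a)\Rightarrow(c)\Rightarrow(b)$, writing $M=A\overline{\otimes}B(H)$ and using throughout that, by Lemma~\ref{L:commutant of AotimesB(H)}, multiplication by a scalar function $g\in L^\infty(\mu)$ is the central element $g\otimes I$ and hence commutes with $P$. For $(b)\Rightarrow(a)$, assume $\ran(P)=\widehat{X}_{\f}$ and compute the action of $PTP$ (for $T\in M$) on the generating vectors: for $g\f\in\widehat{X}_{\f}$ one gets $PTP(g\f)=P(g\,T\f)=g\,P(T\f)$ by Lemma~\ref{L:AotimesB(H) charact}(2) and centrality of $g\otimes I$, and since $P(T\f)\in\ran(P)=\widehat{X}_{\f}$ we may write $P(T\f)=\psi_T\f$ for a scalar function $\psi_T$. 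Thus every element of $PMP$ acts on $\widehat{X}_{\f}$ as multiplication by a scalar function, and as such multiplications commute, $PMP$ is commutative, i.e.\ $P$ is abelian.

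For $(a)\Rightarrow(c)$ I argue by contraposition. If $(c)$ fails, there are $\f,\g\in\ran(P)$ for which the measurable Gram function $\norm{\f(\omega)}^2\norm{\g(\omega)}^2-\abs{[\f,\g](\omega)}^2$ is nonzero on a set $E$ of positive measure, i.e.\ $\f(\omega),\g(\omega)$ are linearly independent on $E$. A pointwise Gram--Schmidt step replaces $\g$ by $\g'=\g-c\f$, where $c$ is the measurable coefficient making $\g'(\omega)\perp\f(\omega)$; here $c\f=\g-\g'\in L^2(\mu,H)$, so $c\f\in\widehat{X}_{\f}\subseteq\ran(P)$ by Lemma~\ref{L:X_f}(a) and Observation~\ref{obs:invariance Xf}, whence $\g'\in\ran(P)$. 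Normalising the restrictions to $E$ yields unit sections $\phi,\psi\in\ran(P)$ supported on $E$ with $\phi(\omega)\perp\psi(\omega)$ there, so $\widehat{X}_{\phi}\perp\widehat{X}_{\psi}$ and both are nonzero. Lemma~\ref{L:operators on Xf} then provides a partial isometry $V\in M$ with initial space $\widehat{X}_{\phi}$ and final space $\widehat{X}_{\psi}$, so that $V^*V$ and $VV^*$ are the distinct, orthogonal projections onto these two spaces. Since both ranges lie in $\ran(P)$, Observation~\ref{obs:invariance Xf} gives $V=PVP$ and $V^*=PV^*P$, so $V,V^*\in PMP$ while $V^*V\neq VV^*$; hence $PMP$ is noncommutative and $P$ is not abelian.

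The main obstacle is $(c)\Rightarrow(b)$, where a single generator has to be manufactured. I plan a maximal-support argument: put $S_{\f}=\{\omega:\f(\omega)\neq0\}$ and $s=\sup\{\mu(S_{\f}):\f\in\ran(P)\}$, choose $\f_n\in\ran(P)$ with $\mu(S_{\f_n})\uparrow s$, disjointify by $E_n=S_{\f_n}\setminus\bigcup_{k<n}S_{\f_k}$ (each $\chi_{E_n}\f_n$ lies in $\ran(P)$ because $\chi_{E_n}\otimes I$ is central), and set $\f_0=\sum_n c_n\chi_{E_n}\f_n$ with $c_n>0$ small enough for norm convergence; since the $E_n$ are disjoint there is no cancellation, $\f_0\in\ran(P)$, and $S_{\f_0}=\bigcup_n S_{\f_n}$, so $\mu(S_{\f_0})=s$. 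A cutting argument then shows $S_{\g}\subseteq S_{\f_0}$ a.e.\ for every $\g\in\ran(P)$, for otherwise $\f_0+\chi_{L\setminus S_{\f_0}}\g\in\ran(P)$ would have support of measure exceeding $s$. On $S_{\f_0}$ condition $(c)$ forces $\g(\omega)$ to be a scalar multiple of $\f_0(\omega)$, so $\g=\lambda\f_0$ for a measurable scalar function $\lambda$, giving $\g\in\widehat{X}_{\f_0}$ by Lemma~\ref{L:X_f}(a); thus $\ran(P)\subseteq\widehat{X}_{\f_0}$, and the reverse inclusion is Observation~\ref{obs:invariance Xf}. The delicate points here are the measurability and $L^2$-membership of the scalar functions $c$ and $\lambda$ and the avoidance of cancellation in $\f_0$, all controlled through the disjointness of the $E_n$ and the bracket $[\cdot,\cdot]$ of Lemma~\ref{L:X_f}. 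Finally $(b)\Rightarrow(c)$ is immediate, as every element of $\widehat{X}_{\f}$ is a scalar-function multiple of $\f$, and any two such are pointwise proportional.
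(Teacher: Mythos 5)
Your proposal is correct and follows essentially the same route as the paper: the same cycle $(b)\Rightarrow(a)\Rightarrow(c)\Rightarrow(b)$, with corner elements shown to act as scalar multiplications on $\widehat{X}_{\f}$, a pointwise Gram--Schmidt plus Lemma~\ref{L:operators on Xf} producing noncommuting elements of $PMP$, and a countable exhaustion by disjointly supported sections summed into a single generator. The only cosmetic differences are that you exhibit noncommutativity via a partial isometry $V$ with $V^*V\neq VV^*$ where the paper uses the two bracket operators $T_1,T_2$, and your maximal-support/disjointification argument replaces the paper's Zorn-type maximal family, both yielding the same countable family.
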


\begin{proof}
$(b)\Rightarrow (a)$
Let $T\in A\overline{\otimes} B(H)$ be such that $T=PTP$. It follows that $\widehat{X}_{\f}=\ran(P)$ is invariant for $T$, hence $T$ generates a bounded linear operator on the Hilbert space $Y$ defined in the proof of Lemma~\ref{L:X_f}, denoted again by $T$. 

Set $h=T(1)$. Condition $(2)$ of Lemma~\ref{L:AotimesB(H) charact} assures that $T(\chi_E)=\chi_E h$ for $E\subset L$ measurable.
Hence
$$\begin{aligned}
\int_E \abs{h(\omega)}^2\norm{\f(\omega)}^2\di\mu(\omega)&=\norm{\chi_E h}_Y^2=\norm{T(\chi_E)}_Y^2\le \norm{T}^2\norm{\chi_E}^2\\&
=\norm{T}^2\int_E \norm{\f(\omega)}^2\di\mu(\omega)\end{aligned}$$
for $E\subset K$ measurable. It follows that $\abs{h(\omega)}\le \norm{T}$ $\mu$-a.e. Since $T(g)=gh$ for any simple function and simple functions are dense, we deduce that $T$ is a multiplication operator.

Finally, any two multiplication operators commute, which completes the argument.

$(a)\Rightarrow(c)$: Assume there are $\f,\g\in\ran(P)$ and a set $E$ of positive measure such that $\f(\omega)$, $\g(\omega)$ are linearly independent for $\omega\in E$.

Set 
$$\f_1(\omega)=\begin{cases}\frac{\f(\omega)}{\norm{\f(\omega)}},& \omega\in E,\\ 0, &\omega\in L\setminus E,\end{cases}\quad\f_2(\omega)=\begin{cases}
\frac{\g(\omega)-\ip{\g(\omega)}{\f_1(\omega)}\f_1(\omega)}{\norm{\g(\omega)-\ip{\g(\omega)}{\f_1(\omega)}\f_1(\omega)}},& \omega\in E,\\ 0, &\omega\in L\setminus E.\end{cases}$$
Then $\f_1,\f_2\in\ran(P)$ (by Observation~\ref{obs:invariance Xf} and Lemma \ref{L:X_f}), $[\f_1,\f_2]=0$ and $\norm{\f_j(\omega)}=1$ for $\omega\in E$ and $j=1,2$.

Consider the operators defined by $T_1(\h)=[\h,\f_1]\f_2$ and $T_2(\h)=[\h,\f_2]\f_1$ for $\h\in L^2(\mu,H)$. Then $T_1$ and $T_2$ are contractive operators on $L^2(\mu,H)$. Moreover, they satisfy condition $(2)$ from Lemma~\ref{L:AotimesB(H) charact}, hence they belong to $A\overline{\otimes}B(H)$. Moreover,  $T_j=PT_jP$ for $j=1,2$ and $T_1T_2\ne T_2T_1$. Hence $P$ is not abelian.

$(c)\Rightarrow(b)$ Assume $(c)$ holds. Let $(\f_\lambda)_{\lambda\in\Lambda}$  be a maximal family of elements in $\ran(P)$ such that $\{\omega\in L \setsep  \f_{\lambda_1}(\omega)\ne0\ne \f_{\lambda_2}(\omega)\}$ has measure zero whenever $\lambda_1,\lambda_2$ are two distinct elements of $\Lambda$. Since $\mu$ is a probability measure,  $\Lambda$ is countable, thus we may assume $\Lambda\subset\en$. Set 
$$\f=\sum_{n\in\Lambda}\frac{1}{2^n}\frac{\f_n}{\norm{\f_n}}.$$
Then clearly $\f\in \ran(P)$, thus $\widehat{X}_{\f}\subset\ran(P)$. We claim that in fact $\widehat{X}_{\f}=\ran(P)$. To prove this take any $\g\in\ran(P)$. Then $E=\{\omega\in L\setsep \g(\omega)\ne0=\f(\omega)\}$ has measure zero. Indeed, otherwise the function $\chi_E\g$ would contradict maximality of the family  $(\f_\lambda)$. Hence, by the assumption we get that
$\g(\omega)$ is a scalar multiple of $\f(\omega)$ $\mu$-a.e.
Hence clearly $\g\in\widehat{X}_{\f}$.
\end{proof}

\begin{lemma}\label{L:finite in AotimesB(H)}
 Let $P\in A\overline{\otimes}B(H)$ be a projection.
 \begin{enumerate}[$(i)$]
     \item Assume that $P$ is finite. Then $P$ is $\sigma$-finite. Moreover, there is a sequence $(\f_n)$ in $\ran(P)$ such that
     \begin{enumerate}[$(a)$]
         \item $[\f_n,\f_m]=0$ for $m\ne n$;
         \item $\span\bigcup_n \widehat{X}_{\f_n}$ is dense in $\ran(P)$;
         \item $\dim\span\{\f_n(\omega)\setsep n\in\en\}<\infty$ for $\mu$-almost all $\omega$.
     \end{enumerate}
    Moreover, the dimension function from assertion $(c)$ does not depend on the choice of the sequence $(\f_n)$.
    \item Assume that $P$ is infinite. Then there is a sequence $(\f_n)$ in $\ran(P)$ and a measurable set $E\subset L$ with $\mu(E)>0$ such that $(\f_n(\omega))_{n\in\en}$ is an orthonormal sequence for each $\omega\in E$.
 \end{enumerate}
\end{lemma}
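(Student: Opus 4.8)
The plan is to read off everything from the pointwise behaviour of $P$. By Lemma~\ref{L:AotimesB(H) charact}$(2)$ every element of $A\overline{\otimes}B(H)$ commutes with the multiplications $h\otimes I$, hence acts fibrewise: there is a measurable field $\omega\mapsto P(\omega)$ of projections in $B(H)$ with $(P\f)(\omega)=P(\omega)\f(\omega)$, and the same holds for partial isometries, with products and adjoints computed fibrewise. I shall systematically use the cyclic projections $[\f]$ onto $\widehat{X}_{\f}$ (which lie in $A\overline{\otimes}B(H)$ by Lemma~\ref{L:operators on Xf}) together with the following equivalence device: if $\norm{\f(\omega)}=\norm{\g(\omega)}$ $\mu$-a.e., then $h\f\mapsto h\g$ is a partial isometry of $A\overline{\otimes}B(H)$ with initial projection $[\f]$ and final projection $[\g]$, so that $[\f]\sim[\g]$.

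For $(ii)$ I would argue straight from the definition of an infinite projection. Choose a partial isometry $w$ with $w^{*}w=P$ and $ww^{*}=Q<P$, and put $R=P-Q\neq0$. Taking $\eta_0\in\ran R\setminus\{0\}$ and normalising it to $\eta$ with $\norm{\eta(\omega)}=1$ on $E=\{\omega\setsep\eta_0(\omega)\neq0\}$ and $\eta(\omega)=0$ otherwise, one gets $\eta\in\ran R\subset\ran P$ and $\mu(E)>0$. Set $\f_n=w^{n}\eta$. Since fibrewise $w(\omega)^{*}w(\omega)=P(\omega)$, $w(\omega)w(\omega)^{*}=Q(\omega)$ and $\eta(\omega)\in\ran(P(\omega)-Q(\omega))$, the usual shift computation shows that $(w(\omega)^{n}\eta(\omega))_{n}$ is orthonormal for each $\omega\in E$, which is the required sequence.

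For $(i)$ I would fix, by Zorn's lemma, a maximal family $(\f_\lambda)_{\lambda\in\Lambda}$ in $\ran P$ whose members have fibres of norm $0$ or $1$ and satisfy $[\f_\lambda,\f_\mu]=0$ $\mu$-a.e. for $\lambda\neq\mu$; normalisations stay in $\ran P$ by Lemma~\ref{L:X_f} and Observation~\ref{obs:invariance Xf}, and maximality yields property $(b)$, since a nonzero $\g\in\ran P$ with $[\g,\f_\lambda]=0$ $\mu$-a.e. for all $\lambda$ could be normalised and adjoined. The key step is an extraction principle: if infinitely many of the $\f_\lambda$ are simultaneously nonzero on a set of positive measure, then $P$ is infinite. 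Indeed, reindexing at each $\omega$ the active members by their $m$-th occurrence (a measurable operation) produces a genuine orthonormal sequence $(\g_m)$ with unit fibres on a set $E$ of positive measure; then $([\g_m])_m$ is an orthogonal sequence of mutually equivalent nonzero subprojections of $P$, so $\sum_m[\g_m]$ is properly infinite by Lemma~\ref{L:finite projections basic facts}$(e)$ and $P$ is infinite by Lemma~\ref{L:finite projections basic facts}$(a)$. With $P$ finite this forces, first, $\Lambda$ to be countable (otherwise some $\varepsilon>0$ admits infinitely many $\lambda$ with $\mu(\{\f_\lambda\neq0\})>\varepsilon$, and $\limsup_n\{\f_{\lambda_n}\neq0\}$ has measure $\geq\varepsilon$, contradicting the principle); relabel the family as $(\f_n)_{n\in\en}$. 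Second, the set $\{\omega\setsep\#\{n\setsep\f_n(\omega)\neq0\}=\infty\}$ must be null, which is property $(c)$, while $(a)$ holds by construction. Finally $P=\sum_n[\f_n]$ with each $[\f_n]$ abelian (Lemma~\ref{L:abelian pr in AotimesB(H)}) and $\sigma$-finite, as its corner algebra is the multiplication algebra of a finite measure space; a countable orthogonal sum of $\sigma$-finite projections is $\sigma$-finite, so $P$ is $\sigma$-finite. Independence of the dimension function follows because $(a)$ and $(b)$ make $(\f_n(\omega))_n$ an orthogonal basis of $\ran P(\omega)$ for a.a. $\omega$, whence $\dim\span\{\f_n(\omega)\setsep n\in\en\}=\dim\ran P(\omega)$.

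The main obstacle is this extraction principle: converting the pointwise fact ``infinitely many fibres are nonzero at $\omega$'', where the set of active indices depends on $\omega$, into one honest orthonormal sequence defined on a fixed set of positive measure. This needs the measurable reindexing by $m$-th occurrence and a careful verification that the resulting $\g_m$ belong to $\ran P$ and give mutually equivalent cyclic projections, so that the abstract comparison results in Lemma~\ref{L:finite projections basic facts} become applicable; the remaining counting, $\sigma$-finiteness and uniqueness steps are then routine.
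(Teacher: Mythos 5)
Your foundational claim is not justified: Lemma~\ref{L:AotimesB(H) charact}$(2)$ gives only the module property $T(h\f)=hT(\f)$ for $h\in L^\infty(\mu)$; it does \emph{not} provide a measurable field $\omega\mapsto P(\omega)$ (or $\omega\mapsto w(\omega)$ for partial isometries) with $(P\f)(\omega)=P(\omega)\f(\omega)$ $\mu$-a.e. Passing from the module property to a pointwise field of operators is von Neumann reduction theory, which requires separability of $H$ and a standard measure space; here $H=\ell^2(\Gamma)$ may be non-separable and $\mu$ is a Radon measure on a hyperstonean compactum (recall the paper's own warning that $A\overline{\otimes}B(H)\ne L^\infty(\mu,B(H))$ because $B(H)$ is not reflexive). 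This defect is harmless in most of your argument, because the fibrewise facts you actually use concern \emph{vectors}, not operators, and can be rederived from the available lemmas: for a partial isometry $w\in A\overline{\otimes}B(H)$ and $\f\in\ran(p_i(w))$ one gets $\norm{(w\f)(\omega)}=\norm{\f(\omega)}$ a.e. from $\int\abs{h}^2\norm{(w\f)(\omega)}^2\di\mu=\norm{w(h\f)}^2=\norm{h\f}^2$ for all $h\in L^\infty(\mu)$, and fibrewise orthogonality of $w^n\eta$ and $w^m\eta$ follows from $\langle h\,w^n\eta,w^m\eta\rangle=\langle h\eta,w^{m-n}\eta\rangle=0$ together with Lemma~\ref{L:X_f}$(b)$. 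With these substitutions your part $(ii)$, the extraction principle (which is purely a statement about vectors and their cyclic projections), the countability of $\Lambda$, properties $(a)$--$(c)$, and the $\sigma$-finiteness of $P$ (as a countable orthogonal sum of cyclic projections whose corners are multiplication algebras of finite measure spaces, instead of the paper's route via the central cover and \cite[Proposition 6.3.10]{KR2}) are all correct; the concluding mechanism -- mutually orthogonal, mutually equivalent cyclic projections under $P$ via Lemma~\ref{L:operators on Xf}, hence a properly infinite subprojection by Lemma~\ref{L:finite projections basic facts}$(e)$ -- is exactly the paper's.

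The genuine gap is the final assertion of $(i)$, the independence of the dimension function, which your one-line argument does not prove. You claim that $(a)$ and $(b)$ make $(\f_n(\omega))_n$ an orthogonal basis of $\ran P(\omega)$ for a.a.\ $\omega$, so that the dimension function is the intrinsic quantity $\dim\ran P(\omega)$. This uses the nonexistent field $P(\omega)$ essentially, and even granting such a field, the step from $L^2$-density of $\span\bigcup_n\widehat{X}_{\f_n}$ in $\ran(P)$ to \emph{fibrewise} density of $\span\{\f_n(\omega)\setsep n\in\en\}$ in $\ran P(\omega)$ a.e. is not automatic: to refute its failure one would need a measurable selection of nonzero vectors in $\ran P(\omega)\ominus\span\{\f_n(\omega)\setsep n\in\en\}$ over a set of positive measure, precisely the kind of construction that is unavailable in this generality and cannot be extracted from the module-property lemmas. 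The paper proves uniqueness by comparison theory instead: if two admissible sequences gave values $k\ne m$ on a set $E$ of positive measure, then $(\chi_E\otimes I)P$ would be expressible both as a sum of $k$ and as a sum of $m$ mutually equivalent orthogonal abelian projections (built by the same technique as in the proof of $(c)$), and \cite[Lemma V.1.26]{Tak}, applied in the von Neumann algebra $(\chi_E\otimes I)P\bigl(A\overline{\otimes}B(H)\bigr)P$, forces $k=m$. Your extraction technique actually supplies the required decompositions, so the repair is available to you; but as written, the uniqueness statement remains unproved.
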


\begin{proof}
$(i)$ Assume that $P$ is finite.
As the center of $A\overline{\otimes}B(H)$ is *-isomorphic to $L^\infty(\mu)$, where $\mu$ is a probability measure, it is a $\sigma$-finite algebra. By Proposition~6.3.10 in \cite{KR2} a finite projection whose central cover is $\sigma$-finite in the center has to be $\sigma$-finite. Therefore $P$ is $\sigma$-finite and the first assertion is proved.  

Further, let $(\f_j)_{j\in J}$ be a maximal family of nonzero elements of $\ran(P)$ such that $[\f_j,\f_k]=0$ whenever $j\ne k$. Let $Q_j$ denote the projection onto $\widehat{X}_{\f_j}$. Then $Q_j\in A\overline{\otimes} B(H)$ by Lemma~\ref{L:operators on Xf}, $Q_j\le P$ and the family $(Q_j)$ is orthogonal. So, the family is countable as $P$ is $\sigma$-finite by the previous paragraph. Moreover, by maximality we easily get that $P=\sum_j Q_j$. 

Hence, properties $(a)$ and $(b)$ are satisfied. Let us continue by proving $(c)$. We proceed by contradiction.
Assume that there is a measurable set $E\subset L$ with $\mu(E)>0$ such that
$$\dim\span \{\f_j(\omega)\setsep j\in J\}=\infty\mbox{ for }\omega\in E.$$
We will  show that there is a sequence $(\g_n)$ in $\ran(P)$ such that the sequence $(\g_n(\omega))$ is  orthonormal for almost all $\omega\in E$. This will be done by induction. 

Assume that $n\in\en$ and we have already constructed $\g_k$ for $k<n$ (the case $n=1$ covers the first induction step).

Let $(n_j,B_j)_{j\in C}$ be a maximal family such that for each $j\in C$ we have
\begin{itemize}
    \item $n_j\in J$ and $B_j\subset L$ is a measurable set of positive measure;
    \item  $\mu(B_j\cap B_k)=0$ for $k\ne j$;
    \item $\f_{n_j}(\omega)\ne 0$ and $\f_{n_j}(\omega) \perp\span\{\g_k(\omega)\setsep k<n\}$ for $\omega\in B_j$.
\end{itemize}
The existence of such a family follows from Zorn's lemma. Moreover, the family is countable (as $\mu$ is a probability measure) and $E\setminus\bigcup_{j\in C} B_j$ has measure zero (by the assumption and maximality). So, assuming $C\subset\en$ it is enough to set $\displaystyle \g'_n=\sum_{j\in C}\frac{1}{2^j}\frac{\chi_{B_j}\f_{n_j}}{\norm{\chi_{B_j}\f_{n_j}}}$ and $\displaystyle\g_n(\omega)=\frac{\g'_n(\omega)}{\norm{\g'_n(\omega)}}$ for $\omega\in E$. 

Further, let $R_n$ be the orthogonal projection onto $\widehat{X}_{\chi_E\g_n}$. Then the projections $(R_n)$ are mutually orthogonal and mutually equivalent (by Lemma~\ref{L:operators on Xf}). Thus $\sum_n R_n$ is properly infinite by Lemma~\ref{L:finite projections basic facts}$(e)$. Since this projection is majorized by $P$, necessarily 
$P$ is infinite, a contradiction.

This completes the proof of properties $(a)-(c)$. Let us point out that we have in fact proved that $(c)$ is a consequence of $(a)$ and $(b)$. It remains to prove the independence of the dimension function. This will be done by contradiction. Assume $(\f_n)$ and $(\g_n)$ are two sequences satisfying $(a)-(c)$ with different dimension functions. It means that there is a measurable set $E\subset L$ with $\mu(E)>0$ and natural numbers $k\ne m$ such that for each $\omega\in E$ we have
$$\dim\span\{\f_n(\omega)\setsep n\in\en\}=k \ \&\ 
\dim\span\{\g_n(\omega)\setsep n\in\en\}=m.$$
Similarly as above we may prove that $(\chi_E\otimes I)P$ can be expressed in two ways -- as a sum of $k$ mutually equivalent orthogonal abelian projections and also as a sum of $m$ mutually equivalent orthogonal abelian projections. It follows from \cite[Lemma V.1.26]{Tak} applied to the von Neumann algebra $(\chi_E\otimes I)PMP$ that $k=m$, a contradiction.

$(ii)$ Assume that $P$ is infinite, i.e., there is a projection $Q<P$ with $P\sim Q$. Let $U$ be a partial isometry with
$p_i(U)=P$ and $p_f(U)=Q$. Fix $\f\in\ran(P-Q)$. Without loss of generality we may assume that $\norm{\f(\omega)}=1$ whenever it is nonzero. Consider the sequence $\f_n$ defined by $\f_1=\f$ and $\f_{n+1}=U(\f_n)$ for $n\in\en$. Since $\widehat{X}_{\f_2}\perp \widehat{X}_{\f_1}$, it follows by an easy induction that the spaces $\widehat{X}_{\f_n}$, $n\in\en$, are mutually orthogonal. It follows that the sequence $(\f_n)$ has the required properties.
\end{proof}

\begin{lemma}\label{L:properly infinite in AotimesB(H)}
Let $P\in A\overline{\otimes}B(H)$ be a projection.
The following assertions are equivalent.
\begin{enumerate}[$(a)$]
     \item $P$ is properly infinite;
     \item There is a measurable set $E\subset L$ with $\mu(E)>0$ and a sequence $(\f_n)$ in $\ran(P)$ such that
     \begin{itemize}
         \item $\chi_{L\setminus E}\f=0$ for $\f\in \ran(P)$, and
         \item $(f_n(\omega))$ is a linearly indepenent sequence for each $\omega\in E$.
     \end{itemize}
     \item There is a measurable set $E\subset L$ with $\mu(E)>0$ and a sequence $(\f_n)$ in $\ran(P)$ such that
     \begin{itemize}
         \item $\chi_{L\setminus E}\f=0$ for $\f\in \ran(P)$, and
         \item $(f_n(\omega))$ is an orthonormal sequence for each $\omega\in E$.
     \end{itemize}
 \end{enumerate} 
\end{lemma}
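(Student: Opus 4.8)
The plan is to establish the cycle $(b)\Leftrightarrow(c)$, $(c)\Rightarrow(a)$ and $(a)\Rightarrow(c)$, which together yield all the equivalences. The implication $(c)\Rightarrow(b)$ is immediate, since an orthonormal sequence is linearly independent. For $(b)\Rightarrow(c)$ I would run a pointwise (measurable) Gram--Schmidt procedure. Starting from a sequence $(\f_n)$ in $\ran(P)$ linearly independent at every $\omega\in E$, I set $\g_1(\omega)=\f_1(\omega)/\norm{\f_1(\omega)}$ and inductively $\g_n'=\f_n-\sum_{k<n}[\f_n,\g_k]\g_k$, then $\g_n(\omega)=\g_n'(\omega)/\norm{\g_n'(\omega)}$ for $\omega\in E$ (and $\g_n=0$ off $E$). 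At each step the scalar coefficients $[\f_n,\g_k]$ are measurable with $\abs{[\f_n,\g_k](\omega)}\le\norm{\f_n(\omega)}$ by Cauchy--Schwarz, so every summand lies in $\widehat{X}_{\g_k}\subset\ran(P)$ by Observation~\ref{obs:invariance Xf} and Lemma~\ref{L:X_f}; since $\norm{\g_n(\omega)}=1$ on $E$ one has $\norm{\g_n}_2^2=\mu(E)<\infty$, so $\g_n\in\widehat{X}_{\g_n'}\subset\ran(P)$ and $(\g_n(\omega))$ is orthonormal on $E$, while the support condition $\chi_{L\setminus E}\f=0$ is inherited unchanged.

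For $(c)\Rightarrow(a)$ I would verify proper infiniteness directly from the definition. Central projections of $A\overline{\otimes}B(H)$ are exactly the $\chi_F\otimes I$ (Lemma~\ref{L:commutant of AotimesB(H)}). If $z=\chi_F\otimes I$ satisfies $zP\ne0$ then, since all range functions vanish off $E$ and $\f_1(\omega)\ne0$ on $E$, necessarily $\mu(F\cap E)>0$. The functions $z\f_n=\chi_{F\cap E}\f_n$ lie in $\ran(zP)$ (as $zP\f_n=z\f_n$, so $\widehat{X}_{z\f_n}\subset\ran(zP)$ by Observation~\ref{obs:invariance Xf}); they are pairwise orthogonal by Lemma~\ref{L:X_f}$(b)$, since $[\chi_{F\cap E}\f_n,\chi_{F\cap E}\f_m]=0$; and the projections $R_n$ onto $\widehat{X}_{\chi_{F\cap E}\f_n}$ are mutually equivalent by Lemma~\ref{L:operators on Xf}, their generators having equal pointwise norms. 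By Lemma~\ref{L:finite projections basic facts}$(e)$ the projection $\sum_n R_n\le zP$ is properly infinite, hence infinite, so $zP$ is infinite by Lemma~\ref{L:finite projections basic facts}$(a)$. As $P\ne0$, this shows $P$ is properly infinite.

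The implication $(a)\Rightarrow(c)$ is the crux. I would first take $E$ with $\chi_E\otimes I$ equal to the central support of $P$ (the least central projection above $P$), so that $\chi_{L\setminus E}\f=0$ for all $\f\in\ran(P)$ and $\mu(E)>0$. The key point is that for every measurable $F\subset E$ with $\mu(F)>0$ the projection $(\chi_F\otimes I)P$ is nonzero --- otherwise $P\le\chi_{L\setminus F}\otimes I$ would contradict minimality of $E$ --- and hence infinite by proper infiniteness of $P$. Applying Lemma~\ref{L:finite in AotimesB(H)}$(ii)$ to $(\chi_F\otimes I)P$ then produces, inside any prescribed $F$, a smaller positive-measure set carrying a sequence of $\ran(P)$-elements orthonormal on it. A standard exhaustion (a maximal disjoint subfamily, countable since $\mu$ is finite) yields a partition $E=\bigsqcup_j F_j$ modulo null sets together with sequences $(\f^{(j)}_n)_n$ orthonormal on $F_j$; gluing $\f_n=\sum_j\chi_{F_j}\f^{(j)}_n$ gives, for each $n$, an $L^2$ function of constant pointwise norm one on $E$ lying in $\ran(P)$ (as the $L^2$-limit of its partial sums, using that $\ran(P)$ is closed), with $(\f_n(\omega))$ orthonormal at every $\omega\in E$. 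The main obstacle is exactly this globalization: Lemma~\ref{L:finite in AotimesB(H)}$(ii)$ only guarantees orthonormality on \emph{some} positive-measure set, and it is proper infiniteness --- forcing every central corner $(\chi_F\otimes I)P$ to be infinite --- that allows the exhaustion to cover the whole support $E$ rather than merely a proper piece of it.
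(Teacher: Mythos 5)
Your proof is correct, and your crucial implication $(a)\Rightarrow(c)$ is essentially the paper's own argument: a maximal disjoint family of positive-measure sets carrying pointwise orthonormal sequences from $\ran(P)$ (countable since $\mu$ is a probability), glued via $\f_n=\sum_j\chi_{F_j}\f_n^{(j)}$, with proper infiniteness plus Lemma~\ref{L:finite in AotimesB(H)}$(ii)$ ruling out a leftover piece; the paper phrases the leftover step as ``$(\chi_{L\setminus E}\otimes I)P\ne0$ would be infinite'' rather than through the central support, but that difference is cosmetic. Where you genuinely diverge is in the remaining implications. The paper closes the cycle as $(c)\Rightarrow(b)\Rightarrow(a)\Rightarrow(c)$, getting $(b)\Rightarrow(a)$ in one stroke from Lemma~\ref{L:finite in AotimesB(H)}$(i)$: a nonzero finite central corner $(\chi_F\otimes I)P$ would have a finite pointwise dimension function, contradicting a pointwise linearly independent sequence. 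You instead prove $(b)\Rightarrow(c)$ by a measurable Gram--Schmidt orthonormalization (each step staying inside $\ran(P)$ via Observation~\ref{obs:invariance Xf} and Lemma~\ref{L:X_f}$(a)$), and then verify $(c)\Rightarrow(a)$ directly from the definition, using Lemma~\ref{L:operators on Xf} to produce mutually orthogonal, mutually equivalent nonzero projections $R_n\le zP$ and Lemma~\ref{L:finite projections basic facts}$(e)$ to conclude that $zP$ is infinite. This reproduces, inside your proof, exactly the technique the paper uses to establish Lemma~\ref{L:finite in AotimesB(H)}$(i)$ itself, so your route is a bit longer but more self-contained, and it makes the equivalence $(b)\Leftrightarrow(c)$ transparent without any appeal to finiteness; the paper's route is shorter because it recycles the already-proved finiteness lemma. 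Both arguments are complete and correct.
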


\begin{proof}
$(c)\Rightarrow(b)$ This is trivial.

$(b)\Rightarrow(a)$ This follows easily from Lemma~\ref{L:finite in AotimesB(H)}$(i)$ (using the description of central projections given in Lemma \ref{L:commutant of AotimesB(H)}).

$(a)\Rightarrow(c)$ Let $(E_j)_{j\in J}$ be a maximal disjoint family of measurable sets of positive measure in $L$ such that for each $j\in J$ there is a family $(\f^j_n)$ in $\ran(P)$ such that $(\f^j_n(\omega))_n$ is an orthonormal sequence for each $\omega\in E_j$. Then $J$ is countable. Set $E=\bigcup_j E_j$ and 
$\f_n=\sum_j \chi_{E_j}\f^j_n$. Then $(\f_n)$ satisfies the second property. The first property is also satisfied, as otherwise $(\chi_{L\setminus E}\otimes I)P\ne0$. This projection is infinite, hence application of Lemma~\ref{L:finite in AotimesB(H)}$(ii)$ yields a contradiction with the maximality of the family $(E_j)$.
\end{proof}

We finish this subsection by the following easy observation describing the range of the transpose of a projection.

\begin{obs}\label{obs:range pt}
Let $P\in A\overline{\otimes}B(H)$ be a projection. Then the range of $P^t$ is $\{\overline{\f}\setsep \f\in\ran(P)\}$.
\end{obs}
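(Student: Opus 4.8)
The plan is to exploit that on $B(L^2(\mu,H))$ the transpose $(\cdot)^t$ is, by the very definition used in Subsection~\ref{subsec:involutions from conjugations}, induced by the pointwise conjugation through the formula $T^t(\f)=\overline{T^*(\overline{\f})}$, and that $(\cdot)^t$ is a central linear involution commuting with $*$ (properties $(i)$--$(v)$). The whole statement reduces to a direct computation once one knows that $P^t$ is again a projection whose range is its fixed-point set.

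First I would record that $P^t$ is a projection. Since $(\cdot)^t$ commutes with the involution, $(P^t)^*=(P^*)^t=P^t$, so $P^t$ is self-adjoint. Since $(\cdot)^t$ reverses products, applying property $(ii)$ with both entries equal to $P$ gives
$$(P^t)^2=P^t P^t=\alpha(P)\alpha(P)=\alpha(P\cdot P)=\alpha(P^2)=\alpha(P)=P^t,$$
so $P^t$ is idempotent. Being a self-adjoint idempotent, $P^t$ is a projection, and consequently $\ran(P^t)=\{\g\in L^2(\mu,H)\setsep P^t\g=\g\}$.

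Next I would compute $P^t$ explicitly using $P^*=P$: for $\g\in L^2(\mu,H)$ we have $P^t(\g)=\overline{P^*(\overline{\g})}=\overline{P(\overline{\g})}$. Hence $\g\in\ran(P^t)$ iff $\overline{P(\overline{\g})}=\g$. Because the pointwise conjugation is an isometric involution (so $\overline{\overline{\x}}=\x$), conjugating both sides turns this into the equivalent identity $P(\overline{\g})=\overline{\g}$, that is, $\overline{\g}\in\ran(P)$. Writing $\f=\overline{\g}$ (so $\g=\overline{\f}$) we obtain $\ran(P^t)=\{\g\setsep\overline{\g}\in\ran(P)\}=\{\overline{\f}\setsep\f\in\ran(P)\}$, which is exactly the claim.

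There is no substantial obstacle here: the argument is routine. The only points requiring a little care are the identification of the transpose on the tensor product with the conjugation formula $T^t(\f)=\overline{T^*(\overline{\f})}$ (which is precisely what was set up in Subsection~\ref{subsec:involutions from conjugations}), and the freedom to move the conjugation across equalities, which is justified by the fact that $\overline{\,\cdot\,}$ is an involutive isometry on $L^2(\mu,H)$.
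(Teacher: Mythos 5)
Your proof is correct and follows essentially the same route as the paper: the paper's proof is precisely the chain of equivalences $\f\in\ran(P^t)\Leftrightarrow P^t(\f)=\f\Leftrightarrow\overline{P^*(\overline{\f})}=\f\Leftrightarrow P(\overline{\f})=\overline{\f}\Leftrightarrow\overline{\f}\in\ran(P)$, which is exactly your computation. Your preliminary check that $P^t$ is a projection (so that its range equals its fixed-point set) is a harmless elaboration of a step the paper leaves implicit.
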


\begin{proof}
$$\f\in \ran(P^t)\Longleftrightarrow P^t (\f)=\f\Longleftrightarrow \overline{P^*(\overline{\f})}=\f
\Longleftrightarrow P(\overline{\f})=\overline{\f}\Longleftrightarrow \overline{\f}\in\ran(P).$$
\end{proof}

\subsection{The symmetric case}\label{subsec:C3}

In this subsection we clarify the situation in the triples of the form $A\overline{\otimes}C$ where $C$ is a Cartan factor of type 3, i.e., of $A\overline{\otimes}B(H)_s$. (We keep the notation from the previous subsection.)
It turns out that the results are completely analogous to that from Subsection~\ref{sec:involutions cts} but the methods of proofs need to be different.

The following lemma is an analogue of Lemma~\ref{L:H(W,alpha) construction of tripotent}.

\begin{lemma}\label{L:C3 construction of tripotent}
Let $P\in A\overline{\otimes}B(H)$ be a projection. Then there is a partial isometry $U\in A\overline{\otimes}B(H)_s$ such that
$p_i(U)=P$ (and $p_f(U)=P^t$).
\end{lemma}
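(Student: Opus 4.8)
The plan is to construct $U$ fiberwise, and this is forced on us: since $A\overline{\otimes}B(H)$ is of type~$I$ (the algebra $B(H)$ has minimal projections), projections cannot in general be halved, so the argument of Lemma~\ref{L:H(W,alpha) construction of tripotent} has no analogue here and one must instead split $\ran(P)$ into one-dimensional fibers. First I would reduce the statement: by Observation~\ref{obs:pf=alpha(pi)}, once a tripotent $U\in A\overline{\otimes}B(H)_s$ satisfies $p_i(U)=P$, the equality $p_f(U)=\alpha(P)=P^t$ holds automatically. Hence it suffices to produce a \emph{symmetric} partial isometry $U$ (that is, $U^t=U$) with initial projection $P$.

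Second, I would decompose $\ran(P)$ into lines. By Zorn's lemma choose a maximal family $(\f_\lambda)_{\lambda\in\Lambda}$ in $\ran(P)$ with $\norm{\f_\lambda(\omega)}\in\{0,1\}$ $\mu$-a.e.\ and $[\f_\lambda,\f_{\lambda'}]=0$ $\mu$-a.e.\ for $\lambda\ne\lambda'$; any nonzero element of $\ran(P)$ can be normalized to such a unit-or-zero field by Lemma~\ref{L:X_f}(a) together with Observation~\ref{obs:invariance Xf}. Writing $Q_\lambda$ for the orthogonal projection onto $\widehat{X}_{\f_\lambda}$, these projections lie in $A\overline{\otimes}B(H)$ by Lemma~\ref{L:operators on Xf}, satisfy $Q_\lambda\le P$ by Observation~\ref{obs:invariance Xf}, and are mutually orthogonal by Lemma~\ref{L:X_f}(b) (which identifies $[\f_\lambda,\f_{\lambda'}]=0$ with orthogonality of $\widehat{X}_{\f_\lambda}$ and $\widehat{X}_{\f_{\lambda'}}$). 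The same characterization of orthogonality, applied to a would-be element of $\ran(P)\ominus\sum_\lambda\widehat{X}_{\f_\lambda}$, shows that maximality forces $P=\sum_\lambda Q_\lambda$. No countability of $\Lambda$ is needed.

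Third, I would glue symmetric rank-one blocks. For each $\lambda$, apply Lemma~\ref{L:operators on Xf} with $\f=\f_\lambda$ and $\g=\overline{\f_\lambda}$ (admissible since $\norm{\overline{\f_\lambda}(\omega)}=\norm{\f_\lambda(\omega)}$), obtaining a partial isometry $T_\lambda\in A\overline{\otimes}B(H)$ with $p_i(T_\lambda)=Q_\lambda$ acting by $T_\lambda\psi=[\psi,\f_\lambda]\,\overline{\f_\lambda}$, i.e.\ fiberwise $T_\lambda(\omega)\colon\eta\mapsto\ip{\eta}{\f_\lambda(\omega)}\overline{\f_\lambda(\omega)}$. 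I would then check that each $T_\lambda$ is symmetric: its fiber is the rank-one operator $\eta\mapsto\ip{\eta}{\xi}\overline{\xi}$ with $\xi=\f_\lambda(\omega)$ a unit vector, and a direct computation from the defining formula $x^t(\zeta)=\overline{x^*(\overline{\zeta})}$, using $\ip{\overline a}{\overline b}=\overline{\ip{a}{b}}$, yields $T_\lambda^t=T_\lambda$. Finally set $U=\sum_\lambda T_\lambda$. The initial projections $Q_\lambda$ and the final projections $Q_\lambda^t$ form two orthogonal families (the latter by Observation~\ref{obs:range pt}), so the net of finite partial sums consists of partial isometries and converges in the strong operator topology to a partial isometry $U\in A\overline{\otimes}B(H)$ with $p_i(U)=\sum_\lambda Q_\lambda=P$. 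Since $\alpha$ is weak$^*$-to-weak$^*$ continuous and strong convergence of this bounded net gives weak$^*$ convergence, $U^t=\sum_\lambda T_\lambda^t=\sum_\lambda T_\lambda=U$, so $U\in A\overline{\otimes}B(H)_s$, and then $p_f(U)=P^t$ by Observation~\ref{obs:pf=alpha(pi)}.

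The main obstacle is conceptual rather than computational: symmetry cannot be obtained from the conjugation directly, because the map $\f\mapsto\overline{\f}$ on $\ran(P)$ is \emph{conjugate}-linear. The conjugation must instead be absorbed into each one-dimensional fiber through the genuinely \emph{linear} rank-one map $\eta\mapsto\ip{\eta}{\xi}\overline{\xi}$, a measurable Takagi-type factorization. Consequently the two delicate points are (i) verifying that these rank-one blocks are truly symmetric and belong to $A\overline{\otimes}B(H)$, and (ii) ensuring that symmetry is preserved under the (possibly uncountable) summation, for which weak$^*$-continuity of the transpose is the essential tool.
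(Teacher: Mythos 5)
Your proof is correct and follows essentially the same route as the paper's: the same elementary block (the symmetric partial isometry $g\f\mapsto g\overline{\f}$ on $\widehat{X}_{\f}$, built from Lemma~\ref{L:operators on Xf}, with the same transpose computation) exhausted by Zorn's lemma and an orthogonal sum. The only cosmetic difference is the order of operations: the paper takes a maximal family of symmetric partial isometries with pairwise orthogonal initial projections and applies the block construction to the leftover $P-p_i(U)$ to contradict maximality, whereas you first decompose $P=\sum_\lambda Q_\lambda$ into cyclic projections and then construct a block on each piece.
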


\begin{proof} The proof will be done in two steps.

{\sc Step 1:} Assume $P\ne 0$. Then there is a nonzero partial isometry $U\in A\overline{\otimes}B(H)_s$ such that
$p_i(U)\le P$ (and $p_f(U)\le P^t$).

\smallskip

Fix any $\f\ne 0$ in the range of $P$ (considered as a projection on $L^2(\mu,H)$). Then $\widehat{X}_{\f}\subset\ran(P)$ (by Lemma~\ref{L:AotimesB(H) charact}$(2)$).
Let us define a partial isometry by setting
$$U(g\f)=g\overline{\f}, \quad g\f\in \widehat{X}_{\f}$$
and $U=0$ on  $\widehat{X}_{\f}^\perp$ (see Lemma~\ref{L:operators on Xf}). Moreover, $U^*$ is the `inverse partial isometry', i.e., $U^*(g\overline{\f})=g\f$ for $g\overline{\f}\in \widehat{X}_{\overline{\f}}$ and $U^*=0$ on $\widehat{X}_{\overline{\f}}^\perp$. It follows that
$$U^t(g\f)=\overline{U^*(\overline{g\f})}=\overline{U^*(\overline{g}\overline{\f})}=\overline{\overline{g}\f}=g\overline{\f}=U(g
\f)$$
for $g\f\in \widehat{X}_{\f}$ and, for any $\h\in X_{\f}^\perp$ we have 
$$U^t(\h)=\overline{U^*(\overline{\h })}=0=U(\h)$$
as $\overline{\h}\in X_{\overline{\f}}^\perp$.
It follows that $U^t=U$.

\medskip

{\sc Step 2:} Let $(U_\theta)_{\alpha\in\Lambda}$ be a maximal family of nonzero partial isometries in $A\overline{\otimes}B(H)_s$ such that $p_i(U_\theta)\le P$ for each $\alpha$ and $p_i(U_\theta)$ are pairwise orthogonal. Then $U=\sum_\theta U_\theta$ is a partial isometry in $A\overline{\otimes}B(H)_s$
such that $p_i(U)=\bigvee_\theta p_i(U_\theta)\le P$. Moreover, $p_i(U)=P$, otherwise we could apply Step 1 to $P-p_i(U)$ and come to a contradiction with maximality.
\end{proof}

Further we get the following proposition, which can be proved exactly as Proposition~\ref{P:H(W,alpha) is finite} by just replacing Lemma~\ref{L:H(W,alpha) construction of tripotent} with Lemma~\ref{L:C3 construction of tripotent}.

\begin{prop}\label{P:AotimesB(H)s is finite} $A\overline{\otimes} B(H)_s$ is a finite JBW$^*$-algebra. 
In particular, the relations $\le_0$ and $\le_2$ 
coincide.
\end{prop}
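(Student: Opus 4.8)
Following the paper's own remark, the plan is to reproduce the proof of Proposition~\ref{P:H(W,alpha) is finite} line for line, with Lemma~\ref{L:C3 construction of tripotent} substituted for Lemma~\ref{L:H(W,alpha) construction of tripotent}. Write $W=A\overline{\otimes}B(H)$ and let $\alpha$ be the transpose $x\mapsto x^t$, so that, by the identifications made in Subsection~\ref{subsec:involutions from conjugations}, $M=A\overline{\otimes}B(H)_s=H(W,\alpha)$ is a JBW$^*$-algebra and $\alpha$ is a central involution on $W$. By Proposition~\ref{P:le0=le2} it is enough to show that $\le_0$ and $\le_2$ coincide on the tripotents of $M$. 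One implication is free, since $u\le_2 e\Rightarrow u\le_0 e$ always holds by Proposition~\ref{P:impl le20}; so the whole task is the reverse implication, which I would establish in its contrapositive form $u\not\le_2 e\Rightarrow u\not\le_0 e$.

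To that end, fix tripotents $u,e\in M$ with $u\not\le_2 e$. By Proposition~\ref{P:H(W,alpha) abstract le2} this reads $p_i(u)\not\le p_i(e)$, i.e. $1-p_i(e)\not\le 1-p_i(u)$, so that
$$q=(1-p_i(e))-(1-p_i(e))\wedge(1-p_i(u))$$
is a nonzero projection in $W$ with $q\le 1-p_i(e)$; moreover, being nonzero, orthogonal to the meet, and below $1-p_i(e)$, it also satisfies $q\not\le 1-p_i(u)$. Feeding $q$ into Lemma~\ref{L:C3 construction of tripotent} yields a partial isometry $v\in M$ with $p_i(v)=q$ and $p_f(v)=q^t$. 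It then remains to verify that $v\in M_0(e)\setminus M_0(u)$, which exhibits an element of $E_0(e)$ lying outside $E_0(u)$ and so witnesses $u\not\le_0 e$.

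The step I expect to require the most care is exactly this membership verification, which is where the involution structure is used. In $W$ the Peirce-$0$ condition $v\in M_0(e)$ means $p_f(e)v=0$ and $vp_i(e)=0$. The second equality is immediate from $p_i(v)=q\le 1-p_i(e)$, giving $vp_i(e)=vq\,p_i(e)=0$. For the first I would use Observation~\ref{obs:pf=alpha(pi)} to write $p_f(e)=\alpha(p_i(e))$ and then exploit that $\alpha$ reverses products:
$$p_f(e)\,p_f(v)=\alpha(p_i(e))\,\alpha(q)=\alpha(q\,p_i(e))=0,$$
whence $p_f(e)v=p_f(e)p_f(v)v=0$. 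To see $v\notin M_0(u)$ it suffices to check $vp_i(u)\ne0$: set $w=q\,p_i(u)$, so $w\ne0$ because $q\not\le 1-p_i(u)$; since $w=qw$ lies in the initial space of $v$ and $v^*v=q$, the equality $vw=0$ would force $w=v^*vw=0$, a contradiction, so $vp_i(u)=vw\ne0$.

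Assembling the pieces, $v\in M_0(e)\setminus M_0(u)$, hence $E_0(e)\not\subset E_0(u)$ and $u\not\le_0 e$. This proves $\le_0=\le_2$ on $M$, and Proposition~\ref{P:le0=le2} then yields that $A\overline{\otimes}B(H)_s$ is a finite JBW$^*$-algebra. The only genuine novelty over the continuous case is the replacement of the construction lemma; the orthogonality bookkeeping is identical, since it rests only on $p_f=\alpha(p_i)$ and the product-reversing property of $\alpha$, both of which hold equally for the transpose.
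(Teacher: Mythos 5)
Your proposal is correct and follows exactly the route the paper prescribes: it reproduces the proof of Proposition~\ref{P:H(W,alpha) is finite} (via Proposition~\ref{P:H(W,alpha) abstract le2}, the projection $q=(1-p_i(e))-(1-p_i(e))\wedge(1-p_i(u))$, and Proposition~\ref{P:le0=le2}) with Lemma~\ref{L:C3 construction of tripotent} substituted for Lemma~\ref{L:H(W,alpha) construction of tripotent}. The only difference is that you spell out the verification $v\in M_0(e)\setminus M_0(u)$, which the paper leaves implicit, and your bookkeeping there is sound.
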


\subsection{The antisymmetric case} \label{subsec:antisymmetric}       

In this subsection we analyze the behaviour of tripotents in $A\overline{\otimes}C$ where $C$ is a Cartan factor of type 2, i.e., in $A\overline{\otimes}B(H)_a$.
It turns out that the structure of $A\overline{\otimes}B(H)_a$ is more complicated than that of its symmetric counterpart. In particular, $A\overline{\otimes}B(H)_a$ is not a JBW$^*$-subalgebra of $A\overline{\otimes}B(H)$ as it is not stable under the Jordan product. (In fact, $x\circ y\in A\overline{\otimes}B(H)_s$ whenever $x,y\in A\overline{\otimes}B(H)_a$.) However, if $\dim H$ is either even or infinite, then $A\overline{\otimes}B(H)_a$ admits a unitary element (see \cite[Proposition 2]{HoMarPeRu2002} for an explicit formula), so it is a JBW$^*$-algebra. 
If $\dim H$ is odd, it is not hard to show that there is no unitary element.

We are going to analyze the behaviour of tripotents in detail and compare it with the results on the symmetric case. It turns out that the analogue of Lemma~\ref{L:C3 construction of tripotent} (and of Lemma~\ref{L:H(W,alpha) construction of tripotent}) fails in this case. Instead, we have the characterizations below. As a byproduct of our approach we reprove the above-mentioned results of existence and non-existence of unitary elements, providing so a deeper understanding of these phenomenon.

 To simplify the notation we will denote $M=A\overline{\otimes}B(H)$ (then $M_a=A\overline{\otimes}B(H)_a$).
We start by the following observation on the property of antisymmetric operators.

\begin{lemma}\label{L:type2 perp} Let $u\in M_a$  and $\f\in L^2(\mu,H)$. Then we have the following.
\begin{enumerate}[$(i)$]
    \item  $\ip {u(\f)}{\overline\f}=0$;
    \item   If $u$ is a tripotent and $\f\in\ran p_i(u)$, then $\g=\overline{u(\f)}\in\ran(p_i(u))$, $\g\perp\f$ and $u(\g)=-\overline{\f}$.
\end{enumerate}
\end{lemma}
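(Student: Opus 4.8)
The entire statement reduces to unwinding the definition of the transpose and recording how the pointwise conjugation interacts with the inner product; once these are in place both parts are short computations. The plan is as follows. Since $u^t(\xi)=\overline{u^*(\overline{\xi})}$, the hypothesis $u\in M_a$, i.e. $u^t=-u$, is equivalent to
\[
u^*(\overline{\xi})=-\overline{u(\xi)}\qquad(\xi\in L^2(\mu,H)),
\]
and, substituting $\overline{\xi}$ for $\xi$ and conjugating, also to $u(\overline{\xi})=-\overline{u^*(\xi)}$. I will refer to these as the antisymmetry relations. In addition I would record the elementary identity $\ip{\overline{\xi}}{\overline{\eta}}=\ip{\eta}{\xi}=\overline{\ip{\xi}{\eta}}$, which follows at once from the coordinatewise description of the conjugation (it is exactly the computation already used in the excerpt to identify $x^t$ with the transpose matrix).

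For $(i)$ I would move $u$ to the other side of the inner product and apply the first antisymmetry relation:
\[
\ip{u(\f)}{\overline{\f}}=\ip{\f}{u^*(\overline{\f})}=-\ip{\f}{\overline{u(\f)}}.
\]
Writing $\f=\overline{\overline{\f}}$ and applying $\ip{\overline{\xi}}{\overline{\eta}}=\ip{\eta}{\xi}$ with $\xi=\overline{\f}$ and $\eta=u(\f)$ turns the last term into $\ip{u(\f)}{\overline{\f}}$. Hence $\ip{u(\f)}{\overline{\f}}=-\ip{u(\f)}{\overline{\f}}$, forcing $\ip{u(\f)}{\overline{\f}}=0$.

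For $(ii)$ I would use in addition that a tripotent in the von Neumann algebra $M$ is a partial isometry, so $p_i(u)=u^*u$ and, since $\f\in\ran p_i(u)$, one has $u^*u\,\f=\f$. Putting $\g=\overline{u(\f)}$, the orthogonality $\g\perp\f$ is $(i)$ read through the conjugation: $\ip{\g}{\f}=\ip{\overline{u(\f)}}{\overline{\overline{\f}}}=\ip{\overline{\f}}{u(\f)}=\overline{\ip{u(\f)}{\overline{\f}}}=0$. Next, the second antisymmetry relation with $\xi=u(\f)$ gives $u(\g)=u(\overline{u(\f)})=-\overline{u^*u(\f)}=-\overline{\f}$. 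Finally, to see $\g\in\ran p_i(u)$ I would verify $u^*u\,\g=\g$: since $u(\g)=-\overline{\f}$, the first antisymmetry relation yields $u^*u(\g)=u^*(-\overline{\f})=\overline{u(\f)}=\g$.

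The argument is essentially computational, so no step is a genuine obstacle; the only point requiring care is the bookkeeping of conjugate-linearity — in particular, deriving the two antisymmetry relations from $u^t=-u$ in the correct form, and applying the conjugation identity $\ip{\overline{\xi}}{\overline{\eta}}=\ip{\eta}{\xi}$ in the right direction. It is also worth noting that the only structural inputs are the definition of $x^t$ via the fixed conjugation and the fact that tripotents of $M$ are partial isometries, so the same computation works verbatim for any $u\in M_a$ without reference to the tensor-product structure of $M$.
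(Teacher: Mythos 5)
Your proof is correct and follows essentially the same route as the paper: both parts are the same short computations unwinding the definition of $u^t$ against the pointwise conjugation, using $\ip{\overline{\xi}}{\overline{\eta}}=\ip{\eta}{\xi}$ and the fact that a tripotent of $M$ is a partial isometry with $u^*u\f=\f$ on $\ran p_i(u)$. The only cosmetic difference is in showing $\g\in\ran p_i(u)$: you verify $u^*u\g=\g$ directly from your antisymmetry relations, whereas the paper cites its observation that $\ran(P^t)=\{\overline{\f}\setsep\f\in\ran(P)\}$ together with $p_i(u)=p_f(u)^t$ — both are one-line justifications of the same fact.
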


\begin{proof}
$(i)$ we have
$$\ip{u^t(\f)}{\overline{\f}}
=\ip{\overline{u^*(\overline\f)}}{\overline{\f}} 
=\overline{\ip{u^*(\overline{\f})}\f} 
=\overline{\ip{\overline{\f}}{u(\f)}}
=\ip{u(\f)}{\overline{\f}},$$
thus, assuming $u^t=-u$, we deduce  $\ip {u(\f)}{\overline\f}=0$.

$(ii)$ By $(i)$ we have $u(\f)\perp\overline{\f}$, thus $\overline{u(\f)}\perp\f$, i.e., $\g\perp\f$. Since $u(\f)\in\ran p_f(u)$, $\g=\overline{u(\f)}\in \ran p_i(u)$ (cf. Observation~\ref{obs:range pt}). Moreover,
$$u(\g)=-u^t(\overline{u(\f)})=-\overline{u^*u(\f)}=-\overline{\f}.$$
\end{proof}

The next lemma characterizes antisymmetric tripotents.

\begin{lemma}\label{L:C2 construction of tripotent}
Let $u\in M$ and let $p\in M$ be a projection. Then $u$ is a partial isometry in $M_a$ with $p=p_i(u)$ if and only if there are a projection $q\le p$ and a partial isometry $v\in M$ such that:
\begin{enumerate}[$(i)$]
    \item $p_i(v)=q$ and $p_f(v)=p^t-q^t$;
    \item $u=v-v^t$.
\end{enumerate}
\end{lemma}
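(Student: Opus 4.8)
The statement is an equivalence, so I would prove the two implications separately, spending almost all the effort on the ``only if'' direction.

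\emph{Plan for the ``if'' implication.} Suppose $q\le p$ is a projection and $v\in M$ is a partial isometry with $p_i(v)=q$ and $p_f(v)=p^t-q^t$, and put $u=v-v^t$. Since $M$ is stable under the transpose we have $u\in M$, and $u^t=v^t-v=-u$, so $u\in M_a$. Applying $\alpha(\cdot)=(\cdot)^t$ to the defining relations of $v$ gives $p_i(v^t)=p-q$ and $p_f(v^t)=q^t$. Hence $v$ and $v^t$ have orthogonal initial projections ($q\perp p-q$) and orthogonal final projections ($p^t-q^t\perp q^t$). From orthogonality of the final projections I would read off $v^*v^t=(v^t)^*v=0$, and from orthogonality of the initial projections $v(v^t)^*=v^tv^*=0$, whence the cross terms drop and
\[
u^*u=v^*v+(v^t)^*v^t=q+(p-q)=p,\qquad uu^*=vv^*+v^t(v^t)^*=(p^t-q^t)+q^t=p^t.
\]
Thus $u$ is a partial isometry in $M_a$ with $p_i(u)=p$, as required.

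\emph{Reduction for the ``only if'' implication.} Given a partial isometry $u\in M_a$ with $p_i(u)=p$, Observation~\ref{obs:pf=alpha(pi)} gives $p_f(u)=p^t$. The key reduction is that it suffices to produce a projection $q\le p$ in $M$ satisfying $uqu^*=p^t-q^t$, and then set $v:=uq$. Indeed, $v\in M$ is a partial isometry with $v^*v=qpq=q$, so $p_i(v)=q$, while the identity gives $p_f(v)=vv^*=uqu^*=p^t-q^t$; and since $v^t=(uq)^t=q^tu^t=-q^tu$, a short computation (checking that $w:=u-uq-q^tu$ satisfies $ww^*=0$, using $upu^*=p^t$ together with $uqu^*=p^t-q^t$) gives $u=uq+q^tu=v-v^t$. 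So everything reduces to constructing $q$, which is the heart of the matter.

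\emph{Constructing $q$ — the main obstacle.} I would obtain $q$ as a maximal \emph{isotropic} projection for the antisymmetric form attached to $u$. On $L^2(\mu,H)$ consider $B(\f_1,\f_2)=\ip{u\f_1}{\overline{\f_2}}$; Lemma~\ref{L:type2 perp}$(i)$ gives $B(\f,\f)=0$, and combining $u^t=-u$ with the antiunitarity of the conjugation yields the skew relation $B(\f_1,\f_2)=-B(\f_2,\f_1)$. Let $\mathcal Q$ be the family of projections $q\le p$ in $M$ with $q^tuq=0$, equivalently with $B$ vanishing on $\ran q$. An increasing net in $\mathcal Q$ has its supremum again in $\mathcal Q$ (the bounded form $B$ vanishes on the dense union of the ranges, hence on the range of the supremum), so Zorn's lemma gives a maximal $q\in\mathcal Q$. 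Isotropy gives $q^t\perp uqu^*$, and both are $\le p^t$, so $s:=p^t-q^t-uqu^*$ is a projection; the desired identity $uqu^*=p^t-q^t$ is exactly the claim $s=0$.

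\emph{The Lagrangian step.} The essential difficulty is showing $s=0$, i.e.\ that a maximal isotropic $q$ is automatically ``Lagrangian''. I would argue by contradiction: if $s\ne0$ then $s^t\ne0$ is a nonzero projection in $M$ with $s^t\le p$ and $s^t\perp q$, so we may pick a nonzero $\f\in\ran s^t$. Let $q_{\f}$ be the projection onto $\widehat{X}_{\f}$, which lies in $M$ and satisfies $q_{\f}\le s^t$ by Observation~\ref{obs:invariance Xf} and Lemma~\ref{L:operators on Xf}. I would then verify $q+q_{\f}\in\mathcal Q$, strictly larger than $q$, contradicting maximality: $B$ vanishes on $\widehat{X}_{\f}$ because $[u\f,\overline{\f}]=0$ $\mu$-a.e.\ (apply Lemma~\ref{L:type2 perp}$(i)$ to $\chi_E\f$ for every measurable $E$); and $B$ vanishes between $\ran q$ and $\widehat{X}_{\f}$ because $u(\ran q)=\ran(uqu^*)\perp\ran s$ while $\overline{\f}\in\ran s$ (Observation~\ref{obs:range pt}), the symmetric vanishing being free from the skew relation. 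This Lagrangian/maximality step — extracting a measurable ``half'' of $\ran p$ on which the antisymmetric form degenerates — is where the operator-algebraic building blocks of Subsection~\ref{subsec:involutions from conjugations}, especially Lemma~\ref{L:operators on Xf} and Observations~\ref{obs:range pt} and~\ref{obs:invariance Xf}, are exactly what makes the enlargement possible.
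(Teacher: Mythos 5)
Your proof is correct, including the two places where the real work happens: the algebraic reduction (the computation $ww^*=0$ does go through, using $upu^*=p^t$, $uqu^*=p^t-q^t$ and $q^t\le p^t$) and the ``maximal isotropic is Lagrangian'' step (the cross terms vanish exactly as you say, since $u(\ran q)=\ran(uqu^*)\perp\ran s$ while $\overline{\widehat{X}_{\f}}\subset\ran s$ by Observation~\ref{obs:range pt}, and skew-symmetry of $B$ handles the other order).

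Your route is, however, organized quite differently from the paper's. The paper never isolates the condition $uqu^*=p^t-q^t$; instead it proves a local version of the lemma first (Step~1: for $u\ne 0$ pick $\f$ in the range of $p$, take $q'$ to be the projection onto $\widehat{X}_{\f}$, set $v'=uq'$ and $p'=q'+p_f(v')^t$, and check $up'=v'-(v')^t$), and then runs Zorn's lemma over \emph{orthogonal families} $(p_\theta)$ of subprojections of $p$, each carrying its own local solution $(q_\theta,v_\theta)$; the final $q$ and $v$ are obtained by summing, and maximality is contradicted by applying Step~1 to the antisymmetric partial isometry $u(p-p')$. You instead run Zorn over a simpler poset — single isotropic projections $q\le p$ with $q^tuq=0$ — and only produce the partial isometry $v=uq$ at the very end by a purely algebraic identity; the defect projection $s=p^t-q^t-uqu^*$ plays the role that the unexhausted piece $p-p'$ plays in the paper. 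The underlying engine is identical in both proofs: the single-vector building block $\widehat{X}_{\f}$ (Lemma~\ref{L:operators on Xf}, Observation~\ref{obs:invariance Xf}), the module property of Lemma~\ref{L:AotimesB(H) charact}$(2)$, and the relation $[u\f,\overline{\f}]=0$ coming from Lemma~\ref{L:type2 perp}$(i)$. What your version buys is conceptual clarity (the symplectic flavor: maximal isotropic $=$ Lagrangian) and a Zorn argument over projections rather than over families-with-data; what the paper's version buys is that its two-step pattern (local solution, then orthogonal exhaustion) is exactly the same template as the proof of Lemma~\ref{L:C3 construction of tripotent}, which highlights the parallel between the symmetric and antisymmetric summands.
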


\begin{proof} Let us start by the `if part'. Assume that $q$ and $v$ satisfying the required properties exist. It follows easily from $(i)$ that $v^t$ is a partial isometry with $p_i(v^t)=p-q$ and $p_f(v^t)=q^t$. Hence $v\perp v^t$, so by $(ii)$ we deduce that $u$ is a partial isometry with $p_i(u)=q+p-q=p$.

\smallskip

Let us continue with the `only if part'. It will be done in two steps.

{\sc Step 1:} Assume $u\ne0$. Then there are a nonzero projection $p'\le p$, a projection $q'\le p'$ and a partial isometry $v'\in M$ such that
\begin{enumerate}[$(i')$]
    \item $p_i(v')=q'$ and $p_f(v)=(p')^t-(q')^t$;
    \item $up'=v'-(v')^t$.
\end{enumerate}

Since $u\ne0$, we have $p\ne 0$.
Fix any $\f\ne 0$ in the range of $p$ (considered as a projection on $L^2(\mu,H)$). Then $\widehat{X}_{\f}\subset\ran(p)$ (by Observation~\ref{obs:invariance Xf}).
Moreover,  Lemma~\ref{L:AotimesB(H) charact}$(2)$ yields that $u(g\f)=g u(\f)$ whenever $g\f\in\widehat{X}_{\f}$.
By Lemma~\ref{L:type2 perp} we get 
$$0=\ip{u(g\f)}{\overline{g\f}}=\int |g|^2 [u(\f),\overline{\f}]\di\mu$$
for any $g\f\in \widehat{X}_{\f}$, thus $[u(\f),\overline{\f}]=0$ a.e.
It follows that $\widehat{X}_{\overline{\f}}\perp \widehat{X}_{u(\f)}$.

Let $q'$ be the projection onto $\widehat{X}_{\f}$. Then $q'\in M$ (by Lemma~\ref{L:operators on Xf}) and $(q')^t$ is the projection onto   $\widehat{X}_{\overline{f}}$ (by Observation~\ref{obs:range pt}). Let $v'=uq'$. Since $q'\le p$, $v'$ is a partial isometry with $p_i(v')=q'$. Moreover,  $\ran(p_f(v'))=\widehat{X}_{u(\f)}$ (cf. Lemmata \ref{L:X_f}$(a)$ and \ref{L:AotimesB(H) charact}$(2)$). Then $\ran(p_f(v')^t)=\widehat{X}_{\overline{u(\f)}}$ (again by Observation~\ref{obs:range pt}), thus $p_f(v')^t\perp q'$. Moreover,
$p_f(v')\le p_f(u)=p^t$, thus $p_f(v')^t\le p$. Hence, $p'=q'+p_f(v')^t\le p$ (and it is a projection).

Hence, $q'\le p'\le p$ and $(i')$ is valid. Moreover, $(ii')$ is satisfied by the definition of $p'$ as
$p_i((v')^t)=p_f(v')^t$.

{\sc Step 2:} Let $(p_\theta)_{\alpha\in\Lambda}$ be a maximal orthogonal family of projections below $p$ such that for each $\alpha\in \Lambda$ there are a projection $q_\theta\le p$ and a partial isometry $v_\theta\in M$ satisfying conditions $(i')$ and $(ii')$ (with $p_\theta,q_\theta,v_\theta$ in place of $p',q',v'$).  Set $p'=\sum_\theta p_\theta$, $q'=\sum_\theta q_\theta$ and $v'=\sum_\theta v_\theta$. Then $p',q',v'$ satisfy conditions $(i')$ and $(ii')$.

If $p'\ne p$, then $w=u(p-p')$ is a nozero partial isometry. Moreover, $w=u-up'$ and $up'$ is antisymmetric by the already proved `if part'. Thus $w$ is antisymmetric. If we apply Step 1 to $w$ we get a contradiction with the maximality of the family $(p_\theta)_{\alpha\in\Lambda}$.
\end{proof}

The next proposition is a variant of Lemmata~\ref{L:C3 construction of tripotent} and~\ref{L:H(W,alpha) construction of tripotent} in the antisymmetric case. Assertion $(i)$ shows that antisymmetric tripotents have an `even nature'.

\begin{prop}\label{P:C2 projections}
Let  $p\in M$ be a projection. Then the following holds.
\begin{enumerate}[$(i)$]
    \item $p=p_i(u)$ for some $u\in M_a$ if and only if there are projections $q_1,q_2\in M$ with $q_1\perp q_2$, $q_1\sim q_2$ and $p=q_1+q_2$;
    \item There is a nonzero tripotent $u\in A \overline{\otimes}B(H)_a$ with $p_i(u)\le p$ if and only if $p$ is not abelian.
\end{enumerate}
\end{prop}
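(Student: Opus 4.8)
The plan is to derive both equivalences from Lemma~\ref{L:C2 construction of tripotent}, which presents every antisymmetric partial isometry in the form $u=v-v^t$ for an ordinary partial isometry $v\in M$, combined with the fact (established in this subsection) that the transpose is a \emph{central} involution on $M=A\overline{\otimes}B(H)$, so that Lemma~\ref{L:p sim alpha(p)} applies and gives $q\sim q^t$ for every projection $q\in M$.

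For the forward implication of $(i)$ I would take $u\in M_a$ with $p=p_i(u)$ and invoke Lemma~\ref{L:C2 construction of tripotent} to get a projection $q\le p$ and a partial isometry $v\in M$ with $p_i(v)=q$, $p_f(v)=p^t-q^t$ and $u=v-v^t$. Since the transpose is an anti-multiplicative, $*$-preserving isometry, $v^t$ is a partial isometry with $p_i(v^t)=p_f(v)^t=p-q$ and $p_f(v^t)=p_i(v)^t=q^t$; hence $p-q\sim q^t$. Combining this with $q\sim q^t$ (Lemma~\ref{L:p sim alpha(p)}) yields $q\sim p-q$, so $q_1=q$ and $q_2=p-q$ are the required pair. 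Conversely, given $p=q_1+q_2$ with $q_1\perp q_2$ and $q_1\sim q_2$, Lemma~\ref{L:p sim alpha(p)} gives $q_1\sim q_2\sim q_2^t=p^t-q_1^t$ (using $q_2^t\le p^t$); I would fix a partial isometry $v\in M$ with $p_i(v)=q_1$ and $p_f(v)=p^t-q_1^t$, and Lemma~\ref{L:C2 construction of tripotent} (with $q=q_1$) then shows that $u=v-v^t\in M_a$ satisfies $p_i(u)=p$.

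For $(ii)$ I would reduce both directions to the characterization of abelian projections in Lemma~\ref{L:abelian pr in AotimesB(H)}. If $p$ is not abelian, that lemma furnishes $\f,\g\in\ran(p)$ that are pointwise linearly independent on a set $E$ with $\mu(E)>0$; a Gram--Schmidt step as in its proof produces $\f_1,\f_2\in\ran(p)$, vanishing off $E$, with $[\f_1,\f_2]=0$ and $\norm{\f_1(\omega)}=\norm{\f_2(\omega)}=1$ on $E$. The orthogonal projections $q_1,q_2$ onto $\widehat{X}_{\f_1},\widehat{X}_{\f_2}$ then lie in $M$ (Lemma~\ref{L:operators on Xf}), are nonzero, mutually orthogonal, dominated by $p$ (Observation~\ref{obs:invariance Xf}), and equivalent via the partial isometry $g\f_1\mapsto g\f_2$; applying part $(i)$ to $q_1+q_2\le p$ yields a nonzero $u\in M_a$ with $p_i(u)=q_1+q_2\le p$. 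For the converse, suppose $u\in M_a$ is nonzero with $p_i(u)\le p$ and pick $\f\in\ran p_i(u)$ with $\f\ne0$. By Lemma~\ref{L:type2 perp}$(i)$, testing against the multipliers $g\f$ (with $g=\chi_B$ ranging over measurable sets, exactly as in Step~1 of the proof of Lemma~\ref{L:C2 construction of tripotent}) forces $[u(\f),\overline{\f}]=0$ $\mu$-a.e., so $\g=\overline{u(\f)}$ satisfies $\g\perp\f$ pointwise; moreover $\g\in\ran p_i(u)\subset\ran(p)$ and $\norm{\g(\omega)}=\norm{\f(\omega)}$ $\mu$-a.e., whence on the positive-measure set $\{\omega:\f(\omega)\ne0\}$ the vectors $\f(\omega),\g(\omega)$ are nonzero and orthogonal, hence linearly independent. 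Lemma~\ref{L:abelian pr in AotimesB(H)} then shows $p$ is not abelian.

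The bookkeeping ($q_2^t\le p^t$, transpose swapping initial and final projections, and a partial isometry in $M$ preserving the pointwise norm $\norm{\cdot(\omega)}$) is immediate from the earlier lemmata. The one point needing genuine care is the passage from the global orthogonality $\ip{u(\f)}{\overline{\f}}=0$ of Lemma~\ref{L:type2 perp}$(i)$ to the pointwise relation $[u(\f),\overline{\f}]=0$; this multiplier argument is precisely what upgrades ``$\g\perp\f$'' to pointwise linear independence and so links the antisymmetric structure to non-abelianness, and I expect it to be the only substantive step.
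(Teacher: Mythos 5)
Your proof is correct. Part $(i)$ is essentially the paper's own argument: both directions combine Lemma~\ref{L:C2 construction of tripotent} with Lemma~\ref{L:p sim alpha(p)}; the paper passes through $q=p_i(v)\sim p_f(v)=(p-q)^t\sim p-q$ where you pass through $p-q\sim q^t\sim q$ via $v^t$, an immaterial difference. Part $(ii)$, however, takes a genuinely different route. The paper stays abstract: by $(i)$, a nonzero $u\in M_a$ with $p_i(u)\le p$ exists iff $p$ dominates two nonzero orthogonal equivalent projections; such a pair cannot exist when $p$ is abelian (equivalent projections in the abelian corner $pMp$ coincide), and when $p$ is not abelian it is produced by a central-carrier argument together with \cite[Lemma V.1.7]{Tak}. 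You instead exploit the concrete measurable-field machinery of the subsection: in the forward direction you build the orthogonal equivalent pair explicitly as the projections onto $\widehat{X}_{\f_1}$ and $\widehat{X}_{\f_2}$ (Lemma~\ref{L:abelian pr in AotimesB(H)}, Observation~\ref{obs:invariance Xf}, Lemma~\ref{L:operators on Xf}) and then invoke $(i)$; in the converse direction you bypass $(i)$ altogether, upgrading the global orthogonality of Lemma~\ref{L:type2 perp}$(i)$ to the pointwise identity $[u(\f),\overline{\f}]=0$ by testing against the multipliers $\chi_B\f$ (the same trick as Step 1 of the proof of Lemma~\ref{L:C2 construction of tripotent}), with the pointwise norm equality $\norm{u(\f)(\omega)}=\norm{\f(\omega)}$ $\mu$-a.e.\ following from the identical multiplier argument, so that $\f$ and $\overline{u(\f)}$ are pointwise linearly independent on a set of positive measure and Lemma~\ref{L:abelian pr in AotimesB(H)} applies. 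The trade-off: the paper's version of $(ii)$ is shorter, uses only von Neumann comparison theory, and would survive verbatim in any algebra where $(i)$ holds; yours is self-contained within the $A\overline{\otimes}B(H)$ representation and its converse half does not depend on $(i)$. All the bookkeeping you flag (transpose exchanging initial and final projections, partial isometries in $M$ being pointwise isometric on the range of their initial projection) indeed checks out.
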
 

\begin{proof} $(i)$
The `only if part' follows from Lemma~\ref{L:C2 construction of tripotent}. Indeed, let $q$ be the projection provided by this lemma. Then $q\sim p^t-q^t$. Further, $p^t-q^t\sim p-q$ by Lemma~\ref{L:p sim alpha(p)} (recall that taking the transpose is a central involution), hence the assertion follows.

Let us continue by the `if part'. So, assume we have $q_1$ and $q_2$
as above. By Lemma~\ref{L:p sim alpha(p)} we know that $q_2\sim q_2^t$, hence $q_1\sim q_2^t$. Fix a partial isometry $v\in M$ with $p_i(v)=q_1$ and $p_f(v)=q_2^t$. Then clearly $v^t$ is also a partial isometry in $M$ and $p_i(v^t)=q_2$ and $p_f(v_t)=q_1^t$. Therefore $v\perp v^t.$
If we define $u=v-v^t$, then $u\in M_a$, it is a partial isometry and $p_i(u)=p$ (by Lemma~\ref{L:C2 construction of tripotent}).

$(ii)$ It follows from $(i)$ that such $u$ exists if and only if there are two nonzero orthogonal projections $q_1,q_2\le p$ with $q_1\sim q_2$. If $p$ is abelian, such projections cannot exist. Conversely, assume $p$ is not abelian, i.e., the von Neumann algebra $pMp$ is not abelian. It follows that there is a projection $q\in pMp$ which is not central. Denote by $z$ its central carrier (in $pMp$). Then $z-q\ne 0$, hence $z-q$ and $q$ are not centrally orthogonal and  \cite[Lemma V.1.7]{Tak} shows that there are nonzero projections $q_1\le q$ and $q_2\le z-q$ with $q_1\sim q_2$.
\end{proof}

The next proposition characterizes unitary and complete tripotents in $M_a$.

\begin{prop}\label{P:complete = abelian}
 Let $u\in M_a$ be a tripotent.
 \begin{enumerate}[$(a)$]
     \item $u$ is unitary $\Longleftrightarrow$ $p_i(u)=1$ $\Longleftrightarrow$ $p_f(u)=1$.
     \item $u$ is complete $\Longleftrightarrow$ $1-p_i(u)$ is abelian$\Longleftrightarrow$ $1-p_f(u)$ is abelian.
     \item Let $w\in M_a$ be a tripotent such that $u\le_2 w$. Then $u$ is complete in $(M_a)_2(w)$ if and only if $p_i(w)-p_i(u)$ is abelian.
 \end{enumerate}
\end{prop}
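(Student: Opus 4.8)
The plan is to reduce all three parts to a single structural fact about ``antisymmetric corners'' of $M=A\overline{\otimes}B(H)$ and then read off each statement. Throughout I write $\alpha$ for the transpose, so that $M_a=H^-(M,\alpha)$ and, by Observation~\ref{obs:pf=alpha(pi)}, every tripotent $u\in M_a$ is a partial isometry with $p_f(u)=p_i(u)^t$. Consequently $p_i(u)=1\iff p_f(u)=1$ (as $1^t=1$), which already gives the two rightmost equivalences in $(a)$ and the rightmost one in $(b)$ (using that $\alpha$ is an anti-isomorphism, so a projection $r$ is abelian iff $r^t$ is). Moreover, if $p_i(u)=1$ then $u$ is a unitary of the von Neumann algebra $M$, whence $M_2(u)=M\supseteq M_a$ and $u$ is unitary in $M_a$; this is the easy half of $(a)$.

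The core lemma I would isolate is: \emph{for a projection $s\in M$, the antisymmetric corner $M_a\cap s^tMs=\{y\in s^tMs\setsep y^t=-y\}$ is $\{0\}$ if and only if $s$ is abelian.} To prove it, note this set is a weak$^*$-closed JB$^*$-subtriple, so it is nonzero iff it contains a nonzero tripotent $v$; any such $v$ satisfies $v=s^tvs$, hence $p_i(v)\le s$ (and $p_f(v)=p_i(v)^t\le s^t$), and conversely any nonzero $v\in M_a$ with $p_i(v)\le s$ lies in $s^tMs$. Thus $M_a\cap s^tMs\neq\{0\}$ exactly when $M_a$ admits a nonzero tripotent with initial projection under $s$, which by Proposition~\ref{P:C2 projections}$(ii)$ happens exactly when $s$ is not abelian. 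Part $(b)$ is then immediate: writing $r=1-p_i(u)$ we have $M_0(u)=(1-p_f(u))M(1-p_i(u))=r^tMr$ (using $1-p_f(u)=r^t$), so $u$ is complete, i.e. $(M_a)_0(u)=M_a\cap M_0(u)=\{0\}$, precisely when $r$ is abelian.

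For $(c)$ I would run the same reduction relative to $w$. Since $u\le_2 w$, Proposition~\ref{P:H(W,alpha) abstract le2} gives $p_i(u)\le p_i(w)$ and $p_f(u)\le p_f(w)$, so $s:=p_i(w)-p_i(u)$ is a projection and $p_f(w)-p_f(u)=s^t$. A direct Peirce computation yields $M_2(w)\cap M_0(u)=(p_f(w)-p_f(u))M(p_i(w)-p_i(u))=s^tMs$, and since $N:=(M_a)_2(w)$ contains $u$ one has $N_0(u)=N\cap M_0(u)=M_a\cap s^tMs$. By the core lemma this vanishes iff $s=p_i(w)-p_i(u)$ is abelian, which is exactly $(c)$.

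The main obstacle is the remaining implication of $(a)$, namely unitary $\Rightarrow p_i(u)=1$. Here completeness of $u$ alone is not enough (it only gives $r=1-p_i(u)$ abelian by $(b)$); one must also exploit $(M_a)_1(u)=\{0\}$. I would first check that the antisymmetric Peirce-one space is $(M_a)_1(u)=\{a-a^t\setsep a\in(1-p_f(u))Mp_i(u)\}$ and that $a\mapsto a-a^t$ is injective there (if $a=a^t$ then $a$ lies in two complementary corners, so $a=0$), whence $(M_a)_1(u)=\{0\}$ forces $(1-p_f(u))Mp_i(u)=\{0\}$; by \cite[Lemma V.1.7]{Tak} this means $r^t=1-p_f(u)$ and $p_i(u)=1-r$ are centrally orthogonal. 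Since $\alpha$ fixes the center (so central supports satisfy $z(r^t)=z(r)$), this reads $z(r)\,z(1-r)=0$; as $z(r)+z(1-r)\ge r+(1-r)=1$ and the two are orthogonal, they sum to $1$, forcing $z(r)\le r$ and hence $r=z(r)$ central. A nonzero central projection of $M$ has the form $\chi_E\otimes I$ with $\mu(E)>0$ (Lemma~\ref{L:commutant of AotimesB(H)}) and thus a non-commutative corner (as $\dim H\ge 2$), contradicting abelianness of $r$. Therefore $r=0$, i.e. $p_i(u)=1$, completing $(a)$.
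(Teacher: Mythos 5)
Your proposal is correct and follows essentially the same route as the paper: your ``core lemma'' is just a repackaging of Proposition~\ref{P:C2 projections}$(ii)$, which is exactly what the paper invokes for $(b)$ and (``completely analogously'') for $(c)$, and your treatment of the hard direction of $(a)$ --- using the vanishing of $(M_a)_1(u)$ to force $(1-p_f(u))Mp_i(u)=\{0\}$, hence central orthogonality via \cite[Lemma V.1.7]{Tak} and centrality of $1-p_i(u)$, then contradicting abelianness via Lemma~\ref{L:commutant of AotimesB(H)} --- mirrors the paper's appeal to the argument of Proposition~\ref{P:H(W,alpha) direct summand} followed by the same description of central projections. The only differences are organizational (isolating the corner lemma, writing out $(c)$ explicitly, and replacing the paper's two-sided central-orthogonality argument by the central-support computation), all of which are sound.
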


\begin{proof}
$(a)$ Assume $p_i(u)=1$. By
Observation~\ref{obs:pf=alpha(pi)} we get $p_f(u)=1$, hence $u$ is unitary even in $M$.

Conversely, assume that $u$ is unitary. Then, in particular, $(M_a)_1(u)=\{0\}$. In the same way as in the proof of Proposition~\ref{P:H(W,alpha) direct summand} we deduce that  $p_i(u)=p_f(u)$ and it is a central projection. Hence $p_i(u)=\chi_E\otimes I$ for a measurable set $E\subset L$ (see Lemma~\ref{L:commutant of AotimesB(H)}).
Finally, we have also $(M_a)_0(u)=0$, hence 
$$\{0\}=(\chi_{L\setminus E)}\otimes I)(M_a)=L^\infty(\mu|_{L\setminus E})\overline{\otimes}B(H)_a,$$
hence $\mu(L\setminus E)$ has zero measure (as $B(H)_a\ne\{0\}$ whenever $\dim H\ge 2$), i.e., $p_i(u)=1$.

 $(b)$ The second equivalence follows from Observation~\ref{obs:pf=alpha(pi)}. Let us prove the first one.
 
 If $1-p_i(u)$ is not abelian, by Proposition~\ref{P:C2 projections}$(ii)$ there is a nonzero tripotent $v\in M_a$ with $p_i(v)\le 1-p_i(u)$. Having in mind that $u^t= - u$, $v^t=- v$, we deduce that $p_f(v) = p_i(v)^t$, $p_f(u) = p_i(u)^t$, and thus $v\perp u$, hence $u$ is not complete. Conversely, assume $u$ is not complete. It means there is a nonzero tripotent $v$ with $v
\perp u$. It follows that $(1-p_f(u))v(1-p_i(u))=v$, thus $p_i(v)\le 1-p_i(u)$. By Proposition~\ref{P:C2 projections}$(ii)$  $1-p_i(u)$ is not abelian.

$(c)$ The proof is completely analogous to that of the first equivalence in $(b)$.\end{proof}

The next lemma provides a decomposition of an antisymmetric tripotent into basic building blocks.

\begin{lemma}\label{L:C2 decomposition of tripoten}
Let $u\in A\overline{\otimes}B(H)_a$ be a tripotent. Then
$u=\sum_\theta u_\theta$, where $(u_\theta)$ is an orthogonal family of tripotents in $A\overline{\otimes} B(H)_a$ such that each $p_i(u_\theta)$ is the sum of a pair of mutually orthogonal and equivalent abelian projections.
\end{lemma}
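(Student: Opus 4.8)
The plan is to realise $u$ as an orthogonal sum of ``fibrewise rank-two'' antisymmetric tripotents, each supported on a pair of subspaces $\widehat{X}_{\f},\widehat{X}_{\g}$ with $\g=\overline{u(\f)}$. Put $p=p_i(u)$. For a nonzero $\f\in\ran(p)$, Lemma~\ref{L:type2 perp}$(ii)$ gives that $\g:=\overline{u(\f)}$ again lies in $\ran(p)$, is orthogonal to $\f$, and satisfies $u(\g)=-\overline{\f}$. Repeating the computation in Step~1 of the proof of Lemma~\ref{L:C2 construction of tripotent} yields $[u(\f),\overline{\f}]=0$ a.e., and conjugating gives $[\f,\g]=0$ a.e.; hence $\widehat{X}_{\f}\perp\widehat{X}_{\g}$ by Lemma~\ref{L:X_f}$(b)$. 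Testing the isometry $u$ against characteristic functions (via Lemma~\ref{L:AotimesB(H) charact}$(2)$) one gets $\norm{\g(\omega)}=\norm{\f(\omega)}$ a.e. Consequently the orthogonal projections $a,b$ onto $\widehat{X}_{\f},\widehat{X}_{\g}$ lie in $M$ and are abelian by Lemma~\ref{L:abelian pr in AotimesB(H)}, are mutually orthogonal, and are equivalent through the partial isometry provided by Lemma~\ref{L:operators on Xf}.

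First I would build the block attached to $\f$. The compression $v=ua$ is a partial isometry with $p_i(v)=a$ and, since $u(\widehat{X}_{\f})=\widehat{X}_{u(\f)}=\ran(b^t)$ (using Observation~\ref{obs:range pt}), with $p_f(v)=b^t$. Applying the ``if part'' of Lemma~\ref{L:C2 construction of tripotent} with $a+b$ in place of $p$ and $a$ in place of $q$ shows that $v-v^t\in M_a$ is a tripotent with $p_i(v-v^t)=a+b$. A short verification, based on $u(\g)=-\overline{\f}$ (which forces $ub=a^t u$), identifies $v-v^t$ with the compression $u_{\f}:=u(a+b)$. Thus $u_{\f}$ is an antisymmetric tripotent whose initial projection $a+b$ is the sum of two orthogonal, equivalent, abelian projections.

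Next I would apply Zorn's lemma to choose a maximal family $(\f_\theta)_\theta$ of nonzero vectors in $\ran(p)$ for which the spaces $\widehat{X}_{\f_\theta}$ together with $\widehat{X}_{\g_\theta}$ (where $\g_\theta=\overline{u(\f_\theta)}$) are pairwise orthogonal. Writing $a_\theta,b_\theta$ for the associated projections and $u_\theta=u(a_\theta+b_\theta)$, the initial projections $a_\theta+b_\theta$ are pairwise orthogonal by construction and the final projections $(a_\theta+b_\theta)^t$ are pairwise orthogonal as well, since the transpose, being a $*$-antiisomorphism, carries orthogonal projections to orthogonal projections. As two partial isometries are orthogonal precisely when both their initial and their final projections are orthogonal, the tripotents $u_\theta$ form an orthogonal family.

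Finally I would prove $\sum_\theta u_\theta=u$, which is where maximality enters. Set $P=\sum_\theta(a_\theta+b_\theta)\le p$. Antisymmetry of each $u_\theta$ gives $u(a_\theta+b_\theta)=(a_\theta+b_\theta)^t u$, so $uP=P^tu$ and $\sum_\theta u_\theta=uP$. If $P\ne p$ then, using $up=p^tu$, the element $u(p-P)=u-P^tu$ is a nonzero antisymmetric tripotent with $p_i(u(p-P))=p-P$; choosing any nonzero $\f\in\ran(p-P)$, Observation~\ref{obs:invariance Xf} and Lemma~\ref{L:type2 perp}$(ii)$ place both $\widehat{X}_{\f}$ and $\widehat{X}_{\overline{u(\f)}}$ inside $\ran(p-P)$, hence orthogonal to every $\widehat{X}_{\f_\theta}$ and $\widehat{X}_{\g_\theta}$, contradicting maximality. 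Therefore $P=p$ and $\sum_\theta u_\theta=up=u$. I expect the main obstacle to be the bookkeeping behind the identity $u(a+b)=(a+b)^tu$ (equivalently the antisymmetry of each $u_\theta$ and of $u(p-P)$), which rests on the fact that $u$ interchanges $\ran(a)\leftrightarrow\ran(b^t)$ and $\ran(b)\leftrightarrow\ran(a^t)$; the other delicate point is upgrading the vector orthogonality of $\f$ and $\g$ to the a.e.\ orthogonality $[\f,\g]=0$ needed to make the $\widehat{X}$-spaces genuinely orthogonal.
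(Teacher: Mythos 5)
Your argument is correct, but it follows a genuinely different route from the paper's. The paper argues top-down: it first invokes the ``only if'' half of Lemma~\ref{L:C2 construction of tripotent} to write $u=v-v^t$ globally, with $p_i(v)=q$ and $p_f(v)=p^t-q^t$, then uses the abstract fact that the type I algebra $A\overline{\otimes}B(H)$ allows $q$ to be written as an orthogonal sum of abelian projections $q=\sum_\theta q_\theta$, and finally cuts: $v_\theta=vq_\theta$, $r_\theta=p_f(v_\theta)^t$, $u_\theta=u(q_\theta+r_\theta)=v_\theta-v_\theta^t$. You argue bottom-up: you need only the elementary ``if'' half of Lemma~\ref{L:C2 construction of tripotent}, you produce the abelian projections concretely as the projections onto the cyclic subspaces $\widehat{X}_{\f}$ and $\widehat{X}_{\overline{u(\f)}}$ (via Lemmata~\ref{L:abelian pr in AotimesB(H)} and~\ref{L:operators on Xf}, together with the pointwise norm equality and the a.e.\ orthogonality $[\f,\g]=0$, both of which you correctly extract from Lemma~\ref{L:type2 perp} and the Step~1 computation), and you exhaust $u$ by Zorn's lemma plus a maximality argument. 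In effect you replay, at the level of the present lemma, the same two-step scheme (build one block, then run a maximality argument) by which the paper proved the ``only if'' half of Lemma~\ref{L:C2 construction of tripotent} itself. The paper's route is shorter because the Zorn-type work is already packaged inside that lemma and inside type I structure theory; your route avoids both the harder direction of that lemma and the abstract decomposition into abelian projections, at the price of verifying the block identities $ub=a^tu$ (whence $u(a+b)=v-v^t$) and $uP=P^tu$ by hand --- and these verifications do go through, using $u^*u=p$, $\overline{\f}=-u(\g)$, $\widehat{X}_{\g}\subset\ran(p)$ and Observation~\ref{obs:range pt}.
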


\begin{proof}
 Let $p=p_i(u)$. Let $q$ and $v$ be the projection and the partial isometry provided by Lemma~\ref{L:C2 construction of tripotent}. Since $A\overline{\otimes}B(H)$ is of type I, $q$ can be expressed as the sum of an orthogonal family of abelian projections $(q_\theta)$. Let $v_\theta=vq_\theta$. Then $v_\theta$ is a partial isometry with $p_i(v_\theta)=q_\theta$, hence
$r_\theta=p_f(v_\theta)^t$ is a projection equivalent to $q_\theta$ (cf. Lemma~\ref{L:p sim alpha(p)}).
Moreover, $v=\sum_{\alpha} v_{\alpha}$, the family $(v_{\alpha})_\theta$ is  orthogonal and $r_\theta\le p_f(v)^t=p-q$. In particular, $r_\theta\perp q$ and, moreover, the family $(r_\theta)$ is orthogonal. 
Set $p_\theta=q_\theta+r_\theta$. Then $p=\sum_\theta p_\theta$ and the family $(p_\theta)$ is orthogonal. Set $u_\theta=up_\theta$.
Then $u_\theta$ is a partial isometry with  $p_i(u_\theta)=p_\theta$, $u_{\alpha} = v_{\alpha}-v_{\alpha}^t$, $u_{\alpha}\perp u_{\beta}$ for $\alpha\neq \beta$.
It remains to observe that $u_\theta\in  A\overline{\otimes}B(H)_a$, but this is clear from the identity $u_\theta = v_\theta- v_\theta^t$.
\end{proof}

Now we are going to distinguish several cases depending on the dimension of $H$. The first proposition settles the finite-dimensional case. Note that the first statement in assertion $(b)$ follows also from \cite[Proposition 2]{HoMarPeRu2002} where an explicit formula for a unitary element is given.
Assertion $(c)$ is also known -- in this case it is easy to find a complete non-unitary tripotent in $B(H)_a$ and then use Proposition~\ref{P:Linftyfindim is finite}.

\begin{prop}\label{P:C2 finite dim}
Assume that $\dim H$ is finite. Then the following holds:
\begin{enumerate}[$(a)$]
    \item $A\overline{\otimes}B(H)_a$ is triple-isomorphic to $L^\infty(\mu, B(H)_a)$. In particular, it is a finite JBW$^*$-triple; 
    \item If $\dim H$ is even, then $A\overline{\otimes}B(H)_a$ has a unitary element. It is therefore  triple-isomorphic to a (finite) JBW$^*$-algebra. Hence, the relations $\le_0$ and $\le_2$  coincide;
    \item If $\dim H$ is odd, then $A\overline{\otimes}B(H)_a$ has no unitary element. 
\end{enumerate}
\end{prop}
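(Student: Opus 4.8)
The plan is to treat the three assertions in turn. Throughout write $M=A\overline{\otimes}B(H)$, so that $M_a=A\overline{\otimes}B(H)_a$, and recall that, since $B(H)$ is a Cartan factor of type $1$, $M$ is the von Neumann tensor product $L^\infty(\mu)\overline{\otimes}B(H)$; as $\dim H=n<\infty$ this is just $L^\infty(\mu,M_n(\ce))=L^\infty(\mu,B(H))$, with all algebraic operations computed pointwise $\mu$-a.e.

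For $(a)$ I would first observe that $B(H)_a$, being a finite-dimensional JBW$^*$-triple, is in particular a \emph{reflexive} Cartan factor. Lemma~\ref{L:tensor=Bochner} (applied with $C=B(H)_a$) then provides the canonical triple-isomorphism $A\overline{\otimes}B(H)_a\cong L^\infty(\mu,B(H)_a)$, and finiteness is immediate from Proposition~\ref{P:Linftyfindim is finite} with $E=B(H)_a$.

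The heart of $(b)$ and $(c)$ is the observation that, combining Proposition~\ref{P:complete = abelian}$(a)$ (a tripotent $u\in M_a$ is unitary iff $p_i(u)=1$) with Proposition~\ref{P:C2 projections}$(i)$, the triple $M_a$ admits a unitary element if and only if the unit of $M$ can be \emph{halved}, i.e.\ written as $1=q_1+q_2$ with $q_1\perp q_2$ and $q_1\sim q_2$. For $(b)$, with $n=2m$ even, such a halving exists: choose a projection $r\in B(H)$ of rank $m$ and set $q_1=1_A\otimes r$ and $q_2=1_A\otimes(1-r)$ (here $1_A$ is the unit of $A$); these are orthogonal, they sum to $1$, and they are equivalent via $1_A\otimes w$ for a partial isometry $w\in B(H)$ with $w^*w=r$ and $ww^*=1-r$. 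Proposition~\ref{P:C2 projections}$(i)$ then yields $u\in M_a$ with $p_i(u)=1$, which is unitary by Proposition~\ref{P:complete = abelian}$(a)$. Consequently $M_a$ is triple-isomorphic to a unital JBW$^*$-algebra by \cite[Theorem~4.1.55]{Cabrera-Rodriguez-vol1}, it is finite by $(a)$, and hence $\le_0$ and $\le_2$ coincide by Proposition~\ref{P:le0=le2}.

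For $(c)$, with $n=2m+1$ odd, I would argue by contradiction: a unitary element of $M_a$ would, by the criterion above, produce a halving $1=q_1+q_2$ with $q_1\sim q_2$ in $M=L^\infty(\mu,M_n)$. Reading this pointwise, a partial isometry $w$ implementing $q_1\sim q_2$ satisfies $w(\omega)^*w(\omega)=q_1(\omega)$ and $w(\omega)w(\omega)^*=q_2(\omega)$ for $\mu$-a.a.\ $\omega$, whence $\operatorname{rank}q_1(\omega)=\operatorname{rank}q_2(\omega)$; but $q_1(\omega)+q_2(\omega)=I$ forces these ranks to sum to $n$, so $2\operatorname{rank}q_1(\omega)=2m+1$, which is impossible. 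Hence no unitary element exists. The only step requiring care is this last pointwise reduction — that Murray--von Neumann equivalence in $L^\infty(\mu,M_n)$ descends to equality of pointwise ranks — but this is transparent once $w,q_1,q_2$ are viewed as essentially bounded $M_n$-valued functions satisfying the defining relations $\mu$-a.e., reducing the matter to the finite-dimensional rank identity for partial isometries on $\ce^n$.
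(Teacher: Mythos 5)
Your proposal is correct and follows essentially the same route as the paper: part $(a)$ via Lemma~\ref{L:tensor=Bochner} and Proposition~\ref{P:Linftyfindim is finite}, part $(b)$ by halving the identity and invoking Propositions~\ref{P:C2 projections}$(i)$, \ref{P:complete = abelian}$(a)$ and \ref{P:le0=le2}, and part $(c)$ by the same even-rank obstruction. The only cosmetic difference is in $(c)$: the paper applies Proposition~\ref{P:C2 projections}$(i)$ pointwise to conclude that every tripotent's initial projection has even rank a.e., while you apply it globally to a hypothetical unitary and then read the Murray--von Neumann equivalence pointwise — both reductions rest on the same facts and are equally valid.
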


\begin{proof}
$(a)$ The first statement follows from
Lemma~\ref{L:tensor=Bochner}.
The second one then follows from Proposition~\ref{P:Linftyfindim is finite}.

$(b)$ If $\dim H$ is even, then there is a projection $p\in B(H)$ with $p\sim 1-p$. Thus $1\otimes p$ is a projection in $A\overline{\otimes}B(H)$ equivalent to $1-(1\otimes p)=1\otimes(1-p)$. Hence, by Proposition~\ref{P:C2 projections}$(i)$ there is a unitary element in $A\overline{\otimes}B(H)_a$.
The rest follows from $(a)$ and Proposition~\ref{P:le0=le2}.

$(c)$ Assume $\dim H$ is odd and $\uu\in L^\infty(\mu,B(H)_a)$ is a tripotent. Then $p_i(\uu)(\omega)=p_i(\uu(\omega))$ $\mu$-a.e. It follows from Proposition~\ref{P:C2 projections}$(i)$ that $\dim\ran(p_i(\uu(\omega)))$ is even for each $\omega$, thus $p_i(\uu)\ne 1$. It follows that $\uu$ is not unitary (cf. Proposition~\ref{P:complete = abelian}$(a)$).
\end{proof}

The next proposition deals with the infinite-dimensional case. Note that assertion $(a)$ follows also from \cite[Proposition 2]{HoMarPeRu2002}.

\begin{prop}\label{P:C2 infinite dim}
Assume that $\dim H$ is infinite. Then the following holds.
\begin{enumerate}[$(a)$]
    \item $A\overline{\otimes}B(H)_a$ admits a unitary element, i.e., it is triple-isomorphic to a JBW$^*$-algebra.
    \item There are complete non-unitary elements in  $A\overline{\otimes}B(H)_a$. In particular, the triple $A\overline{\otimes}B(H)_a$ is not finite. 
  
\end{enumerate}
\end{prop}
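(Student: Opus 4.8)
The plan is to produce both the unitary element of part $(a)$ and the complete non-unitary tripotent of part $(b)$ by exhibiting suitable initial projections and then invoking Proposition~\ref{P:C2 projections}$(i)$ (which says exactly which projections arise as $p_i(u)$ for $u\in M_a$) together with the characterizations of unitarity and completeness in Proposition~\ref{P:complete = abelian}. Throughout I write $M=A\overline{\otimes}B(H)$ and use that $H=\ell^2(\Gamma)$ with $\Gamma$ infinite. The one structural fact I will use repeatedly is that an infinite index set $\Gamma$ can be partitioned into two pieces of the same cardinality $|\Gamma|$, which produces orthogonal, Murray--von Neumann equivalent projections in $B(H)$ summing to a prescribed projection; tensoring with the unit of $A$ transports these relations into $M$, since a partial isometry $w\in B(H)$ implementing $r_1\sim r_2$ yields the partial isometry $1\otimes w\in M$ implementing $1\otimes r_1\sim 1\otimes r_2$.

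For $(a)$ I would split $\Gamma=\Gamma_1\sqcup\Gamma_2$ with $|\Gamma_1|=|\Gamma_2|=|\Gamma|$, let $r_i$ be the orthogonal projection of $H$ onto $\ell^2(\Gamma_i)$, and set $q_i=1\otimes r_i$ for $i=1,2$. Then $q_1\perp q_2$, $q_1+q_2=1\otimes I=1$, and $q_1\sim q_2$ (as $\dim\ran r_1=\dim\ran r_2$). Proposition~\ref{P:C2 projections}$(i)$ now gives $u\in M_a$ with $p_i(u)=q_1+q_2=1$, and Proposition~\ref{P:complete = abelian}$(a)$ shows such $u$ is unitary. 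Hence $M_a$ admits a unitary element and is therefore triple-isomorphic to a JBW$^*$-algebra.

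For $(b)$ I would fix $\gamma_0\in\Gamma$ and put $P=1\otimes P_0$, where $P_0$ is the rank-one projection onto $\ce e_{\gamma_0}$. Then $P$ is a nonzero \emph{abelian} projection in $M$: its range is $\widehat{X}_{\f}$ for $\f$ the constant function $e_{\gamma_0}$, so Lemma~\ref{L:abelian pr in AotimesB(H)} applies. Set $p=1-P$; its fibre is $\ell^2(\Gamma\setminus\{\gamma_0\})$, still of dimension $|\Gamma|$, so as in $(a)$ I split $\Gamma\setminus\{\gamma_0\}$ into two equipotent parts and write $p=q_1+q_2$ with $q_1\perp q_2$ and $q_1\sim q_2$. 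Proposition~\ref{P:C2 projections}$(i)$ yields $u\in M_a$ with $p_i(u)=p$; since $1-p_i(u)=P$ is abelian, $u$ is complete by Proposition~\ref{P:complete = abelian}$(b)$, while $p_i(u)=p\neq1$ forces $u$ to be non-unitary by Proposition~\ref{P:complete = abelian}$(a)$. Finally, $M_a$ is a JBW$^*$-algebra by $(a)$ yet carries a complete non-unitary tripotent, so by the equivalence $(ii)\Leftrightarrow(iii)$ of Proposition~\ref{P:le0=le2} it is not finite.

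The constructions are short once the dictionary between projections in $M$ and antisymmetric tripotents is in place; the only points needing a little care are that tensoring with $1$ preserves orthogonality, equivalence and abelianness of projections, and that $p_i(u)$ must simultaneously be \emph{halvable} (to satisfy Proposition~\ref{P:C2 projections}$(i)$) while its complement is \emph{abelian} (to force completeness). I expect no serious obstacle: both requirements hold automatically because deleting a single basis vector from an infinite-dimensional Hilbert space leaves an infinite-dimensional, hence halvable, space, so the ``even nature'' of antisymmetric tripotents (Proposition~\ref{P:C2 projections}$(i)$) causes no conflict with maximal completeness.
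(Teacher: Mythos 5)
Your proof is correct and follows essentially the same route as the paper: both parts rest on Proposition~\ref{P:C2 projections}$(i)$ to manufacture antisymmetric tripotents from halvable projections, combined with Proposition~\ref{P:complete = abelian} to certify unitarity (part $(a)$) and completeness of a non-unitary tripotent whose cotripotent projection is abelian (part $(b)$). Your constructions merely make the paper's abstract choices concrete — partitioning $\Gamma$ instead of invoking $p\sim 1-p$, and removing a single basis vector instead of postulating $q_1\sim q_2$ with rank-one complement — and your explicit appeal to Proposition~\ref{P:le0=le2} for the final ``not finite'' conclusion is a legitimate way to close the step the paper leaves implicit.
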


\begin{proof}
$(a)$ If $\dim H=\infty$, then there is a projection $p\in B(H)$ with $p\sim 1-p$. The rest of the argument is the same as in Proposition~\ref{P:C2 finite dim}$(b)$. 

$(b)$  Since $H$ is infinite-dimensional, there are two mutually orthogonal projections $q_1,q_2\in B(H)$ such that $q_1\sim q_2$  and $1-q_1-q_2$ is a one-dimensional projection. 
Then $1\otimes q_1$ and $1\otimes q_2$ are two equivalent mutually orthogonal projections in $A\overline{\otimes}B(H)$. By Proposition~\ref{P:C2 projections}$(i)$ there is a tripotent $U\in A\overline{\otimes}B(H)_a$ with $p_i(U)=1\otimes q_1+1\otimes q_2=1\otimes(q_1+q_2)$. Then $p_i(U)\ne 1$ and $1-p_i(U)=1\otimes(1-q_1-q_2)$ is abelian by Lemma~\ref{L:abelian pr in AotimesB(H)}. Hence $U$ is complete but not unitary by Proposition~\ref{P:complete = abelian}. \end{proof}

Hence, summarizing, $A\overline{\otimes}B(H)_a$ is a finite JBW$^*$-algebra if $H$ has  finite even dimension; it is a finite JBW$^*$-triple with no unitary element if $H$ has  finite odd dimension and it is an infinite JBW$^*$-algebra if $H$ has infinite dimension. The case of finite even dimension is in this way completely settled. In the remaining two cases we are going further to characterize the relation $\le_0$ and in the infinite-dimensional case moreover analyze finite and properly infinite tripotents.

We start by characterizing $\le_0$ in $B(H)_a$.

\begin{lemma}
Let $u,v\in B(H)_a$ be two tripotents. 
Then the following assertions are equivalent.
\begin{enumerate}[$(i)$]
    \item $u\le_0 v$;
    \item Either $v$ is complete or $u\le_2 v$;
    \item Either $1-p_i(v)$ is a rank-one operator, or $p_i(u)\le p_i(v)$.
\end{enumerate}
\end{lemma}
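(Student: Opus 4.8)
The plan is to prove the two equivalences $(ii)\Leftrightarrow(iii)$ and $(i)\Leftrightarrow(ii)$ separately, treating the first as bookkeeping and concentrating the real work on the second. Throughout I write $N=B(H)_a$ and use $N_0(\cdot)$ for Peirce-zero subspaces computed inside $N$; recall $N_0(v)=\{x\in N\setsep p_f(v)x=0=x\,p_i(v)\}$ and that $p_f(v)=p_i(v)^t$ by Observation~\ref{obs:pf=alpha(pi)}.

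For $(ii)\Leftrightarrow(iii)$ I would use that $N=H^-(B(H),\alpha)$ with $\alpha$ the transpose, so Proposition~\ref{P:H(W,alpha) abstract le2} turns $u\le_2 v$ into $p_i(u)\le p_i(v)$. Combining Proposition~\ref{P:complete = abelian}$(b)$ (that $v$ is complete iff $1-p_i(v)$ is abelian) with the elementary fact that a projection in $B(H)$ is abelian exactly when its range has dimension at most one, statement $(ii)$ reads ``$1-p_i(v)$ has rank $\le 1$, or $p_i(u)\le p_i(v)$''. The only discrepancy with $(iii)$ is the rank-zero case; but if $1-p_i(v)=0$ then $p_i(v)=1$ and $p_i(u)\le p_i(v)$ holds automatically, so that alternative is absorbed into the second disjunct, giving $(ii)\Leftrightarrow(iii)$. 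The implication $(ii)\Rightarrow(i)$ is then immediate: $u\le_2 v$ gives $u\le_0 v$ by Proposition~\ref{P:impl le20}, and if $v$ is complete then $N_0(v)=\{0\}\subseteq N_0(u)$.

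The core of the argument is $(i)\Rightarrow(ii)$, which I would prove by contraposition: assuming $v$ is not complete and $u\not\le_2 v$, I construct a tripotent $w\in N_0(v)\setminus N_0(u)$, contradicting $u\le_0 v$. Put $p=p_i(v)$; since $v$ is not complete, $1-p$ is non-abelian, so $\ran(1-p)$ has dimension at least two. Since $u\not\le_2 v$ means $p_i(u)\not\le p$, I pick a unit vector $\xi\in\ran(p_i(u))$ with $\eta:=(1-p)\xi\ne0$, choose $\zeta\in\ran(1-p)$ with $\zeta\perp\eta$, and let $r$ be the rank-two projection onto $\operatorname{span}\{\eta,\zeta\}$, so that $r\le 1-p$ splits as a sum of two orthogonal equivalent rank-one projections. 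By Proposition~\ref{P:C2 projections}$(i)$ there is a tripotent $w\in N$ with $p_i(w)=r$. Since $p_i(w)=r\le 1-p_i(v)$ and $p_f(w)=r^t\le 1-p_f(v)$, the Peirce calculus (orthogonality of the initial and of the final projections) yields $w\perp v$, i.e. $w\in N_0(v)$. On the other hand $r\xi=\eta$, whence $w\,p_i(u)\,\xi=w\xi=w\,r\xi=w\eta\ne0$ because $w$ is isometric on $\ran(r)$ and $\eta\ne0$; thus $w\,p_i(u)\ne0$ and $w\notin N_0(u)$. This is the required contradiction, so $u\le_2 v$.

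The step I expect to require the most care is the construction of $w$. The constraint that $p_i(w)$ be the initial projection of an \emph{antisymmetric} tripotent forces it, via Proposition~\ref{P:C2 projections}$(i)$, to be a sum of two orthogonal equivalent projections, hence of even rank; this is precisely why one cannot simply take $r=1-p$ when $1-p$ has odd rank. Passing instead to a suitable rank-two subprojection of $1-p$ that still detects $\eta$ sidesteps this parity obstruction and makes the argument uniform in $\dim H$.
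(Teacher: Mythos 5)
Your proof is correct and follows essentially the same route as the paper: the easy implications via Proposition~\ref{P:complete = abelian} and Proposition~\ref{P:H(W,alpha) abstract le2}, and then a contrapositive argument that builds a rank-two antisymmetric tripotent $w$ (via Proposition~\ref{P:C2 projections}$(i)$, using two orthogonal rank-one projections under $1-p_i(v)$, one of which is not annihilated by $p_i(u)$) lying in $N_0(v)\setminus N_0(u)$. The only differences are organizational (you close the cycle through $(ii)$ rather than $(iii)$, and pick the first rank-one projection from a concrete vector $\eta=(1-p)\xi$ instead of abstractly), which changes nothing of substance.
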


\begin{proof} The implication $(iii)\Rightarrow(ii)$ follows from Proposition~\ref{P:complete = abelian} and Proposition~\ref{P:H(W,alpha) abstract le2}.

The implication $(ii)\Rightarrow(i)$ is obvious.

$(i)\Rightarrow(iii)$ Assume $(iii)$ does not hold, i.e., the projection $1-p_i(v)$ has rank at least two and $p_i(u)\not\le p_i(v)$. The latter inequality implies $1-p_i(v)\not\le 1-p_i(u)$. Hence, there is a rank-one projection $r_1$ with $r_1\le 1-p_i(v)$ but $r_1\not\le1-p_i(u)$. Since $1-p_i(v)$ has rank at least two, there is a rank-one projection $r_2\le1-p_i(v)$ orthogonal to $r_1$. Since $r_1\sim r_2$, by Proposition~\ref{P:C2 projections}$(i)$ there is a tripotent $w\in B(H)_a$ with $p_i(w)=r_1+r_2$. Then clearly $w\in B(H)_0(v)\setminus B(H)_0(u)$.
\end{proof}

Using Proposition~\ref{P:Linfty refelexive} the following proposition easily follows.

\begin{prop}\label{P:le0 in AotimesB(H)aodd}
Assume that the Hilbert space $H$ has finite odd dimension. Let $\uu,\vv\in L^\infty(\mu,B(H)_a)=A\overline{\otimes}B(H)_a$ be two tripotents.
Then the following assertions are equivalent.
\begin{enumerate}[$(i)$]
    \item $\uu\le_0 \vv$;
    \item There is a measurable set $E\subset L$ such that $\vv(\omega)$ is complete for $\omega\in E$ and  $\uu(\omega)\le_2 \vv(\omega)$ for $\omega\in L\setminus E$;
    \item There is a measurable set $E\subset L$ such that $1-p_i(\vv)(\omega)$ is a rank-one operator for $\omega\in L$ 
    and $p_i(\uu)(\omega)\le p_i(\vv)(\omega)$ for $\omega\in L\setminus E$.
\end{enumerate}
\end{prop}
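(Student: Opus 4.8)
The plan is to reduce the statement to a pointwise assertion via the Lebesgue--Bochner picture and then transcribe the preceding pointwise lemma on $B(H)_a$. Since $\dim H<\infty$, the Cartan factor $B(H)_a$ is finite-dimensional, hence reflexive, so by Lemma~\ref{L:tensor=Bochner} (equivalently Proposition~\ref{P:C2 finite dim}$(a)$) we may identify $A\overline{\otimes}B(H)_a$ with $L^\infty(\mu,B(H)_a)$ and apply Proposition~\ref{P:Linfty refelexive}. In particular Proposition~\ref{P:Linfty refelexive}$(3)$ gives that $\uu\le_0\vv$ holds if and only if $\uu(\omega)\le_0\vv(\omega)$ for $\mu$-a.a.\ $\omega$, and the preceding lemma characterizes the latter: for a.e.\ $\omega$, $\uu(\omega)\le_0\vv(\omega)$ is equivalent to the disjunction ``$\vv(\omega)$ is complete or $\uu(\omega)\le_2\vv(\omega)$'', and also to ``$1-p_i(\vv(\omega))$ is rank-one or $p_i(\uu(\omega))\le p_i(\vv(\omega))$''.

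Next I would record the two pointwise dictionaries that identify the clauses of (ii) with those of (iii). On one hand, $\uu(\omega)\le_2\vv(\omega)$ is equivalent to $p_i(\uu(\omega))\le p_i(\vv(\omega))$ by Proposition~\ref{P:H(W,alpha) abstract le2} (applied with $W=B(H)$ and $\alpha$ the transpose). On the other hand, $\vv(\omega)$ is complete in $B(H)_a$ if and only if $1-p_i(\vv(\omega))$ is abelian by Proposition~\ref{P:complete = abelian}$(b)$; here the odd dimension is decisive, since $p_i(\vv(\omega))$ has even rank by Proposition~\ref{P:C2 projections}$(i)$, whence $1-p_i(\vv(\omega))$ has odd rank, and an abelian (i.e.\ rank $\le 1$) projection of odd rank is exactly rank-one. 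Using $p_i(\uu)(\omega)=p_i(\uu(\omega))$ and $p_i(\vv)(\omega)=p_i(\vv(\omega))$ (the initial projection is computed pointwise) these equivalences show that (ii) and (iii) hold or fail together for any fixed choice of the set $E$.

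For $(i)\Rightarrow(ii)$ I would produce $E$ explicitly by setting
$$E=\{\omega\in L\setsep p_i(\uu)(\omega)\not\le p_i(\vv)(\omega)\}.$$
The only genuine point requiring care---and thus the main obstacle---is that this set be measurable, but this is clear: the maps $\omega\mapsto p_i(\uu)(\omega)$ and $\omega\mapsto p_i(\vv)(\omega)$ are measurable into the finite-dimensional algebra $B(H)$, and for projections $p,q$ one has $p\le q$ iff $\norm{pq-p}=0$, so $E$ is the preimage of $(0,\infty)$ under the measurable function $\omega\mapsto\norm{p_i(\uu)(\omega)p_i(\vv)(\omega)-p_i(\uu)(\omega)}$. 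On $L\setminus E$ we have $\uu(\omega)\le_2\vv(\omega)$ by definition, while for a.e.\ $\omega\in E$ we have $\uu(\omega)\le_0\vv(\omega)$ together with $\uu(\omega)\not\le_2\vv(\omega)$, so the pointwise lemma forces $\vv(\omega)$ to be complete. Thus (ii) holds, the clauses being understood $\mu$-a.e.\ as is natural for elements of $L^\infty$. Finally $(ii)\Rightarrow(i)$ is the easy direction: on $E$, completeness of $\vv(\omega)$ means its Peirce-$0$ subspace in $B(H)_a$ is trivial, hence contained in that of $\uu(\omega)$, i.e.\ $\uu(\omega)\le_0\vv(\omega)$; on $L\setminus E$ we obtain $\uu(\omega)\le_0\vv(\omega)$ from $\uu(\omega)\le_2\vv(\omega)$ by Proposition~\ref{P:impl le20}. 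Hence $\uu(\omega)\le_0\vv(\omega)$ a.e., and Proposition~\ref{P:Linfty refelexive}$(3)$ yields $\uu\le_0\vv$. Everything beyond the measurability of $E$ is a pointwise transcription of results already established.
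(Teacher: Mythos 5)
Your proof is correct and takes essentially the same route as the paper: the paper's entire proof is the one-line observation that the statement follows from Proposition~\ref{P:Linfty refelexive} applied pointwise together with the preceding lemma characterizing $\le_0$ in $B(H)_a$, which is exactly your reduction. Your additional details (the explicit measurable set $E$ via the function $\omega\mapsto\norm{p_i(\uu)(\omega)p_i(\vv)(\omega)-p_i(\uu)(\omega)}$, and the dictionaries identifying (ii) with (iii)) are sound fillings-in of steps the paper leaves implicit.
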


Note that the proposition holds also if $H$ has a finite even dimension, but in this case we get $E=\emptyset$ in $(ii)$ and $(iii)$ (cf. Proposition \ref{P:C2 finite dim}$(b)$). For infinite-dimensional $H$ an analogue holds as well, but it requires a different proof. It is contained in the following proposition.

\begin{prop}\label{P:le0 in AotimesB(H)ainfty}
Assume $\dim H=\infty$. 
Let $u,v\in A\overline{\otimes}B(H)_a=L^\infty(\mu)\overline{\otimes}B(H)_a$ be two tripotents. Then the following assertions are equivalent.
\begin{enumerate}[$(i)$]
    \item $u\le_0 v$;
    \item There is a measurable set $E\subset L$  such that $(\chi_E\otimes I)u\le_2 (\chi_E\otimes I)v$ and $(\chi_{L\setminus E}\otimes I)v$ is complete in $L^\infty(\mu|_{L\setminus E})\overline{\otimes}B(H)_a$;
    \item There is a measurable set $E\subset L$  such that  $(\chi_E\otimes I)p_i(u)\le (\chi_E\otimes I)p_i(v)$ and $(\chi_{L\setminus E}\otimes I)(1-p_i(v))$ is abelian.
\end{enumerate}
\end{prop}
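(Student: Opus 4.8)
The plan is to prove the equivalences by first disposing of the routine part $(ii)\Leftrightarrow(iii)$ together with $(ii)\Rightarrow(i)$, and then concentrating all the effort on the substantial implication $(i)\Rightarrow(ii)$. For $(ii)\Leftrightarrow(iii)$ I would merely translate the two formulations through the dictionary already at hand: by Proposition~\ref{P:H(W,alpha) abstract le2} the relation $(\chi_E\otimes I)u\le_2(\chi_E\otimes I)v$ is equivalent to $(\chi_E\otimes I)p_i(u)\le(\chi_E\otimes I)p_i(v)$, while by Proposition~\ref{P:complete = abelian}$(b)$, applied inside the reduced algebra $(\chi_{L\setminus E}\otimes I)M_a$ whose unit is $\chi_{L\setminus E}\otimes I$, completeness of $(\chi_{L\setminus E}\otimes I)v$ is equivalent to abelianness of $(\chi_{L\setminus E}\otimes I)(1-p_i(v))$ (abelianness being intrinsic and unaffected by a central cut). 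The implication $(ii)\Rightarrow(i)$ is then immediate from the direct-sum description in Proposition~\ref{P:preorders in direct sum}: on the summand $\chi_E\otimes I$ one has $\le_2$, hence $\le_0$ by Proposition~\ref{P:impl le20}, whereas on the summand $\chi_{L\setminus E}\otimes I$ the tripotent $v$ is complete, so its Peirce-$0$ subspace is trivial and $u\le_0 v$ holds there vacuously.

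The heart of the matter is $(i)\Rightarrow(ii)$. Writing $P=1-p_i(v)$, I would first isolate the complete part of $v$. Since abelianness of a projection in $M=A\overline{\otimes}B(H)$ is preserved under gluing pieces over orthogonal central summands (central projections being exactly the $\chi_F\otimes I$ by Lemma~\ref{L:commutant of AotimesB(H)}), a maximal disjoint family yields a largest measurable set $F_0$ with $(\chi_{F_0}\otimes I)P$ abelian; set $E=L\setminus F_0$. Then $(\chi_{F_0}\otimes I)v$ is complete by Proposition~\ref{P:complete = abelian}$(b)$, which is the second clause of $(ii)$, and by maximality $(\chi_G\otimes I)P$ is non-abelian for every measurable $G\subseteq E$ of positive measure. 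Restricting to $\chi_E\otimes I$, it remains to show $p_i(u)\le p_i(v)$. Assuming the contrary, $Pp_i(u)\neq0$, I fix $\f_0\in\ran(p_i(u))$ with $\psi:=P\f_0\neq0$. By Observation~\ref{obs:invariance Xf} and Lemma~\ref{L:abelian pr in AotimesB(H)} the orthogonal projection $Q_\psi$ onto $\widehat{X}_{\psi}$ is an abelian subprojection of $P$, and a short computation with the bracket $[\cdot,\cdot]$ (using $\langle\f_0,P\f_0\rangle=\|P\f_0\|^2$ together with Lemma~\ref{L:X_f}) shows $Q_\psi\f_0=\psi$, whence $Q_\psi p_i(u)\neq0$. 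After cutting by the central support of $Q_\psi$ I may assume $\psi\neq0$ almost everywhere.

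The crux is then to \emph{double} $Q_\psi$ inside $P$ without losing the overlap with $p_i(u)$, that is, to produce orthogonal equivalent projections $q_1\perp q_2$ below $P$ with $q_1 p_i(u)\neq0$; this is precisely where the even nature of antisymmetric tripotents costs extra work relative to the plain von Neumann algebra case. Since non-abelianness on every central subpiece forces $P-Q_\psi$ to have full central support, I would apply the comparison theorem \cite[Theorem V.1.8]{Tak} to $Q_\psi$ and $P-Q_\psi$, obtaining a central projection $z'$ with $z'Q_\psi\precsim z'(P-Q_\psi)$ and $(1-z')(P-Q_\psi)\precsim(1-z')Q_\psi$. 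In whichever of the two alternatives is non-trivial I can extract nonzero $q_1\perp q_2\le P$ with $q_1\sim q_2$ and $q_1$ a central cut of $Q_\psi$ (subprojections of an abelian projection being central cuts); thus $q_1\f_0=\chi_B\psi$ for some set $B$ of positive measure, so $q_1 p_i(u)\neq0$. By Proposition~\ref{P:C2 projections}$(i)$ the even projection $q_1+q_2\le P=1-p_i(v)$ is the initial projection of some tripotent $w\in M_a$, which is therefore orthogonal to $v$; the hypothesis $u\le_0 v$ then forces $p_i(w)=q_1+q_2\le 1-p_i(u)$, contradicting $q_1 p_i(u)\neq0$. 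This contradiction gives $p_i(u)\le p_i(v)$ on $E$ and completes $(i)\Rightarrow(ii)$. I expect this doubling-with-control step to be the main obstacle, since the comparison theorem supplies equivalent pairs only abstractly and one must carefully track that the piece meeting $p_i(u)$ survives.
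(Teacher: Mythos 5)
Your proof is correct, and its global skeleton agrees with the paper's: split off by a maximal central projection the part where $1-p_i(v)$ is abelian (equivalently, where $v$ is complete, via Proposition~\ref{P:complete = abelian}), and on the complement derive a contradiction from $p_i(u)\not\le p_i(v)$ by manufacturing two orthogonal, equivalent projections below $1-p_i(v)$ whose sum still meets $p_i(u)$, so that Proposition~\ref{P:C2 projections}$(i)$ produces an antisymmetric tripotent in $M_0(v)\setminus M_0(u)$. Where you genuinely diverge is in how that pair is produced (and in the choice of the witnessing set: yours is the complement of the maximal abelian part, the paper's is the maximal part where $p_i(u)\le p_i(v)$; both are legitimate). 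The paper works with $r_0=(1-p_i(v))-(1-p_i(v))\wedge(1-p_i(u))$, every nonzero subprojection of which automatically fails to be dominated by $1-p_i(u)$, so the overlap with $p_i(u)$ is preserved for free; it then takes any nonzero abelian $r\le r_0$ (type~I structure), sets $s=(1-p_i(v))-r$, and applies \cite[Lemma V.1.7]{Tak} to the non-centrally-orthogonal pair $r,s$. You instead build the abelian projection concretely as $Q_\psi$, the projection onto $\widehat{X}_\psi$ with $\psi=(1-p_i(v))\f_0$, pair it against $(1-p_i(v))-Q_\psi$ via the comparison theorem \cite[Theorem V.1.8]{Tak}, and restore the overlap with $p_i(u)$ by hand, using that subprojections of an abelian projection are central cuts (the standard fact $QNQ=Z(N)Q$ for abelian $Q$) together with the cut to the central support of $Q_\psi$. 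I checked the points you left telegraphic and they do work: $Q_\psi\f_0=\psi$ follows simply because $Q_\psi\le 1-p_i(v)$ gives $Q_\psi p_i(v)=0$, hence $Q_\psi\f_0=Q_\psi(1-p_i(v))\f_0=\psi$; both branches of the comparison dichotomy yield a nonzero $q_1$ that is a central cut of $Q_\psi$ (in the branch $z'\ne 0$ one needs $z'Q_\psi\ne 0$, which is exactly what the central-support cut guarantees); and the hypothesis $u\le_0 v$ survives all central cuts by Proposition~\ref{P:preorders in direct sum}. The trade-off: the paper's lattice trick avoids any case analysis and any structure theory of abelian projections, whereas your route is more self-contained in that it never invokes the type-I fact that every nonzero projection of $A\overline{\otimes}B(H)$ dominates a nonzero abelian one, constructing the needed abelian projection directly from Observation~\ref{obs:invariance Xf} and Lemmata~\ref{L:abelian pr in AotimesB(H)} and~\ref{L:operators on Xf}, at the cost of the extra abelian-corner fact and some central-support bookkeeping.
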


\begin{proof}
The implication $(iii)\Rightarrow(ii)$ follows from Proposition~\ref{P:complete = abelian} and Proposition~\ref{P:H(W,alpha) abstract le2}.

The implication $(ii)\Rightarrow(i)$ is obvious (using Proposition~\ref{P:preorders in direct sum}).

$(i)\Rightarrow(iii)$ Let $M=A\overline{\otimes} B(H)$. Consider the family
$$\{z\in M\setsep z\mbox{ is a central projection and }zp_i(u)\le zp_i(v)\}.$$
This family is nonempty (it contains the zero projection) and is closed under taking suprema of nonempty subsets. So, it has a maximum, call it $z$. Then $zp_i(u)\le zp_i(v)$. By Lemma~\ref{L:commutant of AotimesB(H)} we know that $z=\chi_E\otimes I$ for a measurable set $E\subset L$.

If $E=L$ (i.e., $z=1$), the proof is completed. So, assume that $E\subsetneqq L$ ($z<1$) and that $(\chi_{L\setminus E}\otimes I)(1-p_i(v))=(1-z)(1-p_i(v))$ is not abelian. We may work in $(1-z)M=L^\infty(\mu|_{L\setminus E})\overline{\otimes}B(H)$, so without loss of generality assume $z=0$.

Consider the family
$$\{w\in M\setsep w\mbox{ is a central projection and }w(1-p_i(v))\mbox{ is abelian}\}.$$
This family is nonempty (it contains the zero projection) and is closed under taking suprema of nonempty subsets. So, it has a maximum, call it $w$. If $w=1$, the proof is complete. If $w<1$, we may work in $(1-w)M$, so, similarly as above we may assume without loss of generality that $w=0$, and thus, for every central projection $z$ in $M$, $z (1-p_i(v))$ is either zero or not abelian.

Since $p_i(u)\not\le p_i(v)$, we get $1-p_i(v)\not\le 1-p_i(u)$, so the projection
$$r_0=(1-p_i(v))-(1-p_i(v))\wedge(1-p_i(u))$$ 
is nonzero.
Fix a nonzero abelian projection $r\le r_0$. Since $1-p_i(v)$ is not abelian, the projection
$$s=1-p_i(v)-r$$
is nonzero.

 We claim that $s$ and $r$ are not centrally orthogonal. Let us prove it by contradiction. Assume that there is a central projection $z\in M$ with $r\le z$ and $s\le 1-z$. Since $r+s=1-p_i(v)$, we get $r=z(1-p_i(v))$. Since $r$ is non-zero and abelian, necessarily $z=0$, a contradiction. 

Now we can use \cite[Lemma V.1.7]{Tak} to get projections $r_1\le r$ and $s_1\le s$ with $r_1\sim s_1$. Then there is a tripotent $w\in M_a$ with $p_i(w)=r_1+s_1$. It follows that $w\in M_0(v)\setminus M_0(u)$.
\end{proof}

We continue by a characterization of finite, infinite and properly infinite tripotents in $A\overline{\otimes}B(H)_a$.

\begin{prop}\label{finite-finite}
Assume that $\dim H=\infty$. Let $u\in A\overline{\otimes}B(H)_a$ be a tripotent.
\begin{enumerate}[$(i)$]
    \item $u$ is finite if and only if $p_i(u)$ is a finite projection in $A\overline{\otimes}B(H)$;
     \item $u$ is properly infinite if and only if $p_i(u)$ is a properly infinite projection in $A\overline{\otimes}B(H)$.
\end{enumerate}
\end{prop}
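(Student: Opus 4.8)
The plan is to reduce the finiteness of the tripotent $u$ to a condition on the pointwise dimension (rank) function of the projection $p:=p_i(u)$ in $M=A\overline{\otimes}B(H)$, and then to read off both assertions from the dimension theory of the type~I algebra $M$. First I would rewrite finiteness of $u$ purely in terms of initial projections. If $v\in(M_a)_2(u)$ then $v\le_2 u$, so by Proposition~\ref{P:H(W,alpha) abstract le2} we have $p_i(v)\le p$, and $v$ is unitary in $(M_a)_2(u)$ exactly when $v\sim_2 u$, i.e. $p_i(v)=p$. On the other hand, Proposition~\ref{P:complete = abelian}$(c)$ applied with its ``$u$'' equal to $v$ and its ``$w$'' equal to $u$ shows that $v$ is complete in $(M_a)_2(u)$ precisely when $p-p_i(v)$ is abelian. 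Hence $u$ is \emph{infinite} if and only if there is a tripotent $v\in M_a$ with $p_i(v)\le p$, $p_i(v)\ne p$, and $p-p_i(v)$ abelian.

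Next I would translate the relevant classes of projections into their pointwise dimension functions $d_P(\omega)=\dim\ran(P(\omega))$. Lemma~\ref{L:abelian pr in AotimesB(H)} gives that $P$ is abelian iff $d_P\le 1$ $\mu$-a.e.; Lemma~\ref{L:finite in AotimesB(H)} gives that $P$ is finite iff $d_P<\infty$ $\mu$-a.e.; and Proposition~\ref{P:C2 projections}$(i)$ gives that a projection has the form $p_i(v)$ with $v\in M_a$ exactly when its value is a sum of two orthogonal equivalent projections, i.e. when $d_P$ is even-valued ($\infty=\infty+\infty$ counting as even) $\mu$-a.e.; here I use that equivalence of projections in the homogeneous type~I algebra $A\overline{\otimes}B(H)$ is governed by the function $d_P$, which is part of the dimension theory underlying Lemma~\ref{L:finite in AotimesB(H)}. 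With these dictionaries part $(i)$ is short. If $p$ is finite and $v$ is complete in $(M_a)_2(u)$, then $d_p<\infty$, both $d_p$ and $d_{p_i(v)}$ are even, and since $p_i(v)\le p$ we get $d_{p-p_i(v)}=d_p-d_{p_i(v)}$, which is even and $\le 1$, hence $0$; thus $p_i(v)=p$ and $v$ is unitary, so $u$ is finite. Conversely, if $p$ is infinite then $d_p=\infty$ on a set $E$ of positive measure; choosing (via Lemma~\ref{L:finite in AotimesB(H)}) a measurable rank-one subprojection $r\le p$ supported on $E$ and putting $q=p-r$, the function $d_q$ is still even ($\infty$ on $E$, unchanged off $E$), so $q=p_i(v)$ for some $v\in M_a$, while $p-q=r$ is abelian and nonzero; this $v$ is then complete but not unitary in $(M_a)_2(u)$, so $u$ is infinite.

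Part $(ii)$ follows from $(i)$ together with the description of the direct summands of $M_a$. Since $\dim H=\infty$, $M_a$ is a JBW$^*$-algebra by Proposition~\ref{P:C2 infinite dim}$(a)$, so every direct summand is a JBW$^*$-algebra and hence, by Proposition~\ref{P:H(W,alpha) direct summand}$(b)$, of the form $zM_a$ for a central projection $z=\chi_E\otimes I\in M$ (the central projections being described by Lemma~\ref{L:commutant of AotimesB(H)}); thus the canonical projection satisfies $P_I u=zu$. Now $zM=L^\infty(\mu|_E)\overline{\otimes}B(H)$ is again of the required form, $zu\in(zM)_a$ with $p_i(zu)=zp$, and part $(i)$ applied inside $zM$ yields that $zu$ is infinite iff $zp$ is infinite, while $zu\ne0\iff zp\ne0$. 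Comparing the definition of a properly infinite tripotent for $u$ in $M_a$ with that of a properly infinite projection for $p$ in $M$ then gives the stated equivalence.

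The step I expect to be the main obstacle is the ``only if'' direction of $(i)$: exhibiting the complete non-unitary tripotent $v$ when $p$ is infinite. This forces one to produce, in a measurable fashion, a projection $q\le p$ that differs from $p$ by a nonzero abelian projection yet retains an even dimension function, and then to realize $q$ as $p_i(v)$; both rely on the equivalence/dimension theory of projections in $A\overline{\otimes}B(H)$ encapsulated in Proposition~\ref{P:C2 projections}$(i)$ and Lemma~\ref{L:finite in AotimesB(H)}. The finite direction, by contrast, is essentially a parity argument on the dimension function, and part $(ii)$ is a formal consequence once the direct summands are identified.
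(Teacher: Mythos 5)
Your architecture is sound and much of it coincides with the paper's proof: the parity argument for the ``if'' direction of $(i)$ (the dimension functions of $p_i(u)$ and $p_i(v)$ are even because both projections split into two orthogonal equivalent halves by Proposition~\ref{P:C2 projections}$(i)$, and their difference is abelian, hence has dimension function $\le 1$, hence $0$) is exactly the paper's argument. Your part $(ii)$ --- identifying the direct summands of $M_a$ as $(\chi_E\otimes I)M_a$ and deducing both halves of $(ii)$ formally from $(i)$ applied to central cuts --- is also correct, and is a mild reorganization of the paper, which instead proves the ``if'' half of $(ii)$ first by a direct construction and then uses it to obtain the ``only if'' half of $(i)$.

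The gap is in your ``only if'' direction of $(i)$, at the step ``$d_q$ is still even, so $q=p_i(v)$ for some $v\in M_a$''. What you need there is the implication: a projection whose rank function is a.e.\ even (with $\infty$ allowed) splits as the sum of two orthogonal equivalent projections. This is \emph{not} contained in Proposition~\ref{P:C2 projections}$(i)$ or Lemma~\ref{L:finite in AotimesB(H)}, which you cite as encapsulating it: the former characterizes initial projections of antisymmetric tripotents by the splitting property (not by parity), and the latter produces a dimension function only for \emph{finite} projections, giving (together with equivalence-invariance) the direction ``splits $\Rightarrow$ even''; the converse ``even $\Rightarrow$ splits'' is the nontrivial direction and is established nowhere in the paper (it is true, but requires either homogeneous type~I structure theory or a halving argument). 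The paper sidesteps it: for infinite $p_i(u)$ it takes a \emph{properly infinite} subprojection $q\le p_i(u)$ (\cite[Proposition 6.3.7]{KR2}), halves it (\cite[Lemma 6.3.3]{KR2} or \cite[Proposition V.1.36]{Tak}) to realize it as $p_i(v)$ via Proposition~\ref{P:C2 projections}$(i)$, and concludes with the already proved ``if'' part of $(ii)$ and Lemma~\ref{L:finite tripotents}$(c)$. Your construction can be repaired in the same spirit: writing $z=\chi_E\otimes I$, off $E$ one has $q=p$, which splits because $p=p_i(u)$ does; on $E$ the projection $zp$ is properly infinite by Lemma~\ref{L:properly infinite in AotimesB(H)}, subtracting the abelian $r$ leaves it properly infinite, and halving then splits $zp-r$; summing the two splittings splits $q$. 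As written, however, the decisive step of your argument rests on an unproved assertion.
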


\begin{proof}
Assume first that $p_i(u)$ is a properly infinite projection. Fix $\f\in\ran p_i(u)$. Let $q$ denote the projection onto $\widehat{X}_{\f}$. Then $q\le p_i(u)$ (by Observation~\ref{obs:invariance Xf}) and $q$ is abelian by Lemma~\ref{L:abelian pr in AotimesB(H)}. Hence $p_i(u)-q$ is also properly infinite. By \cite[Proposition V.1.36]{Tak} or \cite[Lemma 6.3.3]{KR2} there are two orthogonal equivalent projections $r_1,r_2$ with $p_i(u)-q=r_1+r_2$. Hence, by Proposition~\ref{P:C2 projections} there is a tripotent $v\in A\overline{\otimes}B(H)_a$ with $p_i(v)=p_i(u)-q$. Then $v\le_2 u$, $v$ is complete and not unitary in  $\left( A\overline{\otimes}B(H)_a\right)_2(u)$ (by Proposition~\ref{P:complete = abelian}). Thus $u$ is infinite.

Further, let $E\subset L$ be measurable such that $(\chi_E\otimes I)u\ne 0$. Then $p_i((\chi_E\otimes I)u)=(\chi_E\otimes I)p_i(u)$ which is a properly infinite projection. The previous paragraph shows that in this case $(\chi_E\otimes I)u$ is infinite.

Thus, $u$ is properly infinite and the proof of the `if part' of $(ii)$ is completed.

\smallskip

Next assume that $p_i(u)$ is infinite. By \cite[Proposition 6.3.7]{KR2} there is a properly infinite projection $q\le p_i(u)$.  By \cite[Lemma 6.3.3]{KR2} there are two orthogonal equivalent projections $q_1,q_2$ with $q=q_1+q_2$. Hence, by Proposition~\ref{P:C2 projections} there is a tripotent $v\in A\overline{\otimes}B(H)_a$ with $p_i(v)=q$. By the already proved part we know that $v$ is properly infinite. Since $v\le_2 u$, $u$ is infinite by Lemma~\ref{L:finite tripotents}. This completes the proof of the `only if part' of assertion $(i)$.

\smallskip

Assume now that $p_i(u)$ is finite. Let $v\in M_2(u)$ be complete. Then $p_i(v)\le p_i(u)$, hence $p_i(v)$ is also finite and, moreover, $p_i(u)-p_i(v)$ is abelian (cf. Proposition~\ref{P:complete = abelian}).

By Proposition~\ref{P:C2 projections} we have $p_i(u)=q_1+q_2$ for a pair of orthogonal equivalent projections $q_1$ and $q_2$. Apply Lemma~\ref{L:finite in AotimesB(H)}$(i)$ to $q_1$ and $q_2$.  Since $q_1\sim q_2$, their dimension functions from condition $(c)$ are equal. Since the dimension function of $p_i(u)$ is the sum of the dimension functions of $q_1$ and $q_2$, it attains even values. By the same argument the dimension function of $p_i(v)$ attains even values.
Since $p_i(u)-p_i(v)$ is abelian, it follows from Lemma~\ref{L:abelian pr in AotimesB(H)} that its dimension function (which equals the difference of dimension functions of $p_i(u)$ and $p_i(v)$) is bounded by $1$. Since it must attain even values, 
it is necessarily zero. But this means that $p_i(v)=p_i(u)$, i.e., $v$ is unitary in $M_2(u)$.
So, $u$ is finite. This completes the proof of the `if part' of $(i)$.

\smallskip

Finally, assume that $u$ is properly infinite. Let $z$ be a central projection such that $zp_i(u)\ne 0$. Since $zp_i(u)=p_i(zu)$, we have $zu\ne0$. Thus $zu$ is infinite, so by the already proved assertion $(i)$ we deduce that $zp_i(u)$ is infinite. Hence $p_i(u)$ is properly infinite and the proof is complete. 
\end{proof}

The next corollary follows from Proposition~\ref{finite-finite} using \cite[Proposition 6.3.7]{KR2} and Lemma  \ref{L:commutant of AotimesB(H)}.

\begin{cor}
Let $u\in A\overline{\otimes} B(H)_a$ be a tripotent. Then there is a measurable set $E\subset L$ such that $(\chi_E\otimes I)u$ is finite and $(\chi_{L\setminus E}\otimes I)u$ is either zero or properly infinite. 
\end{cor}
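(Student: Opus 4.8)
The plan is to reduce everything to the behaviour of the initial projection $p_i(u)$ in the von Neumann algebra $A\overline{\otimes}B(H)$, using Proposition~\ref{finite-finite} to translate finiteness and proper infiniteness of the tripotent $u$ into the corresponding properties of the projection $p_i(u)$.

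First I would look at $p_i(u)$. If it is a finite projection, I may simply take $E=L$: then $(\chi_{L\setminus E}\otimes I)u=0$ and $(\chi_E\otimes I)u=u$ is finite by Proposition~\ref{finite-finite}$(i)$. Otherwise $p_i(u)$ is an infinite projection, and I would apply Lemma~\ref{L:finite projections basic facts}$(b)$ (that is, \cite[Proposition 6.3.7]{KR2}) to obtain a central projection $z\in A\overline{\otimes}B(H)$ such that $zp_i(u)$ is properly infinite and $(1-z)p_i(u)$ is finite. By Lemma~\ref{L:commutant of AotimesB(H)} every central projection of $A\overline{\otimes}B(H)$ is of the form $\chi_F\otimes I$ for some measurable $F\subset L$; writing $z=\chi_F\otimes I$ and setting $E=L\setminus F$, we get that $(\chi_E\otimes I)p_i(u)=(1-z)p_i(u)$ is finite while $(\chi_{L\setminus E}\otimes I)p_i(u)=zp_i(u)$ is properly infinite.

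The remaining step is to carry these properties back from the projections to the tripotent. For a central projection $z$ one has $p_i(zu)=(zu)^*(zu)=u^*zu=z\,p_i(u)$, by centrality of $z$ (this identity was already used in the proof of Proposition~\ref{finite-finite}). Hence $p_i((\chi_E\otimes I)u)=(\chi_E\otimes I)p_i(u)$ is finite, so $(\chi_E\otimes I)u$ is finite by Proposition~\ref{finite-finite}$(i)$. Likewise, if $(\chi_{L\setminus E}\otimes I)p_i(u)=0$ then $(\chi_{L\setminus E}\otimes I)u$ has zero initial projection and is therefore zero, whereas if $(\chi_{L\setminus E}\otimes I)p_i(u)$ is properly infinite then $(\chi_{L\setminus E}\otimes I)u$ is properly infinite by Proposition~\ref{finite-finite}$(ii)$. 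There is no genuine obstacle in this argument; the only point needing a little care is the identity $p_i(zu)=z\,p_i(u)$ for central $z$, together with keeping track of the possibly-vanishing second summand, which is precisely why the assertion comes out as an immediate corollary rather than requiring new work.
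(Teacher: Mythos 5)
Your proof is correct and takes exactly the route the paper intends: the paper justifies this corollary in a single line as following from Proposition~\ref{finite-finite}, \cite[Proposition 6.3.7]{KR2} (i.e.\ Lemma~\ref{L:finite projections basic facts}$(b)$) and Lemma~\ref{L:commutant of AotimesB(H)}, which is precisely your reduction to $p_i(u)$, the central splitting of an infinite projection, and the identification of central projections as $\chi_F\otimes I$. The only step you make explicit beyond the paper is the routine identity $p_i(zu)=zp_i(u)$ for central $z$, which is indeed the glue needed to transfer the properties back to the tripotent.
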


In the following proposition we establish some further properties of properly infinite tripotents. Assertion $(a)$ is an analogue of Proposition~\ref{P:vN properly infinite trip}$(iv)$ and Proposition~\ref{P:pV decomp}$(ii)$. Assertion $(b)$ says that the analogue of Proposition~\ref{P:vN halving} and Proposition~\ref{P:pV halving} fails in this case. Recall that $M$ denotes the von Neumann algebra $A\overline{\otimes}B(H)$.

\begin{prop}\label{P:C2 properly infinite}
Let  $u\in A\overline{\otimes} B(H)_a$ be a properly infinite tripotent. 
\begin{enumerate}[$(a)$]
    \item Any complete tripotent in $(M_a)_2(u)$ is again properly infinite.
    \item Assume that $v\in (M_a)_2(u)$ is complete and $w\in (M_a)_2(u)\cap (M_a)_1(v)$. Then $w$ is abelian (and hence finite).
\end{enumerate}
\end{prop}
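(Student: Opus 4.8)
The plan is to reduce both assertions to statements about the initial projections in the von Neumann algebra $M=A\overline{\otimes}B(H)$, exploiting Proposition~\ref{finite-finite} and the description of complete tripotents in Proposition~\ref{P:complete = abelian}. Throughout, $u$ being properly infinite forces $\dim H=\infty$, so Proposition~\ref{finite-finite} is available.

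For assertion $(a)$, let $v\in(M_a)_2(u)$ be complete. Since $v\le_2 u$, Proposition~\ref{P:complete = abelian}$(c)$ gives that $p_i(u)-p_i(v)$ is abelian, hence finite. As $u$ is properly infinite, $p_i(u)$ is a properly infinite projection in $M$ by Proposition~\ref{finite-finite}$(ii)$, and by the same proposition it suffices to show that $p_i(v)$ is properly infinite. So fix a central projection $z\in M$ with $zp_i(v)\ne0$; then $zp_i(u)\ge zp_i(v)\ne0$, so $zp_i(u)$ is infinite. Writing $zp_i(u)=zp_i(v)+z(p_i(u)-p_i(v))$, the second summand is abelian, hence finite; were $zp_i(v)$ finite, $zp_i(u)$ would be an orthogonal sum of two finite projections and therefore finite (see, e.g., \cite{KR2}), a contradiction. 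Thus $zp_i(v)$ is infinite, $p_i(v)$ is properly infinite, and $v$ is properly infinite.

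For assertion $(b)$, write $p=p_i(u)$ and $e=p_i(v)$; by Proposition~\ref{P:complete = abelian}$(c)$ the projection $p-e$ is abelian. Since $w\in(M_a)_2(u)$ we have $w=p^t w p$, and since $w\in(M_a)_1(v)$ the Peirce calculus for $v$ (with $p_f(v)=e^t$ by Observation~\ref{obs:pf=alpha(pi)}) gives $e^t w e=0$ and $(1-e^t)w(1-e)=0$. Decomposing $p=e+(p-e)$ and $p^t=e^t+(p^t-e^t)$ and discarding these two vanishing blocks, I would obtain $w=a-a^t$ where $a=(p^t-e^t)we$, so that $p_i(w)=q_1+q_2$ with $q_1=p_i(a)\le e$ and $q_2=p_i(a^t)=p_f(a)^t\le p-e$ orthogonal. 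Because $p-e$ is abelian, $q_2$ is abelian, and since $q_1\sim p_f(a)=q_2^t\sim q_2$ (the last equivalence by Lemma~\ref{L:p sim alpha(p)}), $q_1$ is abelian as well and $q_1\sim q_2$. Hence $p_i(w)$ is the sum of two orthogonal equivalent abelian projections.

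It remains to deduce that $w$ is abelian. Realizing $M$ as the decomposable operators on $L^2(\mu,H)$, the abelian projections $q_1,q_2$ have fibers of rank at most one (Lemma~\ref{L:abelian pr in AotimesB(H)}), so $p_i(w)$ and $p_f(w)=p_i(w)^t$ have fibers of rank at most two. Consequently, for a.a.\ $\omega$ the fiber $(M_a)_2(w)(\omega)$ is the space of antisymmetric operators between two spaces of dimension at most two, which is at most one-dimensional and thus equals $\ce\,w(\omega)$. This forces $(M_a)_2(w)=\{gw\setsep g\in L^\infty(\mu)\}$, a commutative algebra with unit $w$; hence $(M_a)_2(w)$ is associative and $w$ is abelian, and therefore finite by Lemma~\ref{L:finite tripotents}$(e)$. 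The main obstacle is this last step: turning the pointwise ``rank $\le2$'' picture into the rigorous identity $(M_a)_2(w)=\{gw\}$ requires the decomposability description of $A\overline{\otimes}B(H)$ furnished by Lemma~\ref{L:AotimesB(H) charact}, together with the measurable selection of the scalar field $g(\omega)=\tfrac12\ip{x(\omega)}{w(\omega)}$ and the bound $\|g\|_\infty\le\|x\|$; the bookkeeping of the Peirce blocks in the previous paragraph must also be carried out carefully to confirm that $a$ is a genuine partial isometry with the stated initial and final projections.
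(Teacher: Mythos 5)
Your assertion $(a)$ is correct and follows the paper's route exactly: $p_i(u)$ is properly infinite by Proposition~\ref{finite-finite}, $p_i(u)-p_i(v)$ is abelian by Proposition~\ref{P:complete = abelian}$(c)$, and your central-projection argument (an orthogonal sum of two finite projections is finite) correctly fills in a step the paper leaves implicit. The first half of your argument for $(b)$ --- the decomposition $w=a-a^t$ with $a=(p^t-e^t)we$, and the conclusion that $p_i(w)=q_1+q_2$ with $q_1,q_2$ orthogonal, equivalent, abelian --- is likewise exactly the paper's decomposition $w=w_1+w_2$, $w_1^t=-w_2$ (your $a$ is the paper's $w_2$), and the bookkeeping you defer is routine.

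The genuine gap is in your last step. You ``realize $M$ as the decomposable operators on $L^2(\mu,H)$'' and argue fiberwise that the fiber of $(M_a)_2(w)$ is at most one-dimensional, but in this setting operators in $A\overline{\otimes}B(H)$ have no fibers: $H=\ell^2(\Gamma)$ may be non-separable and $\mu$ is a Radon measure on a hyperstonean compactum, so direct-integral theory is unavailable, and the paper itself stresses that $A\overline{\otimes}C$ is \emph{not} $L^\infty(\mu,C)$ once $C$ fails to be reflexive (Lemma~\ref{L:tensor=Bochner} covers only the reflexive case; $B(H)$ with $\dim H=\infty$ is not reflexive). Your proposed repair, the scalar field $g(\omega)=\tfrac12\ip{x(\omega)}{w(\omega)}$, presupposes the very fibers $x(\omega)$, $w(\omega)$ that are undefined, so Lemma~\ref{L:AotimesB(H) charact} alone does not rescue the argument in this form. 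What does work --- and is precisely the content of the paper's proof --- is to exploit that elements of $L^2(\mu,H)$, unlike operators, do have fibers: writing $\ran(p_i(w_j))=\widehat{X}_{\f_j}$, any $T\in(M_a)_2(w)$ is determined by the two vector-valued functions $T(\f_1),T(\f_2)$ via the module property of Lemma~\ref{L:AotimesB(H) charact}; localizing the antisymmetry relation of Lemma~\ref{L:type2 perp} by characteristic functions $\chi_E$ gives $[T(\f_1),\overline{\f_1}]=0$, hence $T(\f_1)=h_1\overline{\f_2}$ and $T(\f_2)=h_2\overline{\f_1}$ with $h_1,h_2\in L^\infty(\mu)$ (by boundedness of $T$); finally a second antisymmetry computation combined with the normalization $\norm{\f_1(\omega)}=\norm{\f_2(\omega)}$ $\mu$-a.e.\ (arranged by choosing $\overline{\f_2}=w_1\f_1$, a choice your sketch never makes) yields $h_2=-h_1$. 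This identifies $(M_a)_2(w)$ with $L^\infty\bigl(\mu|_{\{\omega\setsep\f_1(\omega)\ne0\}}\bigr)$, which is your claimed identity $(M_a)_2(w)=\{gw\setsep g\in L^\infty\}$ made rigorous; without this derivation the fiberwise picture remains a heuristic rather than a proof.
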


\begin{proof}
$(a)$ Since $u$ is properly infinite, $p_i(u)$ is also properly infinite by Proposition~\ref{finite-finite}.
Assume that $v\in (M_a)_2(u)$ is complete. 
Then $p_i(v)\le p_i(u)$ and $p_i(u)-p_i(v)$ is abelian (by Proposition~\ref{P:complete = abelian}$(c)$). Thus $p_i(v)$ is properly infinite and so is $v$ (by Proposition~\ref{finite-finite}).

$(b)$ We have $w=w_1+w_2$ where
$$w_1=p_f(v)w(p_i(u)-p_i(v))\mbox{ and }w_2=(p_f(u)-p_f(v))wp_i(v).$$
Note that $w_1^t=-w_2$ and $w_1,w_2$ are orthogonal tripotents in $A\overline{\otimes}B(H)$. Moreover, $p_i(w_1)$ is abelian since $p_i(w_1)\le p_i(u)-p_i(v)$ and $p_i(u)-p_i(v)$ is abelian. Further,
$$p_i(w_2)=p_i(-w_1^t)=p_f(w_1)^t\sim p_f(w_1)\sim p_i(w_1),$$  hence
$p_i(w_2)$ is also abelian. So there are $\f_1,\f_2\in L^2(\mu,H)$ such that  $\ran(p_i(w_j))=\widehat{X}_{\f_j}$ for $j=1,2$ (cf. Lemma \ref{L:abelian pr in AotimesB(H)}). Since $w_1\perp w_2$, we have $[\f_1,\f_2]=0$. The equality $w_1^t=-w_2$ shows that $\ran(p_f(w_1))=\widehat{X}_{\overline{\f_2}}$ 
and $\ran(p_f(w_2))=\widehat{X}_{\overline{\f_1}}$ (see Observation \ref{obs:range pt}). Hence we may and shall assume that $\overline{\f_2}=w_1\f_1$, in particular
$\norm{\f_1(\omega)}=\norm{\f_2(\omega)}$ $\mu$-a.e.

It follows that $(M_a)_2(w)$ consists of those operators from $A\overline{\otimes}B(H)_a$ which map $\widehat{X}_{\f_1}+\widehat{X}_{\f_2}$ into 
$\widehat{X}_{\overline{\f_1}}+\widehat{X}_{\overline{\f_2}}$ and are zero on the orthogonal complement.
 Let $T$ be any such operator. By Lemmata \ref{L:AotimesB(H) charact} and \ref{L:type2 perp} we get that for any measurable set $E\subset L$ we have 
$$\ip{\chi_E T(\f_1)}{\chi_E\overline{\f_1}}=
\ip{T(\chi_E \f_1)}{\overline{\chi_E\f_1}}=0,$$
thus $[T(\f_1),\overline{\f_1}]=0$. It follows that there is a measurable function $h_1:L\to\ce$ such that $T(\f_1)=h_1\overline{\f_2}$. Hence, again by Lemma \ref{L:AotimesB(H) charact}, we get  
$$T(g\f_1)=h_1g\overline{\f_2}, \qquad g\f_1\in \widehat{X}_{\f_1}.$$
Since $T$ is a bounded operator, necessarily $h_1\in L^\infty(\mu)$.
Similarly we deduce that there is a  function $h_2\in L^\infty(\mu)$ such that
$$T(g\f_2)=h_2g\overline{\f_1}, \qquad g\f_2\in \widehat{X}_{\f_2}.$$
Finally, given any measurable set $E\subset L$ we get
$$\begin{aligned}
\int_E \norm{\f_2(\omega)}^2h_1\di\mu&=\ip{h_1\chi_E\overline{\f_2}}{\chi_E\overline{\f_2}}
=\ip{T(\chi_E\f_1)}{\chi_E\overline{\f_2}}
\\&=-\ip{T^t(\chi_E\f_1)}{\chi_E\overline{\f_2}}
=-\ip{\overline{T^*(\chi_E\overline{\f_1})}}{\chi_E\overline{\f_2}}
\\&=-\ip{\chi_E\f_2}{T^*(\chi_E\overline{\f_1})}
=-\ip{T(\chi_E\f_2)}{\chi_E\overline{\f_1}}
\\&=-\ip{h_2\chi_E\overline{\f_1}}{\chi_E\overline{\f_1}}=-\int_E \norm{\f_1(\omega)}^2h_2\di\mu
\end{aligned}
$$
Since  $\norm{\f_1(\omega)}=\norm{\f_2(\omega)}$ $\mu$-a.e., we deduce that $h_2=-h_1$ $\mu$-a.e. on the set $\{\omega\in L\setsep \f_1(\omega)\ne0\}$.

It now easily follows that $(M_a)_2(w)$ is canonically isomorphic to the abelian von Neumann algebra $L^\infty(\mu|_{\{\omega\in L\setsep \f_1(\omega)\ne0\}})$, hence $w$ is abelian.
\end{proof}

\section{Spin factors and exceptional JBW$^*$-triples}\label{sec:6 - spin and exceptional}

In this section we will deal with the summands of the form $A\overline{\otimes}C$, where $A$ is an abelian von Neumann algebra and $C$ is a Cartan factor of type $4$, $5$ or $6$. This section is divided into four subsections. Subsection~\ref{subsec:spin} is devoted to Cartan factors of type $4$, usually called spin factors. In Subsection~\ref{sec:CD} we explain the relationship of certain finite-dimensional spin factors to the Cayley-Dickson doubling process, in particular to the complex octonion algebra. Subsections~\ref{subsec:C6} and~\ref{subsec:C5} are then devoted to Cartan factors of type $6$ and $5$ which are defined using the octonion algebra.

\subsection{Spin factors}\label{subsec:spin}

This subsection is devoted to the summands of the form $A\overline{\otimes}C$ where $C$ is a type $4$ Cartan factor, i.e. a spin factor. Let us start by focusing on the structure of spin factors itself.

Throughout this subsection $C$ will denote the Hilbert space
 $\ell^2(\Gamma)$ for a set $\Gamma$, equipped with the canonical (coordinatewise) conjugation, and with the triple product and the norm  defined by
$$\begin{aligned}
\J xyz&=\ip xy z+ \ip zy x - \ip{x}{\overline{z}}\overline{y}\\
\norm{x}^2&=\ip xx +\sqrt{\ip xx^2-\abs{\ip{x}{\overline{x}}}^2}.
\end{aligned}
$$
Then $C$ is indeed a JB$^*$-triple.
(This follows, for example, from the explanation given in \cite[pp. 16-17 and pp. 19-20]{harris1974bounded}.)

There are two canonical norms on $C$ -- the hilbertian norm $\norm{x}_2=\sqrt{\ip xx}$ and the above-defined norm $\norm{\cdot}$.
These two norms are equivalent as clearly
$$\norm{x}_2\le\norm{x}\le\sqrt2\norm{x}_2\mbox{ for }x\in C.$$
It follows, in particular, that $C$ is a JBW$^*$-triple (being isomorphic to a Hilbert space and hence reflexive).

Moreover, we have on $C$ two notions of orthogonality -- one defined by the inner product and the second one coming from the structure of a JB$^*$-triple. They do not coincide, so to distinguish them we will use $\perp_2$ to denote the (Euclidean) orthogonality with respect to the inner product (i.e., $x\perp_2 y$ if and only if $\ip xy=0$) and $\perp$ to denote the triple orthogonality (i.e., $x\perp y$ if and only if $L(x,y)=0$).

 For a non-zero $x\in C$ the equality $\norm{x}=\norm{x}_2$ holds if and only if $\ip xx=\abs{\ip{x}{\overline{x}}}$, which in a Hilbert space is equivalent to say that $\overline{x}$ is a scalar mutliple of $x$. Moreover, since we are working with the canonical coordinatewise conjugation, this takes place if and only if $x$ is a scalar multiple of a vector with real-valued coordinates.  Indeed the `if part' is obvious, to show the `only if' part assume that $\overline{x}=cx$ and $x=(\alpha_\gamma r_\gamma)_{\gamma\in\Gamma}$, where $r_\gamma\ge0$ and $\alpha_\gamma$ is a complex unit for each $\gamma\in\Gamma$. Then $\overline{x}=(\overline{\alpha_\gamma}r_\gamma)_{\gamma\in\Gamma}$.
If $\overline{x}=cx$ for some $c\in\mathbb{T}$, then $\overline{\alpha_\gamma}r_\gamma=c\alpha_\gamma r_\gamma$ for $\gamma\in\Gamma$. Hence $\alpha_\gamma^2=\frac1c$ whenever $r_\gamma>0$. It follows that $\alpha_\gamma=\pm\frac1{\sqrt{c}}$ (whenever $r_\gamma>0$).  Hence, $x = \frac1{\sqrt{c}} ( \sqrt{c} \ \alpha_{\gamma} r_{\gamma})$.

In particular, if $\dim C=1$, then $\norm{\cdot}=\norm{\cdot}_2$.

On the other hand, $\norm{x}=\sqrt2\norm{x}_2$ if and only if $x\perp_2 \overline{x}$. This may happen whenever $\dim C\ge 2$ (an example of such a vector in the two-dimensional space is $(1,i)$).

 It is usually assumed that $\dim C\ge 3$. The reason is that if $\dim C=1$, then $C$ is just the complex field, hence a Hilbert space; and if $\dim C=2$, then $C$ is not a factor, it is actually triple-isomorphic to the abelian C$^*$-algebra $\ce\oplus^{\infty} \ce$ (see, for example \cite{loos1977bounded,kaup1997real} or the recent reference \cite[\S 3]{KP2019}, see also Lemma \ref{l 6.7 two three and four dimensional spins}$(iv)$ below).
 The next results are part of the folklore in JB$^*$-triple theory.

\begin{lemma}\label{L:tripotents in spin factor}
Nonzero tripotents in $C$ are exactly the elements of one of the following forms.
\begin{enumerate}[$(a)$]
    \item $u=\alpha z$, where $z\in C$ is an element with real-valued coordinates such that $\norm{z}_2=1$ and $\alpha$ is a complex unit. In this case $u$ is unitary.
    \item $u\perp_2 \overline{u}$ and $\norm{u}_2=\frac1{\sqrt{2}}$. In this case $u$ is minimal, 
    $C_2(u)=\span \{u\}$, $C_0(u)=\span\{\overline{u}\},$  $C_1(u)=\{u,\overline u\}^{\perp_2}$ and the Peirce projections are the respective orthogonal projections.
\end{enumerate}
\end{lemma}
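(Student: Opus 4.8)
The plan is to turn the tripotent equation $\J uuu=u$ into a single linear relation between $u$ and its conjugate $\overline u$, and then split into cases according to whether $u$ and $\overline u$ are linearly dependent. Directly from the definition of the triple product,
\[
\J uuu=\ip uu\,u+\ip uu\,u-\ip u{\overline u}\,\overline u=2\ip uu\,u-\ip u{\overline u}\,\overline u,
\]
so, abbreviating $a=\ip uu=\norm u_2^2\ge 0$ and $b=\ip u{\overline u}\in\ce$, the element $u$ is a tripotent precisely when
\[
(2a-1)\,u=b\,\overline u.\qquad(\ast)
\]
If $b=0$, then since $u\ne 0$ relation $(\ast)$ forces $a=\tfrac12$, i.e.\ $\norm u_2=\tfrac1{\sqrt2}$ together with $\ip u{\overline u}=0$, that is $u\perp_2\overline u$; this is exactly form $(b)$. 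If $b\ne 0$, then $\overline u=\tfrac{2a-1}{b}\,u$, so $\overline u$ is a scalar multiple of $u$; writing $\overline u=cu$ and conjugating gives $|c|=1$, while substituting $\overline u=cu$ back into $a$ and $b$ (using $b=\overline c\,a$, hence $bc=a$) turns $(\ast)$ into $2a-1=a$, whence $a=1$. By the remarks preceding the statement, the condition ``$\overline u=cu$ with $|c|=1$'' is equivalent to $u$ being a scalar multiple of a real-coordinate vector, and normalizing yields $u=\alpha z$ with $z$ real, $\norm z_2=1$ and $\alpha$ a complex unit; this is form $(a)$. The converse inclusions are a one-line substitution into $(\ast)$.

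It then remains to establish the extra assertions in each case. For form $(a)$ I would prove that $u$ is unitary by showing $L(u,u)=\mathrm{id}$: with $u=\alpha z$, $z$ real, $\norm z_2=1$, one has $a=1$ and, for every $x\in C$,
\[
\J uux=x+\ip xu\,u-\ip u{\overline x}\,\overline u,
\]
and the last two terms cancel because $\overline u=\overline\alpha z$ gives $\ip xu\,u=(\textstyle\sum_\gamma x_\gamma z_\gamma)\,z=\ip u{\overline x}\,\overline u$. Hence $C_2(u)=C$, i.e.\ $u$ is unitary.

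For form $(b)$ I would exhibit the Peirce decomposition explicitly. Here $a=\tfrac12$, $b=0$, so
\[
L(u,u)x=\tfrac12 x+\ip xu\,u-\ip u{\overline x}\,\overline u.
\]
Testing $x=u$ gives $L(u,u)u=u$; testing $x=\overline u$ gives $L(u,u)\overline u=0$ (using $\ip{\overline u}u=\overline b=0$ and $\ip u{\overline{\overline u}}=\ip uu=\tfrac12$); and for $x\in\{u,\overline u\}^{\perp_2}$ both inner-product terms vanish, since $\ip u{\overline x}=\ip x{\overline u}=0$, so $L(u,u)x=\tfrac12 x$. Because $u\perp_2\overline u$, the three subspaces $\span\{u\}$, $\span\{\overline u\}$ and $\{u,\overline u\}^{\perp_2}$ are mutually $\perp_2$-orthogonal and add up to $C$; as they sit inside the eigenspaces $C_2(u)$, $C_0(u)$ and $C_1(u)$, equality holds throughout. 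In particular $\dim C_2(u)=1$, so $u$ is minimal, and since the Peirce decomposition is thus an orthogonal decomposition the Peirce projections are the corresponding orthogonal projections.

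The computations are routine; the only delicate point is the careful bookkeeping of the conjugation-twisted inner products $\ip u{\overline x}$ and $\ip x{\overline u}$ (which coincide for the coordinatewise conjugation) and the correct use of the equivalence, recalled before the statement, between $\overline u=cu$ with $|c|=1$ and $u$ being a complex multiple of a real vector. No genuine obstacle arises beyond these conventions.
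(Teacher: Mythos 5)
Your proof is correct and takes essentially the same route as the paper's (the paper only sketches it: compute $\J uuu$ from the spin triple product, note that either $\ip{u}{\overline u}=0$ or $\overline u$ is a scalar multiple of $u$, and declare the identification of the two cases ``straightforward''). You have simply filled in the details the paper omits — the normalization $a=1$ versus $a=\tfrac12$, the reduction of $\overline u=cu$, $|c|=1$ to real-coordinate vectors, and the explicit eigenspace computation for $L(u,u)$ giving the Peirce decomposition — all of which check out.
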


\begin{proof} The statement of the lemma is essentially well-known and easy to see (see, for example,  \cite[\S 3]{KP2019}).
Let us just briefly indicate the argument.
Assume $u\in C$ is a nonzero tripotent. 
Then
$$u=\J uuu =2\ip uu u - \ip{u}{\overline{u}}\overline{u},$$
hence either $\overline{u}$ is a scalar multiple of $u$ or $\ip{u}{\overline{u}}=0$.
It is straightforward to show that these two cases correspond to the cases $(a)$ and $(b)$.
\end{proof} 
 
 Now we get easily the following result.
 
\begin{prop}\label{P:spin is finite}
Any spin factor is (triple-isomorphic to) a finite JBW$^*$-algebra. 
In particular, the relations $\le_0$ and $\le_2$ coincide.
\end{prop}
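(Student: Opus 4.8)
The plan is to prove that any spin factor $C=\ell^2(\Gamma)$ is triple-isomorphic to a finite JBW$^*$-algebra by verifying condition $(ii)$ of Proposition~\ref{P:le0=le2}: every complete tripotent in $C$ is unitary. Once this is established, Proposition~\ref{P:le0=le2} immediately yields that $C$ is (triple-isomorphic to) a finite JBW$^*$-algebra and that the relations $\le_0$ and $\le_2$ coincide. The key input is the explicit classification of nonzero tripotents in $C$ provided by Lemma~\ref{L:tripotents in spin factor}, which splits them into two types: unitary tripotents of the form $\alpha z$ (case $(a)$) and minimal tripotents $u$ with $u\perp_2\overline u$ and $\norm{u}_2=\frac1{\sqrt2}$ (case $(b)$).

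First I would observe that a complete tripotent $u\in C$ must be of type $(a)$, hence unitary. Indeed, suppose for contradiction that a complete tripotent $u$ is of type $(b)$. Then by Lemma~\ref{L:tripotents in spin factor}$(b)$ we have $C_0(u)=\span\{\overline u\}$, which is nonzero. But completeness of $u$ means precisely that $C_0(u)=\{0\}$, a contradiction. Therefore every complete tripotent is of type $(a)$ and hence unitary by the same lemma. This verifies condition $(ii)$ of Proposition~\ref{P:le0=le2}.

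The only subtlety to address is the degenerate low-dimensional cases, since the discussion preceding Lemma~\ref{L:tripotents in spin factor} notes that one usually assumes $\dim C\ge 3$. If $\dim C=1$, then $C=\ce$ is a one-dimensional Hilbert space; here the only tripotents are the complex units, all of which are unitary, so finiteness is trivial. If $\dim C=2$, then $C$ is triple-isomorphic to $\ce\oplus^\infty\ce$, which is a finite abelian JBW$^*$-algebra (its tripotents are finite as abelian tripotents by Lemma~\ref{L:finite tripotents}$(e)$, and coordinatewise every complete tripotent is unitary). In the generic case $\dim C\ge 3$ the argument of the previous paragraph applies directly using Lemma~\ref{L:tripotents in spin factor}.

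I do not expect any serious obstacle here: the entire content is packaged into the tripotent classification of Lemma~\ref{L:tripotents in spin factor} together with the criterion in Proposition~\ref{P:le0=le2}. The only point requiring a little care is making sure the low-dimensional cases are not overlooked, but these reduce to elementary observations about $\ce$ and $\ce\oplus^\infty\ce$. The main conceptual step --- recognizing that a type $(b)$ minimal tripotent always has a nonzero Peirce-$0$ space spanned by $\overline u$ and hence can never be complete --- is immediate from the stated Peirce decomposition.
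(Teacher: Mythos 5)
Your proof is correct and takes essentially the same route as the paper: both rely on Lemma~\ref{L:tripotents in spin factor} to see that any nonzero non-unitary tripotent is minimal and hence has nonzero Peirce-0 subspace (spanned by $\overline u$), so it cannot be complete, and then conclude via Proposition~\ref{P:le0=le2}. The separate treatment of $\dim C=1,2$ is harmless but unnecessary, since Lemma~\ref{L:tripotents in spin factor} is stated and proved without any dimension restriction.
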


\begin{proof}
By Lemma~\ref{L:tripotents in spin factor} there are unitary tripotents and, moreover, tripotents which are not unitary are zero or minimal and hence not complete either. This shows that every spin factor is finite. The coincidence of the two relations follows from Proposition~\ref{P:le0=le2}.
\end{proof} 
 
In the following proposition we characterize the relations $\le$, $\le_2$, $\le_0$ and $\perp$ in the spin factor. The proof follows immeadiately from Lemma~\ref{L:tripotents in spin factor} and Proposition~\ref{P:spin is finite}.
 
\begin{prop}\label{P:relations in spin} Let $u,e\in C$ be two nonzero tripotents.
\begin{enumerate}[$(1)$]
    \item $u\le e$ if and only if one of the following assertions holds
    \begin{itemize}
        \item $u=e$;
        \item $e$ is unitary, $u$ is minimal and $e=u+
        \alpha\overline{u}$ for a complex unit $\alpha$.
    \end{itemize}
    \item $u\le_0 e$ $\Leftrightarrow$ $u\le_2 e$ $\Leftrightarrow$ either $e$ is unitary or $u=\alpha e$ for a complex unit $\alpha$.
    \item $u\perp e$ if and only if $u$ and $e$ are minimal and $e=\alpha\overline{u}$ for a complex unit $\alpha$.
\end{enumerate}
\end{prop}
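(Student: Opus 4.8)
The plan is to deduce all three characterizations directly from the dichotomy established in Lemma~\ref{L:tripotents in spin factor}: a nonzero tripotent in $C$ is either unitary (type~$(a)$) or minimal (type~$(b)$), and in the minimal case the Peirce subspaces of $u$ are $C_2(u)=\span\{u\}$, $C_0(u)=\span\{\overline u\}$ and $C_1(u)=\{u,\overline u\}^{\perp_2}$. I will also repeatedly use the elementary fact that for a tripotent $t$ and a scalar $\lambda$ the element $\lambda t$ is again a tripotent precisely when $\lambda=0$ or $\abs\lambda=1$, since $\J{\lambda t}{\lambda t}{\lambda t}=\abs\lambda^2\lambda\, t$.

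For assertion~$(2)$, since $\le_0$ and $\le_2$ coincide by Proposition~\ref{P:spin is finite}, it suffices to analyse $\le_2$, i.e.\ the condition $u\in C_2(e)$. If $e$ is unitary then $C_2(e)=C$ and every tripotent satisfies $u\le_2 e$; if $e$ is minimal then $C_2(e)=\span\{e\}$, so $u=\lambda e$ for a nonzero scalar $\lambda$, and the tripotent condition forces $\abs\lambda=1$. This yields exactly the stated dichotomy. Assertion~$(3)$ is handled in the same way through $C_0$: as $u\perp e$ means $e\in C_0(u)$, a unitary $u$ (with $C_0(u)=\{0\}$) admits no nonzero orthogonal tripotent, so $u$ must be minimal; then $e\in C_0(u)=\span\{\overline u\}$ gives $e=\lambda\overline u$, and the tripotent condition forces $\abs\lambda=1$, whence $e=\alpha\overline u$ (and $e$ is automatically minimal).

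For assertion~$(1)$ I would combine $u\le e\Rightarrow u\le_2 e$ with the defining requirement that $e-u$ be a tripotent lying in $C_0(u)$. If $u$ is unitary, then $C_0(u)=\{0\}$ forces $e=u$. If $u$ and $e$ are both minimal, then $u\le_2 e$ gives $u=\lambda e$ with $\abs\lambda=1$, so $e-u\in\span\{e\}=C_2(u)$; but $e-u\in C_0(u)$ and $C_2(u)\cap C_0(u)=\{0\}$ (as $e\perp_2\overline e$), so again $e=u$. The only remaining possibility is $u$ minimal and $e$ unitary, where $e-u\in C_0(u)=\span\{\overline u\}$ yields $e-u=\alpha\overline u$ with $\abs\alpha=1$ (nonzero, since $u\neq e$), i.e.\ $e=u+\alpha\overline u$.

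The one point requiring genuine care — and the natural place for an error to hide — is the converse direction of $(1)$: that $e=u+\alpha\overline u$ (with $u$ minimal and $\abs\alpha=1$) really is a unitary tripotent with $e-u=\alpha\overline u$ a tripotent orthogonal to $u$. Here I would verify $\overline e=\overline\alpha\, e$, so that $\overline e$ is a scalar multiple of $e$; by the characterisation of type~$(a)$ recalled before the lemma this places $e$ in the unitary class, while $\norm e_2^2=\norm u_2^2+\norm{\overline u}_2^2=1$ (using $u\perp_2\overline u$) confirms the correct normalisation. The remaining orthogonality $\alpha\overline u\perp u$ is immediate from $\overline u\in C_0(u)$. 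With this the equivalences in $(1)$, $(2)$ and $(3)$ all reduce to the bookkeeping above, which is why the result follows at once from Lemma~\ref{L:tripotents in spin factor} and Proposition~\ref{P:spin is finite}.
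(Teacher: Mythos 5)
Your proof is correct and takes exactly the paper's route: the paper's entire proof is the one-line remark that the statement follows immediately from Lemma~\ref{L:tripotents in spin factor} and Proposition~\ref{P:spin is finite}, and your argument is precisely the case analysis (unitary versus minimal tripotents, the scalar-multiple criterion for tripotency, and the coincidence of $\le_0$ and $\le_2$) that this remark leaves to the reader. The only implicit convention, which you share with the paper, is that $\dim C\ge 2$ (in practice $\dim C\ge 3$), so that the unitary and minimal classes from the lemma are mutually exclusive.
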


 Since any spin factor is reflexive (being isomorphic to a Hilbert space), Proposition~\ref{P:Linfty refelexive} may be applied to describe the relations $\le$, $\le_2$, $\le_0$ and $\perp$ in the triple $L^\infty(\mu,C)=L^\infty(\mu)\overline{\otimes}C$. As a consequence we get the following result.

\begin{cor}\label{cor:Aotimes spin is finite}
Let $C$ be a spin factor and let $A$ be an abelian von Neumann algebra. Then $A\overline{\otimes}C$ is a finite JBW$^*$-algebra.
In particular, the relations $\le_2$ and $\le_0$ coincide.
\end{cor}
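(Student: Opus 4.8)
The plan is to reduce the whole statement to the pointwise description of tripotents and preorders furnished by Proposition~\ref{P:Linfty refelexive}, and then to feed in the coincidence of $\le_0$ and $\le_2$ already proved for the spin factor $C$ in Proposition~\ref{P:spin is finite}. First I would record that $C$, being isomorphic to a Hilbert space, is reflexive, so Lemma~\ref{L:tensor=Bochner} is applicable. Writing the abelian von Neumann algebra $A$ as an $\ell_\infty$-sum $\bigoplus_\theta L^\infty(\mu_\theta)$ of summands attached to probability measures (as recalled in Subsection~\ref{subsec:representation}) and distributing the tensor product over this sum, I would identify
$$A\overline{\otimes}C=\bigoplus_\theta\left(L^\infty(\mu_\theta)\overline{\otimes}C\right)=\bigoplus_\theta L^\infty(\mu_\theta,C),$$
the second equality being exactly Lemma~\ref{L:tensor=Bochner}. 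In particular $M:=A\overline{\otimes}C$ is a JBW$^*$-triple.

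The heart of the argument is to show that $\le_0$ and $\le_2$ coincide for tripotents in $M$. By Proposition~\ref{P:preorders in direct sum}$(3)$ both relations are tested coordinatewise on the $\ell_\infty$-sum, so it suffices to treat a single summand $L^\infty(\mu,C)$. There, Proposition~\ref{P:Linfty refelexive}$(3)$ gives, for two tripotents $\uu,\vv$ and for $j\in\{0,2\}$,
$$\uu\le_j\vv\ \Longleftrightarrow\ \uu(\omega)\le_j\vv(\omega)\ \ \mu\mbox{-a.e.}$$
Since Proposition~\ref{P:spin is finite} asserts that $\le_0$ and $\le_2$ agree on tripotents of $C$, the two right-hand conditions are equivalent $\mu$-almost everywhere, whence $\uu\le_0\vv\Longleftrightarrow\uu\le_2\vv$. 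Reassembling the coordinates via Proposition~\ref{P:preorders in direct sum} then shows that $\le_0$ and $\le_2$ coincide throughout $M$.

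Finally I would invoke Proposition~\ref{P:le0=le2}: coincidence of $\le_0$ and $\le_2$ is precisely its condition $(i)$, which is equivalent to condition $(iii)$, namely that $M$ is triple-isomorphic to a finite JBW$^*$-algebra. This yields both assertions of the corollary simultaneously (the algebra structure comes for free, without constructing a unitary by hand). I do not expect a genuine obstacle, as all the analytic content already resides in Propositions~\ref{P:spin is finite}, \ref{P:Linfty refelexive} and~\ref{P:le0=le2}; the only point demanding care is the bookkeeping of the decomposition $A=\bigoplus_\theta L^\infty(\mu_\theta)$ into probability-measure summands, which is forced by the fact that Lemma~\ref{L:tensor=Bochner} and Proposition~\ref{P:Linfty refelexive} are stated for (probability) measures, together with the verification that the $\ell_\infty$-tensor decomposition is compatible with Lemma~\ref{L:tensor=Bochner} summand by summand.
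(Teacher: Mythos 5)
Your proposal is correct and follows essentially the same route the paper intends: reflexivity of $C$ plus Lemma~\ref{L:tensor=Bochner} to identify $L^\infty(\mu)\overline{\otimes}C$ with $L^\infty(\mu,C)$, the pointwise description of the preorders from Proposition~\ref{P:Linfty refelexive}, the spin-factor results (Proposition~\ref{P:spin is finite}), and Proposition~\ref{P:le0=le2} to convert coincidence of $\le_0$ and $\le_2$ into the finite JBW$^*$-algebra conclusion. The only cosmetic difference is that you make explicit the bookkeeping (decomposition $A=\bigoplus_\theta L^\infty(\mu_\theta)$ and Proposition~\ref{P:preorders in direct sum}) that the paper leaves implicit in its ``as a consequence'' phrasing.
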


\subsection{Spin factors and Cayley-Dickson doubling process}\label{sec:CD} 

There is a closed relation between finite-dimensional spin factors and the Cayley-Dickson doubling process. This is an abstract construction described for example in \cite[\S6.1.30]{Cabrera-Rodriguez-vol2}.
We are going to describe it in a special case, starting from the complex field. It enables us to equip certain finite-dimensional spin factors with some additional structure which is then used to define exceptional Cartan factors.

Let $\A_0=\ce$ and $\A_n=\A_{n-1}\times \A_{n-1}$ for $n\in\en,$
 i.e., for $n\in\en\cup\{0\}$ we have $\A_n=\ce^{2^n}$. This space is equipped with the standard conjugation, standard inner product and with the triple product making it the spin factor of dimension $2^n$. Except for this structure we equip $\A_n$ by two involutions -- a linear one denoted by $\inv_n$ and a conjugate-linear one denoted by $*_n$ and, finally, by a product denoted by $\boxdot_n$.
 (Later we will omit the subscript $n$.)  These operations are defined by induction.
 
$\A_0$ is the complex field, hence $\boxdot_0$ will be the standard multiplication of complex numbers,  the linear involution $\inv_0$ will be the identity mapping and tho conjugate-linear involution $*_0$ will be the complex conjugation.

Given the structure defined on $\A_n$ we define it on $\A_{n+1}=\A_n\times \A_n$ as follows
$$\begin{aligned}
(x_1,x_2)^{\inv_{n+1}}&=(x_1^{\inv_n},-x_2),\\
(x_1,x_2)^{*_{n+1}}&=(x_1^{*_n},-\overline{x_2},)\\
(x_1,x_2)\boxdot_{n+1}(y_1,y_2)&=(x_1\boxdot_n y_1-y_2\boxdot_n x_2^{\inv_n},x_1^{\inv_n} \boxdot_n y_2+y_1\boxdot_n x_2)
\end{aligned}$$
for $(x_1,x_2), (y_1,y_2)\in \A_n\times \A_n=\A_{n+1}$.

In the sequel we will omit the subscript $n$ at the two involutions.
The following lemma summarizes basic properties of these operations.

\begin{lemma}\label{L:CD doubling}
\begin{enumerate}[$(i)$]
    \item The mapping $x\mapsto x^\inv$ is a linear mapping on $\A_n$ such that $(x^\inv)^\inv=x$ for $x\in \A_n$;
    \item The mapping $x\mapsto x^*$ is a conjugate linear mapping on $\A_n$ such that $(x^*)^*=x$ for $x\in \A_n$;
    \item $x^*=\overline{x^\inv}=\overline{x}^\inv$ and $(x^*)^\inv=(x^\inv)^*=\overline{x}$ for $x\in \A_n$;
    \item $\A_n$ is a (possibly non-associative) algebra, i.e., the mapping $(x,y)\mapsto x\boxdot_n y$ is bilinear;
    \item $\overline{x\boxdot_n y}=\overline{x}\boxdot_n\overline{y}$, $(x\boxdot_n y)^\inv=y^\inv\boxdot_n x^\inv$ and $(x\boxdot_n y)^*=y^*\boxdot_n x^*$ for $x,y\in \A_n$ (hence $\inv$ and $*$ are involutions);
    \item $x\mapsto (x,0)$ is an isomorphic embedding of the algebra $\A_n$ (equipped with the involutions $\inv$ and $*$) into the algebra $\A_{n+1}$ (equipped with the involutions $\inv$ and $*$), so we have canonical inclusions
    $$\ce=\A_0\subset \A_1\subset \A_2\subset \A_3\subset\dots;$$
    \item If $y\in \ce=\A_0$, then $x\boxdot_n y=y\boxdot_n x=yx$ for $x\in \A_n$;
    \item The algebra $\A_n$ is unital, its unit is $1\in\ce=\A_0$;
    \item If $x\in \A_n$, then $x^\inv=x$ if and only if $x\in \ce=\A_0$;
    
    \item $x^\inv\boxdot_n x\in \A_0=\ce$ for $x\in \A_n$;
    \item $\A_0$ is a commutative field, $\A_1$ is a commutative and associative algebra and $\A_2$ is a non-commutative associative algebra;
    \item $\A_3$ is neither commutative nor associative, but it is alternative, i.e., $$x\boxdot_3(x\boxdot_3 y)=(x\boxdot_3 x)\boxdot_3 y \mbox{ and }(y\boxdot_3 x)\boxdot_3 x=y\boxdot_3(x\boxdot_3 x)\mbox{\qquad for }x,y\in \A_3;$$
    \item $x\boxdot_3(y\boxdot_3 z)+z\boxdot_3(y\boxdot_3 x)=(x\boxdot_3 y)\boxdot_3 z+(z\boxdot_3 y)\boxdot_3 x$ for $x,y,z\in \A_3$.
 \end{enumerate}
\end{lemma}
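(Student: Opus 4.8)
The plan is to prove all thirteen assertions by a single induction on $n$, exploiting the recursive definition of $\inv$, $*$ and $\boxdot_n$ on $\A_{n+1}=\A_n\times\A_n$. Assertions $(i)$--$(x)$ are ``formal'' identities that propagate cleanly through the doubling and can be verified directly; $(xi)$--$(xiii)$ are the genuine structural statements, which I would reduce to the classical Cayley--Dickson facts.

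For $(i)$ and $(ii)$ the base cases are immediate ($\inv_0=\operatorname{id}$ is a linear involution and $*_0$ is complex conjugation, a conjugate-linear one), and the inductive step is a one-line check, e.g. $((x_1,x_2)^\inv)^\inv=(x_1^\inv,-x_2)^\inv=((x_1^\inv)^\inv,x_2)=(x_1,x_2)$, and similarly for $*$. For $(iii)$ I would first prove $x^*=\overline{x^\inv}=\overline{x}^\inv$ by induction, the inductive step needing only that conjugation is coordinatewise and commutes with $\inv$ (read off the defining formulas); the remaining equalities $(x^*)^\inv=(x^\inv)^*=\overline x$ then follow formally by applying the already-proved involutivity of $\inv$ to $x^*=\overline{x}^\inv$. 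Assertion $(iv)$ is clear, since each entry of the product formula is a $\boxdot_n$-product of expressions depending linearly (through $\inv_n$, which is linear by $(i)$) on $x$ or on $y$.

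The multiplicative behaviour of the involutions in $(v)$ is the technical core of the formal part. I would prove $\overline{x\boxdot_n y}=\overline x\boxdot_n\overline y$ and the anti-homomorphism law $(x\boxdot_n y)^\inv=y^\inv\boxdot_n x^\inv$ by simultaneous induction, substituting the product formula into both sides and matching the two coordinates term by term (using $(i)$ and the induction hypotheses); the law $(x\boxdot_n y)^*=y^*\boxdot_n x^*$ then follows from $(iii)$ and these two via $x^*=\overline x^\inv$. Assertion $(vi)$ is a direct substitution showing that $x\mapsto(x,0)$ respects $\boxdot$, $\inv$ and $*$, fixing the chain $\ce=\A_0\subset\A_1\subset\cdots$. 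Using this embedding, $(vii)$ follows by induction (scalars act coordinatewise as scalar multiplication), $(viii)$ is the case $y=1$, and $(ix)$ follows from $(x_1,x_2)^\inv=(x_1^\inv,-x_2)$, which equals $(x_1,x_2)$ iff $x_2=0$ and $x_1^\inv=x_1$. For the norm form $(x)$ the key computation is
\[
x^\inv\boxdot_{n+1}x=\bigl(x_1^\inv\boxdot_n x_1+x_2\boxdot_n x_2^\inv,\;0\bigr),
\]
where the second coordinate vanishes identically and, applying the induction hypothesis to $x_1$ and to $x_2^\inv$, both summands in the first coordinate lie in $\A_0$; hence so does the whole expression.

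For the structural trichotomy I would invoke the Cayley--Dickson machinery recalled in \cite[\S6.1.30]{Cabrera-Rodriguez-vol2}: doubling a commutative associative algebra whose involution is the identity (namely $\A_0$) yields the commutative associative algebra $\A_1$; doubling the commutative associative $\A_1$ yields the associative but non-commutative $\A_2$; and doubling the associative $\A_2$ yields the alternative but non-associative $\A_3$, the non-commutativity and non-associativity being witnessed by explicit elements. This gives $(xi)$ and $(xii)$. Assertion $(xiii)$ is then a formal consequence of alternativity: writing the associator $[a,b,c]=(a\boxdot_3 b)\boxdot_3 c-a\boxdot_3(b\boxdot_3 c)$, identity $(xiii)$ is precisely $[x,y,z]=-[z,y,x]$, and linearizing the two alternative laws $[x,x,y]=0$ and $[y,x,x]=0$ from $(xii)$ shows that the associator is skew-symmetric in its three arguments, in particular under interchange of the outer ones. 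I expect the genuine obstacle to lie in $(xii)$: if one prefers a self-contained argument to the citation, verifying that the third doubling is alternative while the second is only associative requires the classical (and somewhat lengthy) bookkeeping tracking how the associator transforms under doubling, whereas everything else is routine.
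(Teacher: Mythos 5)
Your proposal is correct, and for items $(i)$--$(x)$ and $(xiii)$ it essentially coincides with the paper's argument: the paper also disposes of $(i)$--$(ix)$ by ``an easy induction'' of exactly the kind you sketch, and its proof of $(xiii)$ is the same linearization of $(xii)$ that you phrase as skew-symmetry of the associator $[x,y,z]$ under interchange of the outer variables (the paper adds the first two linearized identities and subtracts the third, which is the same computation). The differences are in $(x)$--$(xii)$. For $(x)$ you compute $x^\inv\boxdot_{n+1}x=(x_1^\inv\boxdot_n x_1+x_2\boxdot_n x_2^\inv,0)$ directly and induct; the paper instead gets it for free: by $(v)$ and $(i)$ the element $w=x^\inv\boxdot_n x$ satisfies $w^\inv=w$, hence $w\in\A_0$ by $(ix)$ -- a slicker, computation-free route, though yours is equally valid. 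The substantive divergence is in $(xi)$--$(xii)$: the paper stays self-contained, proving associativity of $\boxdot_1$ and $\boxdot_2$ by representing elements of $\A_1$ and $\A_2$ as $2\times 2$ matrices (over $\ce$ and over $\A_1$ respectively) and proving the two alternative laws for $\boxdot_3$ by an explicit two-coordinate computation that uses associativity of $\boxdot_2$ and $(v)$; you instead cite the general Cayley--Dickson machinery of \cite[\S 6.1.30]{Cabrera-Rodriguez-vol2} (doubling commutative-associative gives associative, doubling associative gives alternative). Your route buys brevity and conceptual clarity, but it carries the obligation -- which you should state explicitly -- of checking that the paper's doubling formula $(x_1,x_2)\boxdot_{n+1}(y_1,y_2)=(x_1\boxdot_n y_1-y_2\boxdot_n x_2^\inv,\;x_1^\inv\boxdot_n y_2+y_1\boxdot_n x_2)$ agrees with (or is isomorphic to) the convention used in the cited reference, since placements of the involution vary across sources; the paper's matrix-representation and hands-on computation avoid that dependence entirely. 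Neither proof, incidentally, exhibits explicit witnesses for the non-commutativity of $\A_2$ or the non-commutativity and non-associativity of $\A_3$, so your brief remark that these are ``witnessed by explicit elements'' is no less complete than the original on that point.
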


\begin{proof} These properties are well known. Let us briefly comment them for the sake of completeness.

The properties $(i)$--$(ix)$ follow by an easy induction using the definitions and properties of the complex field. 

$(x)$ This follows from $(v)$, $(i)$ and $(ix)$.

$(xi)$ $\A_0$ is a commutative field as it is the complex field. 

The commutativity of $\boxdot_1$ follows from the commutativity of $\boxdot_0$ and the fact that $x^\inv=x$ for $x\in \A_0$. The associativity of $\boxdot_1$ follows from the observation that it can be represented as matrix multiplication if any $x=(x_1,x_2)\in \A_1$ is represented by the matrix $\begin{pmatrix}
x_1 & - x_2\\ x_2 & x_1
\end{pmatrix}.$
Hence, $\A_1$ is a commutative and associative algebra.

Commutativity of $\boxdot_1$ implies the associativity of $\boxdot_2$, as it can be represented by the matrix multiplication
in the algebra of $2\times 2$ matrices with entries in $\A_1$. Indeed, any $x=(x_1,x_2)\in \A_2$ corresponds to the matrix
$\begin{pmatrix}
x_1 & - x_2^\inv\\ x_2 & x_1^\inv
\end{pmatrix}.$
Thus $\A_2$ is an associative algebra. 

$(xii)$ The equality may be proved by a direct computation, using the previous properties. Indeed, assume $x=(x_1,x_2)$ and $y=(y_1,y_2)$. Then
$$\begin{aligned}
x\boxdot_3&(x\boxdot_3 y)=x\boxdot_3 (x_1\boxdot_2 y_1-y_2\boxdot_2 x_2^\inv,x_1^\inv \boxdot_2 y_2+y_1\boxdot_2 x_2)
\\&=(x_1\boxdot_2 x_1\boxdot_2 y_1-x_1\boxdot_2 y_2\boxdot_2 x_2^\inv
-x_1^\inv \boxdot_2 y_2\boxdot_2 x_2^\inv-y_1\boxdot_2 x_2\boxdot x_2^\inv,\\&\qquad x_1^\inv\boxdot_2 x_1^\inv \boxdot_2 y_2+x_1^\inv\boxdot_2 y_1\boxdot_2 x_2 + x_1\boxdot_2 y_1\boxdot_2 x_2-y_2\boxdot_2 x_2^\inv\boxdot_2 x_2)
\end{aligned}$$
and
$$\begin{aligned}
(x\boxdot_3 x)&\boxdot_3 y=(x_1\boxdot_2 x_1-x_2\boxdot_2 x_2^\inv,x_1^\inv\boxdot_2 x_2+x_1\boxdot_2 x_2)\boxdot_3 y
\\&=(x_1\boxdot_2 x_1\boxdot_2 y_1-x_2\boxdot_2 x_2^\inv\boxdot_2 y_1 - y_2\boxdot_2 x_2^\inv\boxdot_2 x_1 - y_2\boxdot_2x_2^\inv\boxdot_2 x_1^\inv,\\
&\qquad x_1^\inv\boxdot_2 x_1^\inv\boxdot_2 y_2-x_2\boxdot_2 x_2^\inv\boxdot_2 y_2+ y_1\boxdot_2 x_1^\inv\boxdot_2 x_2+y_1\boxdot_2 x_1\boxdot_2 x_2),
\end{aligned}$$
where we used the definitions, the associtativity of $\boxdot_2$ and  $(v)$. Let us compare the results. In the first coordinate, the first summands coincide. Moreover, the fourth summand from the first case coincide with the second summand from the second case by $(x)$ and $(vii)$.  Moreover,
$$x_1\boxdot_2 y_2\boxdot_2 x_2^\inv
+x_1^\inv \boxdot_2 y_2\boxdot_2 x_2^\inv 
=(x_1+x_1^\inv)\boxdot_2 y_2\boxdot_2 x_2^\inv\overset{(vi)}{=}
 y_2\boxdot_2 x_2^\inv\boxdot_2(x_1+x_1^\inv),$$
which completes the proof of the equality of the first coordinates.
The equality of the second coordinates is analogous and the second equality is similar.

$(xiii)$ As mentioned in \cite[p. 152]{Cabrera-Rodriguez-vol1}, it is easy to derive from $(xii)$ by linearization that
\begin{gather*}
    x\boxdot_3 (y\boxdot_3 z)+ y\boxdot_3(x\boxdot_3 z)= (x\boxdot_3 y)\boxdot_3 z+(y\boxdot_3 x)\boxdot_3 z,\\
    z\boxdot_3 (y\boxdot_3 x)+ y\boxdot_3(z\boxdot_3 x)= (z\boxdot_3 y)\boxdot_3 x+(y\boxdot_3 z)\boxdot_3 x,\\
    y\boxdot_3(z\boxdot_3 x)+ y\boxdot_3(x\boxdot_3 z)=(y\boxdot_3 z)\boxdot_3 x+(y\boxdot_3 x)\boxdot_3 z.
\end{gather*}
We conclude by adding the first two equalities and subtrackting the third one.
\end{proof}

The next lemma explains the connection of the algebra $\A_n$ to the structure of a spin factor on $\A_n$.

\begin{lemma}\label{L:spin factor as CD}
\begin{enumerate}[$(a)$]
     \item $\ip xy=\ip{x^\inv}{y^\inv}=\frac12 (x\boxdot_n y^*+\overline{y}\boxdot_n x^\inv)$ for $x,y\in \A_n$;
    \item $1$ is a unitary element of $\A_n$. Moreover, the operations in the respective JB$^*$-algebra are given by
    $$x^{*_1}=\J 1x1=x^*\mbox{ and }x\circ_1 y=    \J x1y=\frac12(x\boxdot_n y+y\boxdot_n x)$$
    for $x,y\in\A_n$;
    \item $\ip xy=\frac12(x\circ_1 y^*+x^\inv\circ_1\overline{y})$ for $x,y\in \A_n$; 
    \item $\J xyz= \frac12(x\boxdot_n(y^*\boxdot_n z)+z\boxdot_n(y^*\boxdot_n x))=\frac12((x\boxdot_n y^*)\boxdot_n z+(z\boxdot_n y^*)\boxdot_n x)$ for $x,y,z\in \A_n$ if $n\le 3$;
    \item $\ip{x\boxdot_n z^*}{y}=\ip{x}{y\boxdot_n z}$ and 
    $\ip{z^*\boxdot_n x}{y}=\ip{x}{z\boxdot_n y}$ for $x,y,z\in \A_n$ if $n\le 3$.
\end{enumerate}
\end{lemma}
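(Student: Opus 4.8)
The plan is to treat part (a) as the foundation and to derive (b)--(e) from it together with the algebraic facts collected in Lemma~\ref{L:CD doubling}. First I would prove (a) by induction on $n$. The auxiliary equality $\ip xy=\ip{x^\inv}{y^\inv}$ says that $\inv$ is an isometry for the inner product; since the inner product on $\A_{n+1}=\A_n\times\A_n$ is the sum of the two coordinate inner products and $(x_1,x_2)^\inv=(x_1^\inv,-x_2)$, this is immediate from the inductive hypothesis, the sign on the second coordinate disappearing under the modulus. For the main identity I would show by induction that $x\boxdot_n y^*+\overline y\boxdot_n x^\inv$ lies in $\A_0=\ce$ and equals $2\ip xy$. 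The base case $n=0$ is the scalar computation $x\overline y+\overline y\,x=2x\overline y$. For the inductive step I would write $x=(x_1,x_2)$, $y=(y_1,y_2)$, substitute the doubling formulas for $*_{n+1}$, $\inv_{n+1}$, $\boxdot_{n+1}$, and use Lemma~\ref{L:CD doubling}(iii) (i.e. $(\overline w)^\inv=w^*$ and $(w^*)^\inv=\overline w$) to collapse the four resulting summands: the first coordinate becomes $2\ip{x_1}{y_1}+2\ip{x_2}{y_2}=2\ip xy$ by the inductive hypothesis, while the second coordinate cancels to $0$.

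From (a) I would extract two consequences used repeatedly. Taking $y=1$ (and noting $1^\inv=1$, $1^*=1$ via Lemma~\ref{L:CD doubling}(ix),(iii)) gives $x+x^\inv=2\ip x1=2x_0$, where $x_0$ denotes the coefficient of $1$; hence $x^\inv=2x_0-x$, and a vector element $u$ (one with $u_0=0$) satisfies $u^\inv=-u$. Applying (a) to vector elements and then replacing $y'$ by $\overline{y'}$ yields the anticommutation relation $x'\boxdot_n y'+y'\boxdot_n x'=-2B(x',y')$, where $B(a,b)=\sum_\gamma a_\gamma b_\gamma$. These give (b) and (c). For (b): $1$ is a unitary tripotent by Lemma~\ref{L:tripotents in spin factor}(a); the explicit spin product together with $x^\inv=2x_0-x$ gives $\J 1x1=2\overline{x_0}-\overline x=\overline{x^\inv}=x^*$; and expanding $\frac12(x\boxdot_n y+y\boxdot_n x)$ into scalar and vector parts and invoking the anticommutation relation gives $\frac12(x\boxdot_n y+y\boxdot_n x)=x_0 y+y_0 x-B(x,y)=\J x1y$, which is \eqref{eq circ-star}. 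For (c) I would regroup $x\circ_1 y^*+x^\inv\circ_1\overline y=\frac12\big((x\boxdot_n y^*+\overline y\boxdot_n x^\inv)+(y^*\boxdot_n x+x^\inv\boxdot_n\overline y)\big)$: the first bracket is $2\ip xy$ by (a), and the second bracket is exactly (a) applied to the pair $(y^*,x^*)$, giving $2\ip{y^*}{x^*}=2\,\overline{\ip{y^\inv}{x^\inv}}=2\,\overline{\ip yx}=2\ip xy$ (using the isometry $\ip{\cdot}{\cdot}=\ip{\cdot^\inv}{\cdot^\inv}$ from (a) and $\ip{\overline a}{\overline b}=\overline{\ip ab}$). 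Hence the sum is $4\ip xy$ and (c) follows.

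Finally, (d) and (e) are where the hypothesis $n\le 3$ enters, since there $\A_n$ is alternative (Lemma~\ref{L:CD doubling}(xi),(xii)). In (d) the equality of the two parenthesised forms is precisely identity (xiii) with $y$ replaced by $y^*$. To identify either form with $\J xyz$ I would invoke the universal formula expressing a JB$^*$-triple product through its unital JB$^*$-algebra, applied to $(\A_n,\circ_1,{*})$ with unit $1$ (justified by (b)), and then rewrite the Jordan expression $(x\circ_1 y^*)\circ_1 z+x\circ_1(y^*\circ_1 z)-(x\circ_1 z)\circ_1 y^*$ in terms of $\boxdot_n$; in an alternative algebra this symmetrised Jordan triple product collapses to $\frac12\big((x\boxdot_n y^*)\boxdot_n z+(z\boxdot_n y^*)\boxdot_n x\big)$, the collapse being controlled by Artin's theorem and by (xiii). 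For (e) I would expand $2\ip{x\boxdot_n z^*}{y}$ and $2\ip{x}{y\boxdot_n z}$ using (a) and Lemma~\ref{L:CD doubling}(iii),(v); their difference reduces to the pair of associators $[x,z^*,y^*]$ and $[\overline y,\overline z,x^\inv]$ (where $[a,b,c]=(a\boxdot_n b)\boxdot_n c-a\boxdot_n(b\boxdot_n c)$), which coincide because in an alternative algebra the associator is an alternating trilinear function (a consequence of (xii)) and, after passing to vector parts via $w^\inv=2w_0-w$, the two triples of arguments differ only by sign changes from $\inv$, ${*}$ and conjugation together with an even permutation. The second identity in (e) is handled symmetrically.

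The main obstacle I expect is the non-associative bookkeeping in (d) and (e) for $n=3$: one must track carefully how $\inv$, ${*}$ and coordinatewise conjugation act on scalar and vector parts and perform the alternating-associator rearrangements, since a single misplaced sign would break the identities — as indeed happens once $n\ge 4$, where $\A_n$ is no longer alternative and the restriction $n\le 3$ becomes essential.
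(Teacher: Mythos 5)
Your proposal is correct, and for parts (a)--(c) it is essentially the paper's own argument: the same induction along the doubling for (a), and (b), (c) by direct computation with the explicit spin triple product and (a) (the paper evaluates $\J 11x$, $\J 1x1$, $\J x1y$ directly, while you first extract $x+x^\inv=2\ip x1$ and the anticommutation of vector parts; this is equivalent bookkeeping, and your appeal to Lemma~\ref{L:tripotents in spin factor}$(a)$ for unitarity of $1$ is legitimate). The interesting differences are in (d) and (e). For (d) both you and the paper pass through the Jordan expression $(x\circ_1 y^*)\circ_1 z+x\circ_1(y^*\circ_1 z)-(x\circ_1 z)\circ_1 y^*$, justified by (b), and invoke Lemma~\ref{L:CD doubling}$(xiii)$; but note that $(xiii)$ by itself only gives skewness of the associator $[a,b,c]=(a\boxdot_3 b)\boxdot_3 c-a\boxdot_3(b\boxdot_3 c)$ in the \emph{outer} variables, and after using $(xiii)$ the residual obstruction is $[y^*,x,z]+[y^*,z,x]$, whose vanishing requires the full alternating property of the associator, i.e.\ linearized $(xii)$. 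You correctly flag alternativity as an input (the paper cites only $(xiii)$, which is terser than what the computation actually uses); your invocation of Artin's theorem is the one misstep, since Artin's theorem concerns two-generated subalgebras and does not apply to the three elements $x,y^*,z$ --- the alternating-associator property you state in (e) is the right tool. For (e) your route is genuinely different: the paper unwinds the products through the doubling formulas and computes coordinatewise in the associative algebra $\A_2$, whereas you stay inside $\A_3$ and reduce $2\ip{x\boxdot_3 z^*}{y}-2\ip{x}{y\boxdot_3 z}$, via (a) and $(v)$, to $[x,z^*,y^*]-[\overline y,\overline z,x^\inv]$; passing to vector parts (using $(w^\inv)'=-w'$ and $(w^*)'=-\overline{w'}$) these become $[x',\overline{z'},\overline{y'}]$ and $-[\overline{y'},\overline{z'},x']$, which indeed coincide by alternation --- though the permutation involved is a transposition (odd), so ``even'' should refer to the total parity of the three sign flips together with the transposition, which is what makes the identity check out. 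Your second identity in (e) does follow symmetrically, e.g.\ from the first via $\ip{z^*\boxdot_3 x}{y}=\ip{x^\inv\boxdot_3(z^\inv)^*}{y^\inv}=\ip{x^\inv}{y^\inv\boxdot_3 z^\inv}=\ip{x}{z\boxdot_3 y}$. On balance, your treatment of (e) is cleaner and more conceptual than the paper's coordinate descent, at the cost of the careful sign and parity bookkeeping; the paper's computation needs nothing beyond associativity of $\A_2$ but is longer and less illuminating.
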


\begin{proof}
$(a)$ We proceed by induction on $n$. For $n=0$ the equality is obvious. Assume that it holds for $n$ and fix $x=(x_1,x_2)$ and $y=(y_1,y_2)$ in $\A_{n+1}$. Then
$$\begin{aligned}
\frac12(x\boxdot_{n+1}y^*+\overline{y}\boxdot_{n+1}x^\inv)&=
\frac12((x_1,x_2)\boxdot_{n+1}(y_1^*,-\overline{y_2})+(\overline{y_1},\overline{y_2})\boxdot_{n+1}(x_1^\inv,-x_2)) 
\\&=\frac12((x_1\boxdot_n y_1^*+\overline{y_2}\boxdot_n x_2^\inv, -x_1^\inv\boxdot_n \overline{y_2}+y_1^*\boxdot_n x_2)
\\&\qquad+(\overline{y_1}\boxdot_n x_1^\inv + x_2\boxdot_n \overline{y_2}^\inv,-\overline{y_1}^\inv\boxdot_n x_2+x_1^\inv\boxdot_n \overline{y_2})) 
\\&=\frac12(x_1\boxdot_n y_1^*+\overline{y_1}\boxdot_n x_1^\inv
+  x_2\boxdot_n y_2^*+\overline{y_2}\boxdot_n x_2^\inv,0)
\\&=(\ip{x_1}{y_1}+\ip{x_2}{y_2},0)=\ip xy
\end{aligned}$$
and
$$\ip{x^\inv}{y^\inv}=\ip{(x_1^\inv,-x_2)}{(y_1^\inv,-y_2)}=
\ip{x_1^\inv}{y_1^\inv}+\ip{x_2}{y_2}=\ip{x_1}{y_1}+\ip{x_2}{y_2}
=\ip xy.$$

$(b)$  Using the definition and $(a)$ we get 
$$\J 11x=\ip 11 x +\ip x1 1 -\ip 1{\overline{x}}1
=x+\frac12(x+x^\inv)-\frac12(\overline{x}^*+x)=x$$
for any $x\in \A_n$, thus $1$ is unitary. Further, for $x,y\in \A_n$ we have
$$\J 1x1= 2\ip 1x 1-\ip 11 \overline{x}\overset{(a)}{=}x^*+\overline{x}-\overline{x}=x^*$$
and
$$\begin{aligned}
\J x1y&=\ip x1 y+\ip y1 x-\ip{x}{\overline{y}}1
=y\boxdot_n\frac12(x+x^\inv) +x\boxdot_n \frac12(y+y^\inv)\\&\qquad-
\frac12(x\boxdot_n \overline{y}^*+y\boxdot x^\inv)\\&=\frac12(x\boxdot_n y+y\boxdot_n x).\end{aligned}$$

$(c)$ This follows from $(a)$ and $(b)$.

$(d)$ For $n\le2$ the algebra $\A_n$ is associative, hence the triple product must be given by this formula.  For $n=3$ it follows
from the formula
$$\J xyz=(x\circ_1 y^*)\circ_1 z+ x\circ_1(y^*\circ_1 z)-(x\circ_1 z)\circ_1 y^*$$
using $(b)$ and Lemma~\ref{L:CD doubling}$(xiii)$.

$(e)$ If $n\le 2$, then $\boxdot_n$ is associative, so the equalities follow easily from $(a)$ and Lemma~\ref{L:CD doubling}. For $n=3$ we have
$$\begin{aligned}
\ip{x\boxdot_3 z^*}{y}&=\ip{(x_1,x_2)\boxdot_3(z_1^*,-\overline{z_2})}{(y_1,y_2)}\\&=\ip{(x_1\boxdot_2z_1^*+\overline{z_2}\boxdot_2 x_2^\inv,-x_1^\inv\boxdot_2\overline{z_2}+z_1^*\boxdot_2x_2)}{(y_1,y_2)}
\\&=\ip{x_1\boxdot_2z_1^*}{y_1}+\ip{\overline{z_2}\boxdot_2x_2^\inv}{y_1} -\ip{x_1^\inv\boxdot_2\overline{z_2}}{y_2}+\ip{z_1^*\boxdot_2x_2}{y_2}
\\&=\ip{x_1}{y_1\boxdot_2z_1}+\ip{x_2^\inv}{z_2^\inv\boxdot_2y_1}-\ip{x_1^\inv}{y_2\boxdot_2z_2^\inv}+\ip{x_2}{z_1\boxdot_2y_2}
\\&=\ip{x_1}{y_1\boxdot_2z_1}+\ip{x_2}{y_1^\inv\boxdot_2z_2}-\ip{x_1}{z_2\boxdot_2y_2^\inv}+\ip{x_2}{z_1\boxdot_2y_2}
\\&=\ip{(x_1,x_2)}{(y_1\boxdot_2z_1-z_2\boxdot_2y_2^\inv,y_1^\inv\boxdot_2z_2+z_1\boxdot_2y_2)}=\ip{x}{y\boxdot_3 z}
\end{aligned}$$
The second case is similar.
 \end{proof}
 
We finish this section by discussing coincidence of several structures.

\begin{lemma}\label{l 6.7 two three and four dimensional spins}
\begin{enumerate}[$(i)$]
    \item The algebra $\A_2$ is isomorphic to the biquaternion algebra, i.e., to the algebra of quaternions with complex coefficients. The identity mapping is a witnessing isomorphism.
    \item The algebra $\A_2$ is $*$-isomorphic to the matrix algebra $M_2$. A witnessing isomorphism is defined by the following assignment.
    $$e_1\mapsto\begin{pmatrix}
1 & 0 \\ 0 & 1 
\end{pmatrix}, e_2\mapsto
\begin{pmatrix}
i & 0 \\ 0 & -i 
\end{pmatrix},
e_3\mapsto
\begin{pmatrix}
0 & 1 \\ -1 & 0 
\end{pmatrix}, e_4\mapsto
\begin{pmatrix}
0 & i \\ i & 0 
\end{pmatrix}.$$
    \item The isomorphism from $(ii)$ is a triple-isomorphism of the four-dimeansional spin factor $\A_2$  onto the spin factor defined on $M_2$ equipped with the normalized Hilbert-Schmidt inner product and the conjugation with respect to the orthonormal basis from $(ii)$. Moreover, the norm of this spin factor coincides with the operator norm on $M_2$.
    \item The algebra $\A_1$ is $*$-isomorphic to $\ce\oplus^{\infty}\ce$. Hence, the two-dimensional spin factor $\A_1$ is triple-isomorphic to $\ce\oplus^{\infty}\ce$. A witnessing isomorphism is given by $(x_1,x_2)\mapsto(x_1+ix_2,x_1-ix_2)$.
\end{enumerate}
\end{lemma}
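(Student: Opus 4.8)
The plan is to verify all four assertions by direct computation from the recursive definitions of $\boxdot_n$, $*$ and $\inv$, leaning on Lemma~\ref{L:CD doubling} and Lemma~\ref{L:spin factor as CD}. The guiding principle is a clean division of labour: the \emph{algebra}-isomorphism claims in $(i)$, $(ii)$ and $(iv)$ are settled by comparing multiplication tables on basis vectors, whereas the \emph{triple}-isomorphism claims in $(iii)$ and $(iv)$ reduce, via the spin structure, to preservation of the inner product and of the conjugation, both of which determine the spin triple product and the spin norm.

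For $(i)$ I would first unwind $\A_2=\A_1\times\A_1=\ce^4$ and compute the $\boxdot_2$-products of the coordinate basis $e_1,\dots,e_4$ from the recursion together with the facts that $\boxdot_1$ is commutative and $\epsilon\boxdot_1\epsilon=-e_1$ for the imaginary generator $\epsilon=(0,1)$ of $\A_1$ (Lemma~\ref{L:CD doubling}$(xi)$ and a one-line computation). One finds that $e_1$ is the unit and that $e_2,e_3,e_4$ square to $-e_1$ and pairwise anticommute, i.e. the table is exactly that of the quaternions with complex coefficients; hence the identity map of $\ce^4$ is an algebra isomorphism onto the biquaternion algebra. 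For $(ii)$ I would check that the linear extension $\Phi$ of the assignment $e_m\mapsto A_m$ carries these $\boxdot_2$-products to the corresponding matrix products and carries the involution $*$ (which fixes $e_1$ and negates $e_2,e_3,e_4$, by the recursion for $*$) to the matrix adjoint. This is a finite check on four generators. Since $M_2\cong M_2^{\mathrm{op}}$ via transposition, the usual handedness ambiguity of the Cayley--Dickson product is harmless: whichever orientation one fixes, $\Phi$ (or $\Phi$ followed by transposition) is a $*$-isomorphism onto $M_2$.

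For $(iii)$ the key observation is that $A_1,\dots,A_4$ form an orthonormal basis of $M_2$ for the normalized Hilbert--Schmidt inner product $\ip XY=\tfrac12\tr{XY^*}$; verifying $\ip{A_m}{A_n}=\delta_{mn}$ is immediate from $A_m^*=\pm A_m$ and the vanishing of $\tr{A_mA_n}$ for $m\ne n$. Consequently $\Phi$ is a surjective linear isometry between the two inner products, and since both conjugations are the coordinatewise conjugation in the respective orthonormal bases, $\Phi$ intertwines them. Because the spin triple product and the spin norm are written purely in terms of the inner product and the conjugation, $\Phi$ is automatically a triple-isomorphism and an isometry for the spin norms. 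For the coincidence with the operator norm I would compute, for $X=\sum_m c_mA_m$, that $\tr{XX^*}=2\ip XX$ and $\det X=\sum_m c_m^2=\ip X{\overline X}$; writing $s_1\ge s_2$ for the singular values of $X$ (so $s_1^2+s_2^2=\tr{XX^*}$ and $s_1s_2=\abs{\det X}$) and substituting into the spin-norm formula gives
$$\norm X^2=\frac{s_1^2+s_2^2}{2}+\sqrt{\frac{(s_1^2+s_2^2)^2}{4}-s_1^2s_2^2}=\frac{s_1^2+s_2^2}{2}+\frac{s_1^2-s_2^2}{2}=s_1^2,$$
the square of the operator norm.

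Finally, for $(iv)$ I would check directly that $\Phi_1\colon(x_1,x_2)\mapsto(x_1+ix_2,x_1-ix_2)$ sends $\boxdot_1$ to the coordinatewise product of $\ce\oplus^\infty\ce$ and $*_1$ to coordinatewise conjugation; equivalently, the idempotents $\tfrac12(1\mp i\epsilon)$ split $\A_1=\ce[\epsilon]/(\epsilon^2+1)$ into $\ce\oplus^\infty\ce$. As $\boxdot_1$ is commutative, $(\A_1,\circ_1,*_1)$ is a commutative C$^*$-algebra and $\Phi_1$ is a $*$-isomorphism; the triple-isomorphism claim then follows because in both triples the triple product is the one induced by the JB$^*$-algebra structure (Lemma~\ref{L:spin factor as CD}$(d)$ on the $\A_1$ side, the C$^*$-triple product on the other). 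I expect the only genuinely delicate point to be the bookkeeping of signs and orientation in $(ii)$: one must make sure the prescribed matrices reproduce the Cayley--Dickson table rather than its opposite. This is ultimately immaterial for $(iii)$, since the relevant structure there is the \emph{Jordan} product, and an associative isomorphism or anti-isomorphism is in either case a Jordan isomorphism, so the triple product, governed by the Jordan product and the involution, is preserved regardless of the chosen orientation.
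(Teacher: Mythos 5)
Your proposal is correct, and on parts $(i)$, $(ii)$ and the triple-isomorphism half of $(iii)$ it coincides with the paper's proof: a finite check of the multiplication table and of the involution on the basis vectors, followed by the observation that a linear bijection carrying one orthonormal basis onto the other and intertwining the two coordinatewise conjugations automatically preserves the spin triple product. You diverge in two places, and both of your alternatives work. For the equality of the spin norm with the operator norm you compute directly with singular values, using $\tr{XX^*}=2\ip XX$ and $\abs{\det X}=\abs{\ip{X}{\overline X}}=s_1s_2$; the paper instead notes, via Lemma~\ref{L:spin factor as CD}$(d)$, that the transported spin triple product is exactly the C$^*$-triple product of $M_2$, so the two norms must agree by uniqueness of the norm making a given triple product a JB$^*$-triple (Kaup's theorem). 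Your computation is more elementary and self-contained; the paper's argument is shorter but leans on abstract uniqueness. For $(iv)$ the paper does not argue directly: it restricts the isomorphism of $(ii)$--$(iii)$ to $\span\{e_1,e_2\}$, landing in the diagonal matrices, whereas you verify the splitting of $\A_1$ by the idempotents $\tfrac12(1\mp i\epsilon)$ by hand; both are immediate. Finally, your caution about orientation is not idle: writing $A_1,\dots,A_4$ for the matrices listed in $(ii)$, the paper's Cayley--Dickson convention actually yields $e_2\boxdot_2 e_3=-e_4$ and $e_3\boxdot_2 e_2=e_4$ (so the table stated in the paper's proof of $(i)$ carries the opposite orientation), while the matrices satisfy $A_2A_3=A_4$; hence the stated assignment $e_m\mapsto A_m$ is, strictly speaking, an anti-isomorphism. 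As you observe, this is harmless: composing with the transpose gives a genuine $*$-isomorphism, and for the Jordan and triple statements in $(iii)$ and $(iv)$ an anti-isomorphism serves equally well, since the triple product $\tfrac12(xy^*z+zy^*x)$ is unchanged under reversal of the associative product.
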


\begin{proof}
$(i)$ It is enough to observe that $e_1=1$, $e_2\boxdot_2 e_2=e_3\boxdot_2 e_3=e_4\boxdot_2 e_4=-1$,
$e_2\boxdot_2 e_3=-e_3\boxdot_2 e_2=e_4$, $e_3\boxdot_2 e_4=-e_4\boxdot_2 e_3=e_2$, $e_4\boxdot e_2=-e_2\boxdot e_4=e_3$.

$(ii)$ It is enough to observe that the given matrices form a quaternion basis and $e_1^*=e_1$, $e_j^*=-e_j$ for $j=2,3,4$ and the matrices satisfy the same for the standard matrix involution.

$(iii)$ To prove that the respective mapping is a triple isomorphism, it is enough to observe that it is a unitary operator. To this end it is enough to verify that it maps an orthonormal basis onto an orthonormal basis, which is clear.

The coincidence of norms follows form $(ii)$ and Lemma~\ref{L:spin factor as CD}$(d)$. Indeed, it follows that the triple product coming from the spin factor coincides with the triple product coming from the standard C$^*$-algebra structure on $M_2$.

$(iv)$ It follows from $(ii)$ and $(iii)$ that the mapping
$$(x_1,x_2)\mapsto\begin{pmatrix}
x_1+ix_2 &0\\0& x_1-ix_2
\end{pmatrix}$$ 
is simultaneously a $*$-isomorphism and a triple isomorphism of $\A_1$ into $M_2$. Now the statement easily follows.
\end{proof}

\subsection{Type $6$ Cartan factor}\label{subsec:C6}

Exceptional JBW$^*$-triples, i.e., those triples which cannot be found as subtriples of von Neumann algebras,  are described using the complex Cayley numbers. The algebra of complex Cayley numbers is just the algebra $\A_3$ from Section~\ref{sec:CD}. Let us introduce a simplified notation.

The mentioned algebra $\A_3$ will be denoted by $\O$ (for octonions). Recall that it has several structures -- the structure of a Hilbert space with the canonical conjugation, the structure of eight-dimensional spin factor, two involutions (a linear $\inv$ and a conjugate-linear $*$), a product $\boxdot_3$ which is non-commutative and non-associative but is alternative. These structures are interrelated as explained in Section~\ref{sec:CD}. The product $\boxdot_3$ will be in the sequel denoted just by $\boxdot$; the associative products $\boxdot_n$ for $n\le 2$ will be denoted by $\cdot$ or simply omitted as it is usual.

We will consider matrices with entries in $\O$. 
We define for them two involutions and matrix multiplication (denoted again by $\boxdot$) in the standard way, i.e., if $A=(a_{ij})$ is a matrix of type $m\times n$, then $A^*$ and $A^\inv$ are matrices of type $n\times m$; on the place $ij$ the first one has the element $a_{ji}^*$ and the second one the element $a_{ji}^\inv$. Moreover, if $A=(a_{ij})$ is a matrix of type $m\times n$ and $B=(b_{jk})$ is a matrix of type $n\times k$, then 
$A\boxdot B$ is the matrix of type $m\times k$ which has the element $\sum_{j=1}^n a_{ij}\boxdot b_{jk}$ on place $ik$. Note that multiplication is neither commutative nor associative. For square matrices it is neither alternative (cf. \cite[Lemma 2.4.23 and Theorem 2.3.61]{Cabrera-Rodriguez-vol1}). In fact, it is easy to show that the algebra $M_2(\O)$ of $2\times 2$ matrices is even not power-associative. Anyway, using the definitions and Lemma~\ref{L:CD doubling}$(v)$ we easily get
\begin{equation}
    (A\boxdot B)^*=B^*\boxdot A^*\mbox{ and }(A\boxdot B)^\inv=B^\inv\boxdot A^\inv\end{equation}
for matrices of compatible types.

By $H_3(\O)$ denote the ($\inv$-)hermitian $3\times 3$ matrices with entries in $\O$, i.e., those $3\times 3$ matrices $\x$ which satisfy $\x^\inv=\x$. If we define 
$$\x\circ \y=\frac12(\x\boxdot \y+\y\boxdot \x),$$
we obtain an exceptional JB$^*$-algebra (of dimension $27$). 
It follows from \cite[Theorem 2.5.7]{hanche1984jordan} that $\circ$ is indeed a Jordan product (the quoted result deals with the real version). By \cite[Propositions 2.9.2 and 3.1.6 and Corollary 3.1.7]{hanche1984jordan} and \cite[Theorem 2.8]{Wright1977}  there is a (unique, canonical) norm such that $H_3(\O)$ is indeed a JB$^*$-algebra (cf. \cite[Proposition 5.5]{kaup1983riemann}
 for the uniqueness).

Hence $H_3(\O)$ is a JB$^*$-triple when equipped with the triple product
$$\J{\x}{\y}{\z}=(\x\circ\y^*)\circ\z+\x\circ(\y^*\circ \z)-(\x\circ \z)\circ\y^*.  $$ 
 
The just described triple $H_3(\O)$ is the Cartan factor of type $6$. Note that it is a finite-dimensional JB$^*$-algebra, hence using Proposition~\ref{P:Linftyfindim is finite} we get

\begin{prop}
Let $A$ be any abelian von Neumann algebra. Then $A\overline{\otimes}H_3(\O)$ is a finite JBW$^*$-algebra. In particular, the relations $\le_2$ and $\le_0$ coincide.
\end{prop}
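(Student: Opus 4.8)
The plan is to run exactly the same argument that established Corollary~\ref{cor:Aotimes spin is finite} for spin factors, reducing everything to Proposition~\ref{P:Linftyfindim is finite}. The crucial input is that $H_3(\O)$ is a finite-dimensional (hence reflexive) JB$^*$-triple which is in fact a unital JB$^*$-algebra, the identity matrix being its unit. So the whole task amounts to transporting finiteness of $L^\infty(\mu,E)$ for finite-dimensional $E$ through the tensor-product identification recalled in Subsection~\ref{subsec:representation}.

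First I would reduce to the case of a probability measure. By \cite[Theorem 6.4.1]{DualC(K)} (as used in Subsection~\ref{subsec:representation}) any abelian von Neumann algebra decomposes as $A=\bigoplus_{\theta}^{\ell_\infty}L^\infty(\mu_\theta)$, where each $\mu_\theta$ is a Radon probability measure. Since $H_3(\O)$ is finite-dimensional, the (injective) tensor product distributes over this $\ell_\infty$-sum, giving $A\overline{\otimes}H_3(\O)=\bigoplus_{\theta}^{\ell_\infty}\bigl(L^\infty(\mu_\theta)\overline{\otimes}H_3(\O)\bigr)$. By Proposition~\ref{P:finiteness in direct sum} it then suffices to show that each summand is a finite JBW$^*$-algebra, since an $\ell_\infty$-sum of unital JBW$^*$-algebras is again a JBW$^*$-algebra.

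For a single summand, i.e.\ for $A=L^\infty(\mu)$ with $\mu$ a probability measure, I would invoke the description of the tensor product for type $6$ Cartan factors from Subsection~\ref{subsec:representation}: as $H_3(\O)$ is finite-dimensional, $L^\infty(\mu)\overline{\otimes}H_3(\O)=C(X,H_3(\O))\cong L^\infty(\mu,H_3(\O))$. Proposition~\ref{P:Linftyfindim is finite} then shows this is a finite JBW$^*$-\emph{triple}; to upgrade ``triple'' to ``algebra'' I would note that the constant function equal to the identity matrix is a unitary element, so $L^\infty(\mu,H_3(\O))$ is a JBW$^*$-algebra. The final ``in particular'' clause is then immediate from the equivalence $(iii)\Leftrightarrow(i)$ of Proposition~\ref{P:le0=le2}.

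All steps are routine given the earlier results, so I do not expect a genuine obstacle here; the statement is essentially a finite-dimensional special case packaged through the representation theory. The only points needing a word of care are the distributivity of $\overline{\otimes}$ over the $\ell_\infty$-sum (valid precisely because the second factor is finite-dimensional) and the verification that the unit of $H_3(\O)$ persists as a unitary element in the Bochner space $L^\infty(\mu,H_3(\O))$, so that the finite triple produced by Proposition~\ref{P:Linftyfindim is finite} is in fact a finite JBW$^*$-algebra.
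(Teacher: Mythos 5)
Your proposal is correct and follows essentially the same route as the paper: the paper's proof is precisely the observation that $H_3(\O)$ is a finite-dimensional JB$^*$-algebra, so that after the identification $L^\infty(\mu)\overline{\otimes}H_3(\O)\cong L^\infty(\mu,H_3(\O))$ from Subsection~\ref{subsec:representation} one applies Proposition~\ref{P:Linftyfindim is finite}, with the ``in particular'' clause coming from Proposition~\ref{P:le0=le2}. Your additional care about decomposing a general abelian $A$ into an $\ell_\infty$-sum of $L^\infty(\mu_\theta)$'s (using Proposition~\ref{P:finiteness in direct sum}) and about the constant identity matrix being a unitary element just makes explicit details the paper leaves implicit.
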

 
\begin{remark} It is possible to say something more on the structure of tripotents in $M=H_3(\O)$. There are three types of them.

The first type consists of unitary tripotents. An example is the unit of $M$, i.e., the unit matrix.

The second type are minimal tripotents, for example 
$$u=\begin{pmatrix}
1 & 0 & 0 \\ 0 & 0 &0 \\ 0 & 0 & 0
\end{pmatrix}.$$
It is a projection in $H_3(\O)$. In this case
$M_2(u)=\span\{u\}$. Moreover,
\begin{equation}\label{eq Peirce1 minimal trip 6.3} M_1(u)=\left\{\begin{pmatrix}
0 & x & y \\ x^\inv & 0 &0 \\ y^\inv & 0 & 0
\end{pmatrix}\setsep x,y\in \O\right\},
\end{equation}
so it is isomorphic to the type 5 Cartan factor (see Subsection~\ref{subsec:C5} below).
Furhter,
$$M_0(u)=\left\{\begin{pmatrix}
0 & 0 & 0 \\ 0 & x & z \\ 0 & z^\inv & y
\end{pmatrix}\setsep x,y\in \ce, z  \in\O\right\},$$
which is triple-isomorphic to the ten-dimensional spin factor by the mapping given by the formula
$$(x_1,\dots,x_{10})\mapsto\begin{pmatrix} 0&0&0 \\ 0&
-x_1+x_2 & (x_2,x_3,\dots,x_{10})\\ 
0&(x_2,-x_3,\dots,-x_{10}) & -x_1-x_2
\end{pmatrix}.$$

The third type is represented by $1-u$, where $u$ is as above. Then $M_2(1-u)=M_0(u)$, $M_1(1-u)=M_1(u)$ and $M_0(1-u)=M_2(u)$.

These three types cover all tripotents, because for any tripotent $e\in M$ there is a triple-automorphism of $M$ sending $e$ to one of the projections $1$, $u$ and $1-u$ (see \cite[Proposition 5.8]{kaup1997real} or \cite[\S 8]{loos1977bounded}).
 \end{remark}

\subsection{Type $5$ Cartan factor}\label{subsec:C5} 

The Cartan factor of type $5$ is the subtriple of $H_3(\O)$ defined by 
$$C_5=\left\{\begin{pmatrix}
0 & x_1 & x_2 \\ x_1^\inv & 0&0 \\ x_2^\inv &0&0
\end{pmatrix}\setsep \qquad x_1,x_2\in\O\right\}\subset H_3(\O).$$ 
It is not hard to observe that it is indeed a subtriple. Let us observe that, under this point of view, $C_5$ coincides with the Peirce-1 subspace of a minimal tripotent in $H_3(\O)$ described in the previous subsection (cf. \eqref{eq Peirce1 minimal trip 6.3}). Another possibility is to represent $C_5$
 as the space $M_{1,2}(\O)$ of $1\times 2$ matrices with entries in $\O$ equipped with the triple product defined by
 $$\J{\x}{\y}{\z}=\frac12 (\x\boxdot(\y^*\boxdot\z)+\z\boxdot(\y^*\boxdot\x)),$$
 where $\boxdot$ denotes the matrix multiplication from the previous section. Note that
 the position of  brackets does matter, as it is easy to check that the analogue of Lemma~\ref{L:CD doubling}$(xiii)$ fails in the matrix case.
 
 By a direct computation using Lemma~\ref{L:CD doubling} and Lemma~\ref{L:spin factor as CD} one can verify that the assignment
 $$(x_1,x_2)\mapsto\begin{pmatrix}
0 & x_1 & x_2 \\ x_1^\inv & 0&0 \\ x_2^\inv &0&0
\end{pmatrix}$$
is a triple isomorphism between the two representations and to derive the following more detailed formula for the triple product.
 \begin{equation}\label{eq:triple product in C5}
\begin{aligned}
\J{\x}{\y}{\z}&=( \J{x_1}{y_1}{z_1}+\frac12(z_2\boxdot(y_2^*\boxdot x_1)+x_2\boxdot(y_2^*\boxdot z_1)),\\&\qquad\J{x_2}{y_2}{z_2}+\frac12(z_1\boxdot(y_1^*\boxdot x_2)+x_1\boxdot(y_1^*\boxdot z_2)))   
\end{aligned}\end{equation}
 
 Since $C_5$ has a finite dimension (in fact, $\dim C_5=16$), Proposition~\ref{P:Linftyfindim is finite} can be applied to yield the following proposition.
 
 \begin{prop}
 Let $A$ be any abelian von Neumann algebra. Then $A\overline{\otimes}C_5$ is a finite JBW$^*$-triple.
 \end{prop}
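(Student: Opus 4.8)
The whole content of this statement is that $C_5$ has finite dimension (indeed $\dim C_5 = 16$) and is therefore reflexive, so the substantive work has already been packaged into Proposition~\ref{P:Linftyfindim is finite}; the plan is simply to reduce a general abelian von Neumann algebra $A$ to a single probability space and to turn the tensor product into a Lebesgue--Bochner space. First I would invoke the decomposition recalled in Subsection~\ref{subsec:representation}, based on \cite[Theorem 6.4.1]{DualC(K)}, writing $A = \bigoplus_{\theta\in\Lambda}^{\ell_\infty} L^\infty(\mu_\theta)$ with each $\mu_\theta$ a Radon probability measure. Since $C_5$ is finite-dimensional, the completed tensor product defining $A\overline{\otimes}C_5$ distributes over the $\ell_\infty$-sum, so that
$$A\overline{\otimes}C_5 = \bigoplus_{\theta\in\Lambda}^{\ell_\infty} \left(L^\infty(\mu_\theta)\overline{\otimes}C_5\right).$$

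Next, for each fixed $\theta$, I would apply Lemma~\ref{L:tensor=Bochner} --- legitimate because $C_5$ is a reflexive Cartan factor --- to identify $L^\infty(\mu_\theta)\overline{\otimes}C_5$ with the Bochner space $L^\infty(\mu_\theta, C_5)$. This identification is canonical and isometric, hence a triple isomorphism by Kaup's Banach--Stone theorem \cite[Proposition 5.5]{kaup1983riemann}, so it transports the notion of finiteness. Proposition~\ref{P:Linftyfindim is finite}, taken with $E = C_5$, then gives that each $L^\infty(\mu_\theta, C_5)$ is a finite JBW$^*$-triple. Finally I would close the argument with Proposition~\ref{P:finiteness in direct sum}: a tripotent in the $\ell_\infty$-sum is finite exactly when each of its coordinates is, so the whole sum is finite once every summand is.

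I do not anticipate a genuine obstacle here; the only two points deserving a sentence of justification are that the tensor product really splits over the $\ell_\infty$-sum (automatic, since finite-dimensionality of $C_5$ makes the completed injective tensor product coincide with the algebraic one, a finite direct power of $A$) and that the passage through Lemma~\ref{L:tensor=Bochner} respects the triple structure and not merely the norm. Everything else is bookkeeping that reduces the problem to the single finite-dimensional fibre $C_5$, which is handled by Proposition~\ref{P:finitedim is finite} inside the proof of Proposition~\ref{P:Linftyfindim is finite}.
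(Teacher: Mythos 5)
Your proof is correct and takes essentially the same route as the paper: the paper likewise deduces the proposition in one line from Proposition~\ref{P:Linftyfindim is finite}, using the identification $A\overline{\otimes}C_5\cong L^\infty(\mu,C_5)$ provided by Lemma~\ref{L:tensor=Bochner} and the representation of abelian von Neumann algebras recalled in Subsection~\ref{subsec:representation}. The only difference is that you spell out the bookkeeping the paper leaves to its standing conventions, namely the $\ell_\infty$-decomposition of $A$, the splitting of the (injective) tensor product over that sum, and the final appeal to Proposition~\ref{P:finiteness in direct sum}.
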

 
 However, the triple $C_5$ admits no unitary element, so unlike in the case of type $6$ Cartan factor, we do not have a JBW$^*$-algebra. We are going to describe two types of tripotents in $C_5$ using the results on spin factors, Subsection~\ref{sec:CD} and the formula \eqref{eq:triple product in C5}.
 
 Set 
 $$Y_1=\{(x_1,x_2)\in C_5\setsep x_2=0\}\mbox{ and }Y_2=\{(x_1,x_2)\in C_5\setsep x_1=0\}.$$ It follows from \eqref{eq:triple product in C5} that $Y_1$ and $Y_2$ are subtriples of $C_5$ canonically isomorphic to $\O$, i.e., they are canonically isomorphic to the eight-dimensional spin factor. Hence, if $u\in \O$ is a tripotent, then $(u,0)$ is a tripotent in $C_5$. Moreover, as $\O$ is a spin factor, nonzero tripotents are of two types from Lemma~\ref{L:tripotents in spin factor}. Let us describe Peirce decompositions of these tripotents.

\begin{prop} Let $u\in\O$ be a tripotent. Then $(u,0)$ is a tripotent in $C_5$. Moreover, the following assertions hold.
\begin{enumerate}[$(i)$]
    \item For any  $\x=(x_1,x_2)\in C_5$ we have
    $$
\begin{aligned}
L((u,0),(u,0))\x&=(\J uu{x_1},\frac12 u\boxdot(u^*\boxdot x_2));
\\
Q((u,0))\x&=(Q(u)x_1,0),\\
P_2((u,0))\x&=(P_2(u)x_1,0).\end{aligned}$$
In particular, 
$$(C_5)_2(u,0)=\{(x,0)\setsep x\in\O_2(u)\}.$$

\item If $u$ is unitary in $\O$, then $(u,0)$ is complete in $C_5$ and
$$(C_5)_2(u,0)=Y_1\mbox{ and }(C_5)_1(u,0)=Y_2.$$

\item Assume $u$ is minimal in $\O$. Then $(u,0)$ is minimal in $C_5$ and
$$\begin{aligned}
(C_5)_2(u,0)&=\span\{(u,0)\},\\
(C_5)_1(u,0)&= \{(x,0)\setsep x\in\{u,\overline{u}\}^{\perp_2}\}\oplus\{(0,x)\setsep u\boxdot(u^*\boxdot x)=x\}\\&=\{(x,0)\setsep x\in\{u,\overline{u}\}^{\perp_2}\}\oplus\span\{(0,u)\}\\&\qquad\qquad\oplus\{(0,x)\setsep x\in\{u,\overline{u}\}^{\perp_2}\ \&\ x\boxdot(u^*\boxdot u)=0\} \\
(C_5)_0(u,0)&=\span\{(\overline{u},0)\}\oplus\{(0,x)\setsep u\boxdot(u^*\boxdot x)=0\}\\&=
\span\{(\overline{u},0),(0,\overline{u})\}\oplus\{(0,x)\setsep x\in\{u,\overline{u}\}^\perp\ \&\ x\boxdot(u^*\boxdot u)=x\}
\end{aligned}$$
Moreover,
$$(C_5)_0(u,0)\cap Y_2=\{(0,\overline{x})\setsep (0,x)\in (C_5)_1(u,0)\},$$
hence
$$\dim (C_5)_2(u,0)=1,\dim (C_5)_0(u,0)=5,\dim (C_5)_1(u,0)=10.$$
\end{enumerate}
\end{prop}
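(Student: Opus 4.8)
The plan is to reduce everything to the explicit triple product formula \eqref{eq:triple product in C5} together with the description of the tripotents of the spin factor $\O$ from Lemma~\ref{L:tripotents in spin factor} and the algebraic identities of Lemmata~\ref{L:CD doubling} and~\ref{L:spin factor as CD}. First I would check that $(u,0)$ is a tripotent by substituting $\x=\y=\z=(u,0)$ into \eqref{eq:triple product in C5}: every ``mixed'' term carries a factor $0$ and vanishes, leaving $(\J uuu,0)=(u,0)$. For assertion $(i)$ I would compute $L((u,0),(u,0))\x$, $Q((u,0))\x$ and $P_2((u,0))\x=Q((u,0))^2\x$ directly from \eqref{eq:triple product in C5}, placing $(u,0)$ in the appropriate slots; the only surviving contribution to the second coordinate is $\frac12 u\boxdot(u^*\boxdot x_2)$, while the cross terms containing a $0$-entry drop out. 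This immediately yields $(C_5)_2(u,0)=\ran P_2((u,0))=\{(x,0)\setsep x\in\O_2(u)\}$.

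For assertion $(ii)$, since $u$ is unitary we have $\O_2(u)=\O$, so $(i)$ gives $(C_5)_2(u,0)=Y_1$. It then remains to show that $L((u,0),(u,0))$ acts as $\frac12$ times the identity on $Y_2$, for then $Y_2=(C_5)_1(u,0)$, $(C_5)_0(u,0)=\{0\}$ and $(u,0)$ is complete. By $(i)$ this operator is $x_2\mapsto\frac12\,u\boxdot(u^*\boxdot x_2)$, so I must prove $u\boxdot(u^*\boxdot x)=x$. I would first check, using Lemma~\ref{L:tripotents in spin factor}$(a)$ (writing $u=\alpha z$ with $z$ real and $\norm{z}_2=1$) and Lemma~\ref{L:spin factor as CD}$(a)$, that $u\boxdot u^*=u^*\boxdot u=1$; the inverse property of the alternative algebra $\O$ (the subalgebra generated by $u$ and $u^*$ is associative by Artin's theorem) then gives $u\boxdot(u^*\boxdot x)=(u\boxdot u^*)\boxdot x=x$.

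Assertion $(iii)$ is the computational core. From $(i)$, $\O_2(u)=\span\{u\}$ gives $(C_5)_2(u,0)=\span\{(u,0)\}$, so $(u,0)$ is minimal. I would then split $C_5=Y_1\oplus Y_2$, both invariant under $L((u,0),(u,0))$ by $(i)$. On $Y_1$ the operator is the octonion $L(u,u)$, whose Peirce decomposition is read off from Lemma~\ref{L:tripotents in spin factor}$(b)$, contributing $\span\{(u,0)\}$, $\{(x,0)\setsep x\in\{u,\overline u\}^{\perp_2}\}$ and $\span\{(\overline u,0)\}$ at eigenvalues $1,\frac12,0$. On $Y_2$ the operator is $\frac12 T$ with $Tx=u\boxdot(u^*\boxdot x)$; since $(u,0)$ is a tripotent its spectrum lies in $\{0,\frac12,1\}$, and by $(i)$ there is no Peirce-$2$ part in $Y_2$, so $\frac12T$ has no eigenvalue $1$ and hence $T$ has only eigenvalues $0$ and $1$. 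The eigenvalue-$1$ space of $T$ gives the $Y_2$-part of $(C_5)_1(u,0)$ and the eigenvalue-$0$ space the $Y_2$-part of $(C_5)_0(u,0)$. To get the explicit descriptions I would compute the idempotent $f=u^*\boxdot u$ and, using associativity of the subalgebra generated by $u,u^*$, verify $Tu=(u\boxdot u^*)\boxdot u=u$ and $T\overline u=u\boxdot(u^*\boxdot\overline u)=0$; the remaining vectors $x\in\{u,\overline u\}^{\perp_2}$ are then sorted by whether $x\boxdot f=0$ (eigenvalue $1$, Peirce-$1$) or $x\boxdot f=x$ (eigenvalue $0$, Peirce-$0$). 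The symmetry $(C_5)_0(u,0)\cap Y_2=\{(0,\overline x)\setsep(0,x)\in(C_5)_1(u,0)\}$ I would derive from the clean identity $\overline{Tx}+T\overline x=\overline x$ (equivalently $L_{\overline u}L_{u^\inv}+L_uL_{u^*}=\mathrm{id}$, using $\overline{u^*}=u^\inv$), which interchanges the two eigenspaces of $T$; the dimension count $\dim(C_5)_2=1$, $\dim(C_5)_1=6+4=10$, $\dim(C_5)_0=1+4=5$ then closes the argument since $\dim C_5=16$.

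The main obstacle is precisely the analysis of the operator $T=L_u\circ L_{u^*}$ on $Y_2$ in the minimal case: showing that it is an idempotent with eigenvalues $0$ and $1$ of multiplicity $4$ each, and matching these eigenspaces to the stated conditions $x\boxdot f=x$ and $x\boxdot f=0$. Here associativity is unavailable, so the computations must rely on alternativity (Artin's theorem for two-generated subalgebras) together with the concrete coordinate form of a minimal tripotent from Lemma~\ref{L:tripotents in spin factor}$(b)$; establishing the conjugation identity $\overline{Tx}+T\overline x=\overline x$ first is the cleanest way to halve this work, as it automatically pairs the Peirce-$1$ and Peirce-$0$ parts in $Y_2$.
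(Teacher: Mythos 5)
Your overall strategy coincides with the paper's: verify the block formulas in $(i)$ directly from \eqref{eq:triple product in C5}, read the $Y_1$-parts of the Peirce spaces off the spin factor $\O$, identify the $Y_2$-parts with the eigenspaces of the idempotent $T=L_uL_{u^*}$, and finish with the conjugation pairing and a dimension count. In particular, your identity $\overline{Tx}+T\overline{x}=\overline{x}$ is equivalent to the paper's step that, for the unitary $e=u+\overline{u}$, the operator $L((e,0),(e,0))=L((u,0),(u,0))+L((\overline{u},0),(\overline{u},0))$ acts as $\frac12\operatorname{id}$ on $Y_2=(C_5)_1(e,0)$; and deriving the splitting $(C_5)_j(u,0)=((C_5)_j(u,0)\cap Y_1)\oplus((C_5)_j(u,0)\cap Y_2)$ from the block-diagonal form of $L((u,0),(u,0))$ given by $(i)$ is a legitimate, even slightly more direct, substitute for the paper's commuting-Peirce-projections argument via Proposition~\ref{P:M2 inclusion}.

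Two steps, however, need repair. First, in $(ii)$ you justify $u\boxdot(u^*\boxdot x)=(u\boxdot u^*)\boxdot x$ for \emph{arbitrary} $x$ by Artin's theorem; that theorem only makes the subalgebra generated by $u$ and $u^*$ associative, and $x$ is a third element, so the justification fails as stated. The inverse property of alternative algebras (a consequence of the Moufang identities) does hold, but the paper gets the identity with no Moufang input: by Lemma~\ref{L:spin factor as CD}$(d)$, $\J uux=\frac12\left(u\boxdot(u^*\boxdot x)+x\boxdot(u^*\boxdot u)\right)$; since $u$ is unitary, $\J uux=x$, and $u\boxdot u^*=u^*\boxdot u=1$ by your computation, whence $u\boxdot(u^*\boxdot x)=x$. (Your later Artin appeals are fine, because $\overline{u}=(u^*+\overline{u})-u^*$ with $u^*+\overline{u}=u^*+(u^*)^{\inv}\in\ce$, so $u,u^*,\overline{u},f$ all lie in one two-generated unital subalgebra.) Second, in $(iii)$ the sorting of $E=\{u,\overline{u}\}^{\perp_2}$ by $x\boxdot f=0$ versus $x\boxdot f=x$ needs two ingredients your sketch leaves implicit: the identity $Tx=x-x\boxdot f$ on $E$ (again one line from Lemma~\ref{L:spin factor as CD}$(d)$, since $\J uux=\frac12 x$ on $E$), and, crucially, the invariance of $E$ under $T$. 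Without invariance, an eigenvector of $T$ could have a component in $\span\{u,\overline{u}\}$ plus a non-eigenvector component in $E$: concretely, $T(\alpha u+\beta\overline{u}+x)=\alpha u+\beta\overline{u}+x$ only forces $x\boxdot f=-\beta\overline{u}$, not $\beta=0$, so your two sorted subspaces are not yet known to exhaust the eigenspaces. Neither diagonalizability of $T$ nor the conjugation identity supplies this; the paper gets it from the self-adjointness of $T$ on the Hilbert space $\O$ (Lemma~\ref{L:spin factor as CD}$(e)$), which makes the orthogonal complement of the invariant subspace $\span\{u,\overline{u}\}$ invariant. With these two points added, your argument closes and matches the paper's.
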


\begin{proof}
$(i)$ The first two equalities follow immediately from \eqref{eq:triple product in C5}, the third one follows from the second one. The formula for the Peirce-2 subspace follows from the formula for the projection.

$(ii)$ If $u\in\O$ is unitary, then $\O_2(u)=\O$, hence by $(i)$ we get $(C_5)_2(u,0)=Y_1$.

Further, by Lemma~\ref{L:tripotents in spin factor} we know that
$u=\alpha e$ where $e\in\O$ has real coordinates, $\ip ee=1$ and $\alpha$ is a complex unit. Thus, using Lemma~\ref{L:spin factor as CD}$(a)$ we get
$$1=\ip ee=\frac12(e\boxdot e^*+\overline{e}\boxdot e^\inv)=e\boxdot e^\inv=
\alpha e \boxdot \overline{\alpha} e^\inv=u\boxdot u^*,$$
hence for any $x\in\O$ we have
$$\frac12 u\boxdot(u^*\boxdot x)=\J uux - \frac12(u\boxdot u^*)\boxdot x=x-\frac12x=\frac12x,$$
so $Y_2\subset (C_5)_1(u,0)$. Since $C_5$ is (as a linear space) a direct sum of $Y_1$ and $Y_2$, we deduce that $(C_5)_1(u,0)=Y_2$
and $(u,0)$ is complete.

$(iii)$ The formula for $(C_5)_2(u,0)$ follows from $(i)$ and 
Lemma~\ref{L:tripotents in spin factor}.

Further observe that
$$(C_5)_j(u,0)=((C_5)_j(u,0)\cap Y_1)+((C_5)_j(u,0)\cap Y_2)\mbox{ for }j=0,1.$$
Indeed, $e=u+\overline{u}$ is a unitary tripotent in $\O$, thus by $(ii)$ the canonical projections onto $Y_1$ and $Y_2$ coincide with the Peirce-$2$ and Peirce-$1$ projections of $(e,0)$. Since $(u,0)\in Y_1=(C_5)_2(e,0)$, by Proposition~\ref{P:M2 inclusion} we deduce that Peirce projections of $(u,0)$ and $(e,0)$ commute, so the above decomposition follows. Using this observation, $(i)$ and Lemma~\ref{L:tripotents in spin factor} we get
$$(C_5)_0(u,0)\cap Y_1=\span\{(\overline{u},0)\} \mbox{ and }
(C_5)_1(u,0)\cap Y_1=\{(x,0)\setsep x\in\{u,\overline{u}\}^{\perp_2}\}.$$
It remains to determine the intersections with $Y_2$.
It follows from $(i)$ that
$$\begin{aligned}
(C_5)_1(u,0)\cap Y_2&=\{(0,x)\setsep u\boxdot(u^*\boxdot x)=x\},
\mbox{ and }\\ (C_5)_0(u,0)\cap Y_2&=\{(0,x)\setsep u\boxdot(u^*\boxdot x)=0\}.\end{aligned}$$
Further, observe that the tripotents $(u,0)$ and $(\overline{u},0)$ are orthogonal, thus
$$L((e,0),(e,0))=L((u+\overline{u},0),(u+\overline{u},0))=L((u,0),(u,0))+L((\overline{u},0),(\overline{u},0)).$$
Taking into account that $Y_2=(C_5)_1(e,0)$, we deduce that
$$(C_5)_1(u,0)\cap Y_2=(C_5)_0(\overline{u},0)\cap Y_2
\mbox{ and }(C_5)_0(u,0)\cap Y_2=(C_5)_1(\overline{u},0)\cap Y_2.$$
Further, for $x\in \O$ we have
\begin{multline*}
(0,x)\in (C_5)_0(u,0)\Leftrightarrow u\boxdot(u^*\boxdot x)=0
\Leftrightarrow \overline{u}\boxdot(\overline{u}^*\boxdot \overline{x})=0\\ \Leftrightarrow (0,\overline{x})\in (C_5)_0(\overline{u},0)\Leftrightarrow (0,\overline{x})\in (C_5)_1(u,0),\end{multline*}
hence
$$(C_5)_0(u,0)\cap Y_2=\{(0,\overline{x})\setsep (0,x)\in (C_5)_1(u,0)\},$$
in particular
$$\dim (C_5)_0(u,0)\cap Y_2=\dim (C_5)_1(u,0)\cap Y_2=4.$$

Further, since
$$u\boxdot (u^*\boxdot u)=\J uuu=u,$$
we deduce that $(0,u)\in(C_5)_1(u,0)$ and hence $(0,\overline{u})\in (C_5)_0(u,0)$.

It follows from Lemma~\ref{L:spin factor as CD}$(e)$ that the operator
$$x\mapsto u\boxdot(u^*\boxdot x), \qquad x\in\O$$
is self-adjoint (when $\O$ is considered as a Hilbert space).
Hence, it has an orthonormal basis consisting of eigenvectors.
By the above we know that $u$ and $\overline{u}$ are eigenvectors (with respect to the eigenvalues $1$ and $0$), hence it easily follows that $E=\{u,\overline{u}\}^{\perp_2}$ is invariant for this operator. Moreover, for any $x\in E$ we have

$$u\boxdot(u^*\boxdot x)=2\J uux- x\boxdot(u^*\boxdot u)=
x- x\boxdot(u^*\boxdot u).$$
Now the formulas easily follow.
\end{proof}

In $C_5$ there are just two types of nonzero tripotents essentially described above. Let us explain it:

Let $e$ be a complete tripotent in $C_5$. Since $C_5$ is a Cartan factor, there is an automorphism of $C_5$ transforming $e$ to $(u,0)$ where $u$ is a complete (hence unitary) tripotent in $\O$ (cf. \cite[Proposition 5.8]{kaup1997real} and \cite{loos1977bounded}). In particular,
both subspaces $(C_5)_2(e)$ and $(C_5)_1(e)$ are eight-dimensional and isomorphic to $\O$.

 Let $v$ be a non-complete non-zero tripotent in $C_5$. Then there is a complete tripotent $e\ge v$. Then $e$ is, up to an automorphism
of the form $(u,0)$ where $u$ is unitary in $\O$. Thus $v$ must be of the form $(w,0)$ where $w\le u$ is a non-complete non-zero tripotent in $\O$, so it has the above form.

Observe that, given a complete tripotent $e\in C_5$ and a minimal tripotent $v\le e$, there is, up to a scalar multiple, a unique non-zero tripotent $w\le e$ orthogonal to $v$. Indeed, it must be a mutliple of $e-v$.

On the other hand, if $v$ is a minimal tripotent, then the choice of a complete tripotent $e\ge v$ is highly non-unique. Even the Peirce-2 subspace of $e$ is non-unique.
Indeed, $(C_5)_0(v)$ has dimension $5$, so if we choose any nonzero tripotent $w\in (C_5)_0(w)$, then $e=v+w$ is a complete tripotent above $v$. Moreover, 
$$(C_5)_2(e)=\span\{v,w\}+((C_5)_1(v)\cap(C_5)_1(w))$$ 
and $$(C_5)_1(e)=(C_5)_1(v)\cap(C_5)_0(w) + (C_5)_0(v)\cap(C_5)_1(w).$$

 We finish this section by the following characterization of the preorder $\le_0$ in $C_5$.
 
 \begin{prop}\label{P:le0 in C5}
Let $u,v\in C_5$ be two nonzero tripotents. Then $u\le_0 v$ if and only if either $v$ is complete or $u=\alpha v$ for a complex unit $\alpha$.
\end{prop}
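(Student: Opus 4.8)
The plan is to build on two facts recorded in the discussion preceding the statement: every nonzero tripotent in $C_5$ is either complete or minimal, and the Peirce dimensions are $\dim (C_5)_2(t)=1$, $\dim (C_5)_1(t)=10$, $\dim (C_5)_0(t)=5$ for a minimal tripotent $t$, while $(C_5)_0(t)=\{0\}$ for a complete one. The easy implication is immediate: if $v$ is complete then $(C_5)_0(v)=\{0\}\subset (C_5)_0(u)$, so $u\le_0 v$; and if $u=\alpha v$ for a complex unit $\alpha$ then $L(u,u)=|\alpha|^2L(v,v)=L(v,v)$, so $u$ and $v$ have identical Peirce decompositions by Proposition~\ref{P:M2 equality}, whence $(C_5)_0(v)=(C_5)_0(u)$ and again $u\le_0 v$.

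For the converse, suppose $u\le_0 v$ with $v$ not complete; by the dichotomy $v$ is minimal, so $\dim (C_5)_0(v)=5$. A complete $u$ would give $(C_5)_0(u)=\{0\}$, which cannot contain a $5$-dimensional subspace; hence $u$ is minimal too and $\dim (C_5)_0(u)=5$. An inclusion $(C_5)_0(v)\subset (C_5)_0(u)$ of equidimensional finite-dimensional spaces is an equality, so the whole statement reduces to the following claim: \emph{two minimal tripotents $u,v\in C_5$ with $(C_5)_0(u)=(C_5)_0(v)$ satisfy $u=\alpha v$ for a complex unit $\alpha$.} As $(C_5)_2(v)=\ce v$ is one-dimensional, this amounts to showing $u\in (C_5)_2(v)$, i.e. $L(v,v)u=u$.

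To attack the claim I would use the canonical Hilbertian structure of the Cartan factor: being a finite-dimensional JB$^*$-triple, $C_5$ carries a positive definite inner product for which every $L(x,x)$ is self-adjoint with nonnegative spectrum, so that the Peirce subspaces of any tripotent are mutually orthogonal eigenspaces. Since $u$ lies in $(C_5)_2(u)$, hence is orthogonal to $(C_5)_0(u)=(C_5)_0(v)$, we may write $u=\lambda v+u_1$ with $\lambda\in\ce$ and $u_1\in (C_5)_1(v)$; the goal is $u_1=0$. Grading the tripotent identity $\J uuu=u$ by the rule $\J{(C_5)_j(v)}{(C_5)_k(v)}{(C_5)_l(v)}\subset (C_5)_{j-k+l}(v)$ produces, in the generic case $\lambda\ne0$, the relations $\J{u_1}{v}{u_1}=0$ and $\J{u_1}{u_1}{u_1}=(1-|\lambda|^2)u_1$, while the hypothesis $(C_5)_0(u)=(C_5)_0(v)$ forces $\J uuw=0$, and therefore $\J{u_1}{u_1}{w}=0$, for every $w\in (C_5)_0(v)$; the degenerate value $\lambda=0$ must be excluded by the same analysis.

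The main obstacle is to conclude $u_1=0$ from these relations. Observe that if $u_1\ne0$ then $w_0=u_1/\sqrt{1-|\lambda|^2}$ is a genuine tripotent contained in $(C_5)_1(v)$ and satisfying $\J{w_0}{v}{w_0}=0$, so the task is to show that $(C_5)_1(v)$ carries no such tripotent compatible with $(C_5)_0(u)=(C_5)_0(v)$. I expect to settle this by transporting the question into a spin factor: after normalising $v=(v_0,0)$ with $v_0$ a minimal tripotent of $\O$ via a triple automorphism of $C_5$ (which preserves all Peirce subspaces, and hence $\le_0$), I would fix a complete tripotent $e\ge v$, identify $(C_5)_2(e)$ with the spin factor $\O$, and use the explicit Peirce decomposition of a minimal $(v_0,0)$ from the proposition preceding the statement together with the spin-factor descriptions of $\le_0$, $\le_2$ and $\perp$ in Lemma~\ref{L:tripotents in spin factor} and Proposition~\ref{P:relations in spin}, where the corresponding rigidity is already available. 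Carrying out this reduction cleanly, including the bookkeeping that relates the ambient relation $\le_0$ in $C_5$ to its restriction to the spin factor, is the technical heart of the argument.
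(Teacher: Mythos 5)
Your forward direction and the initial reduction are correct and coincide with the paper's: the complete/minimal dichotomy, the dimension count forcing $(C_5)_0(u)=(C_5)_0(v)$, and (granting the standard fact, not proved in the paper but available in \cite{loos1977bounded}, that a finite-dimensional JB$^*$-triple carries an inner product making every $L(x,x)$ self-adjoint) the decomposition $u=\lambda v+u_1$ with $u_1\in(C_5)_1(v)$ together with the graded identities $\J{u_1}{v}{u_1}=0$, $\J{u_1}{u_1}{u_1}=(1-|\lambda|^2)u_1$ and $\J{u_1}{u_1}{w}=0$ for $w\in(C_5)_0(v)$. But the proof stops exactly at the decisive point: you never prove $u_1=0$, and the plan you sketch for it cannot work as stated. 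After normalizing $v=(v_0,0)$, the dangerous component of $u_1$ is the one along $(0,v_0)$, which lies in $Y_2$, outside the spin factor $Y_1=(C_5)_2(e)$; Lemma~\ref{L:tripotents in spin factor} and Proposition~\ref{P:relations in spin} are statements internal to $\O$ and see nothing of it. Concretely, for $|\alpha|^2+|\beta|^2=1$ the element $(\alpha v_0,\beta v_0)=\alpha v+\beta(0,v_0)$ is a minimal tripotent satisfying all the relations coming from the tripotent identity ($\J{u_1}{v}{u_1}=0$ and the cube identity with $u_1=\beta(0,v_0)$), and it is moreover orthogonal to both $(\overline{v_0},0)$ and $(0,\overline{v_0})$. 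When $\beta\neq0$ the only part of $(C_5)_0(v)$ to which it fails to be orthogonal is the three-dimensional piece $\{(0,x): x\in\{v_0,\overline{v_0}\}^{\perp_2},\ x\boxdot(v_0^*\boxdot v_0)=x\}$, so the whole content of the proposition is concentrated in exploiting the condition $\J{u}{u}{(0,x)}=0$ against that piece, and this requires an honest computation with the octonionic triple product \eqref{eq:triple product in C5}; no "bookkeeping" between $\le_0$ in $C_5$ and its restriction to a spin factor can substitute for it.

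That computation is precisely what the paper's proof does and what your proposal defers. The paper's route after the equality of Peirce-0 spaces is: pick a nonzero tripotent $w$ in the common Peirce-0 space, note that $u+w$ is complete, apply an automorphism so that $u+w=(e,0)$, whence $u=(u_0,0)$ and $w$ is a multiple of $(\overline{u_0},0)$; then membership of $v$ in $(C_5)_0((\overline{u_0},0))$ and in $(C_5)_0((0,\overline{u_0}))$, read off from the explicit Peirce decompositions, forces $v=(\alpha u_0,\beta u_0)$; finally expanding $\J vv{(0,x)}=0$ for $x$ in the three-dimensional piece above via $\boxdot$ yields $\beta=0$. Until you carry out this last step (or some equivalent argument that genuinely engages the $Y_2$-component of $u_1$), the proof has a genuine gap.
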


\begin{proof} 
 The `if part' is obvious. To prove the `only if part' assume that $v$ is not complete. Since $(C_5)_0(v)\subset (C_5)_0(u)$ and both are subspaces of dimension $5$, we have $(C_5)_0(v)= (C_5)_0(u)$.  
Fix any nonzero tripotent $w\in (C_5)_0(u) (=(C_5)_0(v))$. Then $u+w$ is a complete tripotent, so,   up to an automorphism we may assume that $u+w=(e,0)$, where $e$ is a unitary tripotent in $\O$.
Then $u$ also has the second coordinate zero, thus let us write 
it as $(u,0)$ where $u\in\O$ is a non-unitary tripotent.
Then $w$ is a multiple of $(\overline{u},0)$. Without loss of generality assume $w=(\overline{u},0)$ (this assumption changes $e$ but preserves the Peirce decomposition of $e$).
Since $w\in (C_5)_0(v)$, we have also $v\in (C_5)_0(w)$. The above representation yields that
$v=(\alpha u,\beta u+z)$, where $\alpha,\beta\in\ce$ and $z\in\{u,\overline{u}\}^{\perp_2}$ satisfies $u\boxdot(u^*\boxdot z)=z$ and
$\overline{u}\boxdot(\overline{u}^*\boxdot z)=0$.

Since $(0,\overline{u})\in (C_5)_0(u,0)=(C_5)_0(v)$, we have 
$v\in(C_5)_0{(0,\overline{u})}$. By the above representation (exchange of coordinates is an automorphism) we deduce that 
$v=(\alpha'u+z',\beta'u)$ where  where $\alpha',\beta'\in\ce$ and $z'\in\{u,\overline{u}\}^{\perp_2}$ satisfies $u\boxdot(u^*\boxdot z')=z'$ and
$\overline{u}\boxdot(\overline{u}^*\boxdot z')=0$.

If we combine the two representations, we deduce that $v=(\alpha u,\beta u)$ for some $\alpha,\beta\in\ce$.

Fix any $x\in\{u,\overline{u}\}^{\perp_2}$ with $u\boxdot(u^*\boxdot x)=0$. Then $x\in (C_5)_0(u,0)=(C_5)_0(v)$, thus

$$\begin{aligned}
0&=\J vv{(0,x)}\\&=\frac12\left(
(\alpha u,\beta u)\boxdot\left(
\begin{pmatrix}
\overline{\alpha}u^*\\ \overline{\beta}u^*
\end{pmatrix}\boxdot(0,x)\right)+(0,x)
\boxdot\left(
\begin{pmatrix}
\overline{\alpha}u^*\\ \overline{\beta}u^*
\end{pmatrix}\boxdot(\alpha u,\beta u)\right)
\right)
\\&=\frac12\left((\alpha u,\beta u)\boxdot\begin{pmatrix}
0 & \overline{\alpha} u^*\boxdot x \\ 0& \overline{\beta} u^*\boxdot x
\end{pmatrix}
+(0,x)\boxdot\begin{pmatrix}
\abs{\alpha}^2 u^*\boxdot u & \overline{\alpha}\beta u^*\boxdot u \\
\alpha\overline{\beta} u^*\boxdot u & \abs{\beta}^2 u^*\boxdot u 
\end{pmatrix}
\right)
\\&=\frac12((0,(\abs{\alpha}^2+\abs{\beta}^2) u\boxdot (u^*\boxdot x))
+(\overline{\alpha}\beta x\boxdot(u^*\boxdot u),\abs{\beta}^2 x\boxdot(u^*\boxdot u)))
\\&=\frac12(\overline{\alpha}\beta x\boxdot(u^*\boxdot u),\abs{\beta}^2 x\boxdot(u^*\boxdot u)).
\end{aligned}$$
If $\beta\ne0$ we get 
$$0=x\boxdot(u^*\boxdot u)=2\J uux-u\boxdot(u^*\boxdot x)=2\cdot\frac12 x-0=x.$$
Since $x$ may be nonzero, we deduce that $\beta=0$, hence $v$ is a multiple of $(u,0)$. This completes the proof.
\end{proof}

\section{Synthesis of the results}\label{sec:final}

Using the results from Sections~\ref{sec:vN and pV},~\ref{sec:symmetric and antisymmetric} and~\ref{sec:6 - spin and exceptional} we get the following variant of the representation \eqref{eq:representation of JBW* triples}.

\begin{thm}\label{T:synthesis} Let $M$ be a JBW$^*$-triple. Then $M$ can be represented as
$$M=M_1\oplus^{\ell_\infty}M_2\oplus^{\ell_\infty}M_3\oplus^{\ell_\infty}M_4,$$
where the summands have the following form.
\begin{enumerate}[$(a)$]
    \item $M_1$ is a finite JBW$^*$-algebra. $M_1$ can be further decomposed as
    $$M_1=V_1\oplus^{\ell_\infty} \left(\bigoplus^{\ell_\infty}_{j\in J}L^\infty(\mu_j,C_j)\right)\oplus^{\ell_\infty} \left(\bigoplus^{\ell_\infty}_{\lambda\in \Lambda} A_\lambda\overline{\otimes}B(H_\lambda)_s\right) \oplus^{\ell_\infty}H(W,\alpha),$$
    where
\begin{itemize}
\item $V_1$ is a finite von Neumann algebra;
\item $J$ and $\Lambda$ are (possibly empty) sets;
\item $\mu_j$ is a probability measure for each $j\in J$;
\item For each $j\in J$ the space $C_j$ is either a spin factor, or a finite-dimensional Cartan factor of type $3$ or $6$, or $C_j=B(H)_a$ for a Hilbert space $H$ of finite even dimension;
\item $A_\lambda$ is an abelian von Neumann algebra and $H_\lambda$ is an infinite-dimensional Hilbert space for $\lambda\in\Lambda$;
\item $W$ is a (possibly trivial) continuous von Neumann algebra and $\alpha$ is a central linear involution on $W$ commuting with $*$.
\end{itemize}
\item $M_2$ is either a trivial space or a properly infinite JBW$^*$-algebra. Moreover, $M_2$ can be further decomposed as
  $$M_2=V_2\oplus^{\ell_\infty}\left(\bigoplus^{\ell_\infty}_{\gamma\in \Gamma} A_\gamma\overline{\otimes}B(H_\gamma)_a\right),$$
  where
\begin{itemize}
\item $V_2=\{0\}$ or it is a properly infinite von Neumann algebra;
\item $\Gamma$ is a (possibly empty) set;
\item $A_\gamma$ is an abelian von Neumann algebra and $H_\gamma$ is an infinite-dimensional Hilbert space for $\gamma\in\Gamma$.
\end{itemize}
\item $M_3$ is a finite JBW$^*$-triple with no nonzero direct summand isomorphic to a JBW$^*$-algebra. Moreover, $M_3$ can be further decomposed as
 $$M_3=pV_3\oplus^{\ell_\infty} \left(\bigoplus^{\ell_\infty}_{\delta\in \Delta}L^\infty(\nu_\delta,D_\delta)\right),$$
 where
\begin{itemize}
\item $V_3$ is a von Neumann algebra and $p\in V_3$ is a finite projection;
\item $\Delta$ is a (possibly empty) set;
\item $\nu_\delta$ is a probability measure for $\delta\in\Delta$;
\item For each $\delta\in\Delta$ the space $D_\delta$ is either the Cartan factor of type $5$ or the triple $B(H)_a$ where $H$ is a Hilbert space of finite odd dimension (at least $3$).
\end{itemize}
\item $M_4=\{0\}$ or $M_4=qV_4$, where $V_4$ is a von Neumann algebra, $q\in V_4$ is a properly infinite projection such that $qV_4$ has no direct summand isomorphic to a JBW$^*$-algebra. If $M_4$ is not zero, it is properly infinite.
\end{enumerate}
\end{thm}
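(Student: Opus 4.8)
The plan is to begin from the Horn--Neher representation \eqref{eq:representation of JBW* triples} and sort each of its summands into one of the four blocks, invoking the case-by-case analysis of Sections~\ref{sec:vN and pV}, \ref{sec:symmetric and antisymmetric} and~\ref{sec:6 - spin and exceptional}. First I would deal with the involutive summand: Lemma~\ref{L:H(W,alpha) decomposition} lets me write $H(W,\alpha)\cong H(z_1W,\alpha)\oplus^{\ell_\infty}z_2W$, where the involution on $z_1W$ is central, so that $H(z_1W,\alpha)$ is a finite JBW$^*$-algebra by Proposition~\ref{P:H(W,alpha) is finite} (this is the $H(W,\alpha)$-factor of $M_1$), while $z_2W$ is a continuous von Neumann algebra which I set aside to treat together with the other von Neumann pieces. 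By the observation opening Section~\ref{sec:vN and pV}, the $\ell_\infty$-sum of all type~$1$ tensor summands $A_j\overline{\otimes}B(H_j,K_j)$, of $pV$, and of $z_2W$ (the last being a triple $pV$ with $p=1$) is triple-isomorphic to a single $qW'$.

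Next I would analyse this $qW'$. Proposition~\ref{P:Peirce2 in pV}$(d)$ splits it as $W_1\oplus^{\ell_\infty}q'W_2$, where $q'W_2$ has no nonzero direct summand triple-isomorphic to a JBW$^*$-algebra. A central decomposition of the von Neumann algebra $W_1$ (apply Lemma~\ref{L:finite projections basic facts}$(b)$ to $1\in W_1$) produces a finite von Neumann algebra $V_1$, routed to $M_1$, and a properly infinite one $V_2$, routed to $M_2$; and Proposition~\ref{P:pV decomp} splits $q'W_2$ into a finite piece $pV_3$, routed to $M_3$, and a properly infinite piece $qV_4$, routed to $M_4$. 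The property of carrying no JBW$^*$-algebra direct summand passes to these direct summands, since a direct summand of a direct summand is again one.

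For the remaining type~$I$ tensor summands $A_j\overline{\otimes}C_j$ I would classify by the type and dimension of $C_j$. Types $3$ and $4$ yield finite JBW$^*$-algebras by Proposition~\ref{P:AotimesB(H)s is finite} and Corollary~\ref{cor:Aotimes spin is finite}; for finite-dimensional reflexive factors Lemma~\ref{L:tensor=Bochner} rewrites $A_j\overline{\otimes}C_j$ as $L^\infty(\mu_j,C_j)$, finite by Proposition~\ref{P:Linftyfindim is finite}, so that types $6$ (a JBW$^*$-algebra) and finite-dimensional type $3$ and $4$ feed the $L^\infty(\mu_j,C_j)$ family of $M_1$, infinite-dimensional type $3$ stays as $A_\lambda\overline{\otimes}B(H_\lambda)_s$ in $M_1$, and type $5$ (no unitary) feeds the $L^\infty(\nu_\delta,D_\delta)$ family of $M_3$. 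Type $2$ splits by $\dim H$ through Propositions~\ref{P:C2 finite dim} and~\ref{P:C2 infinite dim}: finite even dimension to $M_1$, finite odd dimension to the $L^\infty(\nu_\delta,D_\delta)$ family of $M_3$, infinite dimension to $M_2$. Reassembling each block as an $\ell_\infty$-sum, its finiteness (for $M_1,M_3$) follows from Proposition~\ref{P:finiteness in direct sum}, and the absence of a JBW$^*$-algebra direct summand (for $M_3,M_4$) from Proposition~\ref{P:direct summand}$(b)$, because a tripotent $u=(u_i)$ with $M_1(u)=0$ must have $M_1(u_i)=0$ in each coordinate, and in each factor appearing in $M_3$ or $M_4$ this forces $u_i=0$ (the Cartan factors involved are factors with no unitary, and $q'W_2$ was arranged to have the property).

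The step requiring the most care — the main obstacle — is the proper infiniteness of $M_2$ and $M_4$, since Proposition~\ref{P:finiteness in direct sum} governs only finiteness. For the infinite-dimensional type~$2$ summand I would upgrade Proposition~\ref{P:C2 infinite dim} from ``infinite'' to ``properly infinite'': its unit is a unitary tripotent with initial projection $1$, which is properly infinite in the type~$I_\infty$ algebra $A_\gamma\overline{\otimes}B(H_\gamma)$, so by Proposition~\ref{finite-finite}$(ii)$ the unit is a properly infinite tripotent and every nonzero direct summand is infinite. For the $\ell_\infty$-sum forming $M_2$ I would then argue that a nonzero direct summand decomposes coordinatewise and restricts in some coordinate to a nonzero direct summand of a properly infinite factor, hence is infinite there, so is infinite by Proposition~\ref{P:finiteness in direct sum}; the same coordinatewise argument, now producing a complete non-unitary tripotent rather than using a unit, handles $M_4=qV_4$, which admits no unitary element.
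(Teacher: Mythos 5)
Your proposal is correct and follows essentially the same route as the paper, which gives no separate proof of Theorem~\ref{T:synthesis} beyond the phrase ``using the results from Sections~\ref{sec:vN and pV},~\ref{sec:symmetric and antisymmetric} and~\ref{sec:6 - spin and exceptional}'' --- i.e.\ exactly the assembly you carry out: Lemma~\ref{L:H(W,alpha) decomposition} with Proposition~\ref{P:H(W,alpha) is finite} for the involutive summand, the Observation of Section~\ref{sec:vN and pV} with Proposition~\ref{P:Peirce2 in pV}$(d)$, Lemma~\ref{L:finite projections basic facts}$(b)$ and Proposition~\ref{P:pV decomp} for the von Neumann pieces, the type-by-type results on $A\overline{\otimes}C_j$ for the remaining summands, and Proposition~\ref{P:finiteness in direct sum} plus the properly-infinite analysis for the four blocks. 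The only slip is cosmetic: your routing sentence omits infinite-dimensional spin factors (you invoke Lemma~\ref{L:tensor=Bochner} only for finite-dimensional factors, though it is stated for all reflexive Cartan factors), but Corollary~\ref{cor:Aotimes spin is finite}, which you cite, already covers them, so they land in the $L^\infty(\mu_j,C_j)$ family of $M_1$ exactly as required.
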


\begin{remark}
\begin{enumerate}[$(1)$]
    \item The decomposition given in Theorem~\ref{T:synthesis} focuses on distinguishing   directs summands which are finite or properly infinite
    and further, on putting together direct summand with a unitary element. So, it is not, strictly speaking, a refinement of the decomposition \eqref{eq:representation of JBW* triples}, which distinguishes, in the terminology of \cite{horn1987classification,horn1988classification}, type I summands and continuous ones. However, it is easy to provide a common refinement of these two representations. Let us briefly explain it:
    \begin{itemize}
        \item In the summand $M_1$, the part $H(W,\alpha)$ is continuous.
        \item Each of the von Neumann algebras $V_1-V_4$   can be decomposed into two summands -- one of type I, the second one continuous (see \cite[Theorem V.1.19]{Tak}). This yields also the decomposition of $pV_3$ and $qV_4$ to the type I and continuous parts.
        \item The remaining summands from Theorem~\ref{T:synthesis} are of type I. 
    \end{itemize}
  \item The theory of types of von Neumannn algebras includes also investigation of type III (purely infinite) algebras. By analogy we may say that a tripotent $e$ is a JBW$^*$-triple is \emph{purely infinite} if $M_2(e)$ contains no nonzero finite tripotents. However, this approach gives nothing interesting beyond von Neumann algebras. Indeed, the unique purely infinite JBW$^*$-algebras are von Neumann algebras and the unique further purely infinite JBW$^*$-triples are those of the form $qV$ where $V$ is a von Neumann algebra and $q\in V$ is a purely infinite projection.
   \end{enumerate}
\end{remark}

\begin{prop}\label{P:finite one complete}
Let $M$ be a JBW$^*$-triple. The following assertions are equivalent.
\begin{enumerate}[$(1)$]
    \item $M$ is finite;
    \item Any complete tripotent in $M$ is finite;
     \item There is a complete finite tripotent in $M$;
    \item The Peirce-2 subspace of any complete tripotent is maximal with respect to inclusion;
     \item There is a finite tripotent whose Peirce-2 subspace is maximal with respect to inclusion.
\end{enumerate}
\end{prop}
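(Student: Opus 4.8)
The plan is to run two short cycles through $(1)$, proving the ``easy'' implications $(1)\Rightarrow(2)\Rightarrow(3)$, $(1)\Rightarrow(4)$, $(1)\Rightarrow(5)$ and the reduction $(5)\Rightarrow(3)$ directly, and then to close everything by the single contrapositive statement ``$M$ infinite $\Rightarrow$ both $(3)$ and $(4)$ fail''. Two observations should be isolated first. \emph{If $M_2(e)$ is maximal with respect to inclusion, then $e$ is complete}: otherwise pick a nonzero complete tripotent $w$ in the JBW$^*$-triple $M_0(e)$; then $e\perp w$, so $e+w$ is a tripotent with $e\le e+w$, hence $e\le_2 e+w$ and $w\in M_2(e+w)\setminus M_2(e)$, contradicting maximality. (The converse is false: a non-surjective isometry in $B(H)$ is complete yet lies strictly $\le_2$-below the identity, and this gap is exactly what makes $(4)$ and $(5)$ nontrivial.) \emph{If $M$ is finite, then every complete tripotent $e$ is $\le_2$-maximal}: given $e\le_2 f$, the tripotent $f$ is finite by hypothesis, and since $e\le_2 f$ the Peirce projections of $e$ and $f$ commute (Proposition~\ref{P:M2 inclusion}), so $e$ is a tripotent of the subtriple $M_2(f)$ with $(M_2(f))_0(e)=M_2(f)\cap M_0(e)=\{0\}$, i.e. $e$ is complete in $M_2(f)$; finiteness of $f$ forces $e$ unitary in $M_2(f)$, whence $M_2(e)=(M_2(f))_2(e)=M_2(f)$.

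Granting these, the easy directions are immediate. $(1)\Rightarrow(2)$ is the definition. For $(2)\Rightarrow(3)$, if $M\ne\{0\}$ its unit ball has an extreme point by Krein--Milman, which is a complete tripotent, finite by $(2)$. $(1)\Rightarrow(4)$ is the second observation, and $(1)\Rightarrow(5)$ combines it with the fact that under $(1)$ any complete tripotent is finite, so it is also $\le_2$-maximal. Finally $(5)\Rightarrow(3)$: by the first observation the tripotent furnished by $(5)$ is complete, and it is finite, which is precisely $(3)$.

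The crux is therefore $(3)\Rightarrow(1)$ and $(4)\Rightarrow(1)$, which I would treat jointly through their common contrapositive: if $M$ is infinite, both $(3)$ and $(4)$ fail. Here I invoke Theorem~\ref{T:synthesis}, writing $M=M_{\mathrm f}\oplus^{\ell_\infty}M_\infty$ with $M_{\mathrm f}=M_1\oplus^{\ell_\infty}M_3$ finite and $M_\infty=M_2\oplus^{\ell_\infty}M_4$; since $M_1,M_3$ are finite, $M$ infinite forces $M_\infty\ne\{0\}$. Any complete tripotent of $M$ has a complete component in $M_\infty$ (completeness is coordinatewise, as $M_0$ splits over the $\ell_\infty$-sum) and, by Proposition~\ref{P:finiteness in direct sum}, is finite only if that component is. Thus both failures reduce to two facts about the \emph{properly infinite} building blocks of $M_\infty$ --- a properly infinite von Neumann algebra $V_2$, the triples $A\overline{\otimes}B(H)_a$ with $\dim H=\infty$, and $qV_4$ with $q$ properly infinite: $(\alpha)$ every complete tripotent there is infinite (killing $(3)$), and $(\beta)$ some complete tripotent there is \emph{not} $\le_2$-maximal (killing $(4)$, after placing it in one nonzero coordinate of $M_\infty$ and arbitrary complete tripotents elsewhere, using Proposition~\ref{P:preorders in direct sum}).

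For $(\alpha)$: in $V_2$ a complete tripotent $e$ satisfies $z\le p_i(e)$ and $1-z\le p_f(e)$ for some central $z$ (Lemma~\ref{L:vN complete}), and proper infiniteness makes $z$ or $1-z$ infinite, so $e$ is infinite by Proposition~\ref{P:vN finite trip}; in $A\overline{\otimes}B(H)_a$ a complete tripotent has $1-p_i(e)$ abelian (Proposition~\ref{P:complete = abelian}), hence finite, so $p_i(e)$ must be infinite (otherwise $1$ would be a sum of two finite projections, impossible as $\dim H=\infty$), and Proposition~\ref{finite-finite} yields $e$ infinite; for $qV_4$ this is exactly Proposition~\ref{P:pV decomp}$(ii)$. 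For $(\beta)$: in $V_2$ take an isometry $u$ with $uu^*\sim 1\ne uu^*$ (available since $1$ is properly infinite), giving $u<_2 1$; in $A\overline{\otimes}B(H)_a$ take the unitary element (Proposition~\ref{P:C2 infinite dim}) together with a complete non-unitary tripotent, the latter being $<_2$ the former; in $qV_4$ take $u'$ with $p_f(u')=q$ and $p_i(u')=q'<q$, $q'\sim q$ (available since $q$ is infinite), giving $u'<_2 q$. I expect the main obstacle to be precisely this crux: it genuinely needs the global decomposition of Theorem~\ref{T:synthesis} assembled with the per-block computations above, and it hinges on the subtle point from the first observation that $\le_2$-maximality is strictly stronger than completeness, so that the infinite complete tripotents produced in $(\alpha)$ do not by themselves contradict $(4)$ and the extra construction in $(\beta)$ is indispensable.
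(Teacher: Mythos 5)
Your proposal is correct and takes essentially the same approach as the paper: the easy implications rest on the same two observations (a $\le_2$-maximal tripotent is complete; under finiteness a complete tripotent is $\le_2$-maximal), and the crux $(3)\Rightarrow(1)$, $(4)\Rightarrow(1)$ is handled by the same contrapositive reduction via Theorem~\ref{T:synthesis} to the properly infinite summands, with the same per-summand constructions. The only cosmetic differences are that the paper proves $(2)\Rightarrow(4)\&(5)$ instead of $(1)\Rightarrow(4)\&(5)$, treats the two hard implications separately, and simply cites Propositions~\ref{P:vN properly infinite trip}, \ref{P:pV decomp} and~\ref{P:C2 properly infinite} where you re-derive the needed special cases.
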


\begin{proof}
The implications $(1)\Rightarrow(2)\Rightarrow(3)$ are trivial.

$(3)\Rightarrow(1)$ Assume $M$ is infinite. By Theorem~\ref{T:synthesis} it has a properly infinite direct summand $N$. Moreover, by combining Theorem~\ref{T:synthesis} with Proposition~\ref{P:vN properly infinite trip}, Proposition~\ref{P:pV decomp} and Proposition~\ref{P:C2 properly infinite} we see that any complete tripotent in $N$ is properly infinite. 
If $e\in M$ is now a complete tripotent, then $P_Ne$ is a complete tripotent in $N$. Sinc $P_Ne$ is properly infinite, $e$ is infinite.

$(2)\Rightarrow(4)\&(5)$ Assume $(2)$ is valid. Let $u$ be a complete tripotent. Assume that $v$ is a tripotent such that $M_2(u)\subset M_2(v)$. Then $v$ is complete by Proposition~\ref{P:M2 inclusion}. By $(2)$ we deduce that $v$ is finite, thus $u$ is unitary in $M_2(v)$, i.e., $M_2(u)=M_2(v)$. It proves both $(4)$ and $(5)$. 

$(5)\Rightarrow(3)$ Let $u$ be a tripotent provided by $(5)$. Then $u$ is finite and, moreover, it is complete. (Otherwise there is a nonzero tripotent $v$ with $v\perp u$ and the Peirce-$2$ subspace of $u+v$ is strictly larger than that of $u$.)

$(4)\Rightarrow(1)$ This follows using Theorem~\ref{T:synthesis}. It is enough to show that any properly infinite JBW$^*$-triple fails $(4)$. If it is a JBW$^*$-algebra, it follows by the very definition (there is a unitary tripotent and another complete non-unitary tripotent). 

It remains to consider the case $M=pV$,  where $p$ is a properly infinite in a von Neumann algebra $V$. Since $p$ is infinite, there is a projection $q<p$ with $q\sim p$ Let $u\in V$ be a partial isometry with $p_i(u)=q$ and $p_f(u)=p$. Then $u$ and $p$ are complete tripotents in $M$ and $M_2(u)=qMp\subsetneqq pMp =M_2(p)$.
\end{proof}

Let $P(M)$ be the projection lattice of a von Neumann algebra $M$. An important feature of Murray-von Neumann concept of finiteness  is that finite projections form a (modular) sublattice in $P(M)$ (see e.g. \cite[Theorem V.1.37]{Tak}). This motivated the interest of von Neumann in the projective geometry. Especially, the supremum of two finite projections in the projection lattice is again a finite projection. Unlike lattice structure enjoyed by the set of projections in a von Neumann algebra, the poset of tripotents in a JBW$^*$-triple need not be a lattice (the infimum of a non-empty set of tripotents in a JBW$^*$-triple always exists, however a set of tripotents admits supremum if and only if it has an upper bound \cite[Theorem 3.6]{BattagliaOrder1991}). The poset of tripotents in a JBW$^*$-triple $M$ admits a greatest element if
and only if $M=\{0\}$.  
Nevertheless, if the  supremum of two finite elements does exists, then it has to be finite again. This is the content of the following Proposition.       
	
\begin{prop}
 Let $M$ be a JBW$^*$-triple and let $u,v\in M$ be two finite tripotents  having supremum $s$ in the tripotent poset. Then $s$ is finite. In particular, if $u$,  $v$ are orthogonal finite tripotents, then $u+v$ is finite. 
	\end{prop}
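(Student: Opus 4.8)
The plan is to reduce the statement to finiteness of a JBW$^*$-algebra and then feed it into the representation of Theorem~\ref{T:synthesis}. First I would observe that, since $u,v\le s$, both $u$ and $v$ lie in $N:=M_2(s)$, where they are projections (Proposition~\ref{P:order char}$(vii)$), and that $s$ is the unit of the JBW$^*$-algebra $N$. By the very definition of a finite tripotent together with Proposition~\ref{P:le0=le2}, the tripotent $s$ is finite exactly when $N$ is a finite JBW$^*$-algebra, and the hypothesis that $u,v$ are finite says precisely that the corners $M_2(u)$ and $M_2(v)$ are finite JBW$^*$-algebras. Moreover $s$ equals the join $u\vee v$ computed in the projection lattice of $N$: this lattice join is a projection $\le s$ and a common upper bound of $u,v$, so $s\le u\vee v\le s$. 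Thus it suffices to prove the purely algebraic statement: if $N$ is a JBW$^*$-algebra with unit $s=u\vee v$ and both corners $M_2(u),M_2(v)$ are finite, then $N$ is finite.

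Assume $N$ is infinite. Applying Theorem~\ref{T:synthesis} to the JBW$^*$-algebra $N$ (its summands $M_3,M_4$ vanish, since every direct summand of $N$ is again a JBW$^*$-algebra) lets me split off a nonzero properly infinite direct summand $I$. Compression by $P_I$ preserves all the relevant data: the tripotents $P_I u,P_I v$ remain finite by Proposition~\ref{P:finiteness in direct sum}, and since the order, hence the supremum, is computed coordinatewise by Proposition~\ref{P:preorders in direct sum}, one gets $P_I u\vee P_I v=P_I s$, the unit of $I$. So I may assume $N$ is properly infinite, and by Theorem~\ref{T:synthesis}$(b)$ it is an $\ell_\infty$-sum of a properly infinite von Neumann algebra and summands of the form $A\overline{\otimes}B(H)_a$ with $\dim H=\infty$. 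Compressing once more onto a single nonzero summand (again preserving finiteness and the relation $u\vee v=1$) reduces me to two cases.

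In the von Neumann case $u,v$ are finite projections whose join is the infinite unit; but the finite projections form a sublattice of the projection lattice \cite[Theorem V.1.37]{Tak}, so the unit would be finite, a contradiction. In the antisymmetric case $N=A\overline{\otimes}B(H)_a$, with enveloping (properly infinite) von Neumann algebra $\widetilde M=A\overline{\otimes}B(H)$, I pass to initial projections. By Proposition~\ref{finite-finite} the projections $p_i(u),p_i(v)$ are finite in $\widetilde M$; by Proposition~\ref{P:H(W,alpha) abstract le2} together with Proposition~\ref{P:impl le20} the map $W\mapsto p_i(W)$ is an order embedding of the projection lattice of $N$ into the ``even'' projections of $\widetilde M$ (those of the form $q_1+q_2$ with $q_1\perp q_2$, $q_1\sim q_2$; cf. Proposition~\ref{P:C2 projections}). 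Hence $p_i(s)=1$ is the least even projection dominating $p_i(u)$ and $p_i(v)$, and using the dimension function of Lemma~\ref{L:finite in AotimesB(H)} one checks that this least even majorant has a finite dimension function almost everywhere (being controlled by that of $p_i(u)$ plus that of $p_i(v)$), so it is finite; yet it equals the infinite projection $1$, a contradiction.

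The main obstacle is exactly this last step: because the correspondence $W\mapsto p_i(W)$ doubles dimensions in the antisymmetric summand, one must verify that the supremum taken in the Jordan projection lattice of $N$ is transported by $p_i$ to the least even majorant in $\widetilde M$ — equivalently, that every even projection of $\widetilde M$ strictly below $1$ is realized as $p_i(W)$ for a genuine projection $W\le s$ of $N$. This is where the detailed analysis of abelian, finite and properly infinite projections in Subsection~\ref{subsec:antisymmetric} (Lemmas~\ref{L:abelian pr in AotimesB(H)}--\ref{L:properly infinite in AotimesB(H)} and Proposition~\ref{P:C2 projections}) is needed, and the only genuinely delicate point of the proof. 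Once the general statement holds, the orthogonal case is immediate: for $u\perp v$ the tripotent $u+v$ is their supremum by Proposition~\ref{P:order properties}$(b)$, hence $u+v$ is finite.
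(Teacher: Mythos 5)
Your reduction to a unital JBW$^*$-algebra $N=M_2(s)$ with $s=u\vee v$ the unit, the passage via Theorem~\ref{T:synthesis} to a properly infinite von Neumann summand or a summand $A\overline{\otimes}B(H)_a$ with $\dim H=\infty$, and the von Neumann case itself (via \cite[Theorem V.1.37]{Tak} and Lemma~\ref{L:finite tripotents}$(a)$) all match the paper's proof. The gap is in the antisymmetric case, and it is exactly the step you flag. An order embedding of posets does not transport least upper bounds to least upper bounds in the ambient poset, so ``$p_i(s)=1$ is the least even majorant of $p_i(u),p_i(v)$'' does not follow from Propositions~\ref{P:H(W,alpha) abstract le2} and~\ref{P:impl le20}; and the statement you propose as the ``equivalent'' repair --- that every even projection of $\widetilde M$ strictly below $1$ is realized as $p_i(W)$ for a projection $W\le s$ of $N$ --- is false. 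Indeed, a projection $W$ of $N$ with $W\le s$ has the form $W=sq$ with $q=p_i(W)$, and the requirement $W^t=-W$ (using $s^t=-s$) forces the compatibility condition $sqs^*=q^t$; not every even projection satisfies it. Concretely, take $A=\ce$, $H=\ell^2$ with basis $(e_n)$, and $s$ the symplectic unitary $e_{2k-1}\mapsto e_{2k}$, $e_{2k}\mapsto -e_{2k-1}$; the projection $r$ onto $\span\{e_2,e_3\}$ is even and $r^t=r$, but $srs^*$ is the projection onto $\span\{e_1,e_4\}\ne r^t$, so $r\ne p_i(W)$ for any projection $W\le s$ in $N$. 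Moreover, your dimension-function argument is close to circular: Lemma~\ref{L:finite in AotimesB(H)} attaches a dimension function only to projections already known to be finite, and no bound on a ``least even majorant'' is established.

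What is true, and what actually closes the case, is the stronger transport statement $p_i(s)=p_i(u)\vee p_i(v)$, the \emph{plain} lattice join in $\widetilde M$: the projection-lattice join of $u,v$ in $N$ coincides with the join computed in the von Neumann algebra $\widetilde M_2(s)$ (both are the range projection of $\tfrac12(u+v)$, and $N$ is a weak$^*$-closed unital Jordan $*$-subalgebra of $\widetilde M_2(s)$, so range projections agree), and the isomorphism $x\mapsto s^*x$ of $\widetilde M_2(s)$ onto $\widetilde M$ carries $u,v$ to $p_i(u),p_i(v)$. Once you have this, $1=p_i(u)\vee p_i(v)$ is a join of two finite projections, hence finite by \cite[Theorem V.1.37]{Tak}, contradicting infiniteness of $\widetilde M$ --- no ``even projection'' or dimension-function machinery is needed. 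The paper itself avoids the transport question altogether: it sets $q=p_i(u)\vee p_i(v)$, observes that $1-q$ cannot be abelian (otherwise $1$ would be a join of three finite projections, hence finite), invokes Proposition~\ref{P:C2 projections}$(ii)$ to produce a nonzero tripotent $w\in A\overline{\otimes}B(H)_a$ with $p_i(w)\le 1-q$, and notes that $w$ is then orthogonal to both $u$ and $v$, which is incompatible with their supremum $s$ being unitary (hence complete). You should either adopt one of these two arguments or find a genuine proof of your transport claim; as written, the antisymmetric case does not go through.
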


\begin{proof}
	We can reduce the problem to the case when $M$ is a JBW$^*$-algebra and $u$ and $v$ are projections in $M$ having supremum 1. Indeed, $M_2(s)$ has the structure of a JBW$^*$-algebra in which $u$ and $v$ are projections that are finite (as tripotents) and their supremum $s$ is the unit. In fact,  we have to show that under these circumstances $M$ is a finite $JBW^*$-algebra. In virtue of Theorem~\ref{T:synthesis} (a) and  (b) and using Proposition~\ref{P:preorders in direct sum} we have to prove  that $M$ cannot be equal to one of the the following JBW$^*$-algebras: (a) infinite von Neumann algebra (b) $A\overline{\otimes} B(H)_a$, where $A$ is an abelian von Neumann algebra and $H$ is an infinite dimensional Hilbert space.\\
	
	 Let us consider  case (a). Suppose that $M$ is a von Neumann algebra. We know that a projection in a von Neumann algebra is finite as a tripotent if and only if it is finite as a projection (see
	 Lemma~\ref{L:finite tripotents}$(a)$).
		 As the supremum of two finite projections in a von Neumann algebra is a finite projection (see  \cite[Theorem V.1.37]{Tak}),  we obtain that the unit of $M$ is finite and so $M$ is finite. So  case (a) cannot happen.\\
	 
	  Let us address case (b). Suppose that $M$ is $A\overline{\otimes} B(H)_a$  with $A$ and $H$ specified above and try to  reach a contradiction.  Proposition~\ref{finite-finite} implies that the initial projections $p_i(u)$ and $p_i(v)$ of $u$ and $v$, respectively, are finite  projections in the von Neumann algebra  $N=A\overline{\otimes} B(H)$. Let  $q$ be the supremum of  projections $p_i(u)$ and $p_i(v)$. As $N$ is infinite  we infer that $1-q$ cannot be abelian, otherwise the unit of $N$ would be the supremum of three finite projections and therefore finite (again by \cite[Theorem V.1.37]{Tak}). According to Proposition~\ref{P:C2 projections} (ii) there is a nonzero tripotent in $M$ whose initial projection is under $1-q$.  This tripotent is orthogonal to both $u$ and $v$ and that contradicts with the assumption that $s$, the  supremum of $u$ and $v$, is unitary (hence complete). 
	\end{proof}

\begin{prop}\label{p finite tripotent bounded by unitary}
Let $M$ be a JBW$^*$-algebra and $u\in M$ a finite tripotent. Then there is a unitary element $e\in M$ with $u\le e$.
\end{prop}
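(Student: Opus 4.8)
The plan is to reduce, via the representation in Theorem~\ref{T:synthesis}, to the individual summands and to treat the finite and the properly infinite parts separately; the finite part is immediate, while the properly infinite part requires completing $u$ to a unitary by exploiting that its initial projection is small. First I would record the elementary fact that every direct summand of a JBW$^*$-algebra is again a JBW$^*$-algebra: if $M=N\oplus^{\ell_\infty}N'$ as a sum of ideals and $1$ is the unit of $M$, then $P_N1$ is a unitary tripotent of $N$ (the cross terms of the triple product vanish). Hence, writing $M$ as in Theorem~\ref{T:synthesis}, the summands $M_3$ and $M_4$, which by construction contain no nonzero direct summand isomorphic to a JBW$^*$-algebra, must vanish, so $M=M_1\oplus^{\ell_\infty}M_2$ with $M_1$ a finite JBW$^*$-algebra and $M_2$ either trivial or a properly infinite JBW$^*$-algebra. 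By Proposition~\ref{P:preorders in direct sum} the relation $\le$ and the property of being unitary are tested coordinatewise, and by Proposition~\ref{P:finiteness in direct sum} so is finiteness; writing $u=(u_1,u_2)$ it therefore suffices to dominate each $u_i$ by a unitary in $M_i$.

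For the finite part this is free: since $M_1$ is a finite JBW$^*$-algebra, Lemma~\ref{L:finite tripotents}$(d)$ produces a unitary $e_1\in M_1$ with $u_1\le e_1$ (finiteness of $u$ is not even used here). The real work lies in $M_2$, which by Theorem~\ref{T:synthesis}$(b)$ splits further as $M_2=V_2\oplus^{\ell_\infty}\bigoplus_{\gamma}A_\gamma\overline{\otimes}B(H_\gamma)_a$, where $V_2$ is a properly infinite von Neumann algebra and each $H_\gamma$ is infinite-dimensional. Applying Propositions~\ref{P:preorders in direct sum} and~\ref{P:finiteness in direct sum} once more, I treat each summand separately, recalling that the relevant component of $u$ is a \emph{finite} tripotent.

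The construction is the same in both types of summand: a finite tripotent has finite initial projection, so the complement $1-p_i(u)$ is properly infinite, and this is exactly the room needed to build the missing half of a unitary. In $V_2$ I would take a partial isometry $w$ with $p_i(w)=1-p_i(u)$ and $p_f(w)=1-p_f(u)$; such $w$ exists because $p_i(u)\sim p_f(u)$ are finite while $1$ is properly infinite, whence $1-p_i(u)\sim 1\sim 1-p_f(u)$ by absorption of a finite projection into a properly infinite one (cf. \cite[Chapter~V]{Tak}, \cite[\S6.3]{KR2}). As the initial and final projections of $w$ are orthogonal to those of $u$, one gets $w\perp u$, and $e=u+w$ is then a partial isometry with $p_i(e)=p_f(e)=1$, i.e. a unitary by Lemma~\ref{L:vN complete}$(a)$, with $u\le e$. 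In $A_\gamma\overline{\otimes}B(H_\gamma)_a$ I argue analogously: by Proposition~\ref{finite-finite}$(i)$ the projection $p_i(u)$ is finite in the properly infinite von Neumann algebra $A_\gamma\overline{\otimes}B(H_\gamma)$, so $1-p_i(u)$ is properly infinite and, by Lemma~\ref{L:finite projections basic facts}$(c)$, decomposes as a sum of two orthogonal equivalent projections. Proposition~\ref{P:C2 projections}$(i)$ then yields an antisymmetric tripotent $w$ with $p_i(w)=1-p_i(u)$, and Observation~\ref{obs:pf=alpha(pi)} gives $p_f(w)=1-p_f(u)$, so again $w\perp u$; thus $e=u+w\in A_\gamma\overline{\otimes}B(H_\gamma)_a$ is a tripotent with $p_i(e)=1$, hence unitary by Proposition~\ref{P:complete = abelian}$(a)$, and $u\le e$. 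Assembling $e=(e_1,e_2)$ over all summands finishes the proof.

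The step I expect to be the main obstacle is the properly infinite von Neumann summand, where producing $w$ rests on the complement-equivalence $1-p_i(u)\sim 1-p_f(u)$ for equivalent finite projections; this is precisely where proper infiniteness (absorption of finite projections) must be invoked, and it is the reason the argument cannot stay within the finite calculus. The antisymmetric case carries the extra subtlety that initial projections of its tripotents must be ``even'' (a sum of two orthogonal equivalent projections), but this is automatic here since $1-p_i(u)$ is properly infinite and hence halvable.
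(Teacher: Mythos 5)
Your proof is correct and follows essentially the same route as the paper's: the reduction via Theorem~\ref{T:synthesis} (with the summands $M_3$ and $M_4$ vanishing because $M$ is a JBW$^*$-algebra), Lemma~\ref{L:finite tripotents}$(d)$ for the finite summand, and, for $A_\gamma\overline{\otimes}B(H_\gamma)_a$, exactly the paper's argument (Proposition~\ref{finite-finite}, halving of the properly infinite projection $1-p_i(u)$ via Lemma~\ref{L:finite projections basic facts}$(c)$, and Proposition~\ref{P:C2 projections}$(i)$). The only deviation is the von Neumann summand, where the paper simply cites \cite[Proposition V.1.38]{Tak} while you re-derive the needed fact by the absorption argument $1-p_i(u)\sim 1\sim 1-p_f(u)$; this is correct and standard, though it relies on proper infiniteness of $V_2$ (which Theorem~\ref{T:synthesis} does supply), whereas the cited result holds in any von Neumann algebra.
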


\begin{proof}
It is enough to prove it for the individual summands from Theorem~\ref{T:synthesis}. If $M$ is finite, the statement follows from Lemma~\ref{L:finite tripotents}$(d)$. If $M$ is a von Neumann algebra, the statement is proved in \cite[Proposition V.1.38]{Tak}.

It remains to consider the case $M=L^\infty(\mu)\overline{\otimes}B(H)_a$ where $H$ is an infinite-dimensional Hilbert space. Proposition~\ref{finite-finite} says that $p_i(u)$ is a finite projection in the von Neumann algebra
$L^\infty(\mu)\overline{\otimes}B(H)$. Hence $1-p_i(u)$ is properly infinite. By 
Lemma~\ref{L:finite projections basic facts}$(c)$ 
$1-p_i(u)$ is the sum of a pair of equivalent orthogonal projections, thus Proposition~\ref{P:C2 projections}$(i)$ shows that there is a tripotent $v\in M$ with $p_i(v)=1-p_i(u)$. Clearly $v\perp u$ and $p_i(u+v)=1$, so $u+v$ is unitary.
\end{proof}

The conclusion of the previous proposition should be compared with Lemma \ref{L:finite tripotents}$(d)$.
We continue by the following result on decomposing an infinite tripotent.

\begin{prop}
Let $M$ be a JBW$^*$-triple and let $e\in M$ be an infinite tripotent. Then there is a direct summand $N$ of $M$ such that $P_Ne$ is properly infinite and $(I-P_N)e$ is finite.
\end{prop}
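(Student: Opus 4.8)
The plan is to reduce the statement to the individual summands of the representation in Theorem~\ref{T:synthesis} and then reassemble the pieces. Write $M=M_1\oplus^{\ell_\infty}M_2\oplus^{\ell_\infty}M_3\oplus^{\ell_\infty}M_4$ as in that theorem, so that $M_1$ and $M_3$ are finite while $M_2$ and $M_4$ are properly infinite or trivial, and decompose $e=(e_1,e_2,e_3,e_4)$ accordingly. By Proposition~\ref{P:finiteness in direct sum} the components $e_1$ and $e_3$ are automatically finite, and since $e$ is infinite it must have an infinite component in the properly infinite part $M_2\oplus^{\ell_\infty}M_4$. I regard this part as an $\ell_\infty$-sum of its basic summands, namely the von Neumann algebra $V_2$, the triples $A_\gamma\overline{\otimes}B(H_\gamma)_a$ (with $H_\gamma$ infinite-dimensional), and the triple $qV_4$.

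In each basic summand $L$ I would produce a splitting $L=P(L)\oplus^{\ell_\infty}F(L)$ into direct summands so that the corresponding component of $e$ is properly infinite or zero on $P(L)$ and finite on $F(L)$. For $L=V_2$ the component $u$ is a partial isometry whose initial projection $p_i(u)$ is finite, respectively properly infinite, exactly when $u$ is (Propositions~\ref{P:vN finite trip} and~\ref{P:vN properly infinite trip}); applying Lemma~\ref{L:finite projections basic facts}$(b)$ to $p_i(u)$ yields a central projection $z$ with $zp_i(u)$ properly infinite and $(1-z)p_i(u)$ finite, and $zV_2$, $(1-z)V_2$ are the desired direct summands (Lemma~\ref{L:vN direct summand}). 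For $L=qV_4$ the same scheme applies, using Remark~\ref{rem:pV reduced to V}, Corollary~\ref{cor:pV properly infinite trip}, and the description of direct summands in Proposition~\ref{P:pV direct summand}. For $L=A_\gamma\overline{\otimes}B(H_\gamma)_a$ the required splitting is precisely the content of the corollary to Proposition~\ref{finite-finite}, the two pieces being $(\chi_E\otimes I)L$ and $(\chi_{L\setminus E}\otimes I)L$.

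Now set $N=\bigoplus^{\ell_\infty}P(L)$, the sum over all basic summands, so that $I-P_N$ projects onto $M_1\oplus^{\ell_\infty}M_3\oplus^{\ell_\infty}(\bigoplus^{\ell_\infty}F(L))$. Then $(I-P_N)e$ is finite by Proposition~\ref{P:finiteness in direct sum}, as each of its components is finite, and $P_N e\neq0$ because $e$ has an infinite component in some basic summand, where the properly infinite piece is nonzero. The one genuinely new point, and the main obstacle, is to verify that $P_N e$ is \emph{properly} infinite; I expect to isolate this as the proper-infiniteness analogue of Proposition~\ref{P:finiteness in direct sum}: a nonzero tripotent $(v_\alpha)$ in an $\ell_\infty$-sum $\bigoplus^{\ell_\infty}L_\alpha$ is properly infinite as soon as every nonzero $v_\alpha$ is properly infinite. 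To prove this I would first show that every direct summand $I$ of $\bigoplus^{\ell_\infty}L_\alpha$ is coordinatewise, i.e. $I=\bigoplus^{\ell_\infty}(I\cap L_\alpha)$ with each $I\cap L_\alpha$ a direct summand of $L_\alpha$. This holds because a direct summand is a weak$^*$-closed ideal and each canonical projection $P_{L_\alpha}$ leaves it invariant: writing $L_\alpha$ as $(\bigoplus L_\beta)_2(w)+(\bigoplus L_\beta)_1(w)$ for a complete tripotent $w\in L_\alpha$ as in Proposition~\ref{P:direct summand}$(a)$, one has $P_{L_\alpha}=P_2(w)+P_1(w)$, and since these Peirce projections are polynomials in $L(w,w)$ (by the formulas in the introduction), the inclusion $P_{L_\alpha}I\subset I$ follows from $\{w,w,I\}=\{I,w,w\}\subset I$. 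Granting this, $P_I(v_\alpha)=(P_{I\cap L_\alpha}v_\alpha)_\alpha$; if it is nonzero, some coordinate $P_{I\cap L_\beta}v_\beta\neq0$ with $v_\beta\neq0$ hence properly infinite, so $P_{I\cap L_\beta}v_\beta$ is infinite and $P_I(v_\alpha)$ cannot be finite by Proposition~\ref{P:finiteness in direct sum}. Applying this lemma to $P_N e$ completes the argument.
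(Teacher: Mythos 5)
Your proposal is correct and takes essentially the same route as the paper: both reduce via Theorem~\ref{T:synthesis} to the basic properly infinite summands ($V_2$, the $A_\gamma\overline{\otimes}B(H_\gamma)_a$, and $qV_4$) and in each one transfer the question to the initial projection in the enveloping von Neumann algebra, where \cite[Proposition 6.3.7]{KR2} (Lemma~\ref{L:finite projections basic facts}$(b)$) supplies the central projection that splits off the properly infinite part. The only difference is explicitness: the paper compresses the reduction and reassembly into the sentence ``it is enough to prove the statement in case $M$ is one of the properly infinite summands,'' whereas you actually prove this step — showing direct summands of an $\ell_\infty$-sum are coordinatewise (via Peirce projections of a complete tripotent being constant-term-free polynomials in $L(w,w)$) and deducing that componentwise properly infinite tripotents are properly infinite — and that argument is sound.
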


\begin{proof}
It is enough to prove the statement in case $M$ is one of the properly infinite summands from the representation given in Theorem~\ref{T:synthesis}.

The proof is in all three cases essentially the same. Let us explain the setting. We have a von Neumann algebra $W$ and its subtriple $M$ of a special form:
\begin{enumerate}[C{a}se 1:]
    \item $M=W$;
    \item $M=pW$ for a projection $p\in W$;
    \item $W=A\overline{\otimes}B(H)$ where $A$ is an abelian von Neumann algebra, $H$ an infinite-dimensional Hilbert space and $M=W_a$.
\end{enumerate}

So, assume that $W$ and $M$ are of one of these three forms. Let $e\in M$ be an infinite tripotent. Then $p_i(e)$ is an infinite projection in $W$ (by Proposition~\ref{P:vN finite trip}, Remark~\ref{rem:pV reduced to V} or Proposition~\ref{finite-finite}). 
By \cite[Proposition 6.3.7]{KR2} there is a central projection $z\in W$ such that $zp_i(e)$ is properly infinite and $(1-z)p_i(e)$ is finite.
It is enough to set $N=zM$ and use again Proposition~\ref{P:vN finite trip}, Remark~\ref{rem:pV reduced to V} or Proposition~\ref{finite-finite}.
\end{proof}

The following proposition reveals the relationship of the relations $\le_0$ and $\le_2$ in general JBW$^*$-triples.

\begin{prop}
Let $M$ be a JBW$^*$-triple and let $e,u\in M$ be two tripotents. Then $u\le_0 e$ if and only if there is a direct summand $N$ of $M$ such that $P_Nu\le_2 P_Ne$ and $(I-P_N)e$ is complete in $(I-P_N)M$.
\end{prop}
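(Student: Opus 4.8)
The plan is to prove the two implications separately, using the decomposition from Theorem~\ref{T:synthesis} together with the summand-by-summand analysis already carried out. First I would handle the easy direction: assume such a direct summand $N$ exists. On $N$ we have $P_Nu\le_2 P_Ne$, which gives $P_Nu\le_0 P_Ne$ by Proposition~\ref{P:impl le20}; on $(I-P_N)M$ the tripotent $(I-P_N)e$ is complete, so $(I-P_N)M_0((I-P_N)e)=\{0\}$ and hence $(I-P_N)u\le_0(I-P_N)e$ trivially. Since $\le_0$ respects direct sums (Proposition~\ref{P:preorders in direct sum}), these two facts combine to yield $u\le_0 e$ in $M$. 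This is a short verification.

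The substantive direction is to show that $u\le_0 e$ forces the existence of the summand $N$. The strategy is to reduce, via Theorem~\ref{T:synthesis}, to the individual summands: since the decomposition is an $\ell_\infty$-sum and all the relevant relations split coordinatewise (Proposition~\ref{P:preorders in direct sum}), it suffices to produce, \emph{within each summand type}, a further central/direct-summand splitting into a part where $u\le_2 e$ and a part where $e$ is complete, and then to glue these splittings together across the summands into a single direct summand $N$ of $M$. For each building block I would invoke the characterization of $\le_0$ already established: Lemma~\ref{l charcteriz le2 and le0 in vN}$(b)$ for von Neumann algebras, Proposition~\ref{P:le0 in pV} for triples $pV$, the coincidence $\le_0\,=\,\le_2$ in the finite JBW$^*$-algebra summands $H(W,\alpha)$, $A\overline{\otimes}B(H)_s$, spin factors, $H_3(\O)$ (Propositions~\ref{P:H(W,alpha) is finite}, \ref{P:AotimesB(H)s is finite}, \ref{P:spin is finite}, and the type-$6$ result), Proposition~\ref{P:le0 in C5} for the type-$5$ factor, and Propositions~\ref{P:le0 in AotimesB(H)aodd} and~\ref{P:le0 in AotimesB(H)ainfty} for the antisymmetric summands $A\overline{\otimes}B(H)_a$. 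In each case the cited result literally produces a measurable/central decomposition of the summand (a set $E\subset L$ or a central projection $z$) separating the ``$\le_2$'' region from the ``$e$ complete'' region, which is exactly a direct summand of that block by the description of central projections (e.g.\ Lemma~\ref{L:commutant of AotimesB(H)} and the fact that central projections correspond to characteristic functions).

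The main obstacle I anticipate is the \emph{bookkeeping of completeness versus $\le_2$ under the $\ell_\infty$-gluing}, together with verifying that the union of the per-summand splittings is genuinely a direct summand of $M$. Concretely, each summand $S_k$ of the representation yields its own direct summand $N_k\subset S_k$ (where $\le_2$ holds) with complement on which $e$ is complete; the candidate is $N=\bigoplus_k^{\ell_\infty} N_k$, and I must check (i) that this $N$ is a direct summand of $M$ in the sense of Proposition~\ref{P:direct summand} — which follows since each $N_k$ is cut out by a central projection and direct summands are preserved under $\ell_\infty$-sums — and (ii) that ``$e$ complete in $(I-P_N)M$'' is the correct global statement, i.e.\ that completeness of $(I-P_N)e$ in the whole $\ell_\infty$-sum is equivalent to completeness in each coordinate, which is immediate from $M_0$ splitting coordinatewise. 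The genuinely delicate summand is $A\overline{\otimes}B(H)_a$ with $\dim H=\infty$: there $\le_2$ and completeness are governed by properly-infinite/finite behaviour of initial projections, so I would lean directly on the equivalence in Proposition~\ref{P:le0 in AotimesB(H)ainfty}$(ii)$, which already phrases $\le_0$ in precisely the desired ``split into a $\le_2$ part and a complete part'' form; the work is to confirm that the measurable set $E$ there defines a central projection $\chi_E\otimes I$, hence a direct summand, via Lemma~\ref{L:commutant of AotimesB(H)}. Once each block is dealt with, assembling $N$ and reversing the argument completes the proof.
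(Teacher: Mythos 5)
Your proposal is correct and follows essentially the same route as the paper's own proof: reduce via Theorem~\ref{T:synthesis} to the individual summands and invoke exactly the per-summand characterizations of $\le_0$ (Proposition~\ref{P:le0=le2} for the finite part, Lemma~\ref{l charcteriz le2 and le0 in vN}$(b)$, Propositions~\ref{P:le0 in pV}, \ref{P:le0 in AotimesB(H)aodd}, \ref{P:le0 in AotimesB(H)ainfty} and~\ref{P:le0 in C5}). The paper leaves the coordinatewise gluing of the central projections and the easy converse direction implicit, whereas you spell them out; this is a matter of detail, not of substance.
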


\begin{proof}
It is enough to prove the statement for the individual summands in the representation from Theorem~\ref{T:synthesis}. We use the notation from the quoted theorem.

The summand $M_1$ is a finite JBW$^*$-algebra, hence the statement follows from Proposition~\ref{P:le0=le2}.
For $V_2$ we may use Lemma~\ref{l charcteriz le2 and le0 in vN}$(b)$. The case of $A_\gamma\overline{\otimes}B(H_\gamma)_a$ follows from Proposition~\ref{P:le0 in AotimesB(H)ainfty}. The summands $pV_3$ and $qV_4$ are settled by Proposition~\ref{P:le0 in pV}. Finally, the summands $L^\infty(\nu_\delta,D_\delta)$ are covered by Propositions~\ref{P:le0 in AotimesB(H)aodd}
and~\ref{P:le0 in C5}.
\end{proof}

\begin{remark} 
An important role in study of von Neumann algebras is played by so-called halving lemmata. There are two kinds of these results.
\begin{enumerate}[$(1)$]
    \item In a continuous von Neumann algebra any projection can be decomposed into the sum of two orthogonal equivalent projections.
    This property even characterizes continuous von Neumann algebras by \cite[Proposition V.1.35]{Tak}. There is an analogue of this result for continuous JBW$^*$-triples, see \cite[Appendix]{horn1988classification}.
    \item If $p$ is a properly infinite projection in a von Neumann algebra, it can be decomposed into the sum of two orthogonal projections which are both equivalent to $p$ (see Lemma~\ref{L:finite projections basic facts}$(c)$). 
       By Lemma~\ref{L:finite projections basic facts}$(d)$ $p$ may be even decomposed into the sum of an infinite sequence of mutually orthogonal projections such that all of them are equivalent to $p$.
    
    In this case the situation in JBW$^*$-triples is more complicated. An analogue for properly infinite tripotents holds 
    in von Neumann algebras (by Proposition~\ref{P:vN halving}) and for the triples of the form $pV$ (by Proposition~\ref{P:pV halving}). 
    
    However, for triples of the form $A\overline{\otimes}B(H)_a$ the analogue completely fails by Proposition~\ref{P:C2 properly infinite}$(b)$. In fact, this JBW$^*$-triple is properly infinite whenever $\dim H=\infty$, but in some sense it is `almost finite'. Indeed, a complete tripotent need not be unitary but it is `almost unitary' in the sense of Proposition~\ref{P:complete = abelian}.
\end{enumerate}
\end{remark}

\subsection{Finiteness and modularity in JBW$^*$-algebras}

In von Neumann algebras finiteness of projections is closely related to modularity -- by \cite[Theorem V.1.37]{Tak} the set of finite projections in a von Neumann algebra is a modular lattice. A converse holds as well -- if the projection lattice of a von Neumann algebra is modular, the algebra is finite (see \cite[Theorem on page 5]{KaplanskyOrhtocompl55}). In this subsection we clarify the relationship of finiteness and modularity in JBW$^*$-algebras. Let us start by recalling basic notions from lattice theory.

A lattice $L$ (i.e. a partially ordered set in which any two elements $a$ and $b$ have a least upper bound $a \vee b$ and a greatest lower bound
$a \wedge b$) is called \emph{modular} if $e\leq g$ implies $$ (e\vee f)\wedge g= e\vee (f\wedge g),\ \hbox{ for all } f\in L.$$ 
Observe that the inequality $e\vee (f\wedge g)\le (e\vee f)\wedge g$ holds automatically, so the important one is the converse.

If $L$ admits a least and a greatest element they will be denoted by $0$ and $1$, respectively. A lattice is \emph{complete} if for every subset $S\subseteq L$, the supremum $\vee \{a:a\in S\}$ and the infimum $\wedge\{a:a\in S\}$ exist (see \cite[Definition 2.12]{Maeda21970}). Let $a$ and $b$ be elements in $L$ with $a \leq b$. Along this subsection, the set of all elements $c$ in $L$ with $a\leq c \leq b$ will be denoted by $[a, b]$.

The set $\mathcal{P}(M)$ of all projections in a JBW$^*$-algebra $M$ is a complete lattice with respect to the partial order defined by $p\leq q$ if $p\circ  q = p$. (Observe that $p\le q$ in this sense if and only if $p\le q$ as tripotents in the JBW$^*$-triple $M$, cf. Proposition~\ref{P:order char}.) Furthermore, the mapping $p\mapsto p^{\perp}:=1-p$ is  order reversing and satisfies $p^{\perp\perp} =p$, and $p^{\perp}$ is a complement for $p$ (i.e. $p\wedge p^{\perp}=0$ and $p\vee p^{\perp}=1$). Therefore $\mathcal{P}(M)$ is \emph{orthocomplemented} in the sense employed in \cite[5.1.2]{hanche1984jordan} and \cite[Definition 29.1]{Maeda21970}. Considering the usual orthogonality in $\mathcal{P}(M)$ (i.e., $p\perp q$ if $p\circ q =0$; this takes place if and only if $p$ and $q$ are orthogonal as tripotents), it follows from \cite[Theorem 2.9, Corollary 2.10 and Theorem 29.13]{Maeda21970} that the lattice $\mathcal{P}(M)$ is an \emph{orthomodular} lattice, that is, $p\perp q$ implies that $ r = p \vee (r \wedge q)$ for every $r\geq p$ (see \cite[5.1.2]{hanche1984jordan} and \cite[Definition 29.12 and Theorem 29.13]{Maeda21970}).

A projection $p\in \mathcal{P}(M)$ is \emph{modular} if the projection lattice $[0, p]=\mathcal{P}(M_2(p))$ of $M_2(p)$ is modular. If the unit element of a JBW$^*$-algebra $M$ is modular, $M$ itself is called modular. The same definition applies to JBW-algebras (which are just the self-adjoint parts of JBW$^*$-algebras).

It turns out that modularity can be viewed as a stronger version of finiteness. More precisely, we have the following theorem.

\begin{thm}\label{T:modular algebras}
Let $M$ be a JBW$^*$-algebra. Then $M$ is modular if and only if it is triple-isomorphic to a JBW$^*$-algebra of the form
 $$V\oplus^{\ell_\infty} \left(\bigoplus^{\ell_\infty}_{j\in J}L^\infty(\mu_j,C_j)\right) \oplus^{\ell_\infty}H(W,\alpha),$$
    where
    \begin{itemize}
        \item $V$ is a finite von Neumann algebra;
        \item $J$ is a (possibly empty) set;
        \item $\mu_j$ is a probability measure for each $j\in J$;
        \item for each $j\in J$ the space $C_j$ is either a spin factor, or a finite-dimensional Cartan factor of type $3$ or $6$, or $C_j=B(H)_a$ for a Hilbert space $H$ of finite even dimension;
        \item $W$ is a (possibly trivial) continuous finite von Neumann algebra (i.e., a type II$_1$ von Neumann algebra) and $\alpha$ is a central linear involution on $W$ commuting with $*$.
    \end{itemize}
\end{thm}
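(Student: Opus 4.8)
The plan is to combine the representation from Theorem~\ref{T:synthesis} with the elementary principle that modularity can be tested one summand at a time. Since $M$ is a JBW$^*$-algebra it has a unit $1$, so $\mathcal{P}(M)=\mathcal{P}(M_2(1))$ and ``$M$ modular'' means exactly that $\mathcal{P}(M)$ is a modular lattice. If $M=\bigoplus_i^{\ell_\infty}N_i$, then the component $1_i$ of $1$ is unitary in $N_i$ (the cross terms in the triple product vanish, so $(N_i)_2(1_i)=N_i$), hence each $N_i$ is again a JBW$^*$-algebra, the projections of $M$ are the coordinatewise families of projections, and $\mathcal{P}(M)=\prod_i\mathcal{P}(N_i)$ with coordinatewise order and operations (cf. Proposition~\ref{P:preorders in direct sum} and Proposition~\ref{P:order char}). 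A product of lattices is modular if and only if every factor is. In particular the summands $M_3,M_4$ of Theorem~\ref{T:synthesis} must vanish, being themselves JBW$^*$-algebras and so contradicting their defining property, whence $M=M_1\oplus^{\ell_\infty}M_2$. It therefore suffices to decide modularity for the atomic blocks occurring there: a finite von Neumann algebra, a properly infinite von Neumann algebra, $L^\infty(\mu,C)$ with $C$ a finite-dimensional Cartan factor, $A\overline{\otimes}B(H)_s$ and $A\overline{\otimes}B(H)_a$ with $\dim H=\infty$, and $H(W,\alpha)$ with $W$ continuous; the theorem follows once the modular ones are identified as precisely the finite von Neumann algebras, the blocks $L^\infty(\mu,C)$, and $H(W,\alpha)$ with $W$ finite (type $\mathrm{II}_1$).

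The cornerstone for the positive direction is the observation that \emph{for a von Neumann algebra $\tilde W$ with a central involution $\tilde\alpha$ commuting with $*$, the lattice $\mathcal{P}(H(\tilde W,\tilde\alpha))$ is a sublattice of $\mathcal{P}(\tilde W)$}. Indeed, the projections of $H(\tilde W,\tilde\alpha)$ are exactly the $\tilde\alpha$-fixed projections of $\tilde W$, with the inherited order; and as $\tilde\alpha$ is a Jordan-$*$ automorphism it acts as a lattice automorphism of $\mathcal{P}(\tilde W)$, so its fixed-point set is closed under the joins and meets of $\mathcal{P}(\tilde W)$. When $\tilde W$ is finite, $\mathcal{P}(\tilde W)$ is modular (\cite[Theorem V.1.37]{Tak} together with \cite{KaplanskyOrhtocompl55}), and a sublattice of a modular lattice is modular; hence $H(\tilde W,\tilde\alpha)$ is modular. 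This yields at once the modularity of a finite von Neumann algebra and of $H(W,\alpha)$ for $W$ of type $\mathrm{II}_1$. For the blocks $L^\infty(\mu,C)$ with $C$ finite-dimensional I would instead use that a finite-dimensional JBW$^*$-algebra has a projection lattice of finite length carrying a dimension function, hence is modular (for the exceptional type $6$ this is the length-three projective-plane lattice, and for spin factors the trivial remark that a lattice of length two is modular); modularity then lifts to $L^\infty(\mu,C)$ because, the fibres being finite-dimensional, the lattice operations of $\mathcal{P}(L^\infty(\mu,C))$ are computed pointwise $\mu$-a.e., so pointwise modularity passes to the field.

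The genuinely hard direction is the converse: each of the remaining blocks is \emph{not} modular, and this is exactly where modularity refines finiteness, since by Proposition~\ref{P:H(W,alpha) is finite} and Proposition~\ref{P:AotimesB(H)s is finite} the blocks $H(W,\alpha)$ (for $W$ of type $\mathrm{II}_\infty$ or $\mathrm{III}$) and $A\overline{\otimes}B(H)_s$ (with $\dim H=\infty$) are \emph{finite} JBW$^*$-algebras. The plan is to produce, inside each such block, a sublattice isomorphic to the lattice of closed subspaces of an infinite-dimensional real Hilbert space, which is classically non-modular; being a genuine sublattice, a pentagon of it is a pentagon of the ambient lattice, forcing non-modularity. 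The model case is $B(H)_s$ with $\dim H=\infty$: a projection with $p^t=p$ automatically has a real matrix (combine $p^t=p$ with $p^*=p$), so $\mathcal{P}(B(H)_s)$ is precisely the lattice of closed subspaces of the real Hilbert space $H_{\er}$, which is not modular; tensoring with $A$ only adds coordinates and preserves this. The properly infinite von Neumann summand is non-modular directly by the converse half of Kaplansky's theorem.

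The remaining cases $H(W,\alpha)$ with $W$ of type $\mathrm{II}_\infty$ or $\mathrm{III}$, and $A\overline{\otimes}B(H)_a$ with $\dim H=\infty$ (which is infinite by Proposition~\ref{P:C2 infinite dim}), require reducing to the model case. Using centrality of the involution together with Lemma~\ref{L:finite projections basic facts}(b) one first passes to a properly infinite $\tilde\alpha$-invariant central summand, and then builds an $\tilde\alpha$-symmetric system of matrix units --- exploiting $p\sim\tilde\alpha(p)$ from Lemma~\ref{L:p sim alpha(p)}, the infinite halving of Lemma~\ref{L:finite projections basic facts}(d), and, in the antisymmetric case, the tripotent-construction Proposition~\ref{P:C2 projections}(i) --- so as to embed a type $\mathrm{I}_\infty$ factor on which $\tilde\alpha$ acts as the transpose (respectively the symplectic adjoint). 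The real-projection computation above then exhibits the forbidden non-modular sublattice. I expect this simultaneous arrangement of orthogonality, equivalence and $\tilde\alpha$-symmetry of the matrix units to be the main technical obstacle; once it is in place, the proof concludes by matching the surviving modular blocks --- finite von Neumann algebras, $L^\infty(\mu,C)$ with $C$ finite-dimensional, and $H(W,\alpha)$ with $W$ of type $\mathrm{II}_1$ --- against the list in the statement.
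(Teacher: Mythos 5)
Your architecture is sound and your route is genuinely different from the paper's. The paper proves the positive direction by transporting the Jordan structure to a twisted von Neumann product $x\cdot_e y=xe^*y$ and verifying the symmetry-equivalence criterion (condition $(v)$ of Proposition~\ref{p characterization of modular JBW*-algebras}), and it proves the negative direction by reducing every infinite block to Ayupov's theorem on reversible JW$^*$-algebras via Lemma~\ref{L:ayupov}. You instead argue lattice-theoretically throughout, and your cornerstone is correct: a central involution $\tilde\alpha$ is an order-automorphism of $\mathcal{P}(\tilde W)$, so its fixed points form a genuine sublattice, whence $\mathcal{P}(H(\tilde W,\tilde\alpha))$ is modular whenever $\tilde W$ is finite. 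For those blocks this is cleaner than the paper's argument. The reduction to summands, the product-lattice principle, the vanishing of $M_3,M_4$, and the use of Kaplansky's converse \cite{KaplanskyOrhtocompl55} for the properly infinite von Neumann summand are all fine.

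The main gap is that the hard half of the theorem is not actually proved. For $H(W,\alpha)$ with $W$ of type II$_\infty$ or III (and, after the unit shift discussed below, for all the infinite blocks) everything rests on the construction of an $\tilde\alpha$-symmetric system of matrix units, or equivalently an $\tilde\alpha$-invariant I$_\infty$ subfactor on which $\tilde\alpha$ acts as a transpose or symplectic adjoint. You announce this and correctly identify it as the main obstacle, but it is precisely the content that the paper outsources to Ayupov's theorem \cite{ayupov1982}; nothing in Lemma~\ref{L:p sim alpha(p)}, Lemma~\ref{L:finite projections basic facts}$(d)$ or Proposition~\ref{P:C2 projections}$(i)$ delivers it directly, because those results do not control how $\tilde\alpha$ interacts with the intertwining partial isometries. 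Even the cheapest variant --- finding one infinite projection $r$ with $r\perp\tilde\alpha(r)$, after which $a\mapsto a+\tilde\alpha(a)$ is a unital Jordan embedding of the Jordan algebra of $rWr$ into $H(W,\tilde\alpha)$ and Kaplansky finishes --- requires genuine comparison theory entangled with the involution, and is exactly where the work lies. As written, your proof is complete only for the type I blocks, where a conjugation-compatible basis makes the embedding explicit.

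A second, more structural gap: the theorem quantifies over JBW$^*$-algebras \emph{triple-isomorphic} to the standard forms, and a triple isomorphism need not carry the unit to the standard unit; modularity is a property of the projection lattice, which changes with the unit, and its triple-invariance is a consequence of the theorem, not an available hypothesis. So each block must be analyzed for an arbitrary unitary tripotent $e$. This is absorbable in your framework, but it genuinely changes the problem in one case: for $N=A\overline{\otimes}B(H)_a$ the relevant algebras are $N_2(U)$ for unitary tripotents $U$, and these are not antisymmetric parts at all --- one computes $N_2(U)\cong H\bigl(A\overline{\otimes}B(H),\beta\bigr)$ with $\beta(y)=Uy^tU^*$, a \emph{fixed-point} algebra of a twisted central involution (similarly $H(W,\alpha)_2(e)\cong H(W,\beta)$ with $\beta(y)=e\alpha(y)e^*$, and $V_2(e)$ is a finite von Neumann algebra); none of these reductions appear in the proposal. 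Finally, a small omission: Theorem~\ref{T:synthesis} allows $C_j$ to be an \emph{infinite-dimensional} spin factor, so your list of atomic blocks and your lifting argument for $L^\infty(\mu_j,C_j)$ (which invokes finite-dimensionality of the fibres to compute lattice operations pointwise) must be extended to that case, e.g.\ via Proposition~\ref{P:Linfty refelexive} and the length-two fibre lattices you already mention.
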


This theorem will be proved at the end of this subsection using several properties and characterizations of modularity.

Let us first recall a stronger notion of finiteness, introduced by D.~Topping in \cite[\S 10]{Topping1965}, closely related to  modularity of projections. In \cite{Topping1965} the author deals with JBW-algebras, but we translate it to the language of JBW$^*$-algebras in order to be consistent with the rest of this paper. (Recall that JBW-algebras are exactly self-adjoint parts of JBW$^*$-algebras \cite{Wright1977}, so the reformulations make no harm.)

Let $N$ be a JBW$^*$-algebra. 
Two projections $p,q\in N$ are called equivalent (denoted by $p\stackrel{s}{\sim} q$) if there is a finite sequence $s_l, s_2 ,\ldots, s_n$ of self-adjoint symmetries (i.e. $s_j^*=s_j$ and $s_j^2=1$) such that $Q(s_1) \ldots Q(s_n) (p) =q$, where $Q(s_j) (x) =\{s_j,x,s_j\} = 2 (s_j \circ x) \circ s_j - s_j^2 \circ x$ for all $x\in N$ (cf. \cite[\S 10]{Topping1965}, \cite[5.1.4]{hanche1984jordan}, \cite[\S 3]{AlfsenShultzGeometry2003}). We shall write $p\stackrel{s1}{\sim} q$ (respectively, $p\stackrel{s2}{\sim} q$) when they are  exchanged by a symmetry (respectively, two symmetries) in the way we have seen before. Contrary to Murray-von Neumann equivalence, the relation $\stackrel{s}{\sim}$ 
has the property that $p \stackrel{s}{\sim} q$ in $N$ implies $1-p \stackrel{s}{\sim} 1-q$. 
If $p,q$ are two projections in a von Neumann algebra $M$, by  \cite[Proposition 6.56]{AlfsenShultzStateSpace2001} $p\stackrel{s}{\sim}q$ if and only if $p$ and $q$ are unitarily equivalent (i.e. there exists a unitary element $u\in M$ such that $u p u^* = q$). 
In particular,  $p\stackrel{s}{\sim} q$ implies $p\sim q$.

Topping considered in \cite[\S 13]{Topping1965} a property related to finiteness for the relation $\stackrel{s}{\sim}$. A projection $p$ in a JBW-algebra $N$ satisfies property $(F)$ if for each projection $q$ with $q\stackrel{s}{\sim} p$ and $q\leq p$ we have $p=q$. It is known that in a general JBW-algebra, a projection having property $(F)$ may have subprojections which violate $(F)$. We observe that the unit of $N$ always satisfies $(F)$ and that any finite projection in a von Neumann algebra satisfies $(F)$.

There is an equivalent description of the relation $\stackrel{s}{\sim}$. 
A result by S. Zanzinger (\cite[Theorem 4.1]{zanzinger1994}) proves that two projections $p, q$ in a JBW$^*$-algebra
are equivalent (i.e. $p\stackrel{s}{\sim} q$) if and only if they are \emph{perspective}, i.e., if they have common complement in the projection lattice, that is, there is a projection $r$ with $p\wedge r=q\wedge r=0$ and $p\vee r=q\vee r=1$.

The following proposition collects some known characterizations of modular JBW$^*$-algebras.

\begin{prop}\label{p characterization of modular JBW*-algebras} For a JBW$^*$-algebra $N$ the following are equivalent:\begin{enumerate}[$(i)$]\item $N$ is modular (i.e., $N$ has a modular projection lattice);
\item If $p$ and $q$ are projections in $N$ with $p\leq q$ and $p$ and $q$ are perspective, then $p=q$;
\item Every projection in $N$ satisfies property $(F)$, that is, if $q$ and $p$ are projections in $N$ with $q\stackrel{s}{\sim} p$ and $q\leq p$ we have $p=q$;
\item $N$ contains no copy of $B(H)_s$ for a separable infinite dimensional complex Hilbert space $H$;
\item Every orthogonal family of projections in $N$, any two of which are exchanged by a symmetry in $N$ (i.e., mutually $\stackrel{s1}{\sim}$-related), is finite.
\end{enumerate}
If $N$ is even a JW$^*$-algebra, then the previous assertions are equivalent also to the following one.
\begin{enumerate}[$(vi)$]
    \item Every orthogonal family of $\stackrel{s}{\sim}$-equivalent projections in $N$ is finite;
\end{enumerate}
\end{prop}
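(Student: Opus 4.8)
The plan is to split the proof into a self-contained lattice-theoretic core, $(i)\Leftrightarrow(ii)\Leftrightarrow(iii)$, and a structural block, $(iv)$--$(vi)$, which I would transfer from Topping \cite{Topping1965} through the identifications already recorded in the statement.

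For $(i)\Rightarrow(ii)$ I would argue straight from the modular law: if $p\le q$ are perspective with common complement $r$, so that $p\wedge r=q\wedge r=0$ and $p\vee r=q\vee r=1$, then taking $e=p$, $f=r$, $g=q$ gives
$$q=1\wedge q=(p\vee r)\wedge q=p\vee(r\wedge q)=p\vee 0=p.$$
For the converse $(ii)\Rightarrow(i)$ I would invoke the classical characterization of modularity by the absence of a pentagon sublattice (see, e.g., \cite{Maeda21970}): were $\mathcal P(N)$ non-modular it would contain elements $\{0,a,b,c,1\}$ with $a<b$, $a\vee c=b\vee c=1$ and $a\wedge c=b\wedge c=0$; then $a<b$ would be comparable projections sharing the complement $c$, i.e.\ perspective and distinct, contradicting $(ii)$. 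Finally $(ii)\Leftrightarrow(iii)$ is immediate from Zanzinger's theorem \cite[Theorem 4.1]{zanzinger1994}, which identifies $\stackrel{s}{\sim}$ with perspectivity: condition $(iii)$ says that every projection has property $(F)$, i.e.\ that $q\le p$ together with $q\stackrel{s}{\sim}p$ forces $q=p$, which is exactly $(ii)$.

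For the structural block I would first dispose of $\neg(iv)\Rightarrow\neg(i)$ directly. If $N$ contains a copy of $B(H)_s$ with $H$ separable and infinite-dimensional, then the projection lattice of that subalgebra embeds as a sublattice of $\mathcal P(N)$ (the operations $\wedge,\vee$ on projections being computed intrinsically), and since a sublattice of a modular lattice is modular, it suffices to exhibit inside $B(H)_s$ a pair $q<p$ with $q\stackrel{s}{\sim}p$ --- a concrete failure of $(F)$ showing that the triple $B(H)_s$, although finite by Proposition~\ref{P:AotimesB(H)s is finite}, is not modular, which is precisely the phenomenon separating finiteness from modularity. The converse $(iv)\Rightarrow(i)$ and the finiteness criteria $(v)$, $(vi)$ I would read off from \cite[\S\,10--13]{Topping1965}, translated to the present setting via the identification of JBW-algebras with self-adjoint parts of JBW$^*$-algebras (\cite{Wright1977}) and via the description of $\stackrel{s}{\sim}$ above: $(v)$ is Topping's formulation of finiteness through $\stackrel{s1}{\sim}$-orthogonal families, while $(vi)$ is the analogous statement for the coarser relation $\stackrel{s}{\sim}$, available in the reversible (JW$^*$) case where $\stackrel{s}{\sim}$ agrees with unitary equivalence in the enveloping von Neumann algebra (\cite[Proposition 6.56]{AlfsenShultzStateSpace2001}); here $(vi)\Rightarrow(v)$ is trivial since each $\stackrel{s1}{\sim}$-family is in particular a $\stackrel{s}{\sim}$-family, and the content is the reverse upgrade.

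The main obstacle I anticipate is the structural block $(iv)$--$(vi)$: the equivalences $(i)$--$(iii)$ are short and essentially formal, but establishing that infinite-dimensional $B(H)_s$ is the \emph{sole} obstruction to modularity, and that the two finiteness conditions on orthogonal $\stackrel{s1}{\sim}$- and $\stackrel{s}{\sim}$-families faithfully detect it, requires the careful comparison of symmetry-equivalence with perspectivity and Murray--von Neumann equivalence carried out by Topping, together with the delicate passage from the JW$^*$-case to general JBW$^*$-algebras needed for $(vi)$.
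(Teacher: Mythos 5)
Your implication $(i)\Rightarrow(ii)$ is correct, and routing $(ii)\Leftrightarrow(iii)$ through Zanzinger's theorem is exactly what the paper does. However, your proof of $(ii)\Rightarrow(i)$ has a genuine gap: Birkhoff's pentagon criterion only produces a sublattice $\{s,a,b,c,t\}$ with $a<b$, $a\vee c=b\vee c=t$ and $a\wedge c=b\wedge c=s$, where $s$ and $t$ are \emph{not} required to be the $0$ and $1$ of $\mathcal{P}(N)$. Thus $c$ is only a common \emph{relative} complement of $a$ and $b$, whereas perspectivity as defined in the paper demands a common complement with respect to the global bounds. The purely lattice-theoretic implication you invoke is in fact false for general bounded lattices (adjoin a new top and bottom to the pentagon $N_5$: the resulting lattice is non-modular, yet no two distinct comparable elements possess any complement at all, so $(ii)$ holds vacuously), so any correct proof must use orthomodularity. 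Your argument can be repaired: by orthomodularity the interval $[s,t]$ is isomorphic to $[0,\,t\wedge s^\perp]$ via $x\mapsto x\wedge s^\perp$, and if $a',b',c'$ are the images of $a,b,c$, then $c'\vee(t\wedge s^\perp)^\perp$ is a \emph{global} common complement of $a'$ and $b'$, the required distributivity $a'\wedge\bigl(c'\vee(t\wedge s^\perp)^\perp\bigr)=(a'\wedge c')\vee\bigl(a'\wedge(t\wedge s^\perp)^\perp\bigr)$ coming from the Foulis--Holland theorem. This missing step is precisely why the paper routes the equivalence through \cite[Proposition 5.1.3]{hanche1984jordan}, ``as the projection lattice is orthomodular''.

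The structural block is also incomplete in two places. First, the pair $q<p$ with $q\stackrel{s}{\sim}p$ inside $B(H)_s$ is asserted but never constructed; this is the actual content of the non-modularity of $B(H)_s$ and needs an argument (one must realize a real-orthogonal map carrying $\operatorname{ran} q$ onto $\operatorname{ran} p$ as a finite product of symmetries lying in $B(H)_s$, which requires both $q,p$ and $1-q,1-p$ to be infinite and some work with products of symmetries). Second, and more seriously, Topping's memoir \cite{Topping1965} treats JW-algebras, i.e.\ Jordan algebras of self-adjoint operators; a general JBW$^*$-algebra has an exceptional summand to which those results do not apply, so $(iv)\Rightarrow(i)$ and the equivalence with $(v)$ cannot simply be read off from \cite[\S\S 10--13]{Topping1965} after passing to self-adjoint parts via \cite{Wright1977}. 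The paper avoids this by citing \cite[Theorem 7.6.3]{hanche1984jordan}, which is proved for arbitrary JBW-algebras; only the additional assertion $(vi)$, which the statement restricts to JW$^*$-algebras, is taken from \cite[Proposition 14]{Topping1965}.
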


\begin{proof}
$(i)\Leftrightarrow(ii)$ This follows from \cite[Proposition 5.1.3]{hanche1984jordan} as the projection lattice is orthomodular.

$(ii)\Leftrightarrow(iii)$ This follows from the above-quoted result \cite[Theorem 4.1]{zanzinger1994}.

$(i)\Leftrightarrow(iv)\Leftrightarrow(v)$ This follows from \cite[Theorem 7.6.3]{hanche1984jordan}.

Finally, the equivalence of the additional assertion $(vi)$ for JW$^*$-algebras follows from \cite[Proposition 14]{Topping1965}.
\end{proof}

Now we are ready to proof the `if part' of Theorem~\ref{T:modular algebras}.

\begin{proof}[Proof of the `if part' of Theorem~\ref{T:modular algebras}]
Since modularity is obviously preserved by $\ell_\infty$-sums, it is enough to prove that any JBW$^*$-algebra triple-isomorphic to one of the individual summands is modular.

First, assume that $M$ is a JBW$^*$-algebra triple-isomorphic to a finite von Neumann algebra $V$.
It means that there is a unitary element $e\in V$ such that $M$ is Jordan-$*$-isomorphic to the JBW$^*$-algebra $V_2(e)$. Hence we may assume that $M=V_2(e)$. Let us define on $V$ a new product by setting
$$x\cdot_e y= xe^*y,\quad x,y\in V.$$
This product together with the involution $*_e$ (recall that it is defined by $x^{*_e}=ex^*e$) makes $V$ be a von Neumann algebra generating the JBW$^*$-algebra structure of $M=V_2(e)$. This von Neumann algebra is finite (by Lemma~\ref{L:finite tripotents}$(a,c)$).
Hence $M$ is modular by \cite[Theorem V.1.37]{Tak}. (Alternatively, we can use Proposition~\ref{p characterization of modular JBW*-algebras}. Indeed, the equivalence $\stackrel{s}{\sim}$ implies the Murray-von Neumann equivalence $\sim$, thus any finite projection in a von Neumann algebra satisfies  property $(F)$.)

Next assume that $M$ is triple-isomorphic to the summand $N=H(W,\alpha)$. It means that there is a unitary element $e\in N$ such that $M$ is Jordan-$*$-isomorphic to the JBW$^*$-algebra $N_2(e)$.
So, assume $M=N_2(e)$. Note that $e$ is also a unitary element of $W$. In the same way as in the previous paragraph we may introduce on $W$ a new structure of a von Neumann algebra such that $M$ is then a JBW$^*$-subalgebra.
Recall that we assume that $W$ is finite, hence $W_2(e)$ is modular as in the previous paragraph. Since the validity of condition $(v)$ from Propostion~\ref{p characterization of modular JBW*-algebras} is clearly inherited by JBW$^*$-subalgebras, we deduce that $M$ is modular.

Finally, assume that $M$ is triple-isomorphic to $N=L^\infty(\mu,C)$, where $C$ is either a finite-dimensional JBW$^*$-algebra or a spin factor.
Again, we assume $M=N_2(\uu)$ for a unitary element $\uu\in N$. Observe that $\uu(\omega)$ is a unitary element of $C$ for $\mu$-almost all $\omega$.

We will show that condition $(v)$ of Proposition~\ref{p characterization of modular JBW*-algebras} is fulfilled in $M$. We proceed by contradiction. Assume that there is an infinite orthogonal sequence $(\p_n)$ of projections in $M$ such that $\p_n\stackrel{s1}{\sim}\p_m$ in $M$ whenever $m,n\in\en$. It means that for $\mu$-almost all $\omega$ $(\p_n(\omega))_{n\in\en}$ is an orthogonal sequence of tripotents in $C$ below $\uu(\omega)$ and, moreover, there are $\vv_{m,n}\in M$ such that $\vv_{m,n}(\omega)$ is unitary (actually a symmetry) almost everywhere and $\p_n=\J{\vv_{m,n}}{\p_m}{\vv_{m,n}}$, i.e., 
$$\p_n(\omega)=\J{\vv_{m,n}(\omega)}{\p_m(\omega)}{\vv_{m,n}(\omega)}\qquad\mu\mbox{-a.e.}$$
Since we work only with a countable family of elements, we may without loss of generality assume that the mentioned equalities hold everywhere. In particular, if $\p_n(\omega)\ne0$ for some $\omega$, then $\p_m(\omega)\ne0$ for each $m\in\en$. It follows that there is an infinite sequence of pairwise orthogonal nonzero tripotents in  $C$, which is impossible. (Note that in a spin factor there do not exist three nonzero pairwise orthogonal tripotents by Proposition~\ref{P:relations in spin}$(3)$, and in a finite-dimensional JB$^*$-triple there cannot be an infinite family of pairwise orthogonal tripotents for trivial reasons.)
This completes the proof that $M$ is modular.
\end{proof}

To prove the converse implication we will use some consequences of a result due to S.A. Ayupov.  Recall that
a JW$^*$-algebra $N$ inside some $B(H)$ is said to be \emph{reversible} if $a_1 \ldots a_n + a_n \ldots a_1\in N$ whenever
$a_1,\ldots a_n \in N.$ Ayupov proved in \cite[Theorem 3]{ayupov1982} that a reversible JW$^*$-algebra is modular if and only if its
enveloping von Neumann algebra is finite.

The next result is a minor generalization o Ayupov's result.

\begin{lemma}\label{L:ayupov}
Let $V$ be a von Neumann algebra and let $M\subset V$ be a reversible JBW$^*$-subalgebra. Assume that the von Neumann algebra generated by $M$ coincides with the whole $V$ and that $M$ is triple-isomorphic to a modular JBW$^*$-algebra. Them $V$ is finite.
\end{lemma}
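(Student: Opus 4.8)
The plan is to reduce the statement to Ayupov's theorem by replacing the associative product of $V$ with a unitarily twisted one for which $M$ becomes a genuine \emph{modular} JW$^*$-algebra. Let $\Phi\colon N\to M$ be a triple-isomorphism from a modular JBW$^*$-algebra $N$ onto $M$, and set $u=\Phi(1_N)$. Since $1_N$ is a unitary tripotent of $N$ and triple-isomorphisms preserve unitary tripotents, $u$ is a unitary tripotent of $M$, and $\Phi$ is a Jordan-$*$-isomorphism of $N$ onto $(M,\circ_u,*_u)$ (cf. \eqref{eq circ-star}); hence $(M,\circ_u,*_u)$ is modular. First I would verify that $u$ is a unitary element of $V$. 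Writing $1_M$ for the unit of the JBW$^*$-algebra $M$ with its intrinsic structure, the identity $x\circ 1_M=\tfrac12(x\,1_M+1_M\,x)=x$ forces $x$ to commute with $1_M$ and $1_M x=x$, so $x=1_M x\,1_M$ for all $x\in M$; thus $M\subset 1_M V 1_M$, and since $M$ generates $V$ this gives $1_M=1_V=:1$. Applying $L(u,u)$ (computed in $V$, which agrees with the triple product of the subtriple $M$) to $1\in M$ and using $L(u,u)|_M=\mathrm{id}$, we get $\tfrac12(uu^*+u^*u)=1$; as $uu^*,u^*u\le 1$ are projections, this yields $uu^*=u^*u=1$, i.e. $u$ is unitary in $V$.

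Next I would introduce the $u$-twist of $V$. On the Banach space $V$ define a product $x\cdot_u y=xu^*y$ and an involution $x^{\sharp}=ux^*u$, and take $u$ as unit; call the result $V_u$. The map $\phi\colon V_u\to V$, $\phi(x)=u^*x$, is a normal $*$-isomorphism of von Neumann algebras with $\phi(u)=1$, so $V_u$ is a von Neumann algebra $*$-isomorphic to $V$. Because $x\circ_u y=\J xuy=\tfrac12(x\cdot_u y+y\cdot_u x)$ and $x^{*_u}=\J uxu=x^{\sharp}$, the weak$^*$-closed space $M$ is a Jordan-$*$-subalgebra of $V_u$; that is, $(M,\circ_u,*_u)$ is a \emph{modular} JW$^*$-algebra sitting inside $V_u$.

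Then I would transfer reversibility and the generation property to $V_u$. Since $M$ is $*$-closed we have $u^*\in M$, and a $\cdot_u$-word equals $a_1\cdot_u\cdots\cdot_u a_n=a_1u^*a_2u^*\cdots u^*a_n$, a $V$-product of the elements $a_1,u^*,a_2,\dots,u^*,a_n$ of $M$, whose $\cdot_u$-reverse $a_n\cdot_u\cdots\cdot_u a_1$ is precisely the $V$-reverse of that word; hence reversibility of $M$ in $V$ gives reversibility of $M$ in $V_u$. For generation, $\phi$ carries the von Neumann algebra generated by $M$ in $V_u$ onto the von Neumann algebra generated by $u^*M=\phi(M)$ in $V$; the relations $u^*u=1\in u^*M$ and $(u^*x)^*(u^*y)=x^*y$ (using $uu^*=1$) show this algebra contains $M$, hence equals $V$, so $M$ generates $V_u$.

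Finally, $(M,\circ_u,*_u)$ is a reversible modular JW$^*$-algebra generating $V_u$, so by Ayupov's theorem \cite[Theorem 3]{ayupov1982} its enveloping von Neumann algebra $V_u$ is finite; therefore $V\cong V_u$ is finite. The main obstacle is conceptual rather than computational: the hypothesis only supplies modularity relative to the foreign unit $u=\Phi(1_N)$, whereas Ayupov's theorem concerns honest JW$^*$-algebras inside their envelopes. The twist $V\rightsquigarrow V_u$ is exactly what aligns the modular Jordan structure with an associative envelope, and the points requiring care are establishing that $u$ is unitary in all of $V$ and that both reversibility and the generation property survive the twist.
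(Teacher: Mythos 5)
Your proof is correct and follows essentially the same route as the paper's: both establish that $1\in M$ and that $u=\Phi(1_N)$ is unitary in all of $V$, pass to the twisted von Neumann algebra structure $x\cdot_u y=xu^*y$, $x^{\sharp}=ux^*u$ on $V$, observe that reversibility of $M$ in $V$ (applied to words interlaced with $u^*\in M$) yields reversibility of the modular JW$^*$-algebra $(M,\circ_u,*_u)$ in the twisted algebra, and then invoke Ayupov's theorem. The only (immaterial) differences are that you verify the generation property by transporting it through the explicit $*$-isomorphism $x\mapsto u^*x$, while the paper checks directly that the $\cdot_u$-generated algebra is stable under the original product and involution of $V$, and that you spell out the argument that $u$ is unitary in $V$, which the paper merely asserts.
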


\begin{proof}
Observe that the assumptions yield that $1\in M$ and that there is a unitary element $e\in M$ such that the JBW$^*$-algebra $M_2(e)$ is modular. Then $M_2(e)$ is a JBW$^*$-subalgebra of the JBW$^*$-algebra $V_2(e)$. Moreover, $e$ is unitary in $V$ and $V_2(e)$ carries a structure of a von Neumann algebra with the product $\cdot_e$ defined by $x\cdot_e y=x e^* y$.

Let us prove that the von Neumann subalgebra of $V_2(e)$ generated by $M_2(e)$ coincides with $V_2(e)$. To this end denote the generated von Neumann algebra by $W$. It is enough to show that $W$ is stable under the original product and involution in $V$.

We know that $\{e,e^*,1\}\subset M\subset W$. Moreover, for any $x\in W$ we have
$$xe=x e^* e^2=x\cdot_e e^2\in W$$
as  $e^2\in M\subset W$. 
Hence, given $x,y\in W$ we have
$$xy=xee^*y=xe \cdot_e y\in W.$$
Finally, for any $x\in W$ we have
$$x^*=e^*ex^*e e^*=\J{e^*}{x^{*_e}}{e^*}\in W$$
as $W$ is a subtriple of $V$.

So, we deduce that $W=V=V_2(e)$.

Moreover, $M_2(e)$ is reversible in $V_2(e)$. Indeed, let $a_1,\dots,a_n\in M_2(e)$.  Since $M$ is a reversible JBW$^*$-subalgebra of $V$ we have 
\begin{multline*}
    a_1\cdot_e a_2\cdot_e\ldots\cdot_e a_n+ a_n\cdot_e\ldots\cdot_e a_2\cdot_e a_1\\=a_1e^*a_2e^*\dots e^*a_n+a_ne^*\dots e^*a_2e^*a_1\in\! M\!=\!M_2(e).\end{multline*}

Hence $V_2(e)$ is finite by Ayupov's theorem \cite[Theorem 3]{ayupov1982}. It follows that $V$ is finite by Lemma~\ref{L:finite tripotents}. 
\end{proof}

\begin{lemma}\label{L:modular is finite}
Any modular JBW$^*$-algebra is finite.
\end{lemma}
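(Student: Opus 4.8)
The plan is to feed $M$ into the structure theorem and then kill its properly infinite part. First I would record that, since $M$ is a JBW$^*$-algebra, it has a unit, and every nonzero direct summand of a unital JBW$^*$-algebra is again unital (if $M=I\oplus^{\ell_\infty}J$ and $1=u+w$ with $u\in I$, $w\in J$, then $w\circ x=0$ for $x\in I$, so $u$ is a unit for $I$), hence a JBW$^*$-algebra. Applying Theorem~\ref{T:synthesis} to $M$, the summands $M_3$ and $M_4$ there are by construction JBW$^*$-triples admitting no nonzero direct summand isomorphic to a JBW$^*$-algebra; being themselves direct summands of the unital $M$ they would be such JBW$^*$-algebra summands if nonzero, so $M_3=M_4=\{0\}$. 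Thus $M=M_1\oplus^{\ell_\infty}M_2$ with $M_1$ finite and $M_2=V_2\oplus^{\ell_\infty}\bigoplus^{\ell_\infty}_{\gamma\in\Gamma}A_\gamma\overline{\otimes}B(H_\gamma)_a$ its properly infinite part, where $V_2$ is a properly infinite von Neumann algebra and each $H_\gamma$ is infinite dimensional. It remains to show $M_2=\{0\}$. Since the projection lattice of an $\ell_\infty$-sum is the product of the projection lattices of the summands, and a product of lattices is modular precisely when each factor is, modularity of $M$ descends to each of these direct summands. Hence it suffices to prove that a nonzero properly infinite von Neumann algebra and a triple $A\overline{\otimes}B(H)_a$ with $\dim H=\infty$ can never be modular.

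For the von Neumann summand I would apply Lemma~\ref{L:ayupov} with $V=M=V_2$: a von Neumann algebra is trivially a reversible JBW$^*$-subalgebra of itself and generates itself, so if $V_2$ were modular then $V_2$ would be finite, contradicting proper infiniteness. (Equivalently one may quote Kaplansky's theorem \cite[Theorem on page 5]{KaplanskyOrhtocompl55}.) Therefore $V_2=\{0\}$.

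For the antisymmetric summand I would use the intrinsic criterion of Proposition~\ref{p characterization of modular JBW*-algebras}$(iv)$: a JBW$^*$-algebra is modular if and only if it contains no copy of $B(K)_s$ with $K$ separable infinite dimensional. Note first that $A\overline{\otimes}B(H)_a$ is indeed a JBW$^*$-algebra when $\dim H=\infty$, by Proposition~\ref{P:C2 infinite dim}$(a)$, so the criterion applies. To produce the forbidden copy, choose a conjugation-invariant subspace $H_0=K\oplus K\subseteq H$ with $K$ separable infinite dimensional, and consider the map $\Psi(x)=\begin{pmatrix}0 & x\\ -x^{t} & 0\end{pmatrix}$ from $B(K)$ into operators on $H_0$. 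A short check shows that $\Psi(x)$ is antisymmetric for every $x$ and that $\Psi$ respects the triple product $\{x,y,z\}=\tfrac12(xy^*z+zy^*x)$ (using $(y^*)^{t}=\overline{y}$), so $\Psi$ is an injective triple homomorphism of $B(K)$ onto a subtriple of $B(H_0)_a$; restricting $\Psi$ to the symmetric operators $B(K)_s$ yields a copy of the type $3$ Cartan factor. Composing with the subtriple inclusions $B(H_0)_a\subseteq B(H)_a$ (operators carried by $H_0$) and $B(H)_a\cong 1\otimes B(H)_a\subseteq A\overline{\otimes}B(H)_a$ embeds $B(K)_s$ into $A\overline{\otimes}B(H)_a$. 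Hence this summand is not modular, so $\Gamma=\emptyset$; together with $V_2=\{0\}$ this forces $M_2=\{0\}$ and $M=M_1$ is finite.

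The routine parts — the lattice-theoretic descent of modularity to summands and the von Neumann case, which is immediate from Lemma~\ref{L:ayupov} — cause no difficulty. The main point to get right is the antisymmetric summand: the explicit verification that $\Psi$ is a triple monomorphism, and the bookkeeping needed to realise a conjugation-invariant $K\oplus K$ inside $H$ and to pass from $B(H)_a$ to $A\overline{\otimes}B(H)_a$ through the constant embedding. This is where I expect the only genuine (though short) computation to lie.
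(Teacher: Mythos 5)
Your reduction is sound and, up to the antisymmetric summand, matches the paper's own route: the summands $M_3$, $M_4$ of Theorem~\ref{T:synthesis} vanish for a JBW$^*$-algebra, modularity passes to the direct summands of $M$ (intervals and products of modular lattices are modular), and the properly infinite von Neumann summand is killed exactly as in the paper, via Lemma~\ref{L:ayupov} --- whose hypothesis ``triple-isomorphic to a modular JBW$^*$-algebra'' is precisely what the lattice-descent step hands you, so the fact that the summand of $M$ is only \emph{triple}-isomorphic to $V_2$ causes no harm there. Your computation that $\Psi(x)=\begin{pmatrix}0 & x\\ -x^{t} & 0\end{pmatrix}$ is a triple monomorphism of $B(K)$ into $B(H_0)_a$ is also correct. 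The gap lies in how this embedding is used for the summand $A\overline{\otimes}B(H)_a$.

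What the descent step gives you is a modular JBW$^*$-algebra $N_\gamma$ (a summand of $M$) that is \emph{triple}-isomorphic to $A\overline{\otimes}B(H)_a$; under this isomorphism its Jordan product and involution become $\circ_U$, $*_U$ for an \emph{arbitrary} unitary tripotent $U$ of $A\overline{\otimes}B(H)_a$, not the one from Proposition~\ref{P:C2 infinite dim}$(a)$. Condition $(iv)$ of Proposition~\ref{p characterization of modular JBW*-algebras} comes from the Jordan-algebra theorem of Hanche-Olsen--St{\o}rmer, so ``contains a copy of $B(K)_s$'' means a copy as a JBW$^*$-\emph{subalgebra} of the algebra whose modularity is being tested, i.e.\ a subspace stable under $\circ_U$ and $*_U$; it cannot be read as ``contains a subtriple copy'', for otherwise $(iv)$ would be manifestly triple-invariant and modularity would be a triple invariant already at this stage, whereas the paper obtains that fact (Remark~\ref{rem:modularity}$(2)$) only as a consequence of Theorem~\ref{T:modular algebras}, whose proof requires the very lemma you are proving. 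Your copy $1\otimes\Psi(B(K)_s)$ is only a subtriple: its unit $e=1\otimes\Psi(1_K)$ is a tripotent that need not satisfy $e\le U$, and the range of $\Psi$ is stable under $\circ_V$, $*_V$ only for unitaries $V$ with $e\le V$ (by Peirce arithmetic). So what you actually prove is that \emph{one particular} unitary isotope of $A\overline{\otimes}B(H)_a$ is non-modular, and you cannot transfer this to $N_\gamma$'s structure without the (not yet available, here circular) triple-invariance of modularity. Repairing the argument would require producing, for every unitary $U$, a subtriple copy of $B(K)_s$ whose unit lies below $U$ --- e.g.\ starting from the decomposition in Lemma~\ref{L:C2 decomposition of tripoten} --- which amounts to real work comparable to the paper's own construction; the paper instead sidesteps the whole issue by running this summand, too, through Lemma~\ref{L:ayupov} (a reversible embedding into a von Neumann algebra whose enveloping algebra is shown to be infinite), since that lemma quantifies over all Jordan structures at once.
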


\begin{proof}
In view of Theorems~\ref{T:synthesis}
it is enough to prove that no properly infinite JBW$^*$-algebra is modular. Moreover, it is enough to prove this for the individual summands of $M_2$ from Theorem~\ref{T:synthesis}.

If $M$ is a JBW$^*$-algebra triple-isomorphic to a properly infinite von Neumann algebra, it cannot be modular by Lemma~\ref{L:ayupov}. 

It remains to analyze the triples of the form $M=A\overline{\otimes}B(H)_a$ where $A=L^\infty(\mu)$ and $H$ is an infinite-dimensional Hilbert space. Let us show how to apply Lemma~\ref{L:ayupov} in this case.

Recall that $H=\ell^2(\Gamma)$ for an infinite set (and the conjugation and transpose are considered with respect to the canonical basis). Since $\Gamma$ is infinite, there is a subset $\Delta\subset\Gamma$ and a bijection $\eta:\Delta\to\Gamma\setminus\Delta$. Let $u\in B(H)$ be the unitary operator acting on the canonical vectors by
$$e_\gamma\mapsto e_{\eta(\gamma)}\mbox{ and }e_{\eta(\gamma)}\mapsto -e_\gamma\mbox{ for }\gamma\in\Delta.$$
Then $u\in B(H)_a$. Further, $U=1\otimes u$ is a unitary element in $A\overline{\otimes}B(H)_a$, hence also in the von Neumann algebra $V=A\overline{\otimes}B(H)$. Let us equip $M$ with the structure of a  JBW$^*$-algebra given by $M_2(U)$. Then $M$ is a JBW$^*$-subalgebra of the JBW$^*$-algebra $V_2(U)$. Equip $V_2(U)$ with the structure of a von Neumann algebra as above.
We claim that $M$ is reversible in $V_2(U)$. Indeed, fix $x_1,\dots,x_n\in M$. Then
$$\begin{aligned}
(x_1\cdot_U\ldots\cdot_U x_n)^t&=(x_1 U^* \dots U^* x_n)^t=
x_n^t(U^*)^t\dots (U^*)^tx_1^t\\&=(-1)^{2n-1}x_nU^*\dots U^*x_1=
-x_n\cdot_U\ldots\cdot_U x_1,\end{aligned}$$
hence we deduce easily that $M$ is reversible.

Hence, to prove that no JBW$^*$-algebra triple-isomorphic to $M$ is modular, it is enough to show that the von Neumann algebra generated by $M$ is infinite and then use Lemma~\ref{L:ayupov}.

Let $(\gamma_n)$ be a one-to-one sequence in $\Delta$. Let $u_n$ be the partial isometry with $p_i(u_n)=p_f(u_n)=\span\{e_{\gamma_n},e_{\eta(\gamma_n)}\}$ defined by 
$$e_{\gamma_n}\mapsto e_{\eta({\gamma_n})}\mbox{ and } e_{\eta({\gamma_n})}\mapsto -e_{\gamma_n}.$$
Then $u_n\in B(H)_a$ and $u_n\le u$, hence $1\otimes u_n$ is a projection in the von Neumann algebra $V_2(U)$ contained in $M$. Moreover, these projections are mutually orthogonal.

We are going to show that these projections are mutually equivalent (in the Murray-von Neumann sense) in the envelopping von Neumann algebra. To this end fix two distinct numbers $m,n\in\en$. Let $v$ be the partial isometry with $p_i(v)=p_f(v)=\span\{e_{\gamma_n},e_{\gamma_m},e_{\eta(\gamma_n)},e_{\eta(\gamma_m)}\}$ defined by
$$e_{\gamma_n}\mapsto e_{\eta(\gamma_m)}, e_{\gamma_m}\mapsto e_{\eta(\gamma_n)}, e_{\eta(\gamma_n)}\mapsto -e_{\gamma_m} \mbox{ and } e_{\eta(\gamma_m)}\mapsto -e_{\gamma_n}.$$
Then $v\in B(H)_a$. Moreover, 
set $w=vu^*u_n$. Then $w$ is a partial isometry with $p_i(w)=p_i(u_n)$ and $p_f(w)=p_i(u_m)$. Since
$$1\otimes w=(1\otimes v)U^*(1\otimes u_n)=(1\otimes v)\cdot_U(1\otimes u_n),$$
$1\otimes w$ is a partial isometry which belongs to the envelopping von Neumann algebra of $M$ in $V_2(U)$. Moreover,
$$(1\otimes w)^{*_U} \cdot_U(1\otimes w)=U(1\otimes w^*)UU^*(1\otimes w)=U(1\otimes p_i(w))=1\otimes up_i(u_n)=1\otimes u_n$$
and
$$(1\otimes w) \cdot_U(1\otimes w)^{*_U}
=(1\otimes w)U^*U(1\otimes w^*)U=(1\otimes p_f(w))U=1\otimes p_f(u_m)u=1\otimes u_m.$$

Hence $1\otimes u_n\sim 1\otimes u_m$ in the envelopping von Neumann algebra of $M$ in $V_2(U)$. It follows from Lemma~\ref{L:finite projections basic facts}$(e)$ that the envelopping von Neumann algebra is infinite. This completes the proof.
\end{proof}

\begin{proof}[Proof of the `only if' part of Theorem~\ref{T:modular algebras}]
Assume that $M$ is a modular JBW$^*$-algebra. By Lemma~\ref{L:modular is finite} it is finite, so it is isomorphic to a JBW$^*$-algebra of the form $M_1$ from Theorem~\ref{T:synthesis}. It remains to prove that the von Neumann algebra $W$ from the summand $H(W,\alpha)$ is finite and, moreover, there is no summand of the form $A\overline{\otimes}B(H)_s$ for an infinite-dimensional Hilbert space $H$. 

In both cases we may apply Lemma~\ref{L:ayupov}. Indeed, $H(W,\alpha)$ is a reversible JBW$^*$-subalgebra of $W$ and, moreover, the envelopping von Neumann algebra contains each projection in $W$ (by Lemma~\ref{L:H(W,alpha) construction of tripotent}), hence it equals $W$. So, $W$ is finite by Lemma~\ref{L:ayupov}.

Similarly, $A\overline{\otimes}B(H)_s$ is a reversible JBW$^*$-subalgebra of the von Neumann algebra $A\overline{\otimes}B(H)$. Again, by Lemma~\ref{L:C3 construction of tripotent} the envelopping von Neumann algebra contains all projections, so it equals to $A\overline{\otimes}B(H)$. Since this von Neumann algebra is infinite, Lemma~\ref{L:ayupov} shows that
$A\overline{\otimes}B(H)_s$ is not triple-isomorphic to a modular JBW$^*$-algebra. This completes the proof.
\end{proof}

\begin{remark}\label{rem:modularity}
\begin{enumerate}[$(1)$]
\item It follows from Theorems~\ref{T:modular algebras} and~\ref{T:synthesis} that modularity is a stronger property than finiteness, i.e., any modular JBW$^*$-algebra is finite, but there are finite JBW$^*$-algebras which are not modular.
In particular, $A\overline{\otimes} B(H)_s$ where $H$ is an infinite-dimensional Hilbert space is such an example.
\item It follows from Theorem~\ref{T:modular algebras} that modularity of 
JBW$^*$-algebras is a triple-property, i.e., it is preserved by triple isomorphisms. In particular, if $M$ is a modular JBW$^*$-algebra and $u\in M$ is any tripotent, then $M_2(u)$ is again a modular JBW$^*$-algebra (cf. Lemma~\ref{L:modular is finite} and Lemma~\ref{L:finite tripotents}$(d)$).
\end{enumerate}
\end{remark}

The previous remark says, in particular, that the modularity can be consistently defined for JBW$^*$-triples. In accordance with the abstract setting of \cite[p. 141]{BattagliaOrder1991} we say that a tripotent $u$ in a JBW$^*$-triple $M$ is \emph{modular} if the JBW$^*$-algebra $M_2(u)$ is modular. Moreover, it is natural to call a JBW$^*$-triple {\em modular} if any of its tripotents is modular. (This does not exactly match the setting of \cite{BattagliaOrder1991}, but it is equivalent as explained below.) At the moment we just observe that a JBW$^*$-algebra is modular as a JBW$^*$-algebra if and only if it is modular as a JBW$^*$-triple (by Remark~\ref{rem:modularity}$(2)$) and that any modular JBW$^*$-triple is finite (by Lemma~\ref{L:modular is finite}).

\begin{prop}\label{P:modular triple}
Let $M$ be a JBW$^*$-triple. Then $M$ is modular if and only if it is triple-isomorphic to $M_1\oplus^{\ell_\infty}M_3$, where
\begin{itemize}
    \item $M_1$ is a modular JBW$^*$-algebra, hence it has the form from Theorem~\ref{T:modular algebras};
    \item $M_3$ has the form as in Theorem~\ref{T:synthesis}.
\end{itemize}
\end{prop}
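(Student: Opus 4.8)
The plan is to prove both implications by reducing everything to the individual summands of Theorem~\ref{T:synthesis} and invoking the facts already collected. Throughout I use three things: that a JBW$^*$-triple is modular precisely when each of its tripotents is modular; that a JBW$^*$-algebra is modular as an algebra if and only if it is modular as a triple (Remark~\ref{rem:modularity}$(2)$); and that modularity is preserved by $\ell_\infty$-sums (as observed in the proof of the `if part' of Theorem~\ref{T:modular algebras}), so that a tripotent $(u_j)$ is modular exactly when each $u_j$ is, and a direct summand of a modular triple is again modular.

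For the `only if' part I would first observe that a modular JBW$^*$-triple is finite. Indeed, if $M$ is modular and $u\in M$ is any tripotent, then $M_2(u)$ is a modular JBW$^*$-algebra, hence finite by Lemma~\ref{L:modular is finite}; consequently every complete tripotent in $M_2(u)$ is unitary, i.e.\ $u$ is finite. Being finite, $M$ has no summand of the types $M_2$ or $M_4$, so Theorem~\ref{T:synthesis} yields $M=M_1\oplus^{\ell_\infty}M_3$ with $M_1$ a finite JBW$^*$-algebra and $M_3$ of the form in part $(c)$. As $M_1$ is a direct summand of the modular triple $M$, it is modular as a triple, hence modular as a JBW$^*$-algebra, and therefore has the form prescribed by Theorem~\ref{T:modular algebras}. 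This gives the desired decomposition.

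The substance of the `if part' is to show that every triple of the form $M_3$ in Theorem~\ref{T:synthesis}$(c)$ is automatically modular; combined with the hypothesis that $M_1$ is a modular JBW$^*$-algebra (hence a modular triple) and with preservation of modularity under $\ell_\infty$-sums, this gives modularity of $M_1\oplus^{\ell_\infty}M_3$. By the summand-wise reduction it suffices to treat the two kinds of summands of $M_3$. For a summand $pV_3$ with $p$ a finite projection, any tripotent $e$ has $M_2(e)$ triple-isomorphic to the corner $p_f(e)V_3p_f(e)$ (Proposition~\ref{P:Peirce2 in pV}$(b)$); since $p_f(e)\le p$ is finite by Lemma~\ref{L:finite projections basic facts}$(a)$, this corner is a finite von Neumann algebra, which is modular by \cite[Theorem V.1.37]{Tak}, so $e$ is modular. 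For a summand $L^\infty(\nu_\delta,D_\delta)$ with $D_\delta$ finite-dimensional, I would verify condition $(v)$ of Proposition~\ref{p characterization of modular JBW*-algebras} in $M_2(\uu)$ exactly as in the proof of the `if part' of Theorem~\ref{T:modular algebras}: an infinite orthogonal family of nonzero projections in $M_2(\uu)$ pairwise exchanged by a symmetry would, after passing to fibers, yield on a set of positive measure an infinite orthogonal family of nonzero tripotents in the finite-dimensional triple $D_\delta$, which is impossible. Hence $\uu$ is modular.

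The main obstacle is the $L^\infty(\nu_\delta,D_\delta)$ case of the `if part': one has to argue measurably that exchange by a symmetry in $M_2(\uu)$ forces the corresponding fiber projections to be simultaneously nonzero (symmetries are invertible on each fiber), so that the contradiction with the finite-dimensionality of the fibers can be extracted. Everything else is a bookkeeping reduction to Theorems~\ref{T:synthesis} and~\ref{T:modular algebras} and to the characterizations already established.
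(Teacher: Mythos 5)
Your proof is correct, and its skeleton coincides with the paper's: the `only if' part via modularity $\Rightarrow$ finiteness (Lemma~\ref{L:modular is finite}) and Theorem~\ref{T:synthesis}, and the `if' part by checking the two kinds of summands of $M_3$ separately. The one genuine difference is your treatment of the summands $L^\infty(\nu_\delta,D_\delta)$: you re-run, for an arbitrary (non-unitary) tripotent $\uu$, the measurable fiberwise verification of condition $(v)$ of Proposition~\ref{p characterization of modular JBW*-algebras} that appears in the proof of the `if' part of Theorem~\ref{T:modular algebras}, whereas the paper avoids repeating that argument by embedding $D_\delta$ into a finite-dimensional JB$^*$-algebra $C$ (take $C=B(H)$ when $D_\delta=B(H)_a$ with $\dim H$ finite odd, and $C=H_3(\O)$ when $D_\delta=C_5$), noting that $L^\infty(\nu_\delta,C)$ is modular by Theorem~\ref{T:modular algebras}, and then passing modularity to the subtriple $L^\infty(\nu_\delta,D_\delta)$ (for a tripotent $u$ of the subtriple, its Peirce-2 space there is a JBW$^*$-subalgebra of its Peirce-2 space in $L^\infty(\nu_\delta,C)$, and condition $(v)$ is inherited by JBW$^*$-subalgebras). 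The paper's embedding route is shorter and reuses work already done; your direct route is self-contained but must handle the technicalities you correctly identify — that unitaries of $M_2(\uu)$ are fiberwise unitaries of $D_2(\uu(\omega))$ (via $\vv\sim_2\uu$ and Proposition~\ref{P:Linfty refelexive}), that the countably many a.e.\ identities can be assumed to hold everywhere, and that symmetries kill no fiber, so the supports of all the projections coincide and a positive-measure set of fibers carries an infinite orthogonal family of nonzero tripotents in the finite-dimensional $D_\delta$, a contradiction. Your handling of $pV_3$ (the corner $p_f(e)V_3p_f(e)$ is finite by Lemma~\ref{L:finite projections basic facts}$(a)$, hence modular by \cite[Theorem V.1.37]{Tak}) is the same as the paper's, which simply cites Proposition~\ref{P:Peirce2 in pV} and Theorem~\ref{T:modular algebras} for that step.
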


\begin{proof}
Assume that $M$ is modular. Then it is finite, so by Theorem~\ref{T:synthesis} it can be decomposed as $M_1\oplus^{\ell_\infty}M_3$, where $M_1$ and $M_3$ have the meaning from Theorem~\ref{T:synthesis}. Moreover, since $M_1$ is necessarily modular, it must have the form from Theorem~\ref{T:modular algebras}.

To prove the converse we first recall that a modular JBW$^*$-algebra is a modular JBW$^*$-triple as explained above. 

Further, let $V$ be a a von Neumann algebra and let $p\in V$ be a finite projection. If $u\in pV$ is any tripotent, then $(pV)_2(u)$ is triple-isomorphic to a finite von Neumann algebra (see Proposition~\ref{P:Peirce2 in pV}), hence it is modular by Theorem~\ref{T:modular algebras}. It follows that $pV$ is modular.

Finally, if $D$ is a finite-dimensional JB$^*$-triple appearing in the representation of $M_3$, then $D$ is a subtriple of a finite-dimensional JB$^*$-algebra $C$ (if $D=B(H)_a$ with $\dim H$ finite odd, take $C=B(H)$; if $D=C_5$, take $C=H_3(\O)$). Since $L^\infty(\mu,C)$ is modular by Theorem~\ref{T:modular algebras}, we conclude that also the subtriple $L^\infty(\mu,D)$ is modular.
\end{proof}

We finish by the following proposition saying that our definition of modular JBW$^*$-triples is equivalent with the one used in \cite{BattagliaOrder1991}.

\begin{prop}
Let $M$ be a JBW$^*$-triple. Then $M$ is modular if and only if it admits a complete modular tripotent.
\end{prop}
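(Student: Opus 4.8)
The plan is to prove the two implications separately, relying on the structural characterization of modular JBW$^*$-triples given in Proposition~\ref{P:modular triple} together with the fact (already established) that modularity of a JBW$^*$-triple is a \emph{local} property encoded in the Peirce-2 subspaces of tripotents.

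\medskip

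First I would prove the `only if' part. Suppose $M$ is modular. By definition every tripotent $u\in M$ is modular, i.e. $M_2(u)$ is a modular JBW$^*$-algebra. It therefore suffices to produce a \emph{complete} tripotent. But $M$ is modular, hence finite by Lemma~\ref{L:modular is finite}, and a finite JBW$^*$-triple admits a complete (finite) tripotent by Proposition~\ref{P:finite one complete} (equivalence $(1)\Leftrightarrow(3)$). Any such complete tripotent is in particular modular, so it is a complete modular tripotent, as required.

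\medskip

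The `if part' is the substantive direction. Assume $M$ admits a complete modular tripotent $e$; thus $M_2(e)$ is a modular JBW$^*$-algebra. The first step is to observe that $M_2(e)$ is finite (by Lemma~\ref{L:modular is finite}), and then that $e$ itself is a finite tripotent: a complete tripotent $u'\in M_2(e)$ which is finite forces $M$ to be finite by Proposition~\ref{P:finite one complete}, since the existence of one complete finite tripotent already characterizes finiteness. Hence $M$ is finite, so by Theorem~\ref{T:synthesis} it decomposes as $M_1\oplus^{\ell_\infty}M_3$ with $M_1$ a finite JBW$^*$-algebra and $M_3$ a finite JBW$^*$-triple with no nonzero JBW$^*$-algebra summand. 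By Proposition~\ref{P:modular triple} it now remains only to show that the algebra part $M_1$ is \emph{modular}, since the $M_3$ part is automatically admissible in that proposition's representation. To this end, I would write $e=e_1\oplus e_3$ along the decomposition; then $P_{M_1}e=e_1$ is a complete modular tripotent in $M_1$, so $(M_1)_2(e_1)$ is a modular JBW$^*$-algebra. But $M_1$ is a JBW$^*$-algebra with unit, say $1$, and $e_1$ is a \emph{complete} tripotent in $M_1$; since $M_1$ is finite, Proposition~\ref{P:le0=le2} (the equivalence of finiteness with `every complete tripotent is unitary') shows $e_1$ is unitary, whence $(M_1)_2(e_1)=M_1$. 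Therefore $M_1$ itself is modular, and by Proposition~\ref{P:modular triple} the triple $M=M_1\oplus^{\ell_\infty}M_3$ is modular.

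\medskip

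The main obstacle I anticipate is making precise that a complete tripotent restricts to a complete tripotent on each direct summand and that the modularity of $M_2(e)$ transfers to the summand-level Peirce subspaces; this is where one must invoke Proposition~\ref{P:preorders in direct sum} (preorders and completeness are coordinatewise) and the fact that $M_2(e)=(M_1)_2(e_1)\oplus^{\ell_\infty}(M_3)_2(e_3)$, so a modular lattice splits as a product of its factor lattices, each of which is then modular. Once this bookkeeping is in place, the reduction to Proposition~\ref{P:le0=le2} for the algebra summand is routine, and the $M_3$ summand requires no further argument because Proposition~\ref{P:modular triple} already certifies any $M_3$ of the stated form as modular.
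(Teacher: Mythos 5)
Your proof is correct and follows essentially the same route as the paper: both reduce via Lemma~\ref{L:modular is finite} and Proposition~\ref{P:finite one complete} to finiteness of $M$, decompose $M=M_1\oplus^{\ell_\infty}M_3$ by Theorem~\ref{T:synthesis}, dispose of $M_3$ via Proposition~\ref{P:modular triple}, and show the complete tripotent's component in $M_1$ is unitary by Proposition~\ref{P:le0=le2}, forcing $M_1$ to be modular. The only cosmetic difference is that you transfer modularity to $(M_1)_2(e_1)$ by a sublattice-of-a-product argument, where the paper invokes Remark~\ref{rem:modularity}$(2)$ and Theorem~\ref{T:modular algebras}; both justifications are valid.
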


\begin{proof}
The `only if part' is obvious. Let us prove the `if part'. Assume that $M$ admits a complete modular tripotent. By Lemma~\ref{L:modular is finite} this tripotent is finite, so $M$ is finite by Proposition~\ref{P:finite one complete}. So, by Theorem~\ref{T:synthesis} $M=M_1\oplus^\infty M_3$ (with the notation from the quoted theorem). By Proposition~\ref{P:modular triple} the summand $M_3$ is modular. Further, $M_1$ admits a complete modular tripotent. Since $M_1$ is a finite JBW$^*$-algebra, any complete tripotent is unitary (cf. Proposition~\ref{P:le0=le2}). 
So, $M_1$ admits a unitary modular tripotent, thus $M_1$ is modular by Theorem~\ref{T:modular algebras} (cf. Remark~\ref{rem:modularity}$(2)$).
This completes the proof.
\end{proof}

\section*{Acknowledgement}

We are grateful to the anonymous referees for several useful comments and suggestions.

\def\cprime{$'$} \def\cprime{$'$}


\end{document}